\newcommand{\N}{\mathbb{N}}
\newcommand{\Z}{\mathbb{Z}}
\newcommand{\concat}{%
  \mathord{
    \mathchoice
    {\raisebox{1ex}{\scalebox{.7}{$\frown$}}}
    {\raisebox{1ex}{\scalebox{.7}{$\frown$}}}
    {\raisebox{.7ex}{\scalebox{.5}{$\frown$}}}
    {\raisebox{.7ex}{\scalebox{.5}{$\frown$}}}
  }
}
\def\Power #1 { \powerset(#1) }
\def\Bidom #1 { {\mathfrak P} (#1) }
\newtheorem{definition}{{\bf Definition}}[section]
\newtheorem{theorem}[definition]{{\bf Theorem}}
\newtheorem{conjecture}[definition]{{\bf Conjecture}}
\newtheorem{corollary}[definition]{{\bf Corollary}}
\newtheorem{proposition}[definition]{\noindent {\bf Proposition}}
\newtheorem{lemma}[definition]{\noindent {\bf Lemma}}
\newtheorem{claim}[definition]{\noindent {\bf Claim}}
\newtheorem{question}[definition]{\noindent {\bf Question}}
\newtheorem{notation}[definition]{\noindent {\bf Notation}}
\newtheorem{remark}[definition]{\noindent {\bf Remark}}\newtheorem{problem}[definition]{\noindent {\bf Problem}}
\newtheorem{subclaim}[definition]{\noindent {\bf Subclaim}}
\def\proofref #1 {{\noindent  {\bf Proof} (#1).}\ }
\def\endproof{\hfill {\kern 6pt\penalty 500
\raise -0pt\hbox{\vrule \vbox to5pt {\hrule width 5pt
\vfill\hrule}\vrule}}}
\def\centerpicture #1 by #2 (#3){\leavevmode
        \vbox to #2{
        \hrule width #1 height 0pt depth 0pt
        \vfill
        \special{pictfile #3}}}
\title[Invariant subsets of scattered trees]{Invariant Subsets of Scattered Trees and the Tree Alternative Property of Bonato and Tardif}
\author[C.~Laflamme]{Claude Laflamme*} 
\address{Mathematics \& Statistics Department, University of Calgary, Calgary, Alberta, Canada T2N 1N4}
\email{laflamme@ucalgary.ca} 
\thanks{*Supported by NSERC of Canada Grant \# 10007490} 
\author[M.~Pouzet]{Maurice Pouzet**} \address{Univ. Lyon, Universit\'e Claude-Bernard Lyon1, CNRS UMR 5208, Institut Camille Jordan,  43 bd. 11 Novembre 1918, 69622 Villeurbanne Cedex, France and Mathematics \& Statistics Department, University of Calgary, Calgary, Alberta, Canada T2N 1N4}
\email{pouzet@univ-lyon1.fr }
 \thanks{**Research started while the author visited the Mathematics and Statistics Department of the University of 
Calgary in June 2012; the support provided is gratefully acknowledged.}
\author[N.~Sauer]{Norbert Sauer***}\thanks{***The third author was supported by NSERC of
Canada Grant \# 10007490}  \address{Mathematics \& Statistics Department, University of Calgary, Calgary, Alberta, Canada T2N 1N4}
\email{nsauer@ucalgary.ca }  
\date{Calgary May-June 2012 -- Version June 13, 2016}
\begin{document}

\keywords{graphs, trees, equimorphy, isomorphy}
\subjclass[2000]{Partially ordered sets and lattices (06A, 06B)}

\begin{abstract}  
A tree is \emph{scattered} if it does not contain a subdivision of the complete
binary tree as a subtree. We show that every scattered tree contains a
vertex, an edge, or a set of at most two ends preserved by every embedding
of T. This extends results of Halin, Polat and Sabidussi.

Calling two trees equimorphic if each embeds in the other, we then
prove that either every tree that is equimorphic to a scattered tree $T$ is
isomorphic to $T$, or there are infinitely many pairwise non-isomorphic trees
which are equimorphic to T. This proves the tree alternative conjecture
of Bonato and Tardif for scattered trees, and a conjecture of Tyomkyn for
locally finite scattered trees.
\end{abstract} 
\maketitle


\section{Introduction} 

In this paper we deal with trees, alias connected graphs with no
cycle. We follow Diestel \cite{diestel} for most of the standard graph
theory notions.  A {\em ray} is a one-way infinite path.

We first generalizes theorems dealing with automorphisms of trees obtained
by Polat and Sabidussi \cite{polat-sabidussi}. They proved that every rayless tree has either a
vertex or an edge preserved by every automorphism, and that a scattered tree
having a set of at least three ends of maximal order contains a rayless tree
preserved by every automorphism. (Scattered trees are defined in the abstract
and at the end of Section 2. Ends are equivalence classes of rays and defined
in Section \ref{sec:ends}, just before Subsection \ref{subsection:order}. The order of an end, which we usually
call rank, is defined and discussed in Subsection \ref{space of ends}.)

Our first result pertains to scattered trees for which there is no end
of maximal order. We prove that if T is such a tree then it contains a
rayless tree preserved by every embedding (injective endomorphism) of
T. Together with a result of Halin \cite{halin}, who showed that every
rayless tree contains a vertex or an edge preserved by every
embedding, we obtain the first main contribution of this paper:

\begin{theorem}\label{main1} 
If a tree $T$ is scattered, then either there is one vertex, one edge,
or a set of at most two ends preserved by every embedding of $T$.
\end{theorem}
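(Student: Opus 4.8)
I would argue by cases according to the ends of $T$, in each case producing either a rayless subtree of $T$ preserved by every embedding of $T$ --- so that Halin's theorem finishes --- or a set of at most two ends preserved by every embedding. Two facts will be used throughout. First, every embedding $f$ of $T$ maps $T$ isomorphically onto an induced subtree, hence carries rays to rays, induces an injective self-map $\widehat{f}$ of the set of ends of $T$, and (by the discussion of the space of ends in Subsection~\ref{space of ends}) never lowers the rank of an end, i.e. $\operatorname{rank}(\widehat{f}(\omega)) \geq \operatorname{rank}(\omega)$ for every end $\omega$. Second, if $T_0 \subseteq T$ is a rayless subtree with $f(T_0) \subseteq T_0$ for every embedding $f$ of $T$, then each $f$ restricts to an embedding of $T_0$; Halin's theorem produces a vertex or an edge of $T_0$ preserved by \emph{every} embedding of $T_0$, hence in particular by each $f|_{T_0}$, so that vertex or edge is preserved by every embedding of $T$.

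For the cases: if $T$ is rayless, apply Halin's theorem to $T$ itself. Otherwise $T$ has an end; put $\rho := \sup\{\operatorname{rank}(\omega) : \omega \text{ an end of } T\}$. If $\rho$ is not attained, the ``first result'' stated in the Introduction gives a rayless subtree preserved by every embedding, and the second fact above concludes the case. If $\rho$ is attained, let $E$ be the non-empty set of ends of rank $\rho$; since $\widehat{f}$ cannot lower rank and $\rho$ is maximal, $\widehat{f}(E) \subseteq E$ for every embedding $f$, so $E$ is preserved by every embedding of $T$. If $|E| \leq 2$ we are done, $E$ being the required set of at most two ends. If $|E| \geq 3$ --- the configuration treated for automorphisms by Polat and Sabidussi --- I would let $T_0$ be the smallest subtree of $T$ containing the median of every triple of ends from $E$ (the median of three ends being the unique vertex lying on all three connecting lines). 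Then $f(T_0) \subseteq T_0$ for every embedding $f$, because medians commute with the isomorphism $f : T \to f(T)$ and $\widehat{f}(E) \subseteq E$; and $T_0$ is rayless, because along a ray of $T_0$ the medians of triples from $E$ would march to infinity, each requiring a ``fresh'' end of $E$ branching off at that vertex, and these infinitely many distinct ends of $E$ would then accumulate at the end of that ray, which would have rank strictly greater than $\rho$ --- impossible. Halin's theorem applied to $T_0$ then finishes this case as well.

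The case bookkeeping, the induced action on the end space, and the two reductions to Halin's theorem are routine. \textbf{The main obstacle is the case $|E| \geq 3$}: upgrading the Polat--Sabidussi phenomenon from automorphisms to arbitrary embeddings, i.e. exhibiting a canonical rayless subtree determined by the ends of maximal order and proving carefully that it is at once invariant under every embedding and rayless --- the delicate point being the ``a ray would force an accumulation point of $E$, hence an end of rank $> \rho$'' argument when $E$ is infinite. (The proof of the ``first result'' used when $\rho$ is not attained is the other genuinely new ingredient, but that statement we may quote.)
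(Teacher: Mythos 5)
Your proposal is correct, and at the top level it runs parallel to the paper's proof: you split on whether the supremum of the ranks of the ends is attained, which is exactly the paper's dichotomy between $rank(\Omega(T))$ a successor and a limit ordinal; in the limit case you quote the same invariant rayless subtree $Lim(T)$ (Lemma \ref{lem:limit scattered rank}) that the paper quotes; when the last derivative $\Omega^{(\alpha')}(T)$ has at most two elements you keep it as the invariant set of ends; and in both rayless situations you finish with the embedding version of Halin's theorem (Lemma \ref{lem:Polat_Sabidussi_Halin}), just as the paper does via Lemma \ref{lem:raylesslimit}. Where you genuinely diverge is the case $\vert E\vert\geq 3$. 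The paper reruns the Polat--Sabidussi construction: it chooses pairwise disjoint rays $C_e$, one for each $e\in\Omega^{(\alpha')}(T)$, spans a subtree $G$ on chosen vertices of these rays, and extracts a rayless part $G'$, with invariance then argued through the choices made. Your $T_0$ --- the convex hull of the set $M$ of medians of triples of ends from $E$ --- is choice-free, so its invariance is immediate once one notes that an embedding carries the connecting line of two ends onto the connecting line of their images and that $\widehat f(E)\subseteq E$; this is the cleaner of the two routes. The delicate point you flag, raylessness of $T_0$ when $E$ is infinite, does go through, though your phrase ``each vertex requires a fresh end branching off'' needs tightening: if $R=r_0,r_1,\dots$ is a ray in $T_0$ and $K_n$ is the component of $r_n$ in $T\setminus\{r_{n-1}\}$, then since every vertex of $T_0$ lies on a path between two medians one gets $M\cap K_n\neq\emptyset$; a median in $K_n$ is the meeting point of three rays towards ends of $E$, at most one of which leaves $K_n$, so at least two ends of $E$ have rays inside $K_n$; the nested sets $E_n$ of such ends satisfy $\bigcap_n E_n\subseteq\{end(R)\}$, so infinitely many distinct ends of $E$ occur and they converge to $end(R)$ in $\Omega(T)$, forcing $end(R)\in\Omega^{(\alpha'+1)}(T)=\emptyset$. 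That is the same rank contradiction the paper reaches by observing that a ray of $G'$ would be a non-isolated ray; neither construction buys extra generality, but yours eliminates the bookkeeping about disjoint rays and vertices of valence two.
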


This is proved in Section \ref{section-polat-sabidussi} as a
consequence of Lemmas \ref{lem:raylesslimit}
and \ref{lem:Polat_Sabidussi_Halin}. Of course the fact that every end
in a scattered tree has an order (or rank), which has been proven by
Jung in \cite{jung}, is needed. However, for our proofs we need a more
structural development, with a finer differentiation of the types of
preserved elements. In this context we will also reprove Jung's
theorem (cf. Section 6.1).

The above results pertain to scattered trees with no end of maximal order or
at least 3 ends of maximal order. If there are only two ends of maximal order,
they must be preserved. However, the situation of two ends preserved by every
embedding is quite easy to characterize:

\begin{proposition} 
A tree $T$ has a set of two distinct ends preserved by every embedding
if and only if it contains a two-way infinite path preserved by every
embedding.
\end{proposition}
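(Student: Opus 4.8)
The plan is to route both implications through the \emph{axis} between two ends. The basic structural fact I will use, valid in any tree, is the following characterization of ends: two rays lie in the same end if and only if they share a common tail (equivalently, have infinite intersection). Indeed, if $R\cap R'$ is finite then, by the tree property, it is a finite subpath of $R$, and deleting its last vertex separates the remaining tail of $R$ from that of $R'$, so the rays determine different ends; the converse is immediate. Two consequences will be invoked. First, any embedding $f$ of $T$ sends rays to rays and preserves the common-tail relation, so it induces a well-defined map $\omega\mapsto f(\omega)$ on ends; and since the two half-rays of a double ray meet in a finite set, $f$ sends a double ray to a double ray whose two defining ends are distinct. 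Second, for distinct ends $\omega_1\neq\omega_2$ there is a \emph{unique} double ray $P(\omega_1,\omega_2)$ with one tail in $\omega_1$ and the other in $\omega_2$.

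For the ``if'' direction, assume a double ray $P$ is preserved by every embedding, and split it at a vertex into rays $R^+,R^-$; they determine distinct ends $\omega^+\neq\omega^-$, since $R^+\cap R^-$ is a single vertex. Given an embedding $f$, the images $f(R^+)$ and $f(R^-)$ are rays contained in $f(P)=P$; a ray inside a double ray, being an infinite connected proper subpath, is a tail of that double ray. Hence the ends of $f(R^+)$ and of $f(R^-)$ both lie in $\{\omega^+,\omega^-\}$, and they are distinct because $f(R^+)\cap f(R^-)=f(R^+\cap R^-)$ is finite. Thus $\{\omega^+,\omega^-\}$ is preserved by $f$, and since $f$ was arbitrary, $T$ has a set of two distinct ends preserved by every embedding.

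For the ``only if'' direction, assume $\{\omega_1,\omega_2\}$ is a set of two distinct ends preserved by every embedding, and put $P:=P(\omega_1,\omega_2)$. For an embedding $f$, the image $f(P)$ is a double ray whose two tails are $f$-images of the tails of $P$, hence lie in the ends $f(\omega_1)$ and $f(\omega_2)$; these are distinct (the half-rays of $f(P)$ meet finitely), and because $\{\omega_1,\omega_2\}$ is preserved we get $\{f(\omega_1),f(\omega_2)\}=\{\omega_1,\omega_2\}$. So $f(P)$ is a double ray with one tail in $\omega_1$ and the other in $\omega_2$, whence $f(P)=P$ by uniqueness. Thus $P$ is a two-way infinite path preserved by every embedding.

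The ``if'' direction is routine; the real work, and the main obstacle, is the preliminary existence and (especially) uniqueness of $P(\omega_1,\omega_2)$. For existence I would fix rays $R_i\in\omega_i$ and the $T$-path $Q$ joining their initial vertices, and reduce the bi-infinite walk $\overleftarrow{R_1}\concat Q\concat R_2$: only finitely much cancellation can occur at each junction, since $R_1\cap R_2$ and $Q$ are finite, and a reduced bi-infinite walk in a tree is a simple path, with left tail in $\omega_1$ and right tail in $\omega_2$. For uniqueness, given two such double rays I would use the common-tail criterion to find a common tail $U_1\in\omega_1$ and a common, disjoint tail $U_2\in\omega_2$ shared by both; each double ray then decomposes as $U_1$ followed by the unique $T$-path between the endpoints of $U_1$ and $U_2$ followed by $U_2$, so the two double rays coincide.
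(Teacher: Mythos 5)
Your proof is correct. The paper in fact states this proposition without proof, treating it as "quite easy to characterize," and your argument via the axis $P(\omega_1,\omega_2)$ --- existence by reducing $\overleftarrow{R_1}\concat Q\concat R_2$ and uniqueness via common disjoint tails plus uniqueness of paths in a tree --- is exactly the standard argument the authors leave implicit. The only cosmetic remark: in the "if" direction you could have invoked the paper's own observation from Section 2 that an embedding preserving a two-way infinite path maps it onto itself, but your direct argument (a ray contained in a double ray is a tail of it, and the images of the two half-rays have finite intersection) is equally valid and self-contained.
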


The case of an end preserved by every embedding is more
subtle. Indeed, this does not imply that the end contains a ray
preserved by every embedding.

We provide examples in Subsection \ref{subsection:examples} of
scattered trees with such an end but which do not contain an infinite
path nor a vertex nor an edge preserved by every embedding.

In order to present our next results, we introduce \footnote{We will
use an alternative presentation in Section \ref{section: Bonato-tardif-level function} based on the notion of
level function (see Subsection \ref{subsection:order}).} the following
notions. 

Let $T$ be a tree and $e$ be an end. We say that $e$ is preserved
\emph{forward}, resp. \emph{backward}, by an embedding $f$ of $T$ if
there is some ray $C\in e$ such that $f[C]\subseteq C$,
resp. $C\subseteq f[C]$. We say that $e$ is \emph{almost rigid} if it
is preserved backward and forward by every embedding, that is every
embedding fixes pointwise a cofinite subset of every ray belonging to
$e$.

We recall that two trees $T$ and $T'$ are \emph{equimorphic}, or {\em
twins}, if $T$ is isomorphic to an induced subtree of $T'$ and $T'$ is
isomorphic to an induced subtree of $T$.  As we will see (cf. Lemma
\ref {lem:sumoftrees}), if $C:= \{x_0, \dots x_n, \dots \}$ is a ray,
then $T$ decomposes as a sum of rooted trees indexed by $C$, that is
$T= \bigoplus _{x_i \in C} T_{x_i}$ where $T_{x_i}$ is the tree,
rooted at $x_i$, whose vertex set is the connected component of $x_i$
in $T\setminus \{x_{i-1}, x_{i+1}\}$ if $i\geq 1$ and in $T\setminus
\{ x_{i+1}\}$ if $i=0$. If the number of pairwise non-equimorphic
rooted trees $T_{x_i}$ is finite, we say that $C$ is
\emph{regular}. We say that an end is \emph{regular} if it contains
some regular ray (in which case all other rays that it contains are
regular).

\begin{theorem} \label{main1-2} 
Let $T$ be a tree. If $T$ is scattered and contains exactly one end
$e$ of maximal order, then $e$ is preserved forward by every
embedding. \\ 
If $T$ contains a regular end $e$ preserved forward by
every embedding, then $e$ contains some ray preserved by every
embedding provided that $T$ is scattered or $e$ is not almost rigid.
\end{theorem}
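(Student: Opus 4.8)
The proof splits naturally into the two stated parts. For the first part, suppose $T$ is scattered with exactly one end $e$ of maximal order $\alpha$, and suppose toward a contradiction that some embedding $f$ does not preserve $e$ forward. Pick a ray $C=(x_0,x_1,\dots)\in e$. Since $f$ is an embedding, $f[C]$ is a ray, hence lies in a unique end $e'$; one checks (using the tree structure and the fact that $C$ and $f[C]$ are both rays of $T$) that either $e'=e$ or $e'\neq e$. If $e'=e$, then $C$ and $f[C]$ are eventually the same ray up to finite initial segments, or they diverge; I would argue that for the tree's end-rank to be respected — rank is an isomorphism invariant of the end, and $f$ restricted to the appropriate cofinal subtree is an embedding of a subtree into $T$ — the image must be cofinal in $e$ again, and then by pushing forward along $C$ one extracts a subray $C'\subseteq C$ with $f[C']\subseteq C'$. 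If instead $e'\neq e$, then $e'$ has rank strictly below $\alpha$ (as $e$ was the unique end of maximal order), but $f$ embeds the rank-$\alpha$ end-structure hanging off $C$ into the rank-$<\alpha$ structure hanging off $f[C]$, contradicting monotonicity of the rank under embeddings. The main technical input here is Jung's rank theory for ends of scattered trees (reproved, per the excerpt, in Section 6.1): that every end of a scattered tree has an ordinal rank, that rank is monotone under embeddings of the rooted-tree decompositions $T=\bigoplus_{x_i\in C}T_{x_i}$, and that an embedding cannot raise rank.

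For the second part, assume $e$ is a regular end preserved forward by every embedding, and assume either $T$ is scattered or $e$ is not almost rigid. Fix a regular ray $C=(x_0,x_1,\dots)\in e$, so $T=\bigoplus_{x_i\in C}T_{x_i}$ with only finitely many equimorphy types among the $T_{x_i}$. Because every embedding $f$ preserves $e$ forward, after passing to a subray we may assume $f[C]\subseteq C$, so $f$ induces an order-preserving injection $i_f\colon C\to C$ (identifying $C$ with $\omega$) together with embeddings $T_{x_n}\hookrightarrow T_{i_f(n)}$ compatible with the tree structure. The goal is to produce a single subray of $C$ fixed pointwise by all embeddings simultaneously — equivalently, to bound the "shift" $i_f(n)-n$ uniformly and then show it is eventually zero. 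Regularity is what makes this finitary: since there are only finitely many equimorphy types of the $T_{x_i}$, the sequence of types along $C$ is a word over a finite alphabet, and the existence of an embedding shifting $C$ forward forces a combinatorial (quasi-)periodicity of this word — this is where I would invoke the dichotomy built into the theorem. If $e$ is not almost rigid, there is an embedding moving some ray of $e$ properly, which combined with forward-preservation forces an actual nontrivial shift, and a compactness/König-type argument on the finitely-many-types word yields eventual periodicity; periodicity then lets one mark a canonical subray (e.g. the positions realizing the lexicographically least tail type, or a set defined purely from the type-word and the branching data) that every embedding must fix. If instead $T$ is scattered, one uses the rank machinery again to get the needed rigidity of the initial behaviour.

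The hard part will be the second paragraph: extracting from "some embedding shifts $C$ forward" a genuine structural periodicity of the rooted-tree decomposition, and then manufacturing from that periodicity a ray that is provably invariant under every embedding and not merely under the one witnessing non-almost-rigidity. Regularity reduces the end to a finite-alphabet infinite word plus finitely much extra data, so the combinatorial core resembles the classical fact that an infinite word admitting a nontrivial shift-embedding of itself into itself is eventually periodic; but one must carefully track the embeddings $T_{x_n}\hookrightarrow T_{i_f(n)}$ (which need not be isomorphisms, only embeddings, unless equimorphy upgrades to isomorphy on the relevant pieces) and ensure the canonically chosen ray is robust against all of them. I expect the proof to proceed via a level function as hinted in the footnote (Subsection \ref{subsection:order}, Section \ref{section: Bonato-tardif-level function}), which repackages exactly this shift/periodicity bookkeeping, and via the characterization in the Proposition above for the boundary case where two ends of maximal order appear.
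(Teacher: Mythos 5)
Your outline has two genuine gaps, one in each half. In the first half, the dangerous case is not $e'\neq e$ (which is indeed excluded because the induced map on $\Omega(T)$ cannot lower Cantor--Bendixson rank, so $f$ must fix the unique end of maximal rank), but the case $e'=e$: an embedding can preserve an end \emph{backward}, with $C\subsetneq f[C]$ for every ray $C\in e$, and then no subray $C'$ satisfies $f[C']\subseteq C'$ --- see Examples 5 and 6 of the paper, which exhibit exactly such trees. Your sentence ``the image must be cofinal in $e$ again, and then by pushing forward along $C$ one extracts a subray $C'\subseteq C$ with $f[C']\subseteq C'$'' is therefore false as stated. The missing idea is that backward motion forces the orbit $\bigcup_{n}f^{(n)}[C']$ to be a two-way infinite path $D$ preserved by $f$, and the \emph{other} end $end(D^{+})$ of $D$ then also lies in the last nonempty derivative of $\Omega(T)$ (Proposition \ref{lem:infinite orbit}), contradicting the uniqueness of the end of maximal order. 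Without this step the first half is not proved.

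In the second half, the goal is misstated: when $e$ is not almost rigid there are embeddings of strictly positive period, so no ray is fixed pointwise and the shift is not ``eventually zero''; what one must show is that for a suitable vertex $x$ on the regular ray one has $x\leq_e f(x)$ for \emph{every} embedding $f$, whence the ray $e(x)$ is preserved as a set. This is the crux and your plan does not supply it. The paper proves it by two quite different arguments, neither of which is the eventual periodicity of a word over a finite alphabet that you invoke. In the scattered, almost rigid case it is a pigeonhole on equimorphy types: if $x_k\not\leq_e f(x_k)$, the join $x_k\vee_e f(x_k)=x_n$ forces two equimorphic copies $T_{k'_n}$, $T_{k''_n}$ of $T_n$ to embed into $T_n$ in independent directions, and iterating this builds a subdivision of the binary tree (Claim \ref{claim:scatbin}), contradicting scatteredness --- no rank machinery is used here. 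In the non-almost-rigid case one works with the greatest common divisor $\mathbf{d}$ of the positive periods, shows the relevant rays are $\mathbf{d}$-periodic, and derives a contradiction from $x\not\leq_e f(x)$ by constructing a tree $T'$ equimorphic to $T$ carrying translations of negative period, contrary to forward preservation by every embedding (Proposition \ref{Fact:finitdist}). Your idea of marking a canonical subray from the periodicity is in the right spirit for defining the invariant ray (the paper's $C_o$ is the intersection of the rays $e(u)$ over $u\in\check{E}$), but the step that rules out an embedding moving the origin off that ray is exactly where the work lies, and it is absent from the proposal.
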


Theorem \ref{main1-2} will be proved in Subsection \ref {proof of
main1-2}; it follows from Corollary \ref{lem:part of thm1.3},
Proposition \ref{prop:one end almost rigid} and Proposition \ref
{Fact:finitdist}.

\begin{corollary} 
Let $T$ be a scattered tree. If $T$ contains a regular almost rigid
end $e$, then $e$ contains a ray fixed pointwise by every embedding. \\
In particular, the set of embeddings of $T$ has a common fixed point.
\end{corollary}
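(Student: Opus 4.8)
The plan is to obtain both assertions directly from Theorem~\ref{main1-2} together with the definitions of \emph{almost rigid} and \emph{regular}. First I would invoke the second part of Theorem~\ref{main1-2}: since $T$ is scattered and $e$ is a regular end, and since $e$ is in particular preserved forward by every embedding (because "almost rigid" means preserved backward and forward by every embedding), the hypothesis of Theorem~\ref{main1-2} is met, so $e$ contains some ray $C$ preserved by every embedding of $T$. Here "preserved" should be read in the strong sense appearing in the statement of Theorem~\ref{main1-2}; I would check that this means $f[C]=C$ for every embedding $f$, or at least that $f[C]\subseteq C$ and $C\subseteq f[C']$ for a common ray — the precise reading is exactly what the proof of Theorem~\ref{main1-2} delivers.

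Next I would upgrade "preserved" to "fixed pointwise." Fix such a ray $C=\{x_0,x_1,\dots\}\in e$, enumerated from its initial vertex $x_0$. Since $e$ is almost rigid, every embedding $f$ fixes a cofinite subset of $C$ pointwise; in particular $f$ fixes all but finitely many $x_i$. An embedding is injective and preserves the tree metric, hence it preserves the path structure of $C$: if $f$ fixes $x_i$ for all large $i$, then because $f[C]\subseteq C$ and $f$ preserves distances along the unique path, $f$ must act on the initial segment of $C$ as the identity too — any nontrivial shift would move some far vertex. I would spell this out: if $f(x_0)=x_k$ with $k>0$, then $f(x_j)=x_{j+k}$ for all $j$ (since $f$ is an isometry on the ray and the only rays in $e$ inside $C$ starting at $x_k$ go "outward"), contradicting that $f$ fixes large $x_i$. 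Hence $f(x_i)=x_i$ for every $i$, i.e. $C$ is fixed pointwise by every embedding of $T$.

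For the final sentence, it suffices to observe that $x_0$ (indeed every $x_i$) is then a common fixed point of the monoid of all embeddings of $T$, which is exactly the assertion that the set of embeddings of $T$ has a common fixed point.

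The main obstacle I anticipate is purely bookkeeping: making sure the word "preserved" in Theorem~\ref{main1-2} is being used consistently with the forward/backward terminology introduced just before it, and confirming that "almost rigid $\Rightarrow$ preserved forward" so that the hypotheses of Theorem~\ref{main1-2} genuinely apply. Once the correct reading of "ray preserved by every embedding" is pinned down, the passage from "preserved" to "fixed pointwise" via almost rigidity is a short metric/combinatorial argument on a single ray, with no further input from the structure theory of scattered trees.
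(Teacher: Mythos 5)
Your proposal is correct and follows essentially the same route as the paper: the corollary is obtained by feeding the hypotheses (regular, scattered, and almost rigid hence preserved forward by every embedding) into the second part of Theorem~\ref{main1-2} (i.e.\ Proposition~\ref{prop:one end almost rigid}) to get a single ray $C$ with $f[C]\subseteq C$ for all embeddings $f$, and then using almost rigidity to force the identity on $C$. The only cosmetic difference is in the last step: the paper gets pointwise fixing from the fact that almost rigidity forces every embedding to have period zero (Lemma~\ref{lem:valuation2} and Lemma~\ref{lem:Sf}), whereas you argue directly that a self-embedding of a one-way path is a forward shift $x_j\mapsto x_{j+k}$ and that $k>0$ would contradict the cofinite pointwise fixing built into the definition of almost rigid; both arguments are valid.
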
 

Examples of trees containing an end preserved backward by every
embedding and no end preserved forward by every embedding were
obtained independently by Hamann \cite{hamann1} (see Example 5 in
Subsection \ref{subsection:examples}) and Lehner \cite{lehner} (see
Example 6 in Subsection \ref{subsection:examples}). They are
reproduced with their permission.

\begin{definition}\label{defin:stable}
A tree $T$ is called {\em stable} if either: 
\begin{enumerate}
\item There is a vertex or an edge or a two-way infinite path or a one-way infinite path  preserved by every embedding or an almost rigid end;
\item Or there exists a non-regular  end which is preserved forward by every embedding. 
\end{enumerate}
\end{definition}

Summarizing Theorems \ref{main1} and \ref{main1-2} we have. 
\begin{theorem}\label{main} 
Every scattered tree is stable. 
\end{theorem}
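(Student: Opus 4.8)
The plan is to unwind the definition of stability and check each clause against the structure theorems already assembled in the excerpt. Let $T$ be a scattered tree. The first step is to apply Theorem~\ref{main1}, which gives three mutually exclusive-or-not alternatives: either some vertex, some edge, or a set of at most two ends is preserved by every embedding. In the first two cases, clause~(1) of Definition~\ref{defin:stable} is immediately satisfied, so we may assume that the preserved object is a set $E$ of ends with $1 \le |E| \le 2$.

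First I would dispose of the case $|E|=2$. Here the set of two distinct ends is preserved by every embedding, so the Proposition (the one characterizing two preserved ends) applies and yields a two-way infinite path preserved by every embedding; this is again covered by clause~(1). The remaining and genuinely delicate case is $|E|=1$: there is exactly one end $e$ preserved by every embedding, and in particular $e$ is the unique end of maximal order (this is where scatteredness, via Jung's theorem that ends in scattered trees have a rank, together with the analysis behind Theorem~\ref{main1}, is used to identify the preserved end as the top-rank one). Then the first assertion of Theorem~\ref{main1-2} tells us that $e$ is preserved forward by every embedding.

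Now I would split on whether $e$ is regular. If $e$ is \emph{not} regular, then $e$ is a non-regular end preserved forward by every embedding, which is exactly clause~(2) of Definition~\ref{defin:stable}, and we are done. If $e$ \emph{is} regular, then the second assertion of Theorem~\ref{main1-2} applies (its hypotheses are met since $T$ is scattered): $e$ contains some ray preserved by every embedding. A single ray preserved by every embedding is a one-way infinite path preserved by every embedding, so clause~(1) is satisfied. In every case $T$ falls under Definition~\ref{defin:stable}, proving that every scattered tree is stable.

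The main obstacle is not in this bookkeeping — it is almost entirely a matter of correctly matching each output of Theorems~\ref{main1} and~\ref{main1-2} to the right disjunct of the definition — but rather in making sure the identification in the $|E|=1$ case is airtight: one must confirm that a single preserved end is necessarily an end of maximal order, so that the hypothesis ``$T$ scattered and contains exactly one end of maximal order'' of Theorem~\ref{main1-2} is actually available. If Theorem~\ref{main1} as proved already delivers the preserved end as the top-rank end, this is free; otherwise a short argument (using that $T$ is scattered, so every end has a rank, and that an embedding cannot send a lower-rank end onto a higher-rank one) is needed before invoking Theorem~\ref{main1-2}.
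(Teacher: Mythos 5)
Your proof is correct and follows exactly the route the paper intends: Theorem \ref{main} is stated there as a direct summary of Theorems \ref{main1} and \ref{main1-2}, with the two-end case handled by the proposition on two preserved ends and the one-end case split according to regularity. The identification you worry about in the $|E|=1$ case is indeed free, since the set of ends produced in the proof of Theorem \ref{main1} is $\Omega^{(last)}(T)$, so a singleton there is precisely the unique end of maximal order and Corollary \ref{lem:part of thm1.3} applies.
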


We will apply these results to conjectures of Bonato-Tardif and
Tyomkyn.

\begin{definition}\label{defin:treealternative}
The \emph{tree alternative property} holds for a tree $T$ if
either every tree equimorphic to $T$ is isomorphic to $T$,  or else there are
infinitely many pairwise non-isomorphic trees which are equimorphic to
$T$. 
\end{definition}

\begin{conjecture}[Bonato-Tardif \cite{Bo-Ta}] 
The tree alternative property holds for every tree.
\end{conjecture}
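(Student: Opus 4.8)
The plan is to prove the dichotomy by splitting on whether $T$ is scattered, reducing the scattered case to a question about a linearly indexed family of rooted subtrees, and treating the non-scattered case separately. The unifying principle is an amplification idea: if $T$ admits a self-embedding onto a \emph{proper} subtree while enough of its structure is ``preserved'' in the sense of Theorem \ref{main}, then the slack in that self-embedding can be iterated to manufacture infinitely many pairwise non-isomorphic twins; conversely, if no such slack exists, every twin is forced to be isomorphic to $T$. The task is then to show that exactly one of these two regimes always occurs.

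For a scattered tree $T$ I would proceed as follows. By Theorem \ref{main}, $T$ is stable, so there is a preserved vertex, edge, one- or two-way infinite path, almost rigid end, or a non-regular forward-preserved end. In each case the preserved object acts as a canonical anchor and, via the decomposition of Lemma \ref{lem:sumoftrees}, exhibits $T$ as a sum $\bigoplus_{x_i \in C} T_{x_i}$ of rooted trees indexed along a ray (or around the fixed vertex or edge). Because the anchor is preserved by every embedding, any twin $T'$ must carry a corresponding anchor and decomposition, and the equimorphy between $T$ and $T'$ descends to an equimorphy between the two indexed families of rooted trees. The problem then becomes a ``rooted tree alternative'': I would show that either the family $(T_{x_i})$ is rigid enough that its sequence of equimorphy types is reconstructed uniquely up to isomorphism, yielding a single twin, or one can insert, delete, or transpose blocks of the sequence while preserving equimorphy, and verify that infinitely many of these rearrangements are pairwise non-isomorphic. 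The finiteness packaged in ``regular'' rays and ends (only finitely many equimorphy types among the $T_{x_i}$) is exactly what makes the rearrangement analysis tractable, and the two clauses of Definition \ref{defin:stable} dictate which rearrangements are available.

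The main obstacle is the non-scattered case, and this is where I expect the real difficulty of the conjecture to lie. A non-scattered tree contains a subdivision of the complete binary tree, and such a subtree admits a vast family of self-embeddings; in particular there need be no preserved vertex, edge, or end, so the anchoring strategy above has nothing to latch onto. My plan would be to argue that every non-scattered tree falls on the \emph{infinite} side of the alternative, by using the embedding-flexibility of the binary skeleton to graft varying finite decorations onto it while preserving equimorphy, and then showing that infinitely many of the resulting trees are pairwise non-isomorphic. The crux, and the step I expect to be hardest, is that the self-embeddings of the complete binary tree can \emph{absorb} decorations, collapsing apparently distinct trees into isomorphic ones; one must therefore design decorations that cannot be absorbed and make ``grafting preserves equimorphy but alters the isomorphism type infinitely often'' rigorous in the complete absence of any invariant. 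It is precisely this absorptive richness of non-scattered self-embedding monoids that places the full conjecture beyond the invariant-subset methods developed here for scattered trees, and that a complete proof would have to overcome.
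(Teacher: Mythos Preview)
The statement you are attempting to prove is labelled as a \emph{Conjecture} in the paper, and for good reason: the paper does not prove it. The paper's actual contribution on this front is Theorem~\ref{thm:main2}, which establishes the tree alternative property for \emph{scattered} trees only. So there is no ``paper's own proof'' to compare against; the full conjecture remains open.

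Your outline for the scattered case is in the spirit of the paper: use stability (Theorem~\ref{main}) to produce a canonical anchor, decompose $T$ over that anchor, and reduce to a rooted or labelled-path problem that can be handled by Tyomkyn's theorem and the machinery of Sections~\ref{section: labelled} and~\ref{section: Bonato-tardif-level function}. You do, however, gloss over one real subtlety: the anchor is produced \emph{inside} $T$ by the monoid of self-embeddings of $T$, and it is not automatic that a twin $T'$ carries ``the corresponding anchor''. The paper handles this by showing, case by case, that the anchor is an isomorphism invariant of the equimorphy class (e.g.\ the origin in Proposition~\ref{Fact:finitdist}, or the level-function arguments in Case~1 of Section~\ref{section: Bonato-tardif-level function}); this transfer step is nontrivial and is where much of the work lies.

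Your plan for the non-scattered case contains a genuine error. You propose to show that \emph{every} non-scattered tree has infinitely many twins, but this is false. The $3$-regular tree is non-scattered (it contains a subdivision of $\mathrm{T}_2$), yet every embedding of it is an automorphism: an embedding preserves the degree of each vertex, so each vertex in the image already has all three of its neighbours in the image, and by connectedness the image is the whole tree. Hence $\vert twin(T)\vert=1$ for this $T$, and your proposed dichotomy (scattered $\Rightarrow$ anchor analysis; non-scattered $\Rightarrow$ infinitely many twins) collapses. The paper's only contribution toward the non-scattered case is Proposition~\ref{kappatwinsscattered}, which pushes the count of twins from the scattered pieces hanging off the kernel up to $T$; this is far from a resolution, and the conjecture for non-scattered trees remains genuinely open.
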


Bonato and Tardif \cite {Bo-Ta} proved that their conjecture holds for
rayless trees, and their result was extended to rayless graphs by Bonato
et al, see~\cite{Bo-al}.

Now let $twin(T)$ denote the set of twins of $T$ up to isomorphism. Note
that if $T$ is a tree for which every embedding is an automorphism,
that is surjective, then $\vert twin(T)\vert =1$; in particular this
is the case for any locally finite rooted tree. On the other hand a star $R$ with
infinitely many vertices is an example of a tree having embeddings
which are not automorphisms and with $\vert twin(R)\vert =1$.  Another
such example is the tree consisting of two disjoint stars with
infinitely many vertices whose roots are adjacent.

As this will be developed in Subsection \ref{subsection:order}, if
$e$ is an end in a tree $T$, there is an orientation of the edges of
$T$ which is the (oriented) covering relation of an ordering $\leq _e$
on the set $V(T)$ of vertices of $T$. Indeed, for every vertex $x\in
V(T)$ there is unique ray starting at $x$ and belonging to $e$;
denoting this ray by $e(x)$ we may set $x\leq_ey$ if $y\in e(x)$. If
$x$ is a vertex, let $T(\to x)$ be the induced rooted tree rooted at
$x$ with $V(T(\to x))=\{y\in V(T): y\leq_{e} x\}$.

We are now ready to state the tree alternative result for stable trees. 

\begin{theorem}\label{main2}
\begin{enumerate}[{ (i)}]
\item The tree alternative conjecture holds for stable trees.  In particular: 

\item If $T$ is a stable tree which does not contain a vertex 
or an edge preserved by every embedding or an almost rigid end and
which has a non-surjective embedding, then $\vert twin(T)\vert=\infty$
unless $T$ is the one-way infinite path.

\item If $T$ has a vertex or an edge or an almost rigid end preserved 
by every embedding and $T$ is locally finite,  then every embedding of
$T$ is an automorphism of $T$.

\item If $T$ has an almost rigid end,  then $\vert twin(T)\vert= 1$ 
if and only if $\vert twin(T(\to x))\vert =1$ for every vertex $x$. 
Otherwise $\vert twin(T)\vert= \infty$.

\item  $T$ has a non-regular and not almost rigid end preserved  
forward by every embedding, then $\vert twin(T)\vert\geq 2^{\aleph_0}$. 
 \end{enumerate}
\end{theorem}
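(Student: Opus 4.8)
The plan is to prove Theorem \ref{main2} by treating each clause separately, since they address structurally distinct cases of stable trees, and then to note that (i) follows by combining all the others with Theorem \ref{main} (every scattered tree is stable). For clause (iii), suppose $T$ is locally finite and has a vertex, an edge, or an almost rigid end preserved by every embedding. In the vertex case, root $T$ at that fixed vertex $x_0$; an embedding fixing $x_0$ must permute the finitely many neighbors of $x_0$ (local finiteness) and, by induction on distance from $x_0$ together with König's lemma applied level by level, must map each finite sphere onto itself, hence be surjective. The edge case reduces to the vertex case after possibly passing to a subgroup of index two. For the almost rigid end case, use the order $\leq_e$ from Subsection \ref{subsection:order}: an embedding $f$ fixes pointwise a cofinite subset of every ray of $e$, so on each down-set $T(\to x)$ it restricts to an embedding of a locally finite rooted tree fixing the root, hence is onto $T(\to x)$; taking the union over a cofinal set of $x$ along a ray of $e$ shows $f$ is surjective. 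In all three subcases $|twin(T)|=1$ is immediate since every embedding is an automorphism.

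For clause (ii), assume $T$ is stable, has a non-surjective embedding $f$, and has neither a fixed vertex nor a fixed edge nor an almost rigid end. By Definition \ref{defin:stable}, $T$ then has a preserved two-way infinite path, a preserved one-way infinite path, or a non-regular end preserved forward by every embedding. If $T$ has a preserved two-way or one-way infinite path $C$, then (unless $T$ is the one-way infinite path itself, in which case every embedding is surjective, contradicting the hypothesis) some side of $C$ carries nontrivial hanging subtrees, and Lemma \ref{lem:sumoftrees} lets us write $T=\bigoplus_{x_i\in C}T_{x_i}$; using the non-surjective embedding one builds, by the standard "shift and re-graft" construction on the ray, infinitely many pairwise non-isomorphic twins. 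If instead $T$ has a non-regular end preserved forward, then clause (v) already gives $|twin(T)|\geq 2^{\aleph_0}$, in particular infinite. So in every case $|twin(T)|=\infty$.

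For clause (iv), with an almost rigid end $e$: every embedding fixes a cofinite part of each ray of $e$, so a twin $T'$ of $T$ must share this structure, and $T'$ is obtained by independently choosing, for each vertex $x$ along a ray of $e$, a twin of the rooted tree $T(\to x)$ that hangs there — but the almost rigidity forces these choices to be "eventually compatible," so the number of isomorphism types of $T'$ is $1$ exactly when every $T(\to x)$ is rigid up to equimorphism, i.e. $|twin(T(\to x))|=1$ for all $x$; otherwise a single vertex with $|twin(T(\to x))|\geq 2$ can be exploited infinitely often along the ray (shifting the "defect" further and further out) to produce infinitely many non-isomorphic twins, and in fact this, combined with the freedom this gives, yields $|twin(T)|=\infty$. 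For clause (v), suppose $e$ is a non-regular, not almost rigid end preserved forward by every embedding. Non-regularity means that along a ray $C=\{x_0,x_1,\dots\}\in e$ the rooted trees $T_{x_i}$ realize infinitely many equimorphism types; "preserved forward but not almost rigid" gives an embedding that shifts $C$ strictly into itself. Using this shift we can, for each subset $S\subseteq\N$, re-index/re-attach the infinitely many pairwise non-equimorphic pieces along $C$ according to $S$ to produce a twin $T_S$ of $T$, and distinct $S$ give non-isomorphic $T_S$ because the equimorphism type realized at level $i$ is an isomorphism invariant of where along the canonical ray that type first appears; this yields $2^{\aleph_0}$ twins.

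The main obstacle will be clause (v) (and the infinite-twin half of (iv)): making the "re-attach the pieces along the ray according to $S$" construction actually produce genuine twins of $T$ — one must verify both that each $T_S$ embeds in $T$ and that $T$ embeds in each $T_S$, which uses the non-regularity to supply infinitely many distinct reusable building blocks and the forward-preservation-without-almost-rigidity to supply the shift that absorbs any finite discrepancy — and then that the $T_S$ are pairwise non-isomorphic, which requires pinning down a genuine isomorphism invariant (the sequence of equimorphism types, read off the canonically ordered ray $\leq_e$) that is insensitive to the embedding freedom but still separates different $S$. Getting this bookkeeping right, uniformly for all $2^{\aleph_0}$ choices, is the technical heart; the locally finite case (iii) and the path cases of (ii), by contrast, are routine König's-lemma and shift arguments.
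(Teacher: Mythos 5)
Your case division matches the paper's (vertex/edge via Tyomkyn, paths via a shift construction, almost rigid ends, and the non-regular forward-preserved end giving $2^{\aleph_0}$), and clauses (ii) and (iii) are essentially right. But the two hardest clauses contain genuine gaps.

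For clause (iv), the direction ``$\vert twin(T(\to x))\vert=1$ for all $x$ implies $\vert twin(T)\vert=1$'' is where the real work lies, and your sketch assumes it away: you take for granted that an arbitrary tree equimorphic to $T$ ``is obtained by independently choosing, for each vertex $x$ along a ray of $e$, a twin of $T(\to x)$,'' and that the resulting local isomorphisms $T(\to x)\simeq T'(\to x)$ (which are nested, not independent, since $T(\to x)\subseteq T(\to x\boxplus 1)$) can be assembled into a global isomorphism. Neither is automatic. The paper first normalizes the twin so that $T$ and $T'$ are ``in position'' (Claim \ref{claim:normrep}) and then performs a recursive surgery along the ray (Claims \ref{claim:refatz}--\ref{claim:refatzup}) producing a third tree $R$ to which both are isomorphic. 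Moreover, your mechanism for the infinite case --- ``shifting the defect further and further out'' --- is unavailable here: almost rigidity means every embedding is level preserving (Claim \ref{claim:presindup}), so there is no shift. The paper instead fixes one level $n$, replaces \emph{all} trees $T(\to y)$ with $y\in\mathcal L_n$ equimorphic to $T(\to x)$ by each of the $\kappa=\vert twin(T(\to x))\vert$ twins simultaneously, and uses level preservation to show the resulting trees are pairwise non-isomorphic (Lemma \ref{claim:rootedtoT}); Tyomkyn's theorem then upgrades $\kappa\geq 2$ to $\kappa=\infty$.

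For clause (v), your separating invariant fails: ``the equimorphism type realized at level $i$'' is not an isomorphism invariant of the modified trees $T_S$, precisely because the end is \emph{not} almost rigid --- there are embeddings, and hence potential isomorphisms between distinct $T_S$ and $T_{S'}$, that shift levels. So distinct $S$ can a priori give isomorphic trees, and the claim ``distinct $S$ give non-isomorphic $T_S$'' is not provable as stated. The paper's Lemma \ref{lem:almost contained} does something weaker but sufficient: it takes an almost disjoint family with the no-translate property of Lemma \ref{pz2}, and shows only that each \emph{isomorphism class} among the $2^{\aleph_0}$ constructed trees is countable, via a counting argument over an uncountable subfamily of isomorphisms sharing a common period. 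Relatedly, your ``re-attach the pieces according to $S$'' must be set up so that $T$ still embeds into $T_S$; the paper arranges this by always replacing $T(f,x_{n+k})$ by the \emph{smaller} tree $T(f,x_n)$, so that the given embedding $f$ itself witnesses $T\hookrightarrow T_A$. Without these two devices the construction does not yield $2^{\aleph_0}$ pairwise non-isomorphic twins.
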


Theorem \ref{main2} is proved in Sections $9$ with the help of results
of Section $8$. Note that Theorems~\ref{main2} and \ref{main} together
with Definition \ref{defin:stable} imply:

\begin{corollary}\label{main3}
Let $T$ be a scattered tree with $ \vert twin(T)\vert < 2^{\aleph_0}
$. Then there exists a vertex or an edge or a two-way infinite path or
a one-way infinite path or an almost rigid end preserved by every
embedding of $T$.
\end{corollary}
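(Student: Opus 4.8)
The plan is to derive Corollary \ref{main3} directly by combining the classification of scattered trees provided by Theorem \ref{main} with the quantitative dichotomy of Theorem \ref{main2}. So suppose $T$ is scattered with $\vert twin(T)\vert < 2^{\aleph_0}$; we want to produce one of the listed preserved objects (a vertex, an edge, a two-way infinite path, a one-way infinite path, or an almost rigid end).

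First I would invoke Theorem \ref{main}: since $T$ is scattered, $T$ is stable, so by Definition \ref{defin:stable} one of two cases holds. In case (1) $T$ already has a vertex, an edge, a two-way infinite path, a one-way infinite path, or an almost rigid end preserved by every embedding, and we are done immediately. So the only work is to rule out case (2): $T$ has a non-regular end $e$ preserved forward by every embedding. Here I would split on whether $e$ is almost rigid. If $e$ is almost rigid, then $T$ has an almost rigid end preserved by every embedding and the conclusion of the corollary holds. If $e$ is not almost rigid, then $T$ has a non-regular, not-almost-rigid end preserved forward by every embedding, so Theorem \ref{main2}(v) applies and gives $\vert twin(T)\vert \geq 2^{\aleph_0}$, contradicting our hypothesis. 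Hence this subcase cannot occur, which completes the argument.

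The only genuinely delicate point is keeping the case analysis aligned with the precise phrasing of Definition \ref{defin:stable}: clause (1) lists "an almost rigid end" as one of the preserved objects, so an almost rigid end arising anywhere in the analysis already lands us in the corollary's conclusion, and clause (2) is exactly the hypothesis needed to trigger Theorem \ref{main2}(v) once we have also observed that $e$ is not almost rigid. I expect no further obstacle: the corollary is a formal consequence of the two main theorems, with the cardinality bound $2^{\aleph_0}$ from Theorem \ref{main2}(v) serving only to convert "not almost rigid" into a contradiction with the standing assumption $\vert twin(T)\vert < 2^{\aleph_0}$.
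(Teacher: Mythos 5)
Your proposal is correct and follows exactly the route the paper intends: the paper gives no separate proof of Corollary \ref{main3} but states that it follows from Theorem \ref{main}, Definition \ref{defin:stable}, and Theorem \ref{main2}, which is precisely the combination you carry out. Your extra care in splitting case (2) of Definition \ref{defin:stable} according to whether the non-regular end is almost rigid is the right way to make Theorem \ref{main2}(v) applicable, so nothing is missing.
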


Tyomkyn \cite{tyomkyn} proved that the tree alternative property holds
for rooted trees and conjectured that $twin(T)$ is infinite at
the exception of the one-way infinite path, for every locally finite
tree $T$ which has a non-surjective embedding.  

We obtain from Theorem \ref{main} and from Theorem \ref{main2} we now
obtain the second main contribution of the paper:

\begin{theorem}\label{thm:main2} 
The tree alternative conjecture holds for scattered trees, and
Tyomkyn's conjecture holds for locally finite scattered trees.
\end{theorem}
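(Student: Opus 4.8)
The plan is to derive Theorem~\ref{thm:main2} directly from the two main contributions already assembled in the paper, namely Theorem~\ref{main} (every scattered tree is stable) and Theorem~\ref{main2} (the tree alternative conjecture holds for stable trees). The first assertion is then immediate: if $T$ is scattered, Theorem~\ref{main} gives that $T$ is stable, and Theorem~\ref{main2}(i) gives that the tree alternative property of Definition~\ref{defin:treealternative} holds for $T$. So the only real content is to see that this also yields Tyomkyn's conjecture in the locally finite case, which requires unwinding what the various clauses of Theorem~\ref{main2} say when $T$ is locally finite.

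Concretely, I would argue as follows. Let $T$ be a locally finite scattered tree with a non-surjective embedding; we must show $|twin(T)|=\infty$ unless $T$ is the one-way infinite path. By Theorem~\ref{main}, $T$ is stable, so by Definition~\ref{defin:stable} either (1) $T$ has a vertex, an edge, a two-way infinite path, a one-way infinite path, or an almost rigid end preserved by every embedding, or (2) $T$ has a non-regular end preserved forward by every embedding. In case~(2), Theorem~\ref{main2}(v) applies once we note that a non-regular end cannot be almost rigid (an almost rigid end is in particular regular, since every embedding fixes a cofinite part of each of its rays, so only finitely many rooted components along such a ray can fail to be singletons), giving $|twin(T)|\geq 2^{\aleph_0}$, in particular infinite. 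In case~(1): if the preserved object is a vertex, an edge, or an almost rigid end, then since $T$ is locally finite, Theorem~\ref{main2}(iii) forces every embedding of $T$ to be an automorphism, contradicting the existence of a non-surjective embedding; hence this sub-case does not occur. So the preserved object is a two-way or a one-way infinite path. If it is a one-way infinite path and $T$ itself is the one-way infinite path we are in the stated exception; otherwise $T$ has no vertex or edge and no almost rigid end preserved by every embedding (local finiteness again rules these out jointly with a non-surjective embedding, as just seen), so Theorem~\ref{main2}(ii) gives $|twin(T)|=\infty$.

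The step I expect to need the most care is checking that the hypotheses of Theorem~\ref{main2}(ii) are genuinely met in the remaining sub-case, i.e.\ that a locally finite scattered $T$ which has a non-surjective embedding and which (from stability) has an infinite path preserved by every embedding indeed has \emph{no} preserved vertex, edge, or almost rigid end. The clean way to see this is contrapositively through part~(iii): for locally finite $T$, a preserved vertex or edge or almost rigid end would make every embedding surjective, so the very assumption of a non-surjective embedding excludes all three simultaneously. Thus whenever $T\neq$ the one-way infinite path, part~(ii) applies and $|twin(T)|=\infty$, which is exactly Tyomkyn's conjecture for locally finite scattered trees. Combining this with the first assertion completes the proof of Theorem~\ref{thm:main2}.

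For completeness I would also remark that the general (not necessarily locally finite) tree alternative statement needs nothing beyond the one-line deduction ``scattered $\Rightarrow$ stable $\Rightarrow$ tree alternative'' from Theorems~\ref{main} and~\ref{main2}(i); all the extra bookkeeping above is solely to extract the sharper dichotomy with the explicit exception of the one-way infinite path that Tyomkyn's formulation demands.
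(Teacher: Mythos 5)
Your overall route is exactly the paper's: the result is stated there as an immediate consequence of Theorem~\ref{main} (scattered $\Rightarrow$ stable) and Theorem~\ref{main2}, with no further argument, and your unwinding of the clauses of Theorem~\ref{main2} to extract Tyomkyn's dichotomy is the intended reading. The first assertion is handled correctly in one line.

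There is, however, one false auxiliary claim in your treatment of case~(2): you assert that a non-regular end cannot be almost rigid, ``since every embedding fixes a cofinite part of each of its rays, so only finitely many rooted components along such a ray can fail to be singletons.'' This is wrong --- almost rigidity constrains the action of embeddings on the ray, not the isomorphism types of the rooted trees hanging off it --- and the paper's own Example~4 ($T^4_{\omega}$) is explicitly an almost rigid end whose ray is not regular. Fortunately the conclusion you need in that sub-case survives: in the Tyomkyn setting $T$ is locally finite with a non-surjective embedding, so if the end were almost rigid then Theorem~\ref{main2}(iii) would force every embedding to be an automorphism, a contradiction; hence the end is not almost rigid and Theorem~\ref{main2}(v) applies as you wanted. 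Even more simply, you can bypass the case split on Definition~\ref{defin:stable} altogether: local finiteness plus a non-surjective embedding rules out (via part~(iii)) any preserved vertex, edge, or almost rigid end, and then part~(ii) alone gives $\vert twin(T)\vert=\infty$ unless $T$ is the one-way infinite path. With that correction the proof is complete and matches the paper.
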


Theorem \ref{main2} clearly holds for finite trees.  That Theorem
\ref{main} holds for finite trees can either be seen directly by
successively removing endpoints until an edge or a single vertex
remains, or by just applying the Polat, Sabidussi, Halin results
mentioned above.  Hence, in the process of proving Theorems \ref{main}
and \ref{main2}, we will mostly assume that our trees $T$ are
infinite.

The results of this paper have been presented in part at the workshop
on Homogeneous Structures, Banff, Nov. 8-13, 2015 and to the seminar
on Discrete Mathematics, Hamburg, Feb. 19. 2016. The paper benefited
of remarks from the audiences and we are pleased to thank them; in particular we
thank M.~Hamann and F.~Lehner for their examples of trees.

We further thank M.~Hamann for the information he provided on his recent
result generalizing Theorem \ref{main1}.  This generalization
\cite{hamann3} has two parts, and the first one reads as follows:

\begin{theorem} Let $G$ be a  monoid of  embeddings of a tree $T$. Then either: 
\begin{enumerate}
\item There is a vertex, an edge or a set of at most two ends preserved by each member of $G$;
\item Or $G$ contains a submonoid freely generated by two embeddings.
\end{enumerate}
\end{theorem}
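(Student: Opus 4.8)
The plan is to prove the dichotomy by running a Stallings–Dunwoody-type analysis of the monoid action on the tree, combined with the structural theory developed earlier in the paper. First I would observe that since $G$ consists of embeddings (injective endomorphisms), the negation of alternative~(1) — no vertex, no edge, and no set of at most two ends preserved by all of $G$ — must be exploited to manufacture a free submonoid. The natural strategy is a ping-pong argument: I would look for two embeddings $f,g\in G$ and disjoint "attracting" regions $A_f,A_g$ (sub-trees or end-neighborhoods) such that $f$ maps everything outside $A_f$ into $A_f$ and likewise for $g$, so that any nontrivial positive word in $f,g$ is seen to be non-trivial by tracking where it sends a fixed base vertex. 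The key point is that an embedding that is not an automorphism "contracts" the tree toward the complement of its image, giving a canonical candidate for an attracting region.

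The key steps, in order, would be: (i) reduce to the case where $G$ contains a non-surjective embedding — if every element of $G$ is an automorphism, then by the classical Tits-type alternative for group actions on trees (Halin/Polat–Sabidussi in the relevant generality, and here Theorem~\ref{main1} applied to the fixed structure) one already lands in case~(1); (ii) for a non-surjective $f\in G$, analyze the nested sequence $V(T)\supseteq f[V(T)]\supseteq f^2[V(T)]\supseteq\cdots$ and extract the limit behavior — either $f$ "pushes toward an end" $e_f$, or $f$ has bounded displacement and hence (by the rayless/finite-diameter arguments behind Halin's theorem) preserves a vertex or edge; (iii) if distinct non-surjective elements push toward distinct ends $e_f\neq e_g$, and if additionally there is no third common end, set up the ping-pong between end-neighborhoods of $e_f$ and $e_g$ and conclude freeness; (iv) handle the degenerate configurations — all non-surjective elements pushing toward the same end $e$, or toward one of two ends — by showing that then $G$ actually preserves that end (or that pair of ends), landing in case~(1), possibly after invoking the forward/backward preservation analysis of Theorem~\ref{main1-2} and the $\leq_e$-ordering machinery from Subsection~\ref{subsection:order}.

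The hard part will be step (iii)–(iv): one must show that \emph{whenever} alternative~(1) fails there genuinely exist two non-surjective embeddings with disjoint attracting regions, rather than merely a single contracting direction or a pencil of contractions all focused on one invariant end or invariant pair of ends. This is exactly where the scattered-tree structure theory must be used in full: an invariant end that is \emph{not} preserved in the strong (backward) sense still forces enough rigidity on the complementary sub-trees (via the level-function description of Section~\ref{section: Bonato-tardif-level function} and the regular/non-regular trichotomy) to either collapse $G$ into case~(1) or expose a second independent contraction. I would expect the cleanest route to be a case split on the number of ends toward which elements of $G$ can push: zero (reduces to the rayless case and Halin), one or two (shows those ends are $G$-invariant, hence case~(1)), three or more (gives room for ping-pong, hence a free submonoid) — mirroring precisely the "at most two ends" threshold in the statement and in Theorem~\ref{main1}.

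Finally I would assemble these pieces: the zero/one/two-end cases yield conclusion~(1) directly, and the "three-or-more" case yields conclusion~(2) via ping-pong, so the dichotomy is exhaustive. A remark worth recording is that the submonoid obtained is freely generated as a \emph{monoid} (positive words only), which is all that is claimed and all that the embedding hypothesis can be expected to give, since embeddings need not be invertible.
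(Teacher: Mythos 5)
First, a point of reference: the paper does not prove this theorem. It is quoted verbatim from Hamann's manuscript \cite{hamann3}, and the authors record only a brief sketch of his strategy: every embedding without a fixed vertex or reversed edge has a \emph{direction} (an end preserved forward with positive period); if case (1) fails there are infinitely many embeddings with pairwise distinct directions; among these one extracts $f$ and $g$ such that \emph{neither fixes the direction of the other}; and from that pair one builds the free submonoid (and, in the second part, a subdivision of the binary tree). Your overall architecture --- directions of non-surjective embeddings, a case split on how many directions occur, ping-pong when there are enough of them --- is in the same spirit as that sketch, so the plan is not unreasonable. But since the paper contains no proof, the comparison can only be against that sketch, and against it your proposal has concrete gaps.

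(a) The theorem concerns an \emph{arbitrary} tree $T$, so your stated intention to use ``the scattered-tree structure theory in full'' (Cantor--Bendixson ranks of $\Omega(T)$, Theorem~\ref{main1}, the first half of Theorem~\ref{main1-2}, the regular/non-regular trichotomy) in steps (i) and (iii)--(iv) cannot work: all of that machinery presupposes $T$ scattered, and the whole point of Hamann's generalization is to dispense with that hypothesis. (b) In the ``zero directions'' case, where every element of $G$ fixes a vertex or reverses an edge, you must still produce a \emph{common} fixed vertex, edge, or end for the entire monoid; $T$ need not be rayless, so Halin's rayless theorem (Lemma~\ref{lem:Polat_Sabidussi_Halin}) does not apply, and what is needed is the elliptic-elements lemma (a family of elliptic isometries of a tree has a common fixed vertex, edge, or end), adapted from groups to monoids of possibly non-invertible embeddings --- an adaptation you do not supply. (c) Most seriously, ``three or more directions'' does not by itself yield disjoint attracting regions: an embedding with direction $e_f$ may perfectly well preserve $e_g$ (a translation preserves two ends, one forward and one backward), and then positive words in $f,g$ can collapse, defeating the ping-pong. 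The crux of Hamann's argument is precisely the extraction of $f$ and $g$ such that neither fixes the direction of the other; your plan names the difficulty but proposes to resolve it with scattered-tree tools that are unavailable here, so the decisive step remains open.
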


Note that if $G$ is a group of automorphisms, the conclusion is
similar: in the second case $G$ contains a subgroup freely generated
by two automorphisms (see Theorem 2.8 and Theorem 3.1 of
\cite{hamann2}). For anterior versions of this result, see Tits
\cite{tits}, Pays and Valette \cite{pays-valette} and Woess
\cite{woess}.

In his second part, M.~Hamann proves that if the first case of the
alternative above does not hold, then $T$ contains a subdivision of
the binary tree. For doing so, he shows that there are infinitely many
embeddings with different directions (the \emph{direction} of an
embedding $f$ being an end preserved forward by $f$ with a positive
period (see Section \ref{section:ends} for the definition of
period)). Then, he observes that there are two embeddings $f$ and $g$
such that none fixes the direction of the other. From that, he builds
a subdivision of the binary tree.


We wish to warmly thank the referees for their truly valuable and appreciated contributions.

\section{Basic definitions}\label{section:definitions}

Let $T$ be a tree. For $x,y\in V(T)$ we write $x\sim y$ if $x$ is
adjacent to $y$.  An {\em embedding of $T$ into a tree $T'$} is an
injection $f$ of $V(T)$ to $V(T')$ for which $x\sim y$ if and only if
$f(x)\sim f(y)$ for all $x,y \in V(T)$. An {\em embedding of $T$} is
an embedding of $T$ into $T$.  We write $H\subseteq T$ if $H$ is a
subtree of $T$. That is if $H$ is connected and $V(H)\subseteq
V(T)$. If $H\subseteq T$ and $f$ is an embedding of $T$ then $f[H]$ is
the subtree of $T$ induced by the set $\{f(h): h\in V(H)\}$ of
vertices.  Hence an embedding $f$ is an isomorphism of $H$ to $f[H]$.
An embedding $f$ {\em preserves a subtree $H$} of $T$ if
$f[H]\subseteq H$. Note that if $H$ is finite or a two-way infinite
path and $f$ preserves $H$ then $f[H]=H$, in which case $f$ restricted
to $H$ is an automorphism of $H$.  The embedding $f$ {\em fixes} a
vertex $x$ if it preserves it, that is if $f(x)=x$. Two trees $T$ and
$R$ are \emph{isomorphic}, resp. {\em equimorphic} or {\em twins}, and
we set $T \simeq R$, resp. $T\equiv R$, if there is an isomorphism of
$T$ onto $R$, resp. an embedding of $T$ to $R$ and an embedding of $R$
to $T$.  A \emph{rooted tree} is a pair $(T, r)$ consisting of a tree
$T$ and a vertex $r\in V(T)$, the \emph{root}. If $(T,x)$ is a rooted
tree with root $x$ and $(R,y)$ is a rooted tree with root $y$, then an
embedding of $(T,x)$ to $(R,y)$ is an embedding of $T$ to $R$ which
maps $x$ to $y$. The definitions above extend to rooted trees.

Let $P$ be an ordered set (poset), that is a set equipped with an
order relation, denoted by $\leq$. We say that $P$ is an \emph{ordered
forest} if for each element $x\in P$, the set $\downarrow x:= \{y\in
P: y\leq x\}$ is totally ordered. If furthermore, two arbitrary
elements have a common lower bound, then $P$ is an \emph{ordered
tree}.

Rooted trees can be viewed as particular types of ordered
trees. Indeed, let $(T, r)$ be a rooted tree; if we order $T$ by
setting $x\leq y$ if $x$ is on the shortest path joining $r$ to $y$ we
get an ordered tree with least element $r$, such that $\downarrow x$
is finite for every $x\in T$. Conversely suppose that $P$ is an
ordered tree, with a least element $0$, such that $\downarrow x$ is
finite for every $x\in P$. Then $a)$ $P$ is a
\emph{meet-semilattice}, that is every pair of elements $x$ and $y$
has a \emph{meet} (a largest lower bound) that we denote by $x\wedge
y$ and $b)$ the unoriented graph of the covering relation of $P$ is a
tree (we recall that an element $y\in P$ \emph{covers} $x$, and we
note $x\lessdot y$, if $x<y$ and there is no element $z$ such that
$x<z<y$).  

In the sequel, we will rather consider the dual of the above order on
a rooted tree, we will denote it $\leq_r$ and we will denote by $P(T,
r)$ the resulting poset.

Let $2^{<\mathbb N}$ be the set of finite sequences with entries $0$
and $1$. For $s\in 2^{<\mathbb N}$ we denote by $s\concat 0$ and
$s\concat 1$ the sequences obtained by adding $0$ and $1$ on the right
of $s$; we also denote by $\Box$ the empty sequence. There are an
order and a graph structure on $2^{<\mathbb N}$.  Ordering
$2^{<\mathbb N}$ via the initial subsequence ordering, the least
element being the empty sequence, we obtain an ordered tree, the
\emph{binary ordered tree}. Considering the (undirected) covering
relation we get a tree, the \emph{ binary tree} also known as
\emph{dyadic tree}, which we denote by $\mathrm{T}_2$.  A tree $T$ is
\emph{scattered} (See Figure 1) if no subdivision of the binary tree $\mathrm{T}_2$
is embeddable into $T$.

\begin{figure}[H]\label{fig:scattered}
\centering \includegraphics[width=8cm]{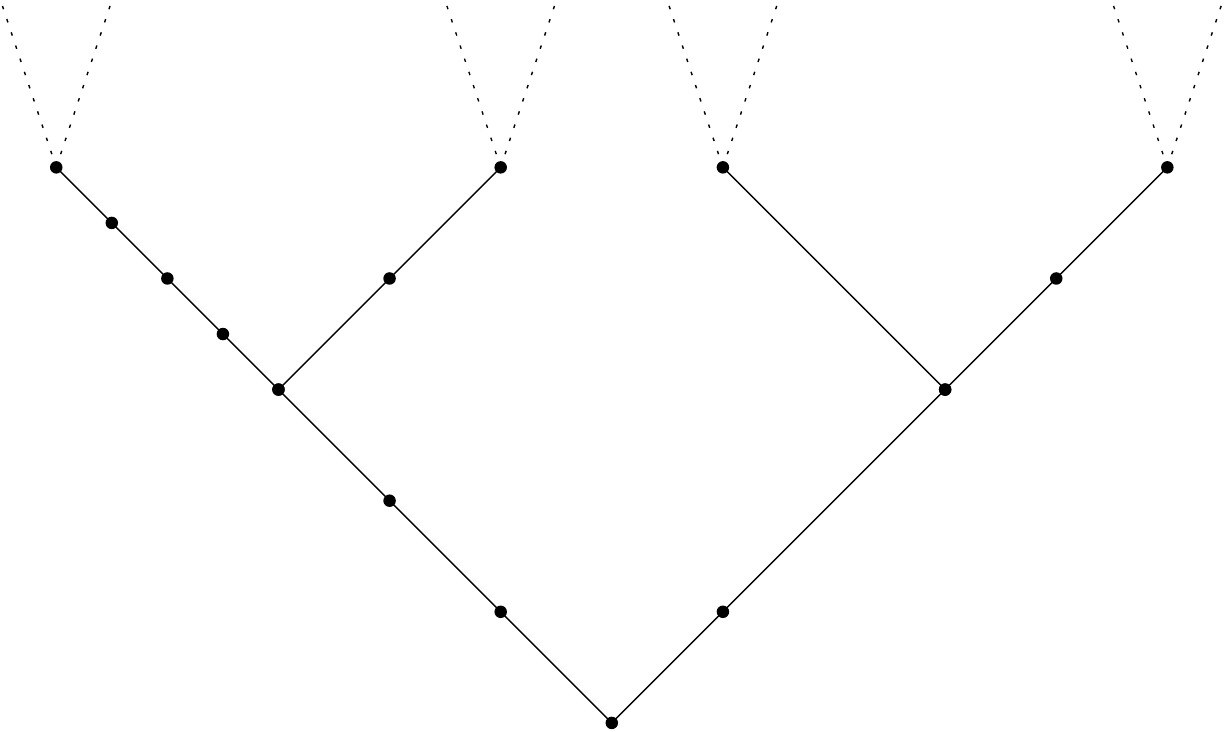}
\caption{A tree is \emph{scattered} if it does not embed a subdivision of the binary tree.}
\end{figure}

\section{Automorphisms and embeddings}

We recall that an automorphism $f$ of a tree $T$ is called a \emph{rotation}
if it fixes some vertex $x$; it is called  an \emph{inversion} if it reverses
an edge,  and it is called a \emph{translation} if it leaves invariant a
two-way infinite path. We recall the following basic result due to
Tits: 

\begin{theorem} \cite{tits} \label{theo:automorphism} 
Every automorphism of a tree is either a rotation, an inversion or a translation. 
\end{theorem}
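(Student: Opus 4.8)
The plan is to study the \emph{displacement function} $\delta_f\colon V(T)\to\N$, $\delta_f(x):=d(x,f(x))$, where $d$ is the graph distance. I would set $n:=\min_{x\in V(T)}\delta_f(x)$; since the values are non-negative integers this minimum is attained, at some vertex $x_0$, and I fix the (unique) geodesic $P=(x_0,x_1,\dots,x_n)$ from $x_0$ to $f(x_0)=x_n$. If $n=0$ then $f(x_0)=x_0$ and $f$ is a rotation, so from now on assume $n\geq 1$.

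The heart of the argument is to show that, unless $f$ is an inversion, the bi-infinite walk obtained by concatenating the translates $\dots\concat f^{-1}[P]\concat P\concat f[P]\concat f^2[P]\concat\dots$ is \emph{reduced}, i.e.\ has no immediate backtracking $a,b,a$. Each $f^k[P]$ is the image of a geodesic under an automorphism, hence again a geodesic, hence reduced; and since a reduced finite walk between two vertices of a tree is the geodesic joining them (trees have no cycles), the only place backtracking could occur is at a junction vertex $f^k(x_n)=f^{k+1}(x_0)$, between its $P$-predecessor $f^k(x_{n-1})$ and its $f[P]$-successor $f^{k+1}(x_1)$. Applying $f^{-k}$, it is enough to rule out the equality $x_{n-1}=f(x_1)$.

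Suppose $x_{n-1}=f(x_1)$. If $n\geq 2$, then $x_1$ and $x_{n-1}$ both lie on $P$, so $\delta_f(x_1)=d(x_1,x_{n-1})=n-2<n$, contradicting the minimality of $n$; thus no backtracking occurs. If $n=1$, then $x_{n-1}=x_0$, so the equality reads $f(x_1)=x_0$, which together with $f(x_0)=x_1$ says that $f$ interchanges the two endpoints of the edge $\{x_0,x_1\}$ --- that is, $f$ is an inversion, and we are done.

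In the one remaining case there is no backtracking at any junction, so for every $k\geq 0$ the concatenation $f^{-k}[P]\concat\cdots\concat f^{k}[P]$ is a reduced walk and hence the geodesic from $f^{-k}(x_0)$ to $f^{k+1}(x_0)$, of length $(2k+1)n$; these geodesics are nested, and their union is a two-way infinite path $L$ with $f[L]=L$ (indeed $f$ shifts $L$ by $n$ edges), so $f$ is a translation. I expect the only real obstacle to be the junction analysis: recognizing that the minimality of $\delta_f$ forces the geodesic from $x_0$ to $f^2(x_0)$ to run through $f(x_0)$ without turning, which is precisely what produces an invariant axis; the rest is routine bookkeeping about reduced walks in trees. (One should merely note that the case $n=1$, $f(x_1)=x_0$ genuinely is an inversion; since the three types in the statement need not be mutually exclusive, no further disambiguation is required.)
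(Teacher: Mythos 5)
Your proof is correct and follows essentially the same route as the paper's: minimize the displacement $d(x,f(x))$, treat the cases of a fixed vertex and a reversed edge, and otherwise assemble the invariant two-way infinite path from the $f$-translates of the geodesic from $x_0$ to $f(x_0)$. You merely supply the detail the paper leaves implicit, namely that minimality of the displacement rules out backtracking at the junctions, so the concatenated walk really is a path.
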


The short and beautiful proof is worth  recalling. 

\begin{proof}
For two elements $x, y\in V(T)$, let $d_T(x,y)$ be the length of the
shortest path joining $x$ to $y$. Let $f$ be an automorphism of
$T$. Let $x\in V(T)$ with $m:= d_T(x, f(x))$ minimum.  If $f(x)=x$,
then $f$ is a rotation. If $f(x)\not = x$ and $f(f(x))=x$, then $m=1$
and $f$ is an inversion. If $f(f(x))\not =x$, then $C:= \bigcup_{n\in
\Z} f^{(n)}[C[x, f(x)]]$, where $C[x, f(x)]$ is the the shortest path
joining $x$ and $f(x)$, is a two-way infinite path left invariant by $f$.
\end{proof}

The case of embeddings is similar.  For this Halin  proved the following result:

\begin{theorem}\cite {halin3}\label{th:embeddingsoftrees} 
Let $f$ be an embedding of a tree $T$ into itself. Then either there is:
\begin{enumerate}
\item A fixed point; or 
\item An edge reversed by $f$; or 
\item  A ray $C$ with  $f[C] \subset C$.
\end{enumerate}

Furthermore, each case excludes the others. 
\end{theorem}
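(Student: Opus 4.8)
The plan is to mimic the proof of Tits' theorem given just above, but to account for the fact that an embedding $f$ need not be surjective, so the quantity $d_T(x, f(x))$ need not behave as nicely. First I would fix a vertex $x_0$ and consider the sequence $x_0, f(x_0), f^{(2)}(x_0), \dots$; the union $\bigcup_{n} C[x_0, f^{(n)}(x_0)]$ of the paths joining consecutive iterates is a subtree $S$ of $T$ that is $f$-invariant in the forward sense, namely $f[S] \subseteq S$, and $S$ is either a finite tree, a ray, or a double ray depending on how the iterates move. The key reduction is that it suffices to analyze the behavior of $f$ on such an $f$-invariant subtree generated by a single orbit, and to choose the starting vertex $x_0$ cleverly.

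The main case division I would carry out is on whether $f$ has a fixed point. If $f$ fixes some vertex we are in case (1); if not, I would show that either $f$ reverses some edge (case (2)) or there is a ray $C$ with $f[C] \subsetneq C$ (case (3)). To get case (3) I would argue as follows: pick $x_0$ with $d_T(x_0, f(x_0))$ minimal among all vertices — this minimum exists since the distances are nonnegative integers. Write $m = d_T(x_0, f(x_0))$ and let $P = C[x_0, f(x_0)]$ be the path between them. If $m = 1$, examine whether $f$ maps the edge $\{x_0, f(x_0)\}$ back onto itself; if so, case (2) holds, and otherwise I would show the double ray $\bigcup_n f^{(n)}[P]$ is genuinely one-way-infinite as seen by $f$, giving a ray $C$ with $f[C]\subsetneq C$. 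If $m \geq 2$, the crucial claim is that the path $P$ and its image $f[P] = C[f(x_0), f^{(2)}(x_0)]$ share exactly the endpoint $f(x_0)$ and no other vertex — because if they overlapped in more than the endpoint, the vertex one step into the overlap from $x_0$'s side would be moved a shorter distance than $m$, contradicting minimality. Iterating, the paths $f^{(n)}[P]$ are pairwise almost disjoint and concatenate to a ray $C$ starting at $x_0$ with $f[C] \subsetneq C$.

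For the "furthermore" clause I would check that the three cases are mutually exclusive: if $f$ has a fixed point it cannot reverse an edge (a reversed edge has both endpoints moved) nor shrink a ray strictly into itself while also fixing a point — one would derive a decreasing chain of distances or a direct contradiction by tracking where the fixed point sits relative to the ray $C$; similarly reversing an edge is incompatible with strictly shrinking a ray, since the reversed edge's endpoints are swapped, not pushed forward along a ray. The main obstacle I anticipate is the minimality/overlap argument in the case $m \geq 2$: one has to be careful that the "first vertex of overlap" argument genuinely produces a vertex moved a distance strictly less than $m$, which requires checking that the relevant vertex is not $x_0$ itself and correctly computing $d_T$ of that vertex to its image using the tree structure (unique paths). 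This is exactly the place where non-surjectivity of $f$ could, a priori, cause trouble, and handling it cleanly is the heart of the proof.
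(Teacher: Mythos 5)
Your plan is correct: the paper gives no proof of this theorem (it is quoted from Halin), but your argument is exactly the natural adaptation of the proof of Tits' theorem that the paper does present, taking a vertex $x_0$ minimizing $d_T(x_0,f(x_0))$ and showing that consecutive images of the path $C[x_0,f(x_0)]$ overlap only in single vertices, so that their union is the required ray when neither a fixed point nor a reversed edge occurs. The only slip is calling $\bigcup_{n\geq 0} f^{(n)}[P]$ a double ray; since $f$ need not be surjective you can only iterate forward, and what you obtain is a one-way ray starting at $x_0$ on which $f$ acts as a forward shift, which is precisely what case (3) requires.
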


Case $(3)$ of the theorem above can be refined  into two parts as follows: 

\begin{proposition}\label{prop:embeddingsoftrees} Let $f$ be an embedding of  a tree $T$ into itself. Then either there is:
\begin{enumerate}
\item A fixed point; or 
\item An edge reversed by $f$; or 
\item A two-way infinite path preserved by $f$ on which $f$ fixes no vertex and reverses no edge; or 
\item A ray preserved by $f$ with a vertex not in the range of $f$. 
\end{enumerate}
Furthermore, each case excludes the others and the edge and path  in Cases (2) and  Case (3) are unique, whereas in Case (4), the ray  has a unique maximal extension to a ray preserved by $f$. 
\end{proposition}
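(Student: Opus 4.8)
The plan is to start from Halin's Theorem \ref{th:embeddingsoftrees} and refine only its third case, since Cases (1) and (2) transfer verbatim. So assume we are in Halin's Case (3): there is a ray $C$ with $f[C]\subset C$, and (by the mutual exclusion in Halin's theorem) $f$ has no fixed point and reverses no edge. First I would analyze what happens along $C$. Write $C=(x_0,x_1,x_2,\dots)$. Since $f[C]\subseteq C$ and $f$ is an embedding, $f$ maps $C$ isomorphically onto a subray $f[C]=(f(x_0),f(x_1),\dots)$ of $C$, so there is a fixed integer $p$ (the \emph{period}, in the terminology introduced later in the paper) with $f(x_i)=x_{i+p}$ for all $i$, where $p\geq 0$; and $p=0$ is impossible because it would force $f(x_0)=x_0$, a fixed point. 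Hence $p\geq 1$. The strict inclusion $f[C]\subset C$ then says exactly that $x_0,\dots,x_{p-1}$ are not in the range of $f\restriction C$; I would need the slightly stronger statement that $x_0$ (equivalently one of these vertices, after possibly shifting) is not in the range of $f$ at all, or instead produce a two-way infinite path — this dichotomy is the heart of the refinement.

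The key step is to decide between Case (4) and Case (3) of the proposition by trying to extend $C$ backwards. For each $n\geq 1$, consider whether $x_0$ lies in the image $f^{n}[T]$. If for every $n$ there is a vertex $y_n$ with $f^{n}(y_n)=x_0$, then I would use the tree structure: $f^{n}$ is an embedding, so the preimage under $f^{n}$ of the ray $(x_0,x_1,\dots)$ is a ray ending at $y_n$; pulling back along $C$ and using that $f(x_i)=x_{i+p}$, these preimages are compatible (the preimage for $n+1$ extends, via one more application of $f^{-1}$, the one for $n$), and one assembles a two-way infinite path $\dots, y^{(2)}, y^{(1)}, x_0, x_1, x_2,\dots$ on which $f$ acts as the shift by $p\geq 1$; in particular $f$ fixes no vertex and (since $p\geq 1$ and shifting by a positive amount along a line never reverses an edge) reverses no edge of it, giving Case (3). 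If, on the other hand, some $x_0\notin f^{n}[T]$, then tracing forward one sees a vertex of $C$ not in the range of $f$ itself, and we are in Case (4). I would then take the union of all rays $C'$ with $f[C']\subseteq C'$ and $C'\supseteq$ (a tail of) $C$; because any two such rays agree on a common tail and the ``backward'' extension process above is canonical, this union is itself a ray, which is the required unique maximal extension.

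The remaining bookkeeping is the mutual exclusion and the uniqueness claims. Mutual exclusion of (1),(2),(3),(4) follows from Halin's theorem (which already separates his (1),(2),(3)) together with the observation that in the refined Case (3) there is no fixed point and no reversed edge — both of which I verified above from $p\geq 1$ — while in Case (4) the existence of a vertex outside the range of $f$ rules out $f$ being an automorphism, hence rules out (3) (where $f$ restricted to the line is an automorphism of that line, but one still must check $f$ is not globally an automorphism; in fact in Case (3) $f$ need not be surjective on $T$, so I should phrase exclusion of (3) vs (4) via ``$f$ fixes no vertex / reverses no edge'' versus the structural incompatibility of a preserved two-way path and a preserved ray with a gap — two distinct preserved lines of the stated types cannot coexist because $f$'s translation length and axis are essentially unique). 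Uniqueness of the edge in (2) and the path in (3) is the standard fact, proved as in Tits' argument: the set of vertices $x$ minimizing $d_T(x,f(x))$ is ``convex'' and determines the axis. Uniqueness of the maximal preserved ray in (4) was argued in the previous paragraph. I expect the main obstacle to be the clean construction of the two-way infinite path in the ``$x_0$ always has a preimage'' case — specifically, checking that the backward preimages chosen at each stage can be made coherent so that their union is a single ray rather than a more complicated subtree; this needs the uniqueness of paths in trees and a careful compatibility argument, but no deep new idea beyond what is already in Halin's and Tits' proofs.
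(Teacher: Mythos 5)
The paper states this proposition without proof (it is presented as a direct refinement of Halin's Theorem~\ref{th:embeddingsoftrees}), so there is no argument of the authors to compare yours against; your proposal therefore has to stand on its own, and in its main line it does. Starting from Halin's ray $C$ with $f(x_i)=x_{i+p}$, $p\geq 1$, the dichotomy on whether $x_0\in f^{n}[T]$ for all $n$ is the right one, and the axis in Case (3) is indeed $\bigcup_{n}(f^{n})^{-1}[C]$. The point you flag as the ``main obstacle'' is handled as follows: $x_0\in f^{n}[T]$ forces $C\subseteq f^{n}[T]$, because $C\cap f^{n}[T]$ is the intersection of a path with a subtree, hence connected, and it contains both $x_0$ and the infinite tail $f^{n}[C]$; thus $(f^{n})^{-1}[C]$ really is a ray extending $C$ backwards by $np$ vertices, and injectivity of $f$ makes the chain of preimages coherent with no choices to reconcile. (This object is exactly the set $S_f$ of Lemma~\ref{lem:Sf}, which the paper later analyses by a different, valuation-based argument.)

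Two points need repair. First, your conclusion in Case B, ``a vertex of $C$ not in the range of $f$,'' is false as stated: let $T$ be the ray $C=(x_0,x_1,\dots)$ with one extra leaf $a$ adjacent to $x_0$, and $f(x_i)=x_{i+1}$, $f(a)=x_0$; then every vertex of $C$ lies in $f[T]$, although $x_0\notin f^{2}[T]$. The correct witness is the backward extension: if $n$ is minimal with $x_0\notin f^{n}[T]$, then $C_{n-1}:=(f^{n-1})^{-1}[C]$ is a ray preserved by $f$ whose endpoint $y_{n-1}$ is not in $f[T]$ (otherwise $x_0=f^{n-1}(y_{n-1})\in f^{n}[T]$). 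Second, the exclusion of (3) and (4) needs more than ``the axis is essentially unique'': you must show that every ray $C'$ preserved by $f$ is contained in the axis $D$, whence all its vertices lie in $f[D]=D\subseteq f[T]$ and (4) fails. This follows from the displacement function: $d(x,f(x))$ equals the constant shift $p'$ of $C'$ along $C'$, while $d(x,f(x))=2\,d(x,D)+q$ for every vertex $x$ (the geodesic from $x$ to $f(x)$ must pass through the distinct projections $p_{(T,D)}(x)$ and $f(p_{(T,D)}(x))$); so $d(\cdot,D)$ would be a constant $r\geq 1$ along $C'$, which is impossible since across any edge disjoint from the subtree $D$ the distance to $D$ changes by exactly one. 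With these two repairs, together with the standard displacement-set argument for the uniqueness of the reversed edge and of the axis, and the observation that the stepwise backward extension in Case (4) is forced by injectivity (so the union of all preserved extensions is a path, necessarily a ray once (3) is excluded), your outline becomes a complete proof.
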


 \section{Decomposition  of trees} \label{sec:decomposition}

The basic idea for the finite case has been  to study the effect of reducing the tree by removing the endpoints and study the effect of this operation on the embeddings, of course in this case the automorphisms, of the tree. If the tree $T$ is infinite, we will have to decompose the tree into larger pieces and then determine properties of the tree in relation to properties of the parts and the way those parts are put together to reconstitute  the tree.  Of course some notation, which will then be used throughout the paper, has to be introduced to describe those actions. In order to reduce the length of chains of symbols we will often, if we think that no confusion is possible, use the symbol for a tree to also stand for the set of its vertices and the set of vertices of an induced subtree of a tree to stand for the subtree. So for example if $C$ is a path in a tree, then we might write $x\in C$ and $P\subseteq C$ to indicate that $x$ is a vertex of $C$ and $P$ is a subpath of $C$ and so on.

Let $T$ be a tree and $X$ be a subset of $T$. We denote by $T\setminus X$ the subgraph of $T$ induced by $V(T)\setminus X$. If $K$ is a subtree of $T$ and $x$ a vertex of $K$, we denote by $(T,K)(x)$  the connected component of $x$ in  $T\setminus (K\setminus \{x\})$. Note that $(T,\{x\})(x)=T$ and that since $T$ is a tree,  $(T,K)(x)$ is the set of vertices on the paths of $T$ meeting $K$ at $x$. Note that $(T,K)(x)=\{x\}$ whenever the neighbourhood of $x$ is included in $K$. We denote by $ ((T,K)(x), x)$ the tree  $(T,K)(x)$ rooted at $x$.  The tree $T$ then consists of the tree $K$ with the rooted trees $ ((T,K)(x), x)$ attached at $x$ for every $x\in V(K)$. In general the trees $ ((T,K)(x), x)$ may be given  as a set of rooted trees of the form $(T_x,r_x)$. That is with a function associating the rooted trees to the vertices of $K$ and then we identify the root $r_x$ with $x$ and let the trees $(T_x,r_x)$ disjointly stick out of $K$.  

Here is the formal definition: Let $T$ be a tree, $X$ be a subset of
$V(T)$ and $(T_x, r_x)_{x\in X}$ be a family of rooted trees. The
\emph{sum} of this family is the tree denoted by $\bigoplus_{x\in X,
T} (T_x, r_x)$, or also simply $\bigoplus_{x\in X, T} (T_x, x)$, which
is obtained by first taking isomorphic copies $T'_x$ of those trees
which are pairwise disjoint and which contain $x$ instead of $r_x$ as
a root. With the understanding that if the trees $T_x$ are already
pairwise disjoint and contain $x$ as a root, then we will not take
different copies of them.  The set of vertices of the tree
$\bigoplus_{x\in X, T} (T_x, x)$ is the set $V(T)\cup \bigcup_{x\in
X}V(T'_x)$ and the edge set is $E(T)\cup \bigcup_{x\in X}E(T_x)$. If
$X=V(T)$ we simply denote this tree by $\bigoplus_{x\in T} (T_x,
r_x)$. In some special cases we will simplify even further. For
example if $\{(T_n,r_n): n\in \mathbb{N}\}$ is a set of rooted trees,
then $\bigoplus_{\mathbb{N}}T_n:=\bigoplus_{n\in \mathbb{N}}T_n$ with
the understanding that $\mathbb{N}$ also denotes the ray indexed
naturally by $\mathbb{N}$. As to be expected we then have:


\begin{lemma}\label{lem:sumoftrees} 
If $T$ is a tree and $K$ a subtree, then $T=\bigoplus_{x\in  K} ((T,K)(x), x)$.  
\end{lemma}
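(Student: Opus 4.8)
The plan is to verify the claimed equality $T=\bigoplus_{x\in K}((T,K)(x),x)$ directly at the level of vertices and edges, since both sides are subgraphs of (copies inside) the ambient tree $T$ and the sum construction in this case takes no new copies: the trees $(T,K)(x)$ are already induced subtrees of $T$ containing $x$, so it suffices to check that they are pairwise ``essentially disjoint'' (meeting only along $K$ in the prescribed way) and that their union, together with $K$, reconstitutes $V(T)$ and $E(T)$.

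First I would recall from the definitions that $(T,K)(x)$ is the connected component of $x$ in $T\setminus(K\setminus\{x\})$, and use the remark already made in the text that, because $T$ is a tree, $(T,K)(x)$ equals the set of vertices lying on paths of $T$ that meet $K$ exactly at $x$. For the vertex count: given any $v\in V(T)$, consider the unique path in $T$ from $v$ to $K$ (which exists and is unique since $T$ is a tree and $K$ is a connected subtree — this is the ``nearest point'' projection onto $K$); let $x$ be its endpoint in $K$. Then $v\in(T,K)(x)$, and conversely every vertex of $(T,K)(x)$ is either in $K$ (only $x$) or projects to $x$. Hence $V(T)=\bigcup_{x\in K}V((T,K)(x))$, and two distinct components $(T,K)(x)$ and $(T,K)(x')$ share no vertex outside $K$ and share a vertex of $K$ only if it is simultaneously $x$ and $x'$, i.e.\ never for $x\ne x'$. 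This matches exactly the vertex set $V(T)\cup\bigcup_{x}V(T'_x)$ of the sum (here $V(T)$ on the sum side should be read as $V(K)$, the base tree of the decomposition, and the $T'_x=(T,K)(x)$).

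Next I would handle the edges. Every edge $e=\{u,v\}$ of $T$ either has both endpoints in $K$ — then it is an edge of $K$ — or else at least one endpoint, say $v$, is not in $K$; in that case $u$ and $v$ have the same projection $x$ onto $K$ (removing $K\setminus\{x\}$ cannot disconnect an edge whose endpoints both route to $K$ through $x$), so $e$ is an edge of $(T,K)(x)$. Conversely every edge of $K$ and every edge of each $(T,K)(x)$ is an edge of $T$, and edges of $(T,K)(x)$ for distinct $x$ are distinct (they involve a vertex outside $K$ attached uniquely to one component, or they are the edges incident to $x$ inside its own component). Thus $E(T)=E(K)\cup\bigcup_{x\in K}E((T,K)(x))$, again matching the edge set of the sum.

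The only real subtlety — and the step I would be most careful about — is the degenerate behaviour when $(T,K)(x)=\{x\}$ (the neighbourhood of $x$ is contained in $K$): then the rooted tree attached at $x$ is the single vertex with no edges, contributing nothing new, which is consistent. One must also make sure the two descriptions of ``the tree $T$ consists of $K$ with the rooted trees $((T,K)(x),x)$ attached'' and ``the formal sum $\bigoplus$'' literally coincide when no fresh copies are taken; since in our situation the $(T,K)(x)$ are genuine induced subtrees of $T$ already containing $x$ as root and pairwise meeting only in the way $\bigoplus$ prescribes, the ``understanding that we will not take different copies'' applies verbatim, and the identity is an equality of graphs, not merely an isomorphism. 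With vertices and edges matched on both sides, the lemma follows.
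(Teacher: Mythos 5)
Your proof is correct and follows essentially the same route as the paper's: a direct verification that the pieces $(T,K)(x)$ cover $V(T)$ and $E(T)$ and meet pairwise only as the sum construction prescribes, using the uniqueness of paths in a tree (the paper phrases the disjointness/cross-edge step as ``otherwise there would be a cycle,'' while you phrase it via the nearest-point projection onto $K$ — the same fact). Your write-up is somewhat more explicit about the covering of vertices and edges, which the paper leaves implicit, but there is no substantive difference in method.
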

\begin{proof}
First, the vertex-sets of the trees ${(T, K)}(x)$ and $(T,K)(y)$ for
$x\not=y$ are disjoint and if there is an edge adjacent to a vertex in
${(T, K)}(x)$ and also adjacent to a vertex in $(T,K)(y)$, then it is
the edge $(x,y)$. Indeed, by construction, ${(T, K)}(x)\cap K=\{x\}$
for each $x\in K$. If $( T, K)(x)$ and $(T, K)(y)$ have a non-empty
intersection or there is an edge not equal to $(x,y)$ from $(T,K)(x)$
to $(T,K)(y)$, then the path from $x$ to $y$ would complete a cycle of
$T$.
\end{proof}

Let $T$ be a tree and $\emptyset \not=X\subseteq T$. For every vertex $x\in V(T)$, the \emph{distance from $x$ to $X$}, denoted by $d_T(x,X)$ is the least integer $n$ such that there is some path of length $n$ from $x$ to some vertex $y\in X$. Hence $d_T(x,X)=0$ iff $x\in X$. If $X$ induces a subtree, say $K$, this vertex $y$ is unique;  we denote it by $p_{(T, K)}(x)$. 

\begin{lemma}\label{lem:retraction}
If $K$ is a subtree of $T$, then the map $p_{(T, K)}$ from $T$ to $K$
is a retraction of reflexive graphs and $p^{-1}_{(T, K)}(y)= (T,K)(y)$
for every $y\in V(K)$.  
\end{lemma}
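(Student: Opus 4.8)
The plan is to verify three things in turn: (i) $p_{(T,K)}$ is well defined, (ii) it is a retraction of reflexive graphs, i.e. it maps $V(T)$ onto $V(K)$, fixes $V(K)$ pointwise, and sends adjacent (or equal) vertices to adjacent (or equal) vertices, and (iii) the fibres are exactly the trees $(T,K)(y)$. The work is almost entirely bookkeeping on paths in a tree, so I will keep the arguments short.

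First I would record the well-definedness that was already observed just before the statement: for $x\in V(T)$ there is a unique shortest path from $x$ to $K$, and hence a unique vertex $p_{(T,K)}(x)\in V(K)$ realizing the distance $d_T(x,K)$. The uniqueness comes from the fact that $K$ is connected: if two distinct vertices $y,y'\in V(K)$ were both at minimal distance from $x$, concatenating a shortest $x$--$y$ path, the (unique, nontrivial) $y$--$y'$ path inside $K$, and a shortest $y'$--$x$ path would yield a closed walk in the tree $T$ that is not reducible to a point, contradicting acyclicity. In particular $p_{(T,K)}(y)=y$ for $y\in V(K)$, so $p_{(T,K)}$ is the identity on $K$ and is therefore surjective onto $V(K)$.

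Next I would check that $p_{(T,K)}$ is a morphism of reflexive graphs: if $x\sim x'$ in $T$ I must show $p_{(T,K)}(x)=p_{(T,K)}(x')$ or $p_{(T,K)}(x)\sim p_{(T,K)}(x')$. Set $y=p_{(T,K)}(x)$ and $y'=p_{(T,K)}(x')$. The unique $x$--$x'$ path is just the edge $xx'$; consider the unique $y$--$y'$ path in $T$ and the shortest $x$--$y$ and $x'$--$y'$ paths. Because $T$ is a tree, the edge $xx'$ lies on the unique $y$--$y'$ path, and moreover $|d_T(x,K)-d_T(x',K)|\le 1$; a short case analysis on whether $d_T(x,K)=d_T(x',K)$ or they differ by $1$ forces the $y$--$y'$ path to have length $0$ or $1$, which is exactly the claim. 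This is the step I expect to be the main (though still modest) obstacle, since it requires being careful that no shorter route from $x$ or $x'$ to $K$ can sneak through the other vertex.

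Finally I would identify the fibres. By Lemma \ref{lem:sumoftrees}, $T=\bigoplus_{y\in K}((T,K)(y),y)$, and by construction $(T,K)(y)\cap K=\{y\}$ while the only edge joining $(T,K)(y)$ to $(T,K)(y')$ for $y\ne y'$ is the edge $yy'$ when $y\sim y'$. Hence for any $x\in V(T)$ there is a unique $y\in V(K)$ with $x\in V((T,K)(y))$, and the whole path inside $(T,K)(y)$ from $x$ to $y$ avoids $K\setminus\{y\}$; conversely any path from $x$ to $K$ must pass through $y$ before reaching any other vertex of $K$, because leaving $(T,K)(y)$ costs the edge $yy'$ at $y\in V(K)$ first. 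Therefore $d_T(x,K)$ is realized uniquely at $y$, so $p_{(T,K)}(x)=y$, which gives $p^{-1}_{(T,K)}(y)\supseteq V((T,K)(y))$; the reverse inclusion is immediate since $p_{(T,K)}$ is a function, and this also re-proves surjectivity. This completes the proof.
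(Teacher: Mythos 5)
Your proposal is correct and follows essentially the same route as the paper: both establish the set-retraction property from $p_{(T,K)}(x)=x \iff x\in V(K)$, both derive the edge-preservation from the decomposition of $d_T(x,x')$ along the unique $y$--$y'$ path (the paper states this as $d_T(x,x')=d_T(x,y)+d_T(y,y')+d_T(y',x')$), and both identify the fibres with the components $(T,K)(y)$ via the fact that the shortest path from $x$ to $K$ stays inside $(T,K)(y)$. The only difference is one of presentation: you spell out the case analysis and the fibre argument a little more explicitly than the paper does.
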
 

\begin{proof} 
Clearly, $p_{(T, K)}(x)=x$ if and only if $x\in V(K)$. Hence  $p_{(T, K)}\circ p_{(T, K)}=p_{(T, K)}$ and $K$ is the range of $p_{(T, K)}$. This amounts to say that $p_{(T, K)}$ is a set-retraction of $V(T)$ onto $V(K)$. To conclude that it is a reflexive-graph-retraction, we need to prove that it transforms an edge into an edge or identifies its end vertices. For that, let  $x, x'\in V(T)$,  $y:=p_{(T, K)}(x)$ and $y':=p_{(T, K)}(x')$. Then, as it is easy to see, $d_T(x,x')= d_T(x,y)+d_T(y,y')+d_T(y',x')$. Hence, if $u:=\{x,x'\} \in E(T)$ and $\{y,y'\}$ is not an edge, then $y=y'$ as required. The fact that the vertices on the shortest path from a vertex $x$ to $K$ belong to $y:=p_{(T, K)}(x)$  ensures that $K_y:=((T,K)(y), y)$ is connected. It follows that $T$ is the sum $\bigoplus_{y\in  K}K_y$.   
\end{proof}

\begin{lemma} \label{lem:sumoverpath}Let  $T$ be a tree, $C$ and $C'$ be two infinite  paths of $T$  and $f$ be an  embedding of $C$ into $C'$. Then  $((T,C)(x), x)$ is embeddable into $((T,C')(f(x)), {f(x)})$ for every $x\in V(C)$ if and only if  $f$ has an extension $g$ to an embedding of  $T$ into itself such that $C=g^{-1}(C')$. 
\end{lemma}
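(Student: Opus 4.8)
The plan is to verify the two implications separately, using Lemma~\ref{lem:sumoftrees} to pass freely between a tree and its decomposition along a path. First suppose $f$ has an extension $g$ to an embedding of $T$ into itself with $C=g^{-1}(C')$. For a fixed $x\in V(C)$ I would check that $g$ maps $(T,C)(x)$ into $(T,C')(f(x))$: a vertex $v\in (T,C)(x)$ lies on a path meeting $C$ only at $x$, so $g(v)$ lies on a path meeting $g[C]$ only at $g(x)=f(x)$; since $C=g^{-1}(C')$ forces $g[C]\subseteq C'$ and in fact $g[C]$ is exactly the part of $C'$ that is the $g$-image of $C$, the image path meets $C'$ at $f(x)$ (and, because $g$ is an embedding, cannot touch $C'$ elsewhere — any further intersection point would pull back to a point of $C$ other than $x$, contradicting that $v$'s path meets $C$ only at $x$). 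Restricting $g$ to $(T,C)(x)$ and sending the root $x$ to $f(x)$ then gives the required embedding of $((T,C)(x),x)$ into $((T,C')(f(x)),f(x))$.

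For the converse, assume embeddings $g_x\colon ((T,C)(x),x)\hookrightarrow ((T,C')(f(x)),f(x))$ are given for each $x\in V(C)$. By Lemma~\ref{lem:sumoftrees}, $T=\bigoplus_{x\in C}((T,C)(x),x)$, so I can \emph{define} $g$ on $V(T)=\bigcup_{x\in C}V((T,C)(x))$ by $g\upharpoonright (T,C)(x):=g_x$; this is well defined since the sets $V((T,C)(x))$ are pairwise disjoint except that they share their roots along $C$, and there $g_x(x)=f(x)=f$ applied to the common vertex, so the pieces agree. I then need to check: (i) $g$ is injective — the ranges of the $g_x$ sit inside the pairwise ``disjoint except at roots'' pieces $(T,C')(y)$, and within each piece $g_x$ is injective, so a clash could only occur at roots, i.e. between $f(x)$ and $f(x')$, ruled out by injectivity of $f$; (ii) $g$ preserves and reflects adjacency — inside a single piece this is because $g_x$ is an embedding, across two pieces $(T,C)(x),(T,C)(x')$ the only possible edge is the edge $xx'$ of $C$ (when $x\sim x'$), which $g$ sends to the edge $f(x)f(x')$, an edge of $C'$ since $f$ is an embedding of $C$ into $C'$, and conversely every edge of $T$ is either internal to some piece or such a path-edge; (iii) $g$ extends $f$ — immediate from $g\upharpoonright C= f$; and (iv) $C=g^{-1}(C')$ — the inclusion $C\subseteq g^{-1}(C')$ is clear, and for the reverse, if $v\notin C$ then $v\in (T,C)(x)\setminus\{x\}$ for a unique $x$, whence $g(v)=g_x(v)\in (T,C')(f(x))\setminus\{f(x)\}$, which is disjoint from $C'$.

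The bookkeeping is routine once the decomposition is in hand; the one place that genuinely needs care — and which I expect to be the main obstacle — is establishing, in the forward direction, that $g[(T,C)(x)]$ meets $C'$ in exactly $\{f(x)\}$ and nowhere else, i.e. ruling out that a branch hanging off $x$ in $T$ could be mapped by $g$ onto vertices lying further along $C'$. This is precisely where the hypothesis $C=g^{-1}(C')$ (rather than merely $g[C]\subseteq C'$) is used: it guarantees that the only vertices of $T$ landing in $C'$ are those of $C$, so no vertex of $(T,C)(x)\setminus\{x\}$ can map into $C'$, and then the tree structure (uniqueness of paths) forces the image of the branch at $x$ to stay inside the component $(T,C')(f(x))$. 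Everything else is a direct application of Lemma~\ref{lem:sumoftrees} together with the definition of ``sum of rooted trees.''
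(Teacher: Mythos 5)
Your proof is correct and follows essentially the same route as the paper: decompose $T$ along $C$ and $C'$ via Lemma~\ref{lem:sumoftrees}, glue the piece-embeddings in one direction and restrict $g$ in the other. The only (harmless) difference is that you invoke the hypothesis $C=g^{-1}(C')$ uniformly at every vertex of $C$, whereas the paper observes that it is automatic at the degree-$2$ vertices of $C$ and only needed at the origin when $C$ is a ray.
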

\begin{proof} For the direct implication, choose for each $x\in V(C)$ an embedding $f_x$ of $((T,C)(x), x)$ into $((T,C')(f(x)), {f(x)})$. Observe that $g:= \bigcup_{x\in V(C)} f_x$ is an embedding of  $\bigoplus_{x\in  C} ((T,C)(x), x)$  into $\bigoplus_{y\in  C'} ((T,C')(y), y)$ and $C=g^{-1}(C')$. According to Lemma \ref{lem:sumoftrees} $\bigoplus_{x\in  C} ((T,C)(x), x)=T$, hence the implication holds. For the converse, note that if $g$ is an extension of $f$, then $g$ induces an embedding of $((T,C)(x), x)$  into $((T,C')(f(x)), {f(x)})$ for every vertex $x\in V(C)$ of degree $2$. Thus the converse holds if $C$ is a two-way infinite path.  If $C$ is  a one-way infinite path, i.e. a ray,   let $x_0$ be its  vertex of degree $1$. Due to the condition on the map $g$, it  induces an embedding of $((T,C)(x_0), {x_0})$  into $((T,C')(f(x_0)), {f(x_0)})$. 
\end{proof}  

\begin{corollary} \label{cor:sumoverpath} 
Let $f$ be an embedding of a tree $T$ which preserves a
two-way-infinite path $D$.  Then $f\big((T,D)(x),x)\big)\subseteq
((T,D)(f(x)),f(x))$ for all $x\in V(D)$.\\

If $f$ is an embedding which preserves a ray $C$ starting at $r_0$
with the predecessor of $f(r_0)$ in $C$ not in $V(f[T])$, then
$$f\big((T,C)(x),x)\big)\subseteq ((T,C)(f(x)),f(x))$$ 
for all $x\in V(C)$.
\end{corollary}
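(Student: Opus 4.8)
The plan is to derive both statements from Lemma \ref{lem:sumoverpath} applied with $C'=D$ (resp.\ $C'=C$) and with the map $f$ restricted to the relevant path. The point is simply to check that the restricted map is an embedding of the path into itself and that the extension hypothesis of Lemma \ref{lem:sumoverpath} is met in the reverse direction: here we already \emph{have} an embedding $g:=f$ of the whole tree $T$, and we want to conclude the pointwise containment of the rooted summands.

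For the first statement, suppose $f$ preserves the two-way infinite path $D$, so $f[D]\subseteq D$. Since $D$ is a two-way infinite path and $f$ preserves it, by the remark in Section \ref{section:definitions} the restriction $f\restriction D$ is an automorphism of $D$; in particular it is an embedding of $D$ into $D$ whose image is all of $D$, so $D=f^{-1}[D]$ trivially. Now apply the converse direction of Lemma \ref{lem:sumoverpath} with $C=C'=D$ and the extension $g:=f$: it yields that $g$, hence $f$, induces an embedding of $((T,D)(x),x)$ into $((T,D)(f(x)),f(x))$ for every vertex $x\in V(D)$ of degree $2$; since every vertex of a two-way infinite path has degree $2$, this holds for all $x\in V(D)$, which is exactly $f\big(((T,D)(x),x)\big)\subseteq((T,D)(f(x)),f(x))$.

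For the second statement, let $C$ be a ray starting at $r_0$ which is preserved by $f$, and assume the predecessor of $f(r_0)$ in $C$ is not in $V(f[T])$. Set $g:=f$, $C'=C$, and let $f_0:=f\restriction C$, an embedding of $C$ into $C$. I must verify the condition ``$C=g^{-1}(C')$'' in the form used in the converse of Lemma \ref{lem:sumoverpath}: that is, that no vertex outside $C$ is mapped into $C$. Suppose $v\in V(T)\setminus V(C)$ with $f(v)\in V(C)$. Because $f[C]\subseteq C$ and $C$ is a ray, $f(r_0)$ has a well-defined predecessor $w$ in $C$; by hypothesis $w\notin V(f[T])$. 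On the other hand, the vertex on the shortest path from $v$ to $C$ that lies in $C$ is some $p_{(T,C)}(v)\neq r_0$ only if \dots; more directly, since $f$ is an embedding and $f(r_0)$ is an endpoint of the sub-ray $f[C]$, a vertex $v\notin V(C)$ adjacent (along its connecting path) to $C$ at a point $x\in V(C)$ would force $f(v)$ to be adjacent to $f(x)\in f[C]\subseteq C$ and to lie off the sub-ray $f[C]$, hence $f(v)$ would be a vertex of $C$ not on $f[C]$ lying ``between'' $f(x)$ and the endpoint $f(r_0)$; the only such possibility is that $f(v)=w$ is the predecessor of $f(r_0)$, contradicting $w\notin V(f[T])$. (Here one uses that $C$ is a ray, so the portion of $C$ not covered by the sub-ray $f[C]$ is exactly the finite segment strictly below $f(r_0)$, and that a vertex off $C$ cannot map into the interior of the ray without creating a degree contradiction for the embedding.) Hence $f^{-1}[C]=C$, and the converse direction of Lemma \ref{lem:sumoverpath}, in the ray case, gives that $f$ induces an embedding of $((T,C)(x_0),x_0)$ into $((T,C)(f(x_0)),f(x_0))$ where $x_0=r_0$, and an embedding of $((T,C)(x),x)$ into $((T,C)(f(x)),f(x))$ for every degree-$2$ vertex $x$ of $C$; together these cover all $x\in V(C)$, giving the displayed inclusion.

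The main obstacle is the second statement's verification that $f^{-1}[C]=V(C)$: one must convert the hypothesis ``the predecessor of $f(r_0)$ is not in the range of $f$'' into the statement that nothing from outside $C$ lands on $C$. The clean way is the degree/uniqueness-of-paths argument sketched above — a vertex $v\notin V(C)$ has $p_{(T,C)}(v)=x$ for a unique $x\in V(C)$, and any neighbour of $x$ off $C$ must, under $f$, map to a neighbour of $f(x)$ off the ray $f[C]$; the only vertices of $C$ with that property are the finitely many strictly below $f(r_0)$, and the only one adjacent to $f[C]$ is the predecessor $w$ of $f(r_0)$, which by hypothesis is not in $V(f[T])$ — so no such $v$ exists. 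I expect this is precisely the refinement of Corollary \ref{cor:sumoverpath} that distinguishes the ray case (Case (4) of Proposition \ref{prop:embeddingsoftrees}) from the two-way path case, and it is where the hypothesis on the predecessor of $f(r_0)$ is essential.
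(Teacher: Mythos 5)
Your derivation is correct and is exactly the route the paper intends: the corollary is stated without proof as an immediate consequence of the converse direction of Lemma \ref{lem:sumoverpath} (more precisely of its proof, which shows that the extension $g$ itself induces the embeddings of the rooted components), and the only real content is the verification that $D=f^{-1}[D]$, resp.\ $C=f^{-1}[C]$, which you supply. The single loose end is that your argument for $C=f^{-1}[C]$ only treats vertices adjacent to $C$; for a vertex $v$ at larger distance with $f(v)\in C$ one should add the one-line reduction that the image of the path from $v$ to $p_{(T,C)}(v)$ joins two vertices of $C$ and hence lies entirely in $C$ (as $C$ is a connected subtree of the tree $T$), so already the neighbour of $p_{(T,C)}(v)$ on that path maps into $C$ and your adjacency argument applies.
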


\section{Ends}\label{section:ends}\label{sec:ends}

Let $T$ be a tree. Two rays $C$ and $C'$ are called \emph{equivalent}
if the set $V(C)\cap V(C')$ of vertices induces a ray, equivalently if
$C\setminus F=C'\setminus F$ for some finite subset $F$ of
$V(T)$. This relation is an equivalence relation, with classes called
\emph{ends}; the equivalence class of a ray $C$ is denoted by
$end(C)$. The set of ends is denoted by $\Omega(T)$.

\begin{figure}[H]\label{fig:endtree}
\centering \includegraphics[width=6cm]{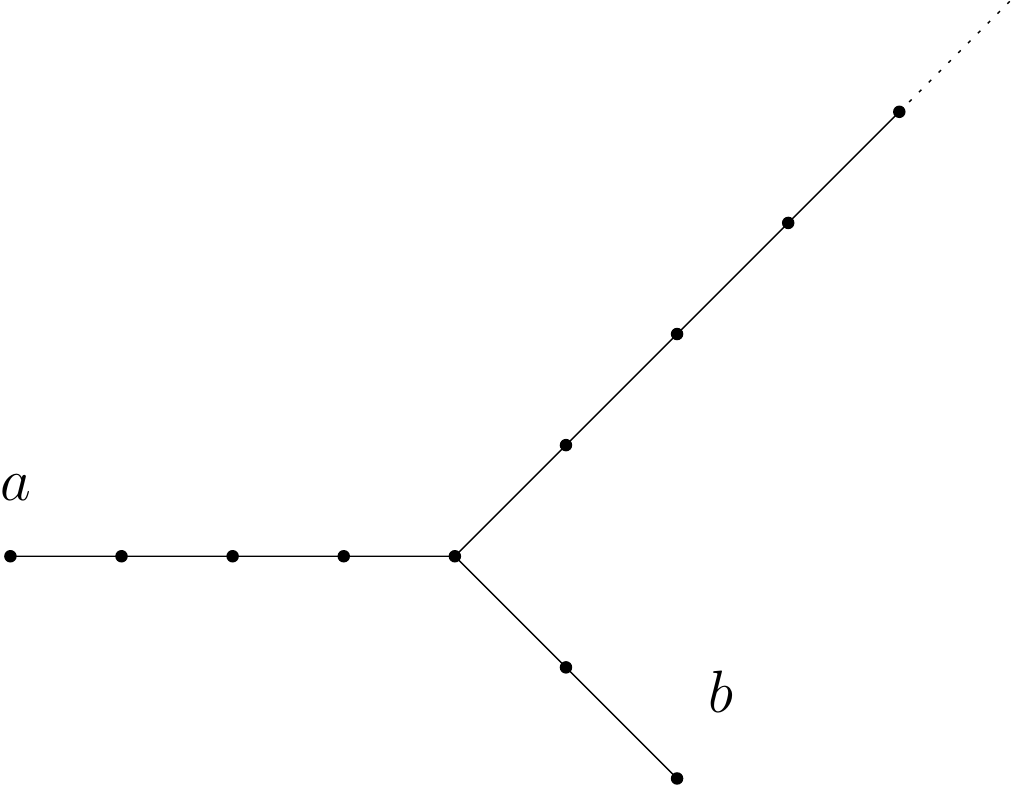}
\caption{An \emph{end} of a tree is an equivalence class of rays.}
\end{figure}

We note that since $T$ is tree, then for each end $e\in \Omega(T)$ and
$x\in V(T)$ there is a unique ray originating at $x$ and belonging to
$e$. We denote it by $e(x)$. For $n\in \N$ we denote by $x\boxplus n$
(or $x\boxplus_e n$ if there is a risk of confusion) the vertex of
$e(x)$ at distance $n$ of $x$. For $y\in e(x)$ we denote by
$T<e(x),y>$ the tree rooted at $y$ and whose vertex set is the
connected component of $y$ in $V(T) \setminus \{y^{-}, y^{+}\}$ where
$y^-$ and $y^+$ are the predecessor and successor of $y$ in $e(x)$. If
$y= x$, this rooted tree will be also denoted by $T(\rightarrow x)$,
its vertex set is the connected component of $x$ in $V(T) \setminus
\{x^{+}\}$. We note that $T= \bigoplus_{y\in e(x)}T<e(x), y>$.
 
To each end $e$ is attached an order $\leq_e$ and to $\Omega(T)$ a
topology. We discuss their properties below.

\subsection{Orders on a tree}\label{subsection:order}
Let $T$ be a tree and $K$ be a subtree. Let $x,y \in V(T)$; we set
$x\leq_K y$ if $y$ is on the shortest path joining $x$ to some vertex
of $K$.

This relation is an order on $V(T)$, $V(K)$ is the set of maximal
elements of $(T, \leq _K)$. For every edge $u:=\{x,y\}$ disjoint from
$K$ either $x<_Ky$ or $y<_Kx$. Indeed, since $K$ is connected, no path
containing $x$ and $y$ has its end vertices in $K$, hence either the
shortest path from $x$ to $K$ contains $y$ or the shortest path from
$y$ to $K$ contains $x$.  For each vertex $x\in V(K)$,
$(T,K)(x)=\{y\in V(T): y\leq_Kx\}$.  As a poset, $V(T)$ is the dual of
a forest. If $K$ reduces to a vertex $x\in V(T)$, we denote by
$P(T,x)$ the resulting poset; its dual satisfies properties a) and b)
given in Section \ref{section:definitions}.

Let $e\in \Omega(T)$. Let $x,y\in V(T)$. We set $x\leq_e y$ if $y\in
e(x)$.  This relation is an order. The unoriented covering relation of
this order is the adjacency relation on $T$. As a poset, $V(T)$
equipped with this order is the dual of an ordered tree; this dual is
a join-semilattice. We denote by $x\vee_ey $ the join of two vertices
$x,y\in V(T)$.

\begin{lemma} 
Let $f$ be an embedding of  a tree $T$ into itself. Then an end $e$, as an
equivalence class, is sent by $f$ into an equivalence class $e'$ of
$T$ and furthermore:

$f$ preserves the covering relation, (that is $x\lessdot_ey$ iff
$f(x)\lessdot_{e'} f(y)$ for all $x,y\in V(T)$). \\
In particular $f$ is
a join-semilattice embedding from $(T, \leq_e)$ into $(T, \leq_{e'})$.
\end{lemma}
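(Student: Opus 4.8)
The plan is to show three things: that $f$ maps an end $e$ into an end $e'$, that $f$ preserves the covering relation of the associated orders, and that $f$ is consequently a join-semilattice embedding. I would begin with the first point. Since $f$ is an embedding of $T$, it preserves adjacency and non-adjacency, hence maps paths to paths; in particular if $C = x_0 x_1 x_2 \dots$ is a ray then $f[C] = f(x_0) f(x_1) f(x_2) \dots$ is again a ray. If $C$ and $C'$ are equivalent rays, i.e.\ they eventually coincide, then $f[C]$ and $f[C']$ eventually coincide as well (their vertex sets agree off a finite set, and $f$ is injective), so $f[C] \sim f[C']$. Thus $f$ induces a well-defined map on $\Omega(T)$, and for a fixed end $e$ we may set $e' := end(f[C])$ for any $C \in e$.

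Next I would verify that $f$ preserves the covering relation, which for the order $\leq_e$ is exactly the adjacency relation of $T$ together with a choice of direction toward $e$. Recall $x \lessdot_e y$ means $y \sim x$ and $y \in e(x)$, equivalently $y$ is the neighbour of $x$ lying on the ray $e(x)$. First, $x \sim y$ iff $f(x) \sim f(y)$ since $f$ is an embedding. It remains to check the direction is preserved: $y \in e(x)$ iff $f(y) \in e'(f(x))$. For this, note that the ray $e(x)$ originating at $x$ and belonging to $e$ is mapped by $f$ to a ray originating at $f(x)$ which, being a subray of $f[C]$ for suitable $C \in e$, belongs to $e'$; by uniqueness of the ray at $f(x)$ in $e'$ this image ray is precisely $e'(f(x))$. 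Hence $y \in e(x)$ implies $f(y) \in f[e(x)] = e'(f(x))$, giving $f(x) \lessdot_{e'} f(y)$. Conversely, if $f(x) \lessdot_{e'} f(y)$ then $f(y) \sim f(x)$, so $y \sim x$, and $y$ is one of the two neighbours of $x$; the neighbour on $e(x)$ maps into $e'(f(x))$ as just shown, while the other neighbour $z$ satisfies $x \in e(z)$, so $f(x) \in e'(f(z))$, which forces $f(z) \notin e'(f(x))$ by the structure of the order along a ray (one cannot have both $f(z) \lessdot_{e'} f(x)$ and $f(x) \lessdot_{e'} f(z)$). Therefore $y$ must be the neighbour on $e(x)$, i.e.\ $x \lessdot_e y$. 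This establishes the covering relation is preserved in both directions.

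Finally, $(T, \leq_e)$ is the dual of an ordered tree and hence a join-semilattice, with $x \vee_e y$ the first vertex at which the rays $e(x)$ and $e(y)$ merge; equivalently, $\leq_e$ is the transitive closure of its covering relation $\lessdot_e$ (since every vertex has a finite ``downward'' structure compatible with the ray toward $e$). Because $f$ preserves $\lessdot_e$ and is injective, it preserves $\leq_e$: indeed $x \leq_e y$ iff there is a finite $\lessdot_e$-chain from $x$ to $y$, and $f$ carries such a chain to a $\lessdot_{e'}$-chain from $f(x)$ to $f(y)$, and conversely any $\lessdot_{e'}$-chain from $f(x)$ to $f(y)$ pulls back along $f$ (using that $f$ reflects the covering relation) to a $\lessdot_e$-chain from $x$ to $y$. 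An order-embedding that preserves and reflects the order, between (duals of) trees, automatically preserves joins where they exist: $f(x \vee_e y)$ is an upper bound of $f(x)$ and $f(y)$ lying on both $e'(f(x))$ and $e'(f(y))$, hence equals their join $f(x) \vee_{e'} f(y)$. Thus $f$ is a join-semilattice embedding from $(T, \leq_e)$ into $(T, \leq_{e'})$.

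The main obstacle I anticipate is the careful bookkeeping in the converse direction of the covering-relation claim — ruling out that the ``wrong'' neighbour of $x$ could map upward in $\leq_{e'}$ — which requires using the fact that along the single ray $e(x)$ the order is a chain and $f$ restricted to that ray is an order isomorphism onto $e'(f(x))$. Everything else is essentially unwinding definitions.
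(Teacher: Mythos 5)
Your proposal is correct in substance and covers all three assertions, but it takes a somewhat different route from the paper on the two delicate points. For the converse of the covering relation, the paper deletes an initial segment of a ray $C\in e$ so that $x,y\notin C$, projects $x$ onto $C$ via $p_{(T,C)}$, and uses the fact that embeddings are isometries to transport the shortest path from $x$ to $C$; you instead argue combinatorially about which neighbour of $x$ can map to the $\lessdot_{e'}$-successor of $f(x)$. Your route works and is arguably more elementary, but two spots need tightening. First, you write that $y$ is ``one of the two neighbours of $x$'': a vertex of a tree can have arbitrarily many neighbours, so you should say that \emph{every} neighbour $z$ of $x$ other than the successor on $e(x)$ satisfies $z\lessdot_e x$, hence $f(z)\lessdot_{e'}f(x)$, hence $f(z)\notin e'(f(x))$; the argument is unchanged but must be stated for all such $z$. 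Second, your justification of join preservation --- ``$f(x\vee_e y)$ is an upper bound of $f(x)$ and $f(y)$ lying on both $e'(f(x))$ and $e'(f(y))$, hence equals their join'' --- is a non sequitur as written, since \emph{every} common upper bound of $f(x)$ and $f(y)$ lies on both rays; likewise, a $\lessdot_{e'}$-chain from $f(x)$ to $f(y)$ need not consist of vertices in the range of $f$, so it does not literally ``pull back.'' The correct one-line repair is already implicit in your text: since $f$ maps $e(x)$ order-isomorphically onto $e'(f(x))$ and $e(y)$ onto $e'(f(y))$, injectivity gives $e'(f(x))\cap e'(f(y))=f[e(x)\cap e(y)]$, and the join is the first vertex of this intersection, so $f(x\vee_e y)=f(x)\vee_{e'}f(y)$. (The paper instead deduces this from $f$ being a one-to-one isometry.) With these two repairs your proof is complete.
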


\begin{proof}
Since $f(e)\subseteq e'$ we have $f(e(x))= e'(f(x))$ for every $x\in
V(T)$. Let $x,y\in V(T)$.  We have respectively $x\lessdot_ey$ iff
$x\sim y$ and $y\in e(x)$ and $f(x)\lessdot_{e'}f(y)$ iff $f(x)\sim
f(y)$ and $f(y)\in e'(f(x))$. Since $f(e)\subseteq e'$, $x\lessdot_ey$
implies trivially $f(x)\lessdot_{e'}f(y)$. Conversely, suppose that
$f(x)\lessdot_{e'}f(y)$.  Let $C\in e$. By deleting some elements of
$C$ we may suppose that $x,y\not \in C$. Let $z:=p_{(T, C)}(x)$, $C':=
f(C)$ and $z':= f(z)$. Since embeddings are isometric, we have
$z'=p_{(T, C')}(x)$. Since $f(x)\leq_{e'} f(y)$ and $C' \in e'$,
$f(y)$ is on the shortest path joining $f(x)$ to $C'$. This path
ending at $z'$ it is the image of the shortest path joining $x$ to
$z$, hence $y$ belongs to this path, proving that $x\leq_ey$; since
$f(x)\sim f(y)$, it follows that $x\sim y$ hence $x\lessdot_e y$. From
this, we have $f(x\vee_{e} y)=f(x)\vee_{e'}f(y)$ for all $x, y\in
V(T)$. Indeed, let $x,y\in V(T)$. Let $z:= f(x)\vee_{e'}f(y)$. Since
$f$ is order preserving, $z\leq_{e'} f(x\vee_{e} y)$, hence $z$ is
both on the shortest path joining $f(x)$ and $f(x\vee_{e} y)$ and the
shortest path joining $f(x)$ and $f(x\vee_{e} y)$. Since $f$ is a
one-to one isometry, $z= f(x\vee_{e} y)$. \end{proof}
 
\begin{corollary} \label{cor:joinembedding} 
An embedding $f$ of $T$ preserves $e$, that is $f[e]\subseteq e$, iff
$f$ is a join-semilattice embedding of $(T, \leq_e)$.
\end{corollary}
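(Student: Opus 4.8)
The plan is to obtain both implications from the preceding Lemma, doing essentially no new work for one direction and one short tree-theoretic observation for the other.

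\emph{Forward direction.} Suppose $f$ is an embedding of $T$ with $f[e]\subseteq e$. By the preceding Lemma, $f$ sends the end $e$ (as an equivalence class of rays) into a single end $e'$, and $f$ is a join-semilattice embedding of $(T,\leq_e)$ into $(T,\leq_{e'})$. Now $f[e]\subseteq e$ says exactly that $f[C]\in e$ for every $C\in e$, whereas $f[C]\in e'$ by the definition of $e'$; since distinct ends are disjoint sets of rays, $e'=e$. Hence $f$ is a join-semilattice embedding of $(T,\leq_e)$ into itself, as required.

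\emph{Converse direction.} Suppose $f$ is an embedding of $T$ which is a join-semilattice embedding of $(T,\leq_e)$; in particular $f$ is $\leq_e$-order-preserving. Fix a vertex $x$. Recall that the $\leq_e$-up-set $\{z\in V(T):f(x)\leq_e z\}$ is precisely the ray $e(f(x))$. Since $x\leq_e x\boxplus n$ for every $n\in\N$, order-preservation gives $f(x)\leq_e f(x\boxplus n)$, so all the (pairwise distinct, by injectivity) vertices $f(x\boxplus n)$ lie on the ray $e(f(x))$. On the other hand $e(x)$ is a ray and $f$ is a graph embedding, so $f[e(x)]$ is a ray, and it contains the infinitely many vertices $f(x\boxplus n)$ of $e(f(x))$. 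Consequently $f[e(x)]$ and $e(f(x))$ share infinitely many vertices, hence lie in the same end, so $f[e(x)]\in end(e(f(x)))=e$. Since every ray in $e$ equals $e(y)$ for its initial vertex $y$, we conclude $f[e]\subseteq e$.

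The only point needing a separate (easy) argument is the claim that two rays of a tree with infinitely many common vertices are equivalent: if $x_{i_1},x_{i_2},\dots$ are common vertices listed in increasing order along one of the rays, then the unique path of $T$ between $x_{i_k}$ and $x_{i_{k+1}}$ lies on both rays, so the tail beyond $x_{i_1}$ of one ray is a sub-ray of the other, and the two rays are equivalent. No step here is a genuine obstacle; the content of the corollary is entirely carried by the preceding Lemma, and this proof is just the translation between the two formulations.
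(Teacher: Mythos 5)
Your proof is correct and follows the route the paper intends: the corollary is stated as an immediate consequence of the preceding lemma (the paper gives no separate proof), and your forward direction is exactly that deduction, while your converse supplies the short missing observation that $\leq_e$-order-preservation forces $f[e(x)]$ to share a tail with $e(f(x))$. Nothing to add.
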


\subsection{Valuation,  level function and period}\label{subsection:valuation}


Let $T$ be a tree and $e$ be an end of $T$.  To each embedding $f$ preserving $e$ we attach an integer $d\in \Z$, the \emph{period} of $f$. In order to do so we introduce the notion of valuation. 

A $e$-\emph{valuation} is any map $v: V(T)\rightarrow \Z$ such that 
\begin{equation} 
x\lessdot_e y \Rightarrow v(y)= v(x)+1\end{equation}

 for every $x,y\in V(T)$. 
 
 \begin{lemma}\label{lem:valuation1} 
Let $e$ be an end of $T$. For
 every $x\in V(T)$, $n\in \Z$ there is a unique $e$-valuation $v$ such
 that $v(x)=n$. In particular two $e$-valuations differ by some
 constant.  
\end{lemma}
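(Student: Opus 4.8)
The statement to prove is Lemma \ref{lem:valuation1}: for every $x\in V(T)$ and $n\in\Z$ there is a unique $e$-valuation $v$ with $v(x)=n$, and consequently any two $e$-valuations differ by a constant. The plan is to build $v$ explicitly using the order $\leq_e$ and the fact that $T$ is a tree, then verify the defining property (1) and uniqueness, and finally deduce the last sentence.

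\textbf{Construction.} Fix $x\in V(T)$ and $n\in\Z$. For an arbitrary vertex $y\in V(T)$, consider $z:=x\vee_e y$, the join of $x$ and $y$ in the join-semilattice $(T,\leq_e)$ (which exists by the discussion in Subsection \ref{subsection:order}). Then $x\leq_e z$ and $y\leq_e z$, so there are unique nonnegative integers $k,\ell$ with $z=x\boxplus_e k$ and $z=y\boxplus_e \ell$; equivalently, $k=d_T(x,z)$ and $\ell=d_T(y,z)$, since the covering relation of $\leq_e$ is the adjacency relation on $T$ and $\{t: x\leq_e t\}=e(x)$ is a ray. Define
\begin{equation}
v(y):=n+k-\ell=n+d_T(x,x\vee_e y)-d_T(y,x\vee_e y).
\end{equation}
Note $v(x)=n$, since $x\vee_e x=x$. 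First I would check this is an $e$-valuation: suppose $y\lessdot_e y'$, i.e.\ $y'$ is the successor of $y$ on $e(y)$ and $y\sim y'$. One should compare $x\vee_e y$ with $x\vee_e y'$. Since $y\leq_e y'$ we have $x\vee_e y\leq_e x\vee_e y'$, and as $T$ is a tree there are only two cases: either $x\vee_e y=x\vee_e y'$ (when $x\vee_e y'$ lies strictly above $y'$), in which case $k$ is unchanged and $\ell$ decreases by $1$ because $d_T(y',x\vee_e y')=d_T(y,x\vee_e y)-1$; or $x\vee_e y=y$ and $x\vee_e y'=y'$ (when $x\leq_e y'$ but not $x\leq_e y$, i.e.\ $y'$ itself is on the path from $x$, actually $y'=y\vee_e x$ means $x\leq_e y'$), in which case $\ell=0$ and $k$ increases by $1$. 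In both cases $v(y')=v(y)+1$, as required. (The remaining a priori case $x\vee_e y' <_e x\vee_e y'$ is impossible, and the case $x\vee_e y = x\vee_e y' = y'$ cannot occur since $y<_e y'$ forces $y'\not\leq_e y$.) I should double-check these cases against the definition $x\vee_e y = $ the least common upper bound, handling the subcase where $x$ and $y$ are $\leq_e$-comparable separately; this case analysis is the one place to be careful.

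\textbf{Uniqueness and the final clause.} For uniqueness, suppose $v,v'$ are both $e$-valuations with $v(x)=v'(x)=n$. Given any $y$, let $z:=x\vee_e y$. Both $x$ and $y$ lie below $z$ along the rays $e(x)$ and $e(y)$, and along a $\lessdot_e$-chain property (1) forces $v(z)=v(x)+d_T(x,z)$ and $v(z)=v(y)+d_T(y,z)$, and likewise for $v'$; hence $v(y)=v(z)-d_T(y,z)=v'(z)-d_T(y,z)=v'(y)$ once we know $v(z)=v'(z)$, and $v(z)=n+d_T(x,z)=v'(z)$ because $z$ is reached from $x$ by a finite $\lessdot_e$-chain. (Equivalently: any two vertices are joined, via their join, by a finite zig-zag of covering steps, and (1) pins down all differences of $v$-values along covering edges, so $v$ is determined by its value at a single vertex.) This simultaneously proves existence is consistent with the formula and gives uniqueness. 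Finally, if $v,v'$ are arbitrary $e$-valuations, fix any $x_0$ and set $c:=v(x_0)-v'(x_0)\in\Z$; then $v$ and $v'+c$ are $e$-valuations agreeing at $x_0$, so by uniqueness $v=v'+c$, i.e.\ two $e$-valuations differ by a constant.

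\textbf{Main obstacle.} The routine part is the construction via the join; the delicate point I expect to spend the most care on is verifying property (1), i.e.\ that $v(y')=v(y)+1$ whenever $y\lessdot_e y'$, because it requires a clean case split according to whether $x\vee_e y$ stays the same or moves up when $y$ is replaced by its $\leq_e$-successor $y'$, and one must be sure no other configuration arises in a tree. Everything else reduces to the elementary fact that in a tree distances add along the unique path, together with the already-established description of $\leq_e$ as having the adjacency relation as its covering relation.
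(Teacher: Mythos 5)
Your proposal is correct and follows essentially the same route as the paper: the same explicit formula $v(y)=n+d_T(x,z)-d_T(y,z)$ with $z=x\vee_e y$, the same observation that any valuation must satisfy this formula (giving uniqueness), and the same shift-by-a-constant argument for the final clause. Your verification of the covering property is in fact more careful than the paper's (which simply asserts $y'\vee_e x=y\vee_e x$, glossing over the case $x\leq_e y$); just note that your second case occurs precisely when $x\leq_e y$, while the configuration $x\leq_e y'$ but $x\not\leq_e y$ falls under your first case with $x\vee_e y=x\vee_e y'=y'$.
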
 

\begin{proof} 
We order $V(T)$ by $\leq_e$. Let $y\in V(T)$. Set $v(y):=
n+d_T(x,z)-d_T(y,z)$ where $z:= x\vee_e y$.  Let $y'\lessdot_e
y$. Observe that $y'\vee x= y\vee x=z$. If follows that
$v(y)=v(y')+1$.  Hence $v$ is an $e$-valuation.  If $v'$ is any
valuation, we must have $v'(z)-v'(x)= d_T(x, z)$ and $v'(z)-v'(y)=
d_T(y, z)$.  Hence, $v'(y)= v'(x)+ d_T(x,z)-d_T(y,z)$. Thus $v'=v$ iff
$v'(x)=v(x)$. If $v$ is a $e$-valuation and $k\in \Z$, then $v+k$ is a
$e$-valuation.  Hence, if $v'$ is an other $e$-valuation, set $k:=
v(x)-v'(x)$. Then $v'+k$ is an $e$-valuation which coincides with $v$
on $x$. Hence $v=v'+k$.  
\end{proof}
 
From this lemma, it follows that if we choose a vertex $o\in V(T)$,
there is a unique $e$-valuation $v$ such that $v(o)= 0$. Such a vertex
is called the {\em origin} of the $e$-valuation.

%

\begin{lemma}\label{lem:valuation2} 
Let $e$ be an end of $T$, $v$ be an $e$-valuation on $T$ and $f$ be an
embedding. Then $f$ preserves $e$ iff there is some $d\in \Z$ so that
$v(f(x))=v(x)+d$ for all vertices $x$ of $T$. Moreover, $d$ is
non-negative iff $f$ preserves $e$ forward; $d$ is negative iff $f$
preserves $e$ backward and not forward.
\end{lemma}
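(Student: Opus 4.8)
\medskip

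\noindent\textbf{Proof proposal.} The plan is to establish the biconditional first and then read off the sign statement. The direction ``$f$ preserves $e$ $\Rightarrow$ the valuation identity'' is routine: the map $w\colon V(T)\to\Z$, $w(x):=v(f(x))$, is again an $e$-valuation, since $x\lessdot_e y$ implies $f(x)\lessdot_e f(y)$ by the lemma preceding Corollary~\ref{cor:joinembedding}, so $w(y)=v(f(y))=v(f(x))+1=w(x)+1$; by Lemma~\ref{lem:valuation1} the $e$-valuations $w$ and $v$ differ by a constant $d\in\Z$, i.e.\ $v(f(x))=v(x)+d$ for every $x$.

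For the converse, suppose $v(f(x))=v(x)+d$ for all $x$ with $d$ fixed. By the lemma preceding Corollary~\ref{cor:joinembedding}, $f$ carries $e$ into \emph{some} end $e'$, and the task is to show $e'=e$; I expect this to be the main obstacle. Fix a vertex $o$ and set $u:=f(o)$. Then $f[e(o)]$ is a ray starting at $u$ and lying in $e'$, so $f[e(o)]=e'(u)$ by uniqueness of the ray from $u$ in $e'$; writing $e(o)=(o_0,o_1,\dots)$ with $o_0=o$ and $o_i\lessdot_e o_{i+1}$, the $i$-th vertex of $e'(u)$ is $f(o_i)$. Assume, for contradiction, $e'\neq e$. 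The rays $e(u)$ and $e'(u)$ then coincide on an initial segment $u=u_0,\dots,u_m$ and split at $u_m$ into distinct neighbours $u_{m+1}\in e(u)$ and $u'_{m+1}\in e'(u)$. Since the only $\leq_e$-cover of $u_m$ from above is $u_{m+1}$, the other neighbour satisfies $u'_{m+1}\lessdot_e u_m$, hence $v(u'_{m+1})=v(u_m)-1$. But $u'_{m+1}=f(o_{m+1})$ and $u_m=f(o_m)$, so $v(u'_{m+1})=v(o_{m+1})+d=v(o_m)+1+d=v(u_m)+1$, and $v(u_m)-1=v(u_m)+1$ is absurd. Therefore $e'=e$ and $f$ preserves $e$. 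The delicate part is to locate the divergence vertex of the two rays and push the valuation bookkeeping through $f$ there.

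For the sign statement assume $f$ preserves $e$, so the identity holds with some $d$. If $f$ preserves $e$ forward, pick $C\in e$ with $f[C]\subseteq C$ and let $c$ be its initial vertex; then $f(c)\in C=e(c)$, so $c\leq_e f(c)$ and $v(f(c))=v(c)+d_T(c,f(c))\geq v(c)$, i.e.\ $d\geq 0$. Conversely, if $d\geq 0$, fix $o$ and put $z:=o\vee_e f(o)$; then $C'':=e(o)\cap f[e(o)]=e(o)\cap e(f(o))$ is the common tail $e(z)$ of these two rays of $e$. Since the valuation increases by one along $e(f(o))$ starting from $f(o)$, every $y\in e(f(o))$ has $d_T(f(o),y)=v(y)-v(f(o))$; applying this to $y=z$ and to $y=f(z)$ (which lies in $f[e(o)]=e(f(o))$ as $z\in e(o)$) gives $d_T(f(o),z)=d_T(o,z)-d$ and $d_T(f(o),f(z))=d_T(o,z)$, so $f(z)=z\boxplus_e d\in e(z)$ and $f[C'']=e(f(z))\subseteq e(z)=C''$: thus $e$ is preserved forward. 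Finally, if $d<0$ then $e$ is not preserved forward (else $d\geq 0$ by the above). For ``backward'', set $x:=o\vee_e f(o)$, noting $x\neq f(o)$ since $o\leq_e f(o)$ would give $v(f(o))\geq v(o)$; the same distance computation along $e(f(o))$ yields $d_T(f(o),x)=d_T(o,x)-d=d_T(o,x)+|d|>d_T(o,x)=d_T(f(o),f(x))$, so $f(x)\leq_e x$, whence $e(x)\subseteq e(f(x))=f[e(x)]$ and $C:=e(x)$ witnesses that $e$ is preserved backward.
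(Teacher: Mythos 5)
Your proof is correct, and its skeleton coincides with the paper's: the implication from end\-/preservation to the valuation identity is obtained exactly as in the paper (the map $x\mapsto v(f(x))$ is again an $e$-valuation, so Lemma \ref{lem:valuation1} applies), and the sign analysis turns on the join $z:=o\vee_e f(o)$. You differ in two local arguments. For the converse implication the paper notes that $v(f(x))=v(x)+d$ forces $f$ to preserve the covering relation $\lessdot_e$ and then appeals to Corollary \ref{cor:joinembedding}; you instead argue from scratch, locating the last common vertex $u_m$ of the rays $e(f(o))$ and $f[e(o)]$ and extracting the contradiction $v(u_m)-1=v(u'_{m+1})=v(u_m)+1$. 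That is more elementary and self-contained, at the cost of some ray bookkeeping, whereas the paper reuses the order-theoretic framework it has already built. For the sign statement the paper's Claim \ref{claim:join} produces a preimage $z'\leq_e z$ with $f(z')=z$, while you compute the image directly, getting $f(z)=z\boxplus_e d$ for $d\geq 0$ and $f(z)<_e z$ for $d<0$ by an isometry-plus-valuation count; the two computations carry the same information (your identity is precisely Equation (\ref{eq:extensive2}) of Lemma \ref{lem:Sf}), but the paper's preimage form is the one reused later (Lemma \ref{lem:Sf}, Subclaim \ref{claim:treevertices}). Two cosmetic remarks: the observation that the join differs from $f(o)$ in your backward case is not actually needed for the distance computation, and the direction ``backward and not forward $\Rightarrow d<0$'' should be flagged, if only in one line, as following from the first equivalence.
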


\begin{proof} 
Set $v'(x):= v(f(x))$. If $f$ preserves $e$, then $v'$ is an
$e$-valuation. Hence from Lemma \ref {lem:valuation1}, $v'= v+d$ for
some $d\in \Z$. Conversely, suppose that $v(f(x))=v(x)+d$ for all
vertices $x$ of $T$.  Let $x,y\in V(T)$. We have $v(f(y))-v(f(x))=
v(y)-v(x)$. In particular, $v(f(y))-v(f(x))=1$ iff $v(y)-v(x)=1$, that
is $f(x)\lessdot_e f(y)$ iff $x\lessdot_e y$. Since $f$ preserves the
covering relation $\lessdot_e$, it preserves $e$ (Corollary
\ref{cor:joinembedding}). Suppose that $f$ preserves $e$ forward, then
there is some $C\in e$ such that $f(C)\subseteq C$. This implies
$x\leq_e f(x)$ for $x\in C$, hence $d\geq 0$. Similarly, if $f$
preserves $e$ backward there is some $C\in e$ such that $C\subseteq
f(C)$, hence $d\leq 0$. If $C\not =f(C)$, then $d\not =0$.  We prove
that if $f$ preserves $e$, then it preserves $e$ forward or backward
according to the sign of $d$.

\begin{claim} \label{claim:join}
Let $x\in V(T)$, $z:= x\vee_e f(x)$ and $k:= v(z)-v(x)-d$. Then $k\geq 0$ and $f(x\boxplus  k)= z$. 
\end{claim}
\noindent {\bf Proof of Claim \ref{claim:join}.}
Since $v(f(x))= v(x) +d$ we have $k= v(z)-v(f(x))$;  since $f(x)\leq_e z$ we have $k\geq 0$, hence $z':= x\boxplus k$ is well defined.  We have $v(z')-v(x)=v(f(z'))-v(f(x))= v(z)-v(f(x)=k$. Since $f(x)\leq_e f(z')$ and $f(x)\leq_e z$,  $f(z')$ and $f(z)$ are comparable. Since 
$v(f(z'))-v(f(x))= v(z)-v(f(x)=k$, $f(z')=z$. 
\hfill $\Box$

Pick any $x\in V(T)$. Let $z:= x\vee_ef(x)$ and $z':= x\boxplus  k$ . According to Claim \ref{claim:join}, $f(z')= z$. If $d\geq 0$, then  $z'\leq_e z$ and hence $f[e(z')]\subseteq e(z')$, proving that $e$ is preserved forward.   If $d=0$, then $z'=z$, and $e(z')$   is fixed pointwise by $f$. If  $d<0$, then $z<_e z'$ then $e(z')\subseteq f[e(z')]$ and $e$ is preserved downward. \end{proof}

The number $d$ is the \emph{period} of $f$ w.r.t. $v$. According to Lemma \ref {lem:valuation1} if $v'$ is an other $e$-valuation, then $f$ has the same period.  

An embedding $f$ \emph{ preserves a valuation} $v: V(T)\rightarrow \Z$  if $v(f(x))= v(x)$ for all $x\in V(T)$. In this case,  $f$ preserves every other valuation attached to the same end.  A map $v: V(T)\rightarrow \Z$ is a \emph{level function} if this is the  valuation associated with an end preserved forward by every embedding.  
\begin{lemma}\label{lem:valuation3} Let  $v$ be a valuation associated with an end $e$. Then $v$ is a level function iff every embedding of $T$ has a non-negative period. 
In particular,  $v$ is a level function provided that $e$ is almost rigid.\end{lemma}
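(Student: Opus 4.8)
The plan is to deduce the statement straight from Lemma~\ref{lem:valuation2} and the definitions of \emph{level function} and \emph{period}; the only point needing a separate (easy) remark is that a valuation pins down the end it belongs to.

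First I would record that remark. Suppose $v$ is an $e$-valuation and also an $e'$-valuation for two ends $e\neq e'$. Then some edge $\{x,y\}$ is oriented oppositely by $\leq_e$ and $\leq_{e'}$, say $x\lessdot_e y$ and $y\lessdot_{e'} x$; the valuation condition forces both $v(y)=v(x)+1$ and $v(x)=v(y)+1$, which is impossible. Hence $v$ determines, for every edge, the direction in which $v$ increases, hence the order $\leq_e$, hence the end $e$ (for any $x$, $e$ is the end of the unique $\leq_e$-increasing ray $e(x)$ from $x$). Consequently, by the definition of a level function, ``$v$ is a level function'' means exactly ``$e$ is preserved forward by every embedding of $T$''.

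Next, the equivalence. By Lemma~\ref{lem:valuation2}, an embedding $f$ preserves $e$ iff $v\circ f=v+d$ for some $d\in\Z$, and this $d$ is by definition the period of $f$; moreover, when $f$ preserves $e$, it preserves $e$ forward iff $d\geq 0$. I would also note the routine fact that ``$f$ preserves $e$ forward'' already implies ``$f$ preserves $e$'': if $f[C]\subseteq C$ with $C\in e$, then for any $C'\in e$ the rays $C'$ and $C$ have a common subray, so $f[C']$ and $f[C]\subseteq C$ have a common subray, whence $f[C']\in e$; thus $f[e]\subseteq e$. Therefore ``$e$ is preserved forward by every embedding'' says precisely that every embedding $f$ preserves $e$ (so that its period $d_f$ is defined) with $d_f\geq 0$, which is the content of ``every embedding of $T$ has a non-negative period''. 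This is the equivalence; observe that the identity embedding always has period $0$, so the universally quantified statement is never vacuous.

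Finally the ``in particular'' clause: if $e$ is almost rigid then, by definition, every embedding $f$ preserves $e$ both forward and backward, so by Lemma~\ref{lem:valuation2} its period satisfies $d_f\geq 0$ and $d_f\leq 0$, hence $d_f=0$; in particular all periods are non-negative, so $v$ is a level function. I do not expect a genuine obstacle here: essentially all the work is already done in Lemma~\ref{lem:valuation2}, and the only care needed is the small bookkeeping distinction between ``$f$ preserves $e$ forward'' and the a priori weaker ``$f$ preserves $e$'', together with reading ``non-negative period'' as presupposing that the period is defined.
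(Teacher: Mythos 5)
Your proposal is correct and follows the same route as the paper, which disposes of the first assertion by citing Lemma~\ref{lem:valuation2} and calls the second obvious; you have merely written out the bookkeeping (a valuation determines its end, ``preserved forward'' implies ``preserved'', almost rigid forces period $0$) that the paper leaves implicit.
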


\begin{proof}
The first assertion follows from  Lemma \ref{lem:valuation2}. The second assertion  is obvious.  
\end{proof}

\begin{lemma} \label{lem:Sf}
Let $T$ be a tree, $e$ be an end, $f$ be an embedding with
non-negative period $d$ and $S_f:= \{x\in V(T) : f(x)\in e(x)\}$. Then
\begin{equation}\label{eq:extensive1}
x\vee_e f(x)\in S_f
\end{equation}

and

\begin{equation}\label{eq:extensive2}
f(x\vee_e f(x))=(x\vee_e f(x))\boxplus d  
\end{equation}

\noindent for every $x\in V(T)$.

If $d=0$, then $S_f$ is the set of fixed points of $f$, whereas if
$d>0$, then $S_f$ is either a ray belonging to $e$ or a two-way
infinite path preserved by $f$.
\end{lemma}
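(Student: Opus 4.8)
The plan is to fix an $e$-valuation $v$ on $T$ (Lemma~\ref{lem:valuation1}); since $f$ preserves $e$ with period $d$, Lemma~\ref{lem:valuation2} gives $v(f(x))=v(x)+d$ for all $x\in V(T)$. I would first note that (\ref{eq:extensive1}) and (\ref{eq:extensive2}) are essentially one fact: once we know $z:=x\vee_e f(x)$ lies in $S_f$, i.e.\ $z\le_e f(z)$, the identity $v(f(z))=v(z)+d$ forces $f(z)=z\boxplus d$, as $z\boxplus d$ is the only vertex of $e(z)$ of valuation $v(z)+d$. So it suffices to prove $z\in S_f$, and for this I would exhibit $z$ as a value of $f$. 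Put $w:=x\boxplus\bigl(v(z)-v(x)-d\bigr)$; this is a well-defined vertex of $e(x)$ because $v(z)\ge v(f(x))=v(x)+d$ (from $f(x)\le_e z$), and by Claim~\ref{claim:join} (or a direct repeat of its computation) one has $f(w)=z$, together with $w\le_e z=w\boxplus d$. Applying the order-preserving map $f$ (Corollary~\ref{cor:joinembedding}) to $w\le_e z$ yields $z=f(w)\le_e f(z)$, that is $z\in S_f$; in particular $S_f\ne\emptyset$.

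Next I turn to the structure of $S_f$. The case $d=0$ is immediate: $x\in S_f$ means $f(x)\in e(x)$, and since $v(f(x))=v(x)$ the only vertex of $e(x)$ it can be is $x$ itself, so $x\in S_f$ iff $f(x)=x$. Assume henceforth $d>0$. Two preliminary observations: (a) every $a\in S_f$ satisfies $f(a)=a\boxplus d$ (since $f(a)\in e(a)$ and $v(f(a))=v(a)+d$); and (b) $S_f$ is closed upward along $e$, i.e.\ $a\in S_f$ implies $a\boxplus1\in S_f$: indeed $v(f(a))=v(a)+d\ge v(a)+1$, so $a\boxplus1$ and $f(a)$ both lie on the ray $e(a)$ with $v(a\boxplus1)\le v(f(a))$, whence $a\boxplus1\le_e f(a)\le_e f(a\boxplus1)$. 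By induction $e(a)\subseteq S_f$ for every $a\in S_f$.

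The key step, and the one I expect to require the most care, is that $S_f$ is a chain under $\le_e$. Suppose not, and let $x,y\in S_f$ be $\le_e$-incomparable; put $z:=x\vee_e y$, so $x<_e z$ and $y<_e z$. Let $x'$ be the predecessor of $z$ on the ray $e(x)$ and $y'$ the predecessor of $z$ on $e(y)$, so $x'\boxplus1=z=y'\boxplus1$. By (b), $x'\in e(x)\subseteq S_f$ and $y'\in e(y)\subseteq S_f$, and $x'\ne y'$, since $x'=y'$ would give $x'\ge_e x$ and $x'\ge_e y$, hence $x'\ge_e x\vee_e y=z$, contradicting $x'<_e z$. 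But then by (a), $f(x')=x'\boxplus d=z\boxplus(d-1)=y'\boxplus d=f(y')$, using $d\ge1$, which contradicts injectivity of $f$ since $x'\ne y'$. Hence any two vertices of $S_f$ are $\le_e$-comparable.

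Finally, $S_f$ is connected: for $a\le_e b$ in $S_f$ the path from $a$ to $b$ is a segment of $e(a)\subseteq S_f$, and any two vertices of $S_f$ are comparable. Being a $\le_e$-chain, $S_f$ has degree at most $2$ at each vertex $a$ — the only possible $S_f$-neighbours of $a$ are $a\boxplus1$ and one predecessor, since two distinct predecessors of $a$ would be incomparable. A connected infinite subgraph of a tree with maximum degree $\le2$ is a ray or a two-way infinite path, and $S_f$ is infinite as it contains some $e(a)$. If $S_f$ has a $\le_e$-least element $x_0$ then $S_f=e(x_0)$, a ray belonging to $e$; otherwise $S_f$ is a two-way infinite path (one of whose ends is $e$), and it is preserved by $f$ because $f(a)=a\boxplus d\in e(a)\subseteq S_f$ for every $a\in S_f$. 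This completes the plan.
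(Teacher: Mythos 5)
Your proof is correct, and its overall skeleton matches the paper's: equations (\ref{eq:extensive1}) and (\ref{eq:extensive2}) are obtained from Claim \ref{claim:join} exactly as in the text, and the structural statement for $d>0$ is reduced, as in the paper, to showing that $S_f$ is totally ordered under $\leq_e$. Where you genuinely diverge is in the proof of that key chain property. The paper argues via the isometry: for incomparable $x,y\in S_f$ with join $z$, it notes $f(x)=x\boxplus d$, $f(y)=y\boxplus d$, deduces $f(x)\vee_e f(y)=z$, and gets the contradiction $d_T(f(x),f(y))=d_T(x,y)-2d$ against distance preservation. You instead first prove that $S_f$ is upward closed along $e$, pass to the two distinct predecessors $x',y'$ of $z$ on $e(x)$ and $e(y)$, and observe that $f(x')=z\boxplus(d-1)=f(y')$ contradicts injectivity. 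Your variant is arguably a bit more self-contained: it uses only injectivity and order preservation, and it sidesteps the step ``$f(x)\vee_e f(y)=z$'' which the paper asserts without justification (it does hold, but requires checking that $d$ is smaller than both $d_T(x,z)$ and $d_T(y,z)$, since otherwise $f(x)$ and $f(y)$ would be comparable). You also spell out the final identification of $S_f$ as a ray or double ray (degree at most two, connectivity, the least-element dichotomy), which the paper leaves implicit. Both arguments are valid; yours trades the one-line distance computation for a slightly longer but more elementary case analysis.
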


\begin{proof}
Equations (\ref {eq:extensive1}) and (\ref {eq:extensive2}) are
equivalent.  The second implies trivially the first; since
$v(f(x\vee_e f(x))=v(x\vee_e f(x))+ d$ the second equation follows
from the first. To prove Equation (\ref {eq:extensive1}) apply Claim
\ref{claim:join}: there is some $z'\leq _ez$ such that $f(z')= z:=
x\vee_e f(x)$. This implies $z'\in S_f$ hence $z\in S_f$.

Since $S_f= \{x\in V(T): x\leq_{e}f(x)\}$ and $f$ preserves $\leq_e$,
it follows that $f$ preserves $S_f$ that is $f(S_f)\subseteq
S_f$. Since $f$ preserves $\leq_{e}$, $f$ preserves some ray, say $C$,
belonging to $e$, hence $C\subseteq S_f$ and thus $S_f$ is non-empty.
Suppose $d >0$.  We claim that $S_f$ is totally ordered. From this, it
follows that this is a path (a ray or a two-way infinite path). If
this is not the case, $S_f$ contains at least two incomparable
elements, say $x, y$.  Let $z:= x\vee_e y$. Then let $m:= d_T(x, y)=
d_T(x, z)+d_T(y, z)$. Since $f$ is an embedding $f(x)$ and $f(y)$ are
incomparable. This implies that $f(x)\vee_e f(y)=z$.  Since $f(x)=
x\boxplus d$ and $f(y)=y\boxplus d$, we have $d_T(f(x), f(y))= m-2d$
contradicting the fact that $f$ is an isometry.
\end{proof}

\subsection{Rays preserved by every embedding}

We present two results on regular ends, namely Proposition \ref{prop:one end almost
rigid} and Proposition \ref{Fact:finitdist}. The proofs are based on
the same idea, but the proof of the first one is much simpler.

\begin{proposition}\label{prop:one end almost rigid}
Le $T$ be a scattered tree and $e$ be an almost rigid regular
end. Then $e$ contains a ray preserved by every embedding of $T$.
\end{proposition}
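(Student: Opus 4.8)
Fix an $e$-valuation $v$; since $e$ is almost rigid, by Lemma~\ref{lem:valuation3} every embedding of $T$ preserves $v$, i.e. has period $0$. The plan is to build, using the regularity of $e$, a ray $C\in e$ on which all embeddings must act trivially. First I would fix a ray $C_0=(x_0,x_1,\dots)\in e$ witnessing regularity, so that among the rooted trees $T_i:=T\langle C_0(x_0),x_i\rangle=T(\to x_i)$ (each rooted at $x_i$) there are only finitely many equimorphy types. For each $i$, set $R_i:=\bigoplus_{j\ge i}T_j$, the ``tail'' subtree of $T$ obtained by cutting $C_0$ below $x_i$; note $R_i=(T,C_0)(x_i)$ together with the forward part of the ray, and $R_i$ is rooted at $x_i$. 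The key observation is that since an embedding $f$ has period $0$, for every $x$ the vertex $x\vee_e f(x)$ is fixed (this is $S_f$ from Lemma~\ref{lem:Sf} with $d=0$, the fixed-point set), and $f$ maps $(T,C_0)(x_i)$ into $(T,C_0)(f(x_i))$ with $v(f(x_i))=v(x_i)=i$, so $f$ either fixes $x_i$ or moves it off $C_0$; in the latter case $x_i\vee_e f(x_i)=x_j$ for some $j>i$ is a fixed point lying on $C_0$.

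The heart of the argument is a counting/pigeonhole step exploiting scatteredness. Since there are only finitely many equimorphy types among $\{T_i\}$, and $T$ is scattered, I claim the set $F$ of vertices of $C_0$ fixed by \emph{every} embedding is infinite, in fact cofinite in $C_0$. Suppose not: then there are arbitrarily large $i$ with an embedding $f_i$ moving $x_i$ off $C_0$. Using period $0$ and the structure $T=\bigoplus_{x\in C_0}((T,C_0)(x),x)$, such an $f_i$ restricts to an embedding of $(T,C_0)(x_i)$ (a rooted tree) into some $(T,C_0)(x_j)$, $j>i$, and hence — combining with the forward ray — yields an embedding of the rooted tree $R_i$ properly into $R_j$. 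By regularity infinitely many of these tails $R_i$ are pairwise equimorphic, so one gets an infinite strictly descending-in-``size'' chain of equimorphic rooted trees $R_{i_0}\hookrightarrow R_{i_1}\hookrightarrow\cdots$ with each embedding non-surjective onto the target's copy in a controlled way; iterating these embeddings along $C_0$ produces an embedding of $T$ with infinitely many ``shifts'', from which — together with the branching that must be present for the moves to be non-trivial (the branch at $x_i$ cannot be absorbed within $e$ because of scatteredness) — one constructs a subdivision of $\mathrm{T}_2$, contradicting that $T$ is scattered. The upshot is that $F$ is cofinite in $C_0$; let $C\subseteq C_0$, $C\in e$, be the ray consisting of the tail of $C_0$ past the last non-fixed vertex (or all of $C_0$). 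Every embedding fixes every vertex of $C$ pointwise, so in particular $f[C]=C\subseteq C$ and $C$ is preserved (indeed fixed) by every embedding.

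\textbf{Main obstacle.} The delicate point is the pigeonhole-to-subdivision step: translating ``cofinally many embeddings move $C_0$-vertices off $C_0$'' into an actual embedded subdivision of the binary tree $\mathrm{T}_2$. One must be careful that the branches created by the moves are genuinely ``new'' — i.e. that an embedding sending $x_i$ into $(T,C_0)(x_j)$ forces a subtree hanging off $C_0$ at some level to contain two disjoint rays, so that iterating along a cofinal sequence of such moves and composing produces binary branching rather than merely a long path. Here regularity (finiteness of equimorphy types of the $T_i$) is exactly what lets one reuse the same finite data to recursively iterate and build the full binary pattern, and almost rigidity (period $0$, hence $x\vee_e f(x)$ a fixed point) is what pins the construction down along $e$. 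I expect this combinatorial construction — carried out in detail in the paper, presumably paralleling the harder Proposition~\ref{Fact:finitdist} — to be where essentially all the work lies; everything else is bookkeeping with valuations and the sum decomposition of Lemma~\ref{lem:sumoftrees}.
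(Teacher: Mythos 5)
Your overall strategy is the paper's: reduce to finding one vertex $x$ on a ray of $e$ with $x\leq_e f(x)$ for every embedding $f$ (period $0$ then forces $f$ to fix $e(x)$ pointwise), and derive a contradiction with scatteredness from an embedding that moves such a vertex off the ray. But the step you yourself flag as "where essentially all the work lies" is genuinely missing, and the sketch you give of it would not go through as stated. First, the assertion "by regularity infinitely many of these tails $R_i$ are pairwise equimorphic" is unjustified: regularity says the slices $T_i$ fall into finitely many equimorphy classes, but the tails $R_i=\bigoplus_{j\geq i}T_j$ need not be equimorphic to one another --- $R_j$ embeds into $R_i$ for $j>i$ trivially, but an embedding of $R_i$ into $R_j$ would require shifting slices forward, which is exactly what period-$0$ embeddings do not provide, and a sequence over a finite alphabet of types can have all tails distinct. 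Second, and more importantly, your route to the binary tree goes through chaining infinitely many off-ray moves at cofinally many levels, and it is not explained how composing such shifts yields binary branching rather than a long path; iterating a single embedding along a ray does not by itself create two independent rays below a common vertex.

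The paper's proof avoids both problems with one device you do not have: using regularity, choose $k$ so that every equimorphy class of slices that occurs at some level $n\geq k$ already has \emph{two} representatives $T_{k'_n},T_{k''_n}$ with $k'_n<k''_n<k$. Then a \emph{single} embedding $f$ with $x_k\not\leq_e f(x_k)$ suffices for the contradiction: if $z:=x_k\vee_e f(x_k)=x_n$, then $f$ embeds $T(\to x_k)$, which contains both $T_{k'_n}$ and $T_{k''_n}$, into $T(\to f(x_k))\subseteq T_n$, so the rooted tree $T_n$ (equimorphic to both $T_{k'_n}$ and $T_{k''_n}$) embeds into two positions inside itself --- one hanging below the endpoint $y$ of a path and one hanging off an interior vertex $x$ of that path. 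That precise nested-but-branching configuration is what can be iterated recursively (the paper's Claim on non-scatteredness of such a rooted tree) to produce a subdivision of $\mathrm{T}_2$. Your sketch never isolates this configuration, and without it the passage from "vertices are moved off the ray" to "a subdivision of the binary tree exists" remains a gap rather than bookkeeping.
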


\begin{proof} 
Our aim is to find $x\in V(T)$ such that $f(x)=x$ for every embedding
$f$. Indeed, since $e$ is almost rigid, all embeddings have period
zero (Lemma \ref{lem:valuation2}) hence $f(y)=y$ for every $y\in e(x)$
and every embedding $f$. In fact, we prove that under the weaker
assumption that $e$ is preserved forward by every embedding, then
there is some $x$ such that $f(x)\geq x$ for every embedding, from
which follows that the ray $e(x)$ is preserved by every embedding.

Pick $u\in V(T)$. Let $n\in \N$, set 
$x_n:=u\boxplus n$ in $e(u)$ and  $T_n:= T<e(u),x_n>$. 
%
%
%

\begin{claim}\label{claim:regular1}There is   $k\in \N$ such that for each $n\geq k$ the interval $[0, k[$ contains  two  integers $k'_n< k''_n$ such that $T_{k'_n}, T_{k''_n}$ and $T_{n}$ are equimorphic.\end{claim}

\noindent{\bf Proof of Claim \ref{claim:regular1}.}
The existence of $k$ is immediate: say that two integers $n$ and $m$ are equivalent if $T_n$ and $T_m$ are equimorphic (as rooted trees). The fact that  $e$ is regular means that  this  equivalence relation has only finitely many blocks. Pick two elements  in each equivalence class whenever possible and otherwise one element;  since the number of these  elements is finite,  some integer dominates the elements chosen. This integer has the required property.\hfill $\Box$

Let $k$ be given by   Claim \ref{claim:regular1}. We claim that $x_k\leq_e f(x_k)$  for every embedding $f$ of $T$.   Indeed, suppose not. Let $z:= x_k \vee_e f(x_k)$ in the join-semilattice $(T, \leq_e)$. Since $x_k \leq_e z$, $z\in e(u)$ hence there is some $n\in \N$ with $n\geq k$ such that  $z= x_n$. Let $k'_n, k''_n<k$ such that  $T_{k'_n}, T_{k''_n}$ and $T_{n}$ are equimorphic as rooted trees. Since $x_n= x_k \vee_e f(x_k)$, the rays $e(x_k)$ and $e(f(x_k))$ meet at $x_n$ (and not before), hence $f$ embeds  $T{(\rightarrow x_k)}$ into $T(\rightarrow f(x_k))$. In particular,  the  trees $T_{k'_n}$ and $T_{k''_n}$ embed into $T_{n}$. Set $r:= x_n$, $(R,r):= T_n$, $y:=f(u\boxplus k'_n)$, $x=f(u\boxplus k''_n)$, $C_y:= f[\{u\boxplus m:  k'_n\leq m\leq n\}$.  According to  Claim  \ref{claim:scatbin},  $T_{n}$ is non-scattered. 

\begin{claim} \label{claim:scatbin}  Let $(R,r)$ be a rooted  tree ordered by $u\leq_r v$ if $v$ is on the unique path $C_u$ joining $u$ to $r$. Let  $x, y\in V(R)$ such that  $y<_rx <_rr$, let  $R(C_y, x)$ be the tree rooted at $x$ whose vertex set is the  connected component of $x$ in $R\setminus \{x^-, x^+\}$ (where $x^-$ and $x^+$ are the neighbours of $x$ in $C_y$) and $R(\rightarrow y)$ be the tree rooted at $y$ whose vertex set is the connected component of $y$ in $R\setminus \{y+\}$.  If there exist  embeddings of the rooted tree  $(R,r)$  into the rooted trees $R(\to y)$ and $R(C_y, x)$,  then $R$ is not  scattered. 
\end{claim}
\noindent{\bf Proof of Claim \ref{claim:scatbin}.}  
Let $f$ be an embedding of $(R,r)$ into $R(C_y,x)$ and $g$ be an embedding of $(R, r)$ into $R(\to y)$. Then $f$ maps the path from $y$ to $r$ to a path from $z:=f(y)$  to $x$ and $g$ maps the path from $y$ to $r$ to the path from to $z'=g(y)$ to $y$.  Implying that there is an oriented path from  $z'=g(y)$ to $x$.  It follows that every copy $(R',r)$ of $(R,r)$ contains three distinct vertices $x, z$ and $z'$ with an oriented path from $z$ to $x$ excluding $z'$ and an oriented path from $z$ to $x$ excluding $z'$  and embeddings of $(R,r)$ into $R'(\to z)$ and into $R'(\to z')$. Implying that $(R,r)$ contains a subdivision of the binary tree. 
\hfill $\Box$ 

This concludes the proof of Proposition \ref{prop:one end almost rigid}.
\end{proof}

When we have embeddings with positive period, we do not need that $T$ is scattered, but   the proof is  more complex. 

Let $T$ be a tree and $e$ be an end. Let $Emb(T)$ be the set of
embeddings of $T$, $Emb_{e}(T)$ be the subset of those preserving $e$,
$Emb^+_{e}(T)$ be the subset of those with positive period and
$\mathbf d$ be the greatest common divisor of the periods of members
of $Emb^+_{e}(T)$. Let $u\in V(T)$ and $p\in \N$; we say that $p$ is a
\emph{period} of $e(u)$ if $T<e(u), y>\equiv T<e(u), y\boxplus p>$ for
every $y$ such that $u<_e y$.  If $e(u)$ has a positive period, then
there is one which divides all the others, we will call it the
\emph{period} of $e(u)$.

%
If  $f\in Emb_{e}^+(T)$ and $d$ is  the period of $f$ we set  $\check {S}_f:= \{x\in S_f:  d\; \text{is a period of}\; e(x)\}$. Furthermore, we set $\check S:= \bigcup _{f\in Emb_{e}^+(T)} \check {S}_f$. 

As we will see below, under the existence of embeddings with positive period, the regularity of an end amounts to the fact that  it contains some periodic ray.


%
%

\begin{lemma}\label{lem:period}
Let $T$ be a tree and  $e$  be a  regular end.  Then,  
\begin{enumerate}
\item for every embedding $f\in Emb_{e}^+(T)$, the set $\check {S}_f$ contains some $u$ such that $d$ is a period of $e(u)$;  
\item all   rays $e(u)$ for $u\in \check S$ have the same period; in particular this period divides ${\mathbf {d}}$; 
\item If every embedding of $T$ has a non negative period, then $\check S$ contains no two-way infinite path; in particular each $\check S_f$ is a ray. 
\end{enumerate}
\end{lemma}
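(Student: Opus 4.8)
\medskip
\noindent\textbf{Proof plan.}
The plan is to prove the three items in turn, using (1) inside (2) and (2) inside (3). Fix an $e$-valuation $v$; for $f\in Emb^{+}_{e}(T)$ write $d=d_f>0$ for its period and recall from Lemma~\ref{lem:Sf} that $S_f=\{x\in V(T):x\leq_{e}f(x)\}$ is a path --- a ray belonging to $e$ or a two-way infinite path --- preserved by $f$, on which $f$ acts as the shift $x\mapsto x\boxplus d$, and which is upward closed for $\leq_{e}$ (a short argument with the covering relation, as in the proof of Lemma~\ref{lem:Sf}); in particular $e(w)\subseteq S_f$ for every $w\in S_f$. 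Since ``$d$ is a period of $e(x)$'' only involves the trees $T<e(x),z>$ for $z>_{e}x$, it is an upward closed condition on $x$, so each $\check S_f$ is a final segment of the path $S_f$. For (1), fix $w\in S_f$ and set $T_j:=T<e(w),w\boxplus j>$. First I would note that $f$, restricted to $e(w)\subseteq S_f$, is the shift by $d$ and carries the two $e(w)$-neighbours of $w\boxplus j$ to those of $w\boxplus(j+d)$, hence maps the component $T_j$ into $T_{j+d}$, so $T_j$ embeds into $T_{j+d}$ for all $j\geq 1$. Next, regularity of $e$ gives a regular ray $e(u_0)$ ($u_0$ its first vertex), and for $z$ past $w\vee_{e}u_0$ (where the two rays have merged) $T<e(w),z>=T<e(u_0),z>$, so the equimorphism classes $[T_j]$ take finitely many values for large $j$. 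Splitting the large indices into the $d$ residue classes mod $d$, along each the sequence $[T_j]\preceq[T_{j+d}]\preceq[T_{j+2d}]\preceq\cdots$ is weakly increasing in the finite quasi-order of embeddability, hence eventually constant up to equimorphy (the classes occurring infinitely often form one equimorphism class, to which all but finitely many terms belong). Taking the largest of these finitely many thresholds yields $M$ with $T_j\equiv T_{j+d}$ for all $j\geq M$; then $u:=w\boxplus M\in S_f$ and $T<e(u),u\boxplus m>=T_{M+m}\equiv T_{M+m+d}=T<e(u),u\boxplus(m+d)>$ for all $m\geq 1$, so $d$ is a period of $e(u)$ and $u\in\check S_f$.

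For (2), the sub-step I would isolate is: if $u\leq_{e}u'$ and $e(u)$ has a positive period, then $e(u)$ and $e(u')$ have the same least period. Indeed $e(u')$ is a tail of $e(u)$, so the sequence $k\mapsto[T<e(u'),u'\boxplus k>]$ is a shift of the sequence $k\mapsto[T<e(u),u\boxplus k>]$, which is purely periodic; for a purely periodic sequence the set of periods is the set of multiples of the least one, and this is unchanged by shifting, so both sequences --- hence both rays --- have the same least period. Now given $u,u'\in\check S$, each has a positive period (by definition of $\check S$ together with item (1)), so applying the sub-step to the pair $u\leq_{e}u\vee_{e}u'$ and to the pair $u'\leq_{e}u\vee_{e}u'$ shows that $e(u)$, $e(u\vee_{e}u')$ and $e(u')$ share a single least period $p$. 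Finally, for each $f\in Emb^{+}_{e}(T)$ pick $u\in\check S_f\subseteq\check S$ (item (1)); then $d_f$ is a period of $e(u)$, so $p\mid d_f$, and hence $p\mid\mathbf d$.

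For (3), suppose toward a contradiction that $\check S$ contains a two-way infinite path $P$. One of its ends, say $e'$, differs from $e$; let $R$ be the ray of $P$ towards $e'$. Since $R\notin e$, $v$ is eventually strictly decreasing along $R$ (otherwise $R$ would have a $\leq_{e}$-increasing sub-ray, which would belong to $e$); fix $r_*\in R$ past that point, so the sub-ray $R''$ of $R$ from $r_*$ is strictly $\leq_{e}$-decreasing, and $e(r_*)\subseteq\check S_f\subseteq\check S$ for the $f$ with $r_*\in\check S_f$ (using that $e(r_*)\subseteq S_f$ and that $\check S_f$ is a final segment of $S_f$). Gluing $R''$ and $e(r_*)$ at $r_*$ produces a two-way infinite path $L=(w_i)_{i\in\Z}$ with $w_i\lessdot_{e}w_{i+1}$ --- so $e(w_i)=\{w_j:j\geq i\}$, $L$ joins $e'$ to $e$, and $L\subseteq\check S$. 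For each $i$, $w_i\in\check S$, so by item (2) the ray $e(w_i)$ has period $p$; since $T<e(w_i),w_j>=(T,L)(w_j)$ for $j>i$, this gives $(T,L)(w_j)\equiv(T,L)(w_{j+p})$ for all $j>i$, hence (letting $i\to-\infty$) for all $j\in\Z$. Applying Lemma~\ref{lem:sumoverpath} with $C=C'=L$ and the shift $w_i\mapsto w_{i-p}$ of $L$, whose hypothesis $(T,L)(w_i)\equiv(T,L)(w_{i-p})$ holds, yields an embedding $g$ of $T$ extending this shift with $L=g^{-1}(L)$. Then $g$ carries the ray $e(w_0)\in e$ onto $e(w_{-p})\in e$, so $g$ preserves $e$, and $v(g(w_0))=v(w_{-p})=v(w_0)-p$, so $g$ has period $-p<0$, contradicting that every embedding of $T$ has non-negative period. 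Hence $\check S$ has no two-way infinite path, and then each $\check S_f$, a non-empty final segment of the path $S_f$ containing a whole ray $e(u)$ by item (1), cannot be two-way infinite, so it is a ray.

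The step I expect to be the main obstacle is the combinatorial argument in (1): converting the one-sided embeddability chain $T_j\hookrightarrow T_{j+d}$, together with regularity, into a \emph{single} vertex $u$ at which $d$ is a period of the entire ray $e(u)$ --- which is precisely what the residue-by-residue eventual stabilization achieves. A secondary difficulty running through all parts is the bookkeeping of whether the relevant path ($S_f$, $\check S_f$, or $L$) is a ray or a two-way infinite path, and the identification of the rooted trees $T<e(x),z>$ with the components of those paths.
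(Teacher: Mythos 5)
Your proof is correct and follows essentially the same route as the paper: (1) via the embeddability-increasing sequence of the $T_j$'s in finitely many equimorphy classes, (2) via comparing periods on the common tail $e(u\vee_e u')$, and (3) via extending a backward translation of a periodic two-way path to an embedding of negative period. The only real difference is that in (3) you first replace an arbitrary two-way infinite path in $\check S$ by the $\leq_e$-monotone path $L=R''\cup e(r_*)$ before applying the translation argument; this handles the non-monotone ("peak") case that the paper's write-up passes over silently, so it is a welcome clarification rather than a change of method.
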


\begin{proof} 
Item (1). Let  $f\in Emb_{e}^+(T)$ and  $u\in S_f$. Set 
$x_n:=u\boxplus n$ and  $T_n:= T<e(u),x_n>$ for $n\in \N$.  Say that two non-negative integers $n,m$ are equivalent if $T_n\equiv T_m$. Since $e$ is a regular end, $e(x)$ is a regular ray and this means that the equivalence relation above has only finitely many classes. Let $d$ be the period of $f$. Then $f(x_n)=x_n\boxplus d=x_{n+d}$ for every non-negative integer $n$. Since $f$ is an embedding of $T$,  $T_n$ is embeddable into $T_{n+d}$ for every $n>0$. Hence, the rooted trees $T_{n+ d.k}$, for $k\geq 0$, form an increasing sequence w.r.t embeddability. Since the number of equivalence classes is finite, there is some $k_n$ such that all $n+ d.k$, for $k\geq k_n$ are equivalent (pick $k_n$ such that $T_{n+ d.k_n} $ is  maximal with respect to equimorphy). This means that on the set $\{m\in \N: m\geq n+d.k_n\}$, the congruence class of $n$ modulo $d$ is included into the  equivalence class of $n+d.k_n$. By considering an upper bound of  $k_1, \dots k_d$, we get an integer $\ell$ such that on the set $\{m\in \N : m\geq \ell\}$
each congruence class is included into some equivalence class of our relation above. This means  $d$ is a period of $e(x_{\ell})$ hence $x_{\ell}\in \check  S_f$ and thus $e(x_{\ell})\subseteq \check  S_f$.

Item (2) Indeed, let $u, v\in \check S$,   $k$ and  $l$ be   periods  of $e(u)$ and $e(v)$ respectively.  Let   $d$ be the greatest common divisor of $k$ and $l$. The rays     $e(u)$ and $e(v)$ intersect on the ray $e(x\vee_e y)$, hence $k$ and $l$,   and thus $d$,  is a  period of that ray. But then $d$ is a period of $e(u)$ and $e(v)$. Taking for $k$ and $l$  the periods of $e(u)$ and $e(v)$, we get $k=l$,  proving our assertion. This common period must divide  the period of each $f\in Emb_{e}^+(T)$, hence it  divides ${\bf d}$, proving  that Item (2) holds. 

Item (3) Suppose that $\check S$ contains a two-way infinite path, say $D$. Let $y\in D$; pick $u\in D$ with $u<_e y$. Since $D\subseteq \check S$ there is some $f \in Emb_{e}^+(T)$ such that $u\in \check S_f$. According to Item (2) we have  $T<D, y>=T<e(u), y>\equiv T<e(u), y\boxplus {\bf {d}}>= T<D, y\boxplus {\bf {d}}>$. Since $T=\bigoplus_{y\in D} T<D,y>$, every translation of $D$ with period $\bf{d}$ extend to an embedding of $T$.  In particular,  the translation $t_{-\bf{d}}$ on $D$ defined by $t_{-{\bf d}}(x)\boxplus {\bf{d}}=x$ for $x\in D$ extends to an embedding of $T$, hence there are embeddings with  negative period which preserve $e$. \end{proof}

%
%
%
%

 We will denote by $\check E$ the set of end points of $\check S$. To avoid confusion, we will use $\check {S}(T)$ and $\check {E}(T)$ when needed.

\begin{proposition}\label{Fact:finitdist} 
Le $T$ be a tree admitting a regular and not almost rigid end $e$
preserved forward by every embedding.  Then the intersection $C_o$ of
all rays $e(u)$ for $u\in \check{E}(T)$ is a ray preserved by every
embedding and the distance from the origin $o$ of this ray to each $u\in
\check{E}(T)$ is at most $\mathbf{d}$, where $\mathbf{d}$ is the
largest common divisor of the positive periods of the embeddings of
$T$.
\end{proposition}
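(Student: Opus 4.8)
The plan is to mimic the argument of Proposition~\ref{prop:one end almost rigid}, but now using embeddings with positive period and the finer structure captured by $\check S$, $\check S_f$ and $\check E$. First I would fix an $e$-valuation $v$ with origin a chosen vertex. Since $e$ is regular and not almost rigid and is preserved forward by every embedding, there is at least one embedding $f\in Emb_e^+(T)$ with period $d>0$, and by Lemma~\ref{lem:period}(1) each such $\check S_f$ is nonempty and contains a vertex $u$ for which $d$ is a period of $e(u)$; by Lemma~\ref{lem:valuation3} (since all embeddings have nonnegative period) and Lemma~\ref{lem:period}(3) each $\check S_f$ is a ray, so $\check E(T)$ is a set of vertices, one per such $f$. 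Let $\mathbf d$ be the gcd of the positive periods. The first goal is to show that for any two $u,u'\in\check E(T)$ we have $d_T(u,u')$ small — more precisely, that $u\vee_e u'$ is within $\mathbf d$ of each of $u,u'$; this will immediately give that $C_o:=\bigcap_{u\in\check E(T)} e(u)$ has origin $o$ at distance $\le \mathbf d$ from each $u$.

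The heart of the matter is a dichotomy, for each pair $u,u'\in\check E(T)$, arising from an associated embedding. Pick $f,f'\in Emb_e^+(T)$ with $\check S_f = e(u)$, $\check S_{f'}=e(u')$, of periods $d,d'$; let $z:=u\vee_e u'$ and suppose for contradiction that $d_T(u,z)>\mathbf d$ (or $d_T(u',z)>\mathbf d$). Using that $d$ is a period of $e(u)$ and $d'$ a period of $e(u')$, and that $e(u)$ and $e(u')$ agree on the ray $e(z)$, I would argue as in Lemma~\ref{lem:period}(2) that $\gcd(d,d')$ — hence $\mathbf d$ — is a period of the common ray $e(z)$, so the rooted trees hanging off $e(z)$ repeat with period $\mathbf d$. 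Combining $f$ restricted below $u$ and $f'$ restricted below $u'$ (both embed $T(\to u)$, $T(\to u')$ into larger such trees along $e$) with the periodicity of the branches along $[o,u]$ and $[o,u']$, I would produce, inside $T(\to z)$, two disjoint ``directions'' each carrying a copy of $T(\to z)$ and thus, as in Claim~\ref{claim:scatbin}, a subdivision of the binary tree hanging off a single vertex; but that does not contradict anything since $T$ need not be scattered. Instead the correct contradiction is with \emph{every embedding preserving $e$ forward}: the periodicity just established lets one translate the common ray $e(z)$ \emph{backward} by $\mathbf d$ and extend (via Lemma~\ref{lem:sumoverpath} / the sum decomposition $T=\bigoplus_{y\in e(z)} T\langle e(z),y\rangle$) to an embedding of $T$ with negative period, contradicting the hypothesis. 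This forces $d_T(u,z)\le\mathbf d$ and $d_T(u',z)\le\mathbf d$.

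Granting the distance bound, $C_o=\bigcap_{u\in\check E(T)} e(u)$ is a genuine ray: any two of the rays $e(u)$ eventually coincide (they all lie in $e$), and the bound shows the ``entry points'' $u\vee_e u'$ stay within a bounded $v$-window, so $C_o$ has a well-defined least vertex $o$ with $d_T(o,u)\le\mathbf d$ for all $u\in\check E(T)$. It remains to check $C_o$ is preserved by every embedding $g$. Given $g$, I would use that $g$ has nonnegative period; if the period is positive then $\check S_g=e(w)$ for some $w\in\check E(T)$, so $g$ maps $C_o\subseteq e(w)$ into $e(w)$ and, since $g$ shifts $v$ by the period $\ge 0$ and fixes the end, $g[C_o]\subseteq C_o$ (the image of $e(w)$ is a subray, and $C_o$ is an initial-in-$\le_e$ final segment common to all of them; one checks $g(o)\in C_o$ using $d_T(o,w)\le\mathbf d$ and that $g$ shifts by a multiple-compatible amount). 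If $g$ has period zero, $g$ fixes $e(x)$ pointwise for some $x$, in particular fixes a cofinal subray of $C_o$, and since $g$ preserves $\le_e$ it must fix $C_o$ setwise as well. Assembling these cases gives that $C_o$ is preserved by every embedding, completing the proof.

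\textbf{Main obstacle.} The delicate point is the second paragraph: pinning down \emph{exactly} which contradiction does the work. It is tempting, by analogy with Proposition~\ref{prop:one end almost rigid}, to derive non-scatteredness, but that is vacuous here; the real leverage is that ``too large'' a gap forces a periodicity of the common ray strong enough to manufacture a \emph{backward} embedding preserving $e$, contradicting forward-preservation by every embedding. Getting the bookkeeping of periods ($d$, $d'$, $\gcd(d,d')$, $\mathbf d$) and the valuation shifts consistent — and making sure the extension to all of $T$ (not just along the ray) is legitimate via the sum decomposition — is where the care is needed.
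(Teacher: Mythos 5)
You correctly identify the shape of the argument and, importantly, the right source of contradiction: one must manufacture an embedding of $T$ with \emph{negative} period, contradicting the hypothesis that $e$ is preserved forward by every embedding (non-scatteredness is indeed useless here). Your treatment of the positive-period case of the preservation claim, and the passage from an upper bound on the pairwise joins to the distance bound, are fine.

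The genuine gap is the step you yourself flag as ``where the care is needed'', and it does not go through as described. The periodicity of the common ray $e(z)$ with period dividing $\mathbf{d}$ already follows from Lemma~\ref{lem:period}(2) whether or not the gap $d_T(u,z)$ exceeds $\mathbf{d}$, so it cannot by itself be the leverage; and ``translate the common ray $e(z)$ backward by $\mathbf{d}$ and extend'' is not an available move: a ray admits no backward translation into itself, and to produce one you need an \emph{infinite} descending path below $z$ along which the attached rooted trees keep repeating with period $\mathbf{d}$. Your setup --- two vertices $u,u'\in\check E(T)$ and their join $z$ --- supplies only the two finite paths from $z$ down to $u$ and to $u'$. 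This missing infinite backward extension is precisely the technical heart of the paper's proof: it fixes a single $u\in\check E(T)$, sets $x:=u\boxplus\mathbf{d}$, supposes $x\not\leq_e f(x)$ for some $f\in Emb^+_e(T)$, locates $z'',z'$ below $z:=x\vee_e f(x)$ with $f(z')=z$ and $T<e(u),z>\equiv T<e(u),z'>\equiv T<e(u),z''>$, and then \emph{iterates} an embedding $h$ of $T<e(u),z>$ into $f[T<e(u),z''>]$ to build an infinite descending path $H$; only then does $D:=H\cup e(z)$ carry a copy of the two-way periodic tree $T'=\oplus_{n\in\Z}T'_n$, and the equimorphy $T\equiv T'$ delivers the negative-period embedding. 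Note finally that the paper's key claim --- $u\boxplus\mathbf{d}\leq_e f(u\boxplus\mathbf{d})$ for \emph{every} positive-period $f$ --- is the statement that both admits this iteration and directly exhibits $u\boxplus\mathbf{d}$ as an upper bound of $\check E(T)$; your pairwise hypothesis $d_T(u,u\vee_e u')>\mathbf{d}$ does not hand you an embedding whose iterates descend below $z$, so you would have to reduce to the paper's formulation in any case.
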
 

\begin{proof}
We prove that $\check {E}$ has a join, say $o$, and that the distance from $o$ to every $u\in \check {E}$ is at most ${\bf d}$.  This is a consequence of the following claim.

\begin{claim}
Let $u\in \check {E}$ and   $x:= u\boxplus{\bf{d}}$. Then  $x\leq_ef (x)$ for every  $f\in Emb_{e}^+(T)$. 
\end{claim}

From this claim,  we have $v\leq x$ for every $v\in \check {E}$, from which it follows that $ \check {E}$  has a join, and this join $o$  is majorized by $x$. Indeed, let $v\in \check {E}$, let $g \in Emb_{e}^+(T)$ such that $v$ is the origin of $\check {S}_g$. From the claim, we have  $x\leq_e g(x)$ hence $x\in S_g$. The ray  $e(x)$ is periodic and its period divides ${\bf d}$, hence it divides the period of $g$ thus  $x\in \check S_g$ hence $u\leq_ex$. Since  $u\leq o\leq x= u+\bf{d}$ the distance from $u$ to $o$ is at most ${\bf d}$.

We prove the claim by contradiction. We suppose $x\not \leq_ef (x)$
for some $f\in Emb_{e}^+(T)$.  We build a tree $T'$ equimorphic to $T$
which is of the form $T':= \oplus_{n\in \Z} T'_n$ and on which act all
translations of $\Z$ of period $k.{\mathbf{d}}$ for $k\in \Z$. From
this, we obtain that there are embeddings of $T$ with negative period,
contradicting our hypothesis.

Let $C:= e(u)$.  According to (2) of Lemma \ref{lem:period}, $C$ is periodic and its period divides ${\bf {d}}$. As in Lemma \ref{lem:period} set  
$x_n:=u\boxplus n$ and  $T_n:= T<e(u),x_n>$ for $n\in\N$.
For  $n\in \Z$, let $T'_n$ be an isomorphic copy rooted at $n$ of the rooted tree $T_{m}$ where $m:=1+ (n\; \mod {\bf d})$ and  $n\;  \mod {\bf d}$ is the residue of $n$ modulo ${\bf d}$. On the tree $T':= \oplus_{n\in \Z} T'_n$ all translations of $\Z$ of  period $k.{\mathbf{d}}$ for $k\in \Z$ extend to embeddings of $T'$. 

\begin{claim}\label{claim:embeddable1}
$T$ is embeddable into $T'$.
\end{claim}

\noindent{\bf Proof of Claim \ref{claim:embeddable1}.} 
The ray $e(u)$ is preserved by $f$. It extends to a maximal path $D$ preserved by $f$. This path is either a ray or a two-way infinite path. We may embed it on the path on $\Z$ by the map $\phi$ sending $u$ on $-1$ and $u\boxplus 1$ on $0$. We claim that $T< D, x> $ embeds into $T'_{\varphi (x)}$ for each $x\in D$. From this, it follows that $\varphi$ extends to an embedding of  $T$ into $T'$ as claimed.  If $x:= u\boxplus (n+1)$, with $n\geq 0$, then $\varphi(x)= n$, $T'_n\equiv T_{m}$ where $n:=1+ (n\; \mod {\bf d})$. Since ${\bf d}$ is a period of $e(u)$, then  $T_{m}\equiv T_{n+1}= T<D, x>$, hence $T<D, x>\equiv T_{\varphi (x)}$. Suppose $x\leq_e$. Then some iterate of $f$ say $g$ send $x$ onto some $g(x)$ with $u<g(x)$ and $T<D, x>$ into $T<D, g(x)>$. But then, $T<D, g(x)>\equiv T_{\varphi(g(x))}$. Since $d$ is a multiple of ${\bf d}$, $\varphi (x)$ and $\varphi (g(x))$ are congruent modulo ${\bf d}$, hence $T_{\varphi (x)}\equiv T_ \varphi {(g(x))}$. This yields 
$T<D, x>\leq T_{\varphi (x)}$. 
\hfill $\Box$
\begin{claim}\label{claim:embeddable2}
$T'$ is embeddable into $T$.
\end{claim}
\noindent{\bf Proof of Claim \ref{claim:embeddable2}.} 

\begin{subclaim} \label{claim:treevertices}There are $z, z', z''\in e(u)\setminus \{u\}$ such that 
$z''<_e z'<_e z$, $z''\boxplus {\bf {d}}= z'$, $z'\boxplus d=z$,  $z''\vee_e f(z'')=z$, $f(z')=z$, $T<e(u), z>\equiv T<e(u), z'>\equiv T<e(u), z''>$. 
\end{subclaim}
\noindent{\bf Proof of Subclaim \ref{claim:treevertices}.}
Set $z:= x\vee f(x)$.  Set $z': = x\boxplus k$ where  $k:=v(z)-v(f(x))$ and $v$ is any $e$-valuation. According to Claim \ref {claim:join}, we have $f(z')= z$. Since $z'\boxplus d=z$ and  $\mathbf d$ divides $d$, we have $T<e(u), z>\equiv T<e(u), z'>$. Since $z:= x\vee f(x)$, we have $x<_e z'$, hence $z'=u\boxplus k'$ with $k'>{\mathbf{d}}$. Set $k'':= k'-{\mathbf {d}}$ and $z'':= u\boxplus k''$. Hence $z'= z''\boxplus {\mathbf {d}}$. It follows that $T<e(u), z'>\equiv T<e(u), z''>$. By construction, we have $z''<_e z'<_e z$; since  $u<_ez''<_ez'$ we have $f(u)<_ef(z'')<_e f(z')=z$. From $z=x\vee_e f(x)$ we have  $z=u\vee_ef(u)$,  hence  $z=z''\vee_e f(z'')$. This proves our subclaim. \hfill $\Box$

With this subclaim the proof of Claim \ref{claim:embeddable2} goes as follows. 

Let $h$ be an embedding of $T<e(u), z>$ into $f[T<e(u), z''>]$.  Let $H_0$ be the path joining $z$ and $f(z'')$, set $H_{n+1}:= h[H_n]$ and $H:=\bigcup_{n\in \N} H_n$. Then $H$ is an infinite path ending at $z$ and $D:= H\cup e(z)$ a two-way infinite path. We define an embedding $\psi$ of the path on $\Z$ onto the path $D$ by setting $s:={\bf {d}}+k-1$,  and for $m\geq 0$, $\psi (s+m)=z\boxplus m$ and $\psi (s-m)= z_m$ where $z_m$ is the unique member of $H$ such that $z_m\boxplus m= z$. We check that $T'_n$ is embeddable into $T<D, \psi (n)>$ for every $n\in \Z$.  Let $m\geq 0$. Then $\varphi\circ \psi (s+m)=s+m$. Indeed, $\psi(s+m)= z\boxplus m=u\boxplus(s+m+1)$ and $\varphi (u\boxplus (s+m+1))=s+m$. For $m=0$, we have  $T'_s\equiv T_{s+1}= T<e(u), z>\equiv T<e(u), z'>$. The map $f$ embeds $T<e(u), z'>$ into $T<D, z>= T<D, \psi (s)>$. Hence $T'_s$ embeds into $T<D, \psi (s)>$. For $m>0$ we have $T'_{s+m}\equiv T_{s+m+1}= T<D, \psi (s+m)>$. Let $m':= m\; \mod{\bf d}$. We have $T'_{s-m}\equiv T'_{s-m'}\equiv T_{s+1-m'}\equiv T_{s+1-m'-d}\leq T_{s+1} <H, z_{m'}>\leq T_{s+1}<H, z_{m}>$ and 
$ T_{s+1}<H, z_{m}> \leq T<D, \psi (s-m)>$.  \end{proof}

The origin of $C_o$ is an isomorphism invariant of $T$, we will call it the  {\em origin} of $T$.

\subsection{The space of ends}\label{space of ends} 
Let $r$ be a vertex of $T$ and let $\Omega_r(T)$ be the set of rays
starting at $r$. The map $\phi_r$ which associate with each end $e$
the unique ray $e(r)$ belonging to $e$ and starting at $r$ is a
bijective map of $\Omega(T)$ onto $\Omega_r(T)$. Using this,
$\Omega(T)$ can be topologized as a subset of $V(T)^\N$, equipped with
the product topology, with $\N$ equipped with the discrete topology, as
well as a subset of the Cantor space $\powerset (V(T))$. The major
features of $\Omega(T)$ with respect to the problem we consider are
the following: \\ a) The embeddings of $T$ acts on $\Omega (T)$; more
specifically, if $f$ is an embedding of $T$, then $f$ defined by
$f(e):=end(f[C])$ for some $C\in e$ is a continuous embedding of
$\Omega(T)$.

b)  $\Omega(T)$ is \emph{topologically scattered} (that is every subset contains an isolated point)  if and only $T$ is scattered in the sense that no subdivision of the binary tree  is embeddable in $T$ (See Theorem \ref{thm-scattered} and \cite {jung}). 

c) If $T$ is locally finite, then $\Omega(T)$ is compact, in fact a Stone space (totally disconnected compact space) and  $\Omega(T)$ is countable if and only if $T$ is scattered (see \cite{diestel}). 

In somewhat more concrete terms, see also \cite{diestel}:  Let $(T,r)$ be the tree $T$  rooted at $T$. For every $s\in V(T)$ let $\Omega_{r,s}(T)$ be the set of rays in $\Omega_r(T)$ which contain $s$. The sets $\Omega_{r,s}(T)$ form a basis of a topology.  This topology on $\Omega_r(T)$ is easily seen and of course well known to satisfy Item c). Let $r'\in V(T)$. Let $P$ be the set of vertices on the path between $r$ and $r'$. For $C\in \Omega_r(T)$ let $C''$ be the ray induced by the set $V(C)\setminus P$ of vertices. Let $C'$ be the unique extension of $C''$ to a ray in $\Omega_{r'}(T)$ starting a $r'$. The map which associates with $C\in \Omega_r(T)$ the ray $C'\in \Omega_{r'}(T)$ is a homeomorphism. Both $C$ and $C'$ are elements of the same end of $T$. It follows that the set of ends of $T$ inherits via $\phi^{-1}_r$ a topology homeomorphic to $\Omega_r(T)$  which indeed is independent of the particular vertex $r\in V(T)$.  
%
 
Let $f$ be an embedding mapping $r$ to $r'$. Then $f$ is an isomorphism of $T$ to  $f[T]$. Let $C$ be a ray in $\Omega_r(T)$ contained in the end $e$. Then $f[C]$ is a  ray  in $\Omega_{r'}(f[T])$ and  also a ray in $\Omega_{r'}(T)$.   The end of $f[T]$ containing $f[C]$ is a subset of the end, denoted $f(e)$,  of $T$ containing $f[C]$. Hence,  $f$ is a homeomorphism of $\Omega_r(T)$ to $\Omega_{r'}(f[T])$. From this,  Item a) follows.  

This applies to the Cantor-Bendixson derivatives of $\Omega_r(T)$. Let
us recall that if $X$ is a topological space, we may denote by
$\text{Isol}( X)$ the set of isolated points of $X$ and define for
each ordinal $\alpha$, the \emph{$\alpha$-th-derivative}
$X^{(\alpha)}$ setting $X^{(0)}=X$, $X^{(\alpha)}=
X^{(\alpha-1)}\setminus \text{Isol}(X^{(\alpha-1)})$ if $\alpha$ is a
successor ordinal and $X^{(\alpha)}= \bigcap_{\beta<\alpha}
X^{(\beta)}$ if $\alpha$ is a limit ordinal. The least $\alpha$ such
that $X^{(\alpha)}=X^{(\alpha+1)}$ is the \emph{Cantor-Bendixson rank}
of $X$ which we denote by $rank(X)$.  The \emph {rank} or \emph{order}
of an element $x\in X$, denoted by $rank(x, X)$ is the least ordinal
$\alpha$ such that $x\not\in X^{\alpha+1}$.  If $X$ is non-empty,
$rank(X)$ is a successor ordinal iff there is a last non-empty
derivative, that we denote by $X^{(last)}$.  As it is well-known, $X$
is scattered iff $X^{(rank(X))}= \emptyset$; if furthermore, $X$ is
compact, then $rank(X)$ is a successor ordinal and $X^{(last)}$, is
finite.  If $X= \Omega_r(T)$ we denote by $\Omega_r^{(\alpha)}(T)$ the
$\alpha$-th derivative of $\Omega(T)$ and if $rank(\Omega_r(T)$ is a
successor ordinal we denote by $\Omega^{(last)}_r(T)$ the last
non-empty derivative. For an example, if $T$ is rayless, then
$\Omega(T)=\emptyset$ hence then $rank(\Omega(T))= 0$. If $T$ is an
infinite one way path or consists of a set of one-way infinite path
originating from some vertex $r$ but otherwise disjoint, then
$\Omega_r^{(1)}(T)$ is empty and hence $rank(\Omega_r(T))$ is equal to
one. Implying that $\Omega^{(1)}(T)$ is empty and the rank of
$\Omega(T)$ is one.

Let $f$ be an embedding of $T$. Let $r\in V(T)$, and  $r':= f(r)$. Since $f$ is a homeomorphism of $\Omega_r(T)$ onto  $\Omega_{r'}(f[T])$, a  ray $C$ is isolated in $\Omega_r(T)$ iff every  $f[C]$ is isolated in $\Omega_{r'}(f[T])$.  Hence,  the $\alpha$-derivative   of $\Omega_r(T)$ is mapped by $f$ onto  the $\alpha$-derivative of $\Omega_{r'}(f[T])$  and hence the Cantor-Bendixson rank of $\Omega_r(T)$ is equal to the Cantor-Bendixson rank of $\Omega_{r'}(f[T])$. Since $f$ is a continuous maps  of $\Omega_r(T)$ into   $\Omega_{r'}(T)$ it maps each $\Omega_r^{(\alpha)}(T)$ into $\Omega_{r'}^{(\alpha)}(T)$. Hence, with the identification of $\Omega(T)$ with $\Omega(T)$ and $\Omega_{r'}(T)$, $f$ preserves each $\Omega^{(\alpha)}(T)$.     

We give an illustration of the notion of rank  in  Subsection \ref{subsection:preservation}, particularly with a proof of the first part of Theorem \ref{main1-2}. To do so we need to compute the rank for some scattered trees. 

\subsection{Operations on trees and computation of the rank}
Let us define the following operations on rooted trees that we call respectively \emph{successor}, \emph{sup} and \emph{sum}.
If $T$ is a rooted tree, $1+T$ is the rooted tree obtaining by adding a new vertex, say $a$, joined to the root of $T$, and by choosing $a$ as a root of this new tree. If $(T_i)_{i\in I}$ is a family of rooted trees, then their supremum $\bigvee_{i\in I} T_i$ is the tree obtained by identifying all the roots of the $T_i$'s to a single one. If $(T_n)_{n\in \N}$ is a family of rooted trees, each $T_n$ rooted at $n$,   then $\bigoplus_{n\in \N} T_n$ is the tree rooted at $0$ of the sum of the $T_n$'s over the infinite path on $\N$. If for some number $n_0$ the trees $T_n$ with $n>n_0$ are all equal to the empty tree $\Box$, then $\bigoplus_{n\in \N} T_n$ is the tree rooted at $0$ of the sum of the $T_n$'s over the finite path on $\{0,1,\dots, n_0\}$. 

We relate the the rank with the operations sup and sum. For this we will need the following notion:

A sequence $(T_n)_{n\in \N}$  of trees  has property $(\ast)$ if: 
\[
\hskip -3pt(\ast)\hskip 15pt  \inf\{\sup\{rank(\Omega(T_n)): m\leq n\in \mathbb{N}\}: m\in \N\}<\sup\{rank(\Omega(T_n)): n\in \N\}.   
\]
Note that if the sequence of trees $T_n$ has property $(\ast)$, then $\max\{rank(\Omega(T_n)): n\in \N\}$ exists and there is a largest number $n_0\in \mathbb{N}$ with $rank(\Omega(T_{n_0}))=\max\{rank(\Omega(T_n)): n\in \N\}$ and with $rank(\Omega(T_{n_0}))> \sup\{rank(\Omega(T_n)): n_0< n\in \mathbb{N}\}$.   Note that $(\ast)$ does not hold if and only if the following property (not $(\ast))$ holds:
\begin{align}\label{align:notast}
&\text{for every ordinal $\alpha<\sup\{rank(\Omega(T_n)): n\in \N\}$ and every $n\in \N$}\tag{not $(\ast)$}\\
&\text{there is an $n<m\in\N$ with} \notag\\
&\alpha<rank(\Omega(T_n))\leq \sup\{rank(\Omega(T_n)): n\in \N\}. \notag  
\end{align}

\begin{lemma}\label{lem:values of the rank}
Let $(T_i)_{i\in I}$ be a family of pairwise disjoint rooted trees. If each $\Omega (T_i)$ is topologically scattered, then $\Omega(\bigvee_{i\in I} T_i)$ is scattered and:
\begin{align}\label{eq:rank-sup}
rank(\Omega(\bigvee_{i\in I} T_i))= sup\{rank(\Omega(T_i)) : i\in I\}.
\end{align}
Let $(T_n)_{n\in \N}$ be  a family of pairwise disjoint rooted trees, each $T_n$ rooted at $n$ If   each $\Omega (T_n)$ is topologically scattered, then  $\Omega(\bigoplus_{n\in \N} T_n)$ is topologically scattered and:
\begin{equation}\label{eq:rank-sum1}
rank(\Omega(\bigoplus_{n\in \N} T_n))=
\begin{cases}
\max\{rank(\Omega(T_n)): n\in \N\},     &\text{ if  $(\ast)$ holds;}\\ 
\sup\{rank(\Omega(T_n)): n\in \N\}+1,   & \text{if  $(\ast)$ does not hold}.
\end{cases}
\end{equation} 
\end{lemma}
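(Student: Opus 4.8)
The plan is to prove the two rank formulas separately, handling the $\bigvee$ case first since it is cleaner and will serve as a stepping stone for the $\bigoplus$ case.

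\textbf{The sup formula.} Write $W := \bigvee_{i\in I} T_i$ and let $r$ be the common root. A ray of $W$ starting at $r$ is a ray of exactly one $T_i$ starting at its root, so $\Omega_r(W)$ is the disjoint union (as a topological space) of the $\Omega_{r}(T_i)$, with each $\Omega_r(T_i)$ clopen in $\Omega_r(W)$ except that the basic open sets at $r$ itself see all of them at once. The key point is that for a ray $C\in e$ with $e\in\Omega(T_i)$, a basic neighbourhood $\Omega_{r,s}(W)$ with $s\neq r$ a vertex of $C$ is entirely contained in $\Omega_r(T_i)$; hence isolated points, and inductively the Cantor--Bendixson derivatives, are computed inside each $\Omega_r(T_i)$, i.e. $\Omega_r^{(\alpha)}(W) = \bigsqcup_i \Omega_r^{(\alpha)}(T_i)$ for every $\alpha$. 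Taking the first $\alpha$ at which every summand is empty gives $rank(\Omega(W)) = \sup_i rank(\Omega(T_i))$, and this common derivative being eventually empty shows $\Omega(W)$ is scattered. The only subtlety is that the supremum need not be attained, but that causes no problem: $\Omega_r^{(\alpha)}(W)=\emptyset$ iff $\alpha\geq rank(\Omega(T_i))$ for all $i$, which is exactly $\alpha\geq\sup_i rank(\Omega(T_i))$.

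\textbf{The sum formula.} Write $S := \bigoplus_{n\in\N} T_n$, rooted at $0$, with the underlying path $0,1,2,\dots$. A ray of $S$ starting at $0$ either eventually leaves the path $\N$ — entering some $T_n$ through its root $n$ and continuing as a ray of $T_n$ — or it follows $\N$ forever. Call the latter the \emph{tail ray} $\rho$. Thus $\Omega_0(S) = \{\rho\}\;\sqcup\;\bigsqcup_{n\in\N}(\text{rays of }T_n\text{ through }n)$, and for $n\geq 1$ the copy of $\Omega_n(T_n)$ sitting inside is clopen in $\Omega_0(S)\setminus\{\rho\}$; only neighbourhoods of $\rho$ mix infinitely many pieces. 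So away from $\rho$ the derivatives are computed piecewise as before: $\Omega_0^{(\alpha)}(S)\setminus\{\rho\} = \bigsqcup_n \Omega_n^{(\alpha)}(T_n)$ (identifying $T_0$'s contribution appropriately). The whole question reduces to: \emph{for which $\alpha$ does $\rho$ survive to the $\alpha$-th derivative?} Now $\rho\in\Omega_0^{(\alpha+1)}(S)$ iff $\rho$ is a limit of points of $\Omega_0^{(\alpha)}(S)$, i.e. iff every neighbourhood $\Omega_{0,n}(S)$ of $\rho$ meets $\Omega_0^{(\alpha)}(S)$ outside $\rho$ — equivalently, iff for every $n$ there is $m\geq n$ with $\Omega_m^{(\alpha)}(T_m)\neq\emptyset$, i.e. $rank(\Omega(T_m))>\alpha$. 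This is precisely the content of property (not~$(\ast)$): $\rho$ survives through all derivatives below $\sup_n rank(\Omega(T_n))$ exactly when (not~$(\ast)$) holds, in which case $rank(\Omega(S)) = \sup_n rank(\Omega(T_n))+1$. If instead $(\ast)$ holds, then from the largest $n_0$ achieving the maximum $M := \max_n rank(\Omega(T_n))$ with $M > \sup_{n>n_0} rank(\Omega(T_n))$, one sees $\rho\notin\Omega_0^{(\beta)}(S)$ for $\beta := \sup_{n>n_0}rank(\Omega(T_n)) < M$ (a tail-neighbourhood of $\rho$ beyond $n_0$ already has all its $T_m$-pieces killed at stage $\beta$), while the piece $\Omega_{n_0}(T_{n_0})$ still contributes at every stage below $M$; hence the last nonempty derivative of $\Omega_0(S)$ is at stage $M-1$ if $M$ is a successor (or $\Omega_0^{(M)}(S)=\emptyset$ in general), giving $rank(\Omega(S))=M$.

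\textbf{Main obstacle.} The genuinely delicate point is the bookkeeping around $\rho$: one must argue carefully that a basic neighbourhood of $\rho$ contains \emph{no other} points of $\Omega_0(S)$ except those inside $T_m$'s with $m$ beyond the branch-off vertex, so that the survival of $\rho$ to the next derivative is governed solely by the tail behaviour of the sequence $(rank(\Omega(T_n)))_n$ — and then to match this exactly against the somewhat intricate definitions of $(\ast)$ and (not~$(\ast)$), including checking that when $(\ast)$ holds the maximum $M$ is indeed attained and the ``$+1$'' does \emph{not} occur. I would also record at the outset, as a small lemma, that this description of $\Omega_0(S)$ and its derivatives is independent of the base vertex (using the homeomorphism $\Omega_r(T)\cong\Omega_{r'}(T)$ from Subsection~\ref{space of ends}), and that topological scatteredness of each $\Omega(T_n)$ propagates to $\Omega(S)$ because, by the above, $\Omega_0^{(\beta)}(S)$ is empty once $\beta$ exceeds all the $rank(\Omega(T_n))$ and $\rho$ has dropped out.
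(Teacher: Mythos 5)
Your proof is correct and follows essentially the same route as the paper's: decompose $\Omega_0(\bigoplus_n T_n)$ into clopen copies of the $\Omega_n(T_n)$ plus the tail ray $\rho$, compute Cantor--Bendixson derivatives piecewise, and decide the rank by tracking at which stage $\rho$ drops out, matching that stage against $(\ast)$. The only blemish is the literal claim $\rho\notin\Omega_0^{(\beta)}(S)$ in the $(\ast)$ case, which can fail when some $T_k$ with $k>n_0$ has rank exactly $\beta$; but your parenthetical argument really shows $\rho$ is isolated in $\Omega_0^{(\beta)}(S)$, hence $\rho\notin\Omega_0^{(\beta+1)}(S)$, and since $\beta+1\leq M$ the conclusion $rank(\Omega(S))=M$ is unaffected.
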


\begin{proof} 
Let $(T_i)_{i\in I}$ be a family of pairwise disjoint rooted trees and
let $T_i'$ be the tree arising from $T_i$ by identifying its root with
the other trees to obtain the root, say r, of $\Omega(\bigvee_{i\in
I}T_i)$. Then $T_i'$ is isomorphic to $T_i$. Let $S$ be a non-empty
subset of $\Omega(\bigvee_{i\in I}T_i)$. Then there is an $i\in I$ for
which $S\cap \Omega_r(T_i')$ is not empty and hence, if
$\Omega_r(T_i')$ is scattered contains an isolated chain $C$ of
$\Omega_r(T_i')$. This chain $C$ is isolated in $\Omega(\bigvee_{i\in
I}T_i)$. 

Equation (\ref{eq:rank-sup}): Let $T:= \bigvee_{i\in I}
T_i$. Observe that $\Omega_r(T)$ is the union of the sets
$\Omega_r(T_i)'$ of chains and that these sets are pairwise disjoint.
Each $\Omega_r(T_i')$ is a clopen subset of $\Omega(T)$. The result
follows.

Let $T:= \bigoplus_{n\in \N} T_n$ and $\gamma:= \sup \{rank(\Omega_n(T_n)): n\in \N\}$. For $n\in \N$ denote by $T'_n$ the tree obtained by adding to  $T_n$  the path from 0 to $n$ rooted at 0. Let $S$ be a non-empty subset of $\Omega(T)$.  Clearly, $\Omega_0(T)$ is the union of the $\Omega_{0}(T_n')$  plus the path on $\N$. Each $\Omega_{0}(T_n')$ is a clopen set homeomorphic to $\Omega_{n}(T_n)$. Hence $\gamma:=\sup\{rank(\Omega(T_n)): n\in \N\}=\sup\{rank(\Omega(T_n')): n\in \N\}$ and

\begin{equation} \label{eq:rank}
\gamma  \leq rank(\Omega_0(T))\leq \gamma+1
\end{equation}
and there is an $n\in \N$ with $S\cap \Omega_0(T_n')\not=\emptyset$ or $S=\{\N\}$. If $\Omega(T_n)$ is scattered there exists a chain $C$ isolated in $\Omega_0(T_0')$  which then is also isolated in $T$.    

\vskip 5pt
\noindent
Property $(\ast)$ holds: Then $rank(\Omega(T_{n_0}))> \sup\{rank(\Omega(T_n)): n_0< n\in \mathbb{N}\}:=\delta$ and   $\gamma= rank(\Omega(T_{n_0}))$ and every path in  $\Omega_0^{(\delta)}(T)$ is a path in $\Omega_0^{(\delta)}(T'_{n_0})$,  which is not empty because $\delta<rank(\Omega(T_{n_0}))$.  Thus $rank(\Omega_{0}(T))=  rank(\Omega_0(T_{n_0}))=\max\{rank(\Omega(T_n)): n\in \N\}$.

\vskip 5pt
\noindent
Property $(\ast)$ does not hold: Then $\gamma$ is not attained or it
is attained infinitely many times. In either case the path on $\N$
belongs to $\Omega_{0}^{(\gamma)}(T)$. With inequality~(\ref{eq:rank})
this yields $rank(\Omega_{0}(T))=\gamma+1$, thus the result.
\end{proof}

\subsection{A set of ends preserved } \label{subsection:preservation}

\begin{lemma}\label{lem:dichotomic}
If a tree $T$ is infinite, locally finite and  scattered,  then there is a non-empty finite subset $\mathcal C$ of $\Omega(T)$ which is preserved by every embedding of $T$. 
\end{lemma}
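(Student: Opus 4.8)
The plan is to exploit the fact that for a locally finite tree, $\Omega(T)$ is a compact scattered (Stone) space, so its Cantor–Bendixson rank $\rho := \mathrm{rank}(\Omega(T))$ is a successor ordinal and the last non-empty derivative $\Omega^{(last)}(T) = \Omega^{(\rho-1)}(T)$ is a \emph{finite} non-empty set. I would take $\mathcal{C} := \Omega^{(last)}(T)$. Since $T$ is infinite and locally finite it has a ray (König's lemma), so $\Omega(T) \neq \emptyset$ and hence $\mathcal{C}\neq\emptyset$; finiteness of $\mathcal{C}$ is exactly the standard fact about last derivatives of compact scattered spaces recalled in Subsection \ref{space of ends}. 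So the only real content is invariance under every embedding.

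The key point, already established in Subsection \ref{space of ends}, is that an embedding $f$ of $T$ acts on ends and, upon picking a base vertex $r$ and setting $r' := f(r)$, induces a homeomorphism of $\Omega_r(T)$ onto $\Omega_{r'}(f[T])$; since $f[T]$ is a subtree of $T$ containing $r'$, the space $\Omega_{r'}(f[T])$ sits inside $\Omega_{r'}(T)$, and under the canonical identifications of $\Omega_r(T)$ and $\Omega_{r'}(T)$ with $\Omega(T)$, the induced map $f_\ast : \Omega(T) \to \Omega(T)$ is a continuous \emph{topological embedding}. The excerpt already notes that such $f_\ast$ maps $\Omega^{(\alpha)}(T)$ into $\Omega^{(\alpha)}(T)$ for every ordinal $\alpha$; in particular $f_\ast[\mathcal{C}] = f_\ast[\Omega^{(\rho-1)}(T)] \subseteq \Omega^{(\rho-1)}(T) = \mathcal{C}$. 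Thus $f_\ast$ restricts to an injective self-map of the finite set $\mathcal{C}$, hence a permutation of $\mathcal{C}$, and in particular $f_\ast[\mathcal{C}] = \mathcal{C}$. Therefore $\mathcal{C}$ is preserved (as a set) by every embedding of $T$, which is what we want.

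I would double-check one subtle point: the claim that $f_\ast$ carries $\Omega^{(\alpha)}(T)$ \emph{into} itself (not just the rank-preservation of $\Omega_r(T)$ versus $\Omega_{r'}(f[T])$). The argument is that a chain $C$ that is isolated in $\Omega_r(T)$ has image $f[C]$ isolated in $\Omega_{r'}(f[T])$ — because $f_\ast$ is a homeomorphism onto that subspace — but $\Omega_{r'}(f[T])$ need not be clopen in $\Omega_{r'}(T)$, so one cannot immediately conclude $f[C]$ is isolated in $\Omega_{r'}(T)$. However, since $f_\ast$ is a continuous injection of $\Omega(T)$ into $\Omega(T)$, continuity alone forces $f_\ast[\Omega^{(\alpha)}(T)]\subseteq \Omega^{(\alpha)}(T)$ by a routine transfinite induction: a continuous map sends limit points to limit points, so it sends $X^{(1)}$ into its image's closure-derivative, and one iterates. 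This is precisely the assertion flagged in the last paragraph of Subsection \ref{space of ends}, so I would simply cite it. The main (and only) obstacle is thus purely bookkeeping — making sure the identifications of $\Omega_r(T)$, $\Omega_{r'}(T)$ and $\Omega(T)$ are done coherently so that ``$f$ preserves each $\Omega^{(\alpha)}(T)$'' is literally applicable; once that is in hand the proof is two lines.
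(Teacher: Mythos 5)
Your proposal is correct and follows essentially the same route as the paper: the paper also sets $\mathcal C:=\Omega^{(last)}(T)$, gets non-emptiness from K\H{o}nig's lemma, finiteness from compactness of the scattered space $\Omega(T)$, and invariance from the fact (established in Subsection \ref{space of ends}) that every embedding maps each derivative $\Omega^{(\alpha)}(T)$ into itself. Your extra care about why a continuous \emph{injection} (not mere continuity) sends limit points to limit points is a legitimate refinement of a point the paper states somewhat loosely, but it does not change the argument.
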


\begin{proof} The proof will follow from the following claims.

The space $\Omega(T)$ is non-empty by K\H{o}nig's lemma
\cite{konig}. Since $T$ is scattered, $\Omega(T)$ is scattered; since
it is compact its rank is a successor ordinal and $\Omega(T)^{(last)}$
is finite. Set $\mathcal C:= \Omega^{(last)}(T)$.  \end{proof}

If $T$ is not locally finite,  the rank can be a limit ordinal and even if it is a successor ordinal the set $\Omega^{(\infty)}(T)$ is not necessarily finite. If $T$ is scattered but not necessarily locally  finite, Theorem \ref{main1} gives an extension of Lemma \ref{lem:dichotomic} whose proof is presented in Section \ref{prooftheorem1.1}.

\begin{proposition} \label{lem:infinite orbit}
Let $f$ be an embedding of a tree $T$. Suppose that there is a two-way
infinite path $D$ preserved by $f$ on which $f$ fixes no vertex and
reverses no edge, or if not, a one-way infinite path $C$ preserved by
$f$ with a vertex not in the range of $f$.

In the first case,  set $\mathcal J:= \{end(D^-), end(D^+)\}$ where $D^-$ and  $D^+$ are two paths whose union is $D$ and such that   $f[D^{+}]\subseteq D^{+}$   and set $\mathcal J:=\{end(C)\}$ in the second case.  If $T$ is scattered, then $rank(\Omega(T))$ is a successor ordinal and in the first case $end(D^{+}) \in \Omega^{(last)}(T)\subseteq \mathcal J$, whereas $\Omega^{(last)}(T)= \mathcal J$ in the second case.  
\end{proposition}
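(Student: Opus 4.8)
The plan is to prove the one-way (second) case first and then reduce the two-way (first) case to it by splitting $T$ at a vertex of $D$.

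\emph{The one-way case.} Let $C$ be the given ray. Replacing $C$ by its unique maximal preserved extension (Proposition~\ref{prop:embeddingsoftrees}) changes neither $end(C)$ nor the hypothesis, so we may assume $C=x_0,x_1,\dots$ is maximal. Then $f[C]$ is a subray of $C$ that omits a vertex of $C$, so $f(x_i)=x_{i+k}$ with $k\ge 1$; moreover, if the predecessor $x_{k-1}$ of $f(x_0)$ were of the form $f(y)$ then, since embeddings are isometric, $y$ would be a neighbour of $x_0$ off $C$ and $y,x_0,x_1,\dots$ a preserved ray properly extending $C$, which is impossible. Hence Corollary~\ref{cor:sumoverpath} applies: writing $S_i:=(T,C)(x_i)=T<C,x_i>$ rooted at $x_i$, so that $T=\bigoplus_{i\in\N}S_i$ over $C$ by Lemma~\ref{lem:sumoftrees}, we get $f(S_i)\subseteq S_{i+k}$ for every $i$. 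Since $f$ maps $S_i$ isomorphically onto a subtree of $S_{i+k}$, $\Omega(S_i)$ embeds topologically into $\Omega(S_{i+k})$, so $rank(\Omega(S_i))\le rank(\Omega(S_{i+k}))$ and the sequence $(rank(\Omega(S_i)))_{i\in\N}$ is non-decreasing on each residue class modulo $k$. Put $\gamma:=\sup_i rank(\Omega(S_i))$. By this monotonicity $\gamma$ is either not attained or attained cofinally, so $\sup_{n\ge m}rank(\Omega(S_n))=\gamma$ for every $m$ and property $(\ast)$ fails. As $T$, hence each $S_i$, is scattered, Lemma~\ref{lem:values of the rank} applies and gives $rank(\Omega(T))=\gamma+1$ --- a successor ordinal --- with $\Omega^{(last)}(T)=\Omega^{(\gamma)}(T)$; moreover its proof places the ray $C$ in $\Omega^{(\gamma)}(T)$. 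Every end of $T$ other than $end(C)$ lies in one of the clopen pieces of that proof, each homeomorphic to $\Omega_{x_i}(S_i)$, and $rank(\Omega(S_i))\le\gamma$ forces $\Omega(S_i)^{(\gamma)}=\emptyset$; therefore $\Omega^{(last)}(T)=\{end(C)\}=\mathcal J$.

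\emph{The two-way case.} Here $f$ restricted to $D$ is a translation (it fixes no vertex and reverses no edge), and $f[D^+]\subseteq D^+$ forces $f(x_i)=x_{i+k}$ with $k\ge 1$ for an indexing $D=(x_i)_{i\in\Z}$ with $D^+=\{x_i:i\ge 0\}$. By Corollary~\ref{cor:sumoverpath} we have $f(S_i)\subseteq S_{i+k}$ for all $i\in\Z$, where $S_i:=(T,D)(x_i)$. Split $T$ at $x_0$: let $T^+$ be the component of $x_0$ in $T\setminus\{x_{-1}\}$ and $T^-$ the component of $x_{-1}$ in $T\setminus\{x_0\}$ together with $x_0$, so that $T=T^+\vee T^-$ in the sense of Lemma~\ref{lem:values of the rank}, while $T^+=\bigoplus_{i\ge 0}S_i$ over $D^+$ and $T^-=\bigoplus_{j\ge 0}R_j$ over $D^-$ with $R_0=\{x_0\}$ and $R_j=S_{-j}$ for $j\ge 1$. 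The pair $T^+$, $f|_{T^+}$ satisfies the one-way hypothesis: the vertex $x_{k-1}$ of $D^+$ lies in no $S_{i+k}$, hence not in $f[T^+]$. So the one-way case gives $rank(\Omega(T^+))=\gamma+1$ and $\Omega^{(last)}(T^+)=\{end(D^+)\}$, with $\gamma:=\sup_{i\ge 0}rank(\Omega(S_i))$; and $\gamma=\sup_{i\in\Z}rank(\Omega(S_i))$ because the inequality $rank(\Omega(S_i))\le rank(\Omega(S_{i+k}))$ shows every negative-index rank is dominated by a non-negative one. On the other hand $(rank(\Omega(S_{-j})))_{j}$ is non-increasing on each residue class, hence eventually constant there, so $\delta:=\sup_j rank(\Omega(R_j))\le\gamma$ is attained, and Lemma~\ref{lem:values of the rank} gives $rank(\Omega(T^-))\le\delta+1\le\gamma+1$, whence $\Omega^{(last)}(T^-)\subseteq\{end(D^-)\}$. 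Since every ray from $x_0$ stays in exactly one of $T^+,T^-$, the space $\Omega(T)$ is the disjoint union of the clopen sets $\Omega(T^+)$ and $\Omega(T^-)$; by Lemma~\ref{lem:values of the rank} this yields $rank(\Omega(T))=\gamma+1$ --- a successor --- and $\Omega^{(\gamma)}(T)=\Omega^{(\gamma)}(T^+)\sqcup\Omega^{(\gamma)}(T^-)$, so that $end(D^+)\in\Omega^{(last)}(T)\subseteq\{end(D^+),end(D^-)\}=\mathcal J$.

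The heart of the matter is the Cantor--Bendixson bookkeeping: turning the side-tree inclusions $f(S_i)\subseteq S_{i+k}$ into residue-wise monotonicity of the ranks, deducing that $(\ast)$ must fail, and reading off from the proof of Lemma~\ref{lem:values of the rank} both the value $\gamma+1$ of the rank and the precise last derivative. The one delicate step is the endpoint $x_0$ of the ray in the one-way case --- justifying $f(S_0)\subseteq S_k$ --- which is why one first passes to the maximal preserved ray so that Corollary~\ref{cor:sumoverpath} becomes available.
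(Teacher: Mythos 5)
Your proof is correct and follows essentially the same route as the paper: decompose $T$ over the invariant path, use $f(S_i)\subseteq S_{i+k}$ (via Corollary \ref{cor:sumoverpath} after passing to the maximal preserved ray) to get residue-wise monotonicity of the ranks of the side trees, deduce that property $(\ast)$ fails, and read off the rank $\gamma+1$ and the last derivative from Lemma \ref{lem:values of the rank}. The only divergence is in the two-way case, where you split $\Omega(T)$ into two clopen halves at a single vertex of $D$ and stabilize the backward ranks, whereas the paper works with the increasing family of closed subspaces $T_{\geq x}$ for $x\in D$; both are sound (your intermediate inclusion for $\Omega^{(\gamma)}(T^-)$ really rests on the same clopen-piece argument you spelled out in the one-way case, not merely on the bound $rank(\Omega(T^-))\leq\gamma+1$), and the clopen decomposition arguably makes the derivative bookkeeping slightly cleaner.
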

\begin{proof}
\begin{claim}\label {claim:orbit}
 Every end $e\not \in \mathcal J$  has an infinite orbit under $f$. In particular the members of $\mathcal J$ are the only ends preserved by  $f$. 
 \end{claim}
\noindent {\bf Proof of Claim \ref{claim:orbit}.}
In the first case take $E=D$  and in the second case take $E$ equal to a maximal one way path  (in fact a maximum one) preserved by $f$. For $x\in E$, set $T_x:=((T,E)(x), x)$. We have $T=\bigoplus_{x\in E} T_x$ and the map $f$ induces an embedding of 
$T_x$ into $T_{f(x)}$ for every $x\in V(E)$. Apply Corollary  \ref{cor:sumoverpath} and let $e \not \in \mathcal J$ and $R\in e$. Then $R$ has all but finitely many of its vertices in one of the rooted trees $T_x$. Then $f[R]$ has all but finitely many of its vertices in $T_{f(x)}$. Because $x\not=f(x)$ the rooted trees $T_x$ and $T_{f(x)}$ have no vertex in common, implying that the end $f(e)$ containing $f[R]$ is different from $e$. Similarly $f^{(n)}(e)$ is different from $f^{(m)}(e)$ for $n\not=m$.\hfill $\Box$

\begin{claim}\label {claim:rank}
Let $\alpha:= \sup\{rank (\Omega_{x} (T_x)): x\in E\}$. Then, in the
first case $e(D^{+}) \in \Omega^{(\alpha)}(T)\subseteq \mathcal J$,
whereas $\Omega^{(\alpha)}(T)= \mathcal J$ in the second case. In particular, $rank
(\Omega(T)) = \alpha+1.$
\end{claim}

\noindent {\bf Proof of Claim \ref{claim:rank}.}
Let $\alpha:=  \sup\{rank (\Omega_{x} (T_x)): x\in E\}$. Since $f$ embeds each 
$T_x$  into   $T_{f(x)}$ for all $x\in E$,  $rank (\Omega_{x} (T_{x}))\leq rank (\Omega_{f(x)} (T_{f(x)}))$.  Let $x, y \in E$. Set $x\leq_E y$ if the ray originating at $x$ and belonging to $end(D^{+})$ contains $y$.  Since  $x\leq f(x)$, we have   $\sup \{rank (\Omega_{y} (T_{y})): x\leq_E x\}=\alpha$. Let $T_{\geq x}:=\bigoplus_{y\in E_x} T_y$, where $E_x:= \{y\in E: x\leq y\}$. 

We claim that $rank (\Omega_x(T_{\geq x}))=\sup \{rank
(\Omega_{y}(T_{y})):y\geq x\}+1=\alpha+1$, a fact which follows from
Lemma \ref{lem:values of the rank}. Indeed, label the vertices of
$E_x$ as $x_0, \dots x_n,\dots$ in an increasing order, set $T'_n:=
T_{x}$ and observe that $(T'_{n})_{n\in \N}$ does not have property
$(*)$. According to $(2)$ of Lemma \ref{lem:values of the rank}, $rank
(\Omega(T_{\geq x}))=\sup \{rank (\Omega(T_{y})):
x\leq_Ey\}+1=\alpha+1$; proving our claim.  

Let $D_{\geq x}:= D\cap
\{y: x\leq_E y\}$. Since $D_{\geq x}$ meets no $T_{y}$ into infinitely
often, it follows from our claim that $\Omega^{(\alpha)}(T_{\geq
x})=\{end(D_{\geq x})\}$.  If $E$ is a ray, then for the least element
$x$ of $E$ we have $T= T_{\geq x}$, hence $\Omega(T)^{\alpha})=\mathcal
J$ as claimed.  If $E$ is a double ray, then, since each $\Omega
(T_{\geq x})$ is closed into $\Omega (T)$, $rank( \Omega (T_{\geq
x}))\leq rank (\Omega (T))$, we infer $\alpha+1\leq rank (\Omega
(T))$. Furthermore, $\Omega (T)= \bigcup_{x\in E} \Omega (T_{\geq x})
\cup \{end(D^{-}\}$, hence we have $ end (D^+)\in \Omega^{(\alpha)}
(T)\subseteq \mathcal J$. In particular, $rank(\Omega^{(\alpha)} (T))=
\alpha+1$. This proves Claim \ref{claim:rank}. \hfill $\Box$

With this Claim, the proof of the Proposition \ref{lem:infinite orbit} is complete
\end{proof}

\begin{remark}  
In the first case of Proposition \ref{lem:infinite orbit},  we have $rank(end (C^-),\Omega(T)) \leq rank(end(C^+),\Omega(T))$. 
\end{remark}

This inequality can be proved directly.  Let $e:=end(C^{-})$, $e':=
end(C^{+})$ and $\alpha:= rank (e, \Omega (T))$. Then every
neighbourhood of $e$ contains an element $e_{\beta}$ of
$\Omega^{(\beta)}(T)$ for every $\beta< \alpha$. Let $U'$ be a
neighbourhood of $e'$ in $\Omega(T)$. Some iterate of $f$ will map $e_{
\beta}$ into some element $e'_{\beta}\in U'\cap
\Omega^{(\beta)}(T)$. This implies that $e'\in \Omega^{(\beta)}(T)$.
The inequality follows.


\begin{corollary} \label{for:the two cases} 
If a tree $T$ is scattered, then $rank(\Omega(T))$ is a successor
ordinal and $\Omega^{(last)}(T)$ has at most two elements provided
that some embedding $f$ does not fix a vertex or an edge.
\end{corollary}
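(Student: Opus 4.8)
The plan is to deduce Corollary \ref{for:the two cases} from Proposition \ref{prop:embeddingsoftrees} and Proposition \ref{lem:infinite orbit}. First I would apply Proposition \ref{prop:embeddingsoftrees} to the embedding $f$ that, by hypothesis, does not fix a vertex or an edge. Since Cases (1) and (2) of that proposition are precisely "there is a fixed point" and "there is an edge reversed by $f$", and reversing an edge forces $f$ to fix that edge setwise, the hypothesis rules out Cases (1) and (2). Hence $f$ falls into Case (3) — a two-way infinite path preserved by $f$ on which $f$ fixes no vertex and reverses no edge — or into Case (4) — a ray preserved by $f$ with a vertex not in the range of $f$. These are exactly the two alternatives in the hypothesis of Proposition \ref{lem:infinite orbit}.

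Next I would simply invoke Proposition \ref{lem:infinite orbit} with this $f$ (and the path $D$ or ray $C$ supplied by Proposition \ref{prop:embeddingsoftrees}). Since $T$ is scattered, that proposition tells us directly that $rank(\Omega(T))$ is a successor ordinal and that $\Omega^{(last)}(T)$ is either contained in $\mathcal J = \{end(D^-), end(D^+)\}$ (Case (3)) or equals $\mathcal J = \{end(C)\}$ (Case (4)). In either situation $\Omega^{(last)}(T)$ has at most two elements, which is the assertion of the corollary.

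There is really no substantive obstacle here: the corollary is a bookkeeping consequence of two results already proved in the excerpt, and the only thing to be careful about is the translation of the Halin-style case distinction (Proposition \ref{prop:embeddingsoftrees}) into the hypotheses of Proposition \ref{lem:infinite orbit} — namely checking that "$f$ reverses no edge" is equivalent to "$f$ does not reverse an edge" and that Case (3)'s requirement that $f$ fix no vertex on $D$ is automatic once Case (1) is excluded (indeed if $f$ fixed a vertex of $D$, it would be in Case (1), contradicting the mutual exclusivity asserted in Proposition \ref{prop:embeddingsoftrees}). If anything needs a word of care it is this mutual-exclusivity point, but Proposition \ref{prop:embeddingsoftrees} states the exclusivity explicitly, so it can be quoted directly.
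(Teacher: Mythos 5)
Your proposal is correct and is exactly the paper's argument: the paper's one-line proof also applies Proposition \ref{prop:embeddingsoftrees} to the given embedding $f$ to place it in the hypotheses of Proposition \ref{lem:infinite orbit}, from which the conclusion is immediate. Your additional remarks on mutual exclusivity merely make explicit what the paper leaves implicit.
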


\begin{proof}
 By Proposition \ref{prop:embeddingsoftrees}, such an $f$ satisfies the hypothesis of Proposition \ref{lem:infinite orbit}. 
\end{proof}
   
%

\begin{corollary}\label{lem:part of thm1.3} 
Let $T$ be a scattered tree. If $rank (\Omega(T))$ is a successor
ordinal, $\vert \Omega^{(last)}(T)\vert =1$ and $e\in \Omega^{(last)}(T)$,
then $e$ is preserved forward by every embedding of $T$.
\end{corollary}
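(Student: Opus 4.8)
The plan is to use Corollary \ref{for:the two cases} together with the uniqueness hypothesis $|\Omega^{(last)}(T)|=1$, and then argue that the single end in the last derivative must be preserved forward. First I would dispose of the trivial case: if every embedding of $T$ fixes a vertex or an edge, then in particular every embedding $f$ sends the (unique) end $e$ of maximal rank to an end of the same rank, i.e. into $\Omega^{(last)}(T)=\{e\}$, so $f$ preserves $e$. So assume some embedding $f$ does not fix a vertex or an edge; by Proposition \ref{prop:embeddingsoftrees} such an $f$ either preserves a two-way infinite path on which it fixes no vertex and reverses no edge, or preserves a ray with a vertex outside its range.

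Next I would show that \emph{every} embedding $g$ of $T$ preserves $e$. Since $g$ is a homeomorphism of $\Omega_r(T)$ onto $\Omega_{g(r)}(g[T])$ (as developed in Subsection \ref{space of ends}) and $g$ maps each $\Omega^{(\alpha)}(T)$ into $\Omega^{(\alpha)}(T)$, $g$ must map $\Omega^{(last)}(T)$ into itself; since this set is the singleton $\{e\}$, we get $g(e)=e$, i.e. $g$ preserves $e$. This works for any $g\in Emb(T)$, whether or not $g$ fixes a vertex; the point is that the last non-empty derivative is a finite set mapped into itself by a continuous self-embedding, and a singleton mapped into itself is fixed.

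The remaining and real point is \emph{forward} preservation. Here I would invoke Proposition \ref{lem:infinite orbit} applied to the distinguished embedding $f$ (the one not fixing a vertex or edge), which exists by the case split in the first paragraph — if no such $f$ exists at all, then $T$ has a vertex or edge preserved by every embedding, and then I would handle forward preservation separately by noting that $f$ with a fixed vertex preserves $e$ forward with period zero via Lemma \ref{lem:valuation2}, or more simply: if $g$ fixes a vertex $x$ then $g[e(x)]$ is a ray in $e$ starting at $x$, hence equals $e(x)$, so $e$ is preserved forward by $g$. When a non-trivial $f$ does exist, Proposition \ref{lem:infinite orbit} tells us that $\mathcal J = \{end(C)\}$ (second case) or $end(D^+)\in\Omega^{(last)}(T)\subseteq\mathcal J$ (first case); combined with $|\Omega^{(last)}(T)|=1$ this forces the first case to actually be the second (a double ray would contribute a second end $end(D^-)$ of rank $\geq$ some value — one must check it also lands in $\Omega^{(last)}(T)$ using the Remark's inequality $rank(end(D^-))\leq rank(end(D^+))$, and since the rank of $e=end(D^+)$ is maximal, $end(D^-)$ also has maximal rank and hence lies in $\Omega^{(last)}(T)$, contradicting $|\Omega^{(last)}(T)|=1$ unless $end(D^-)=end(D^+)$, impossible for a double ray). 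Thus every non-trivial embedding $f$ falls in the second case of Proposition \ref{lem:infinite orbit}, so $f$ preserves forward the ray $C\in e$ with $f[C]\subseteq C$; and trivial embeddings (fixing a vertex or edge) preserve $e$ forward as observed. Hence $e$ is preserved forward by every embedding.

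The main obstacle I expect is precisely the last-derivative bookkeeping for the two-way infinite path case: one has to rule out, using $|\Omega^{(last)}(T)|=1$, that an embedding behaves as a translation along a double ray whose two ends both have maximal rank, and to do this cleanly one needs the Remark's inequality $rank(end(D^-),\Omega(T)) \leq rank(end(D^+),\Omega(T))$ together with the fact that $e=end(D^+)$ already realizes the maximal rank $rank(\Omega(T))-1$, forcing $end(D^-)$ into $\Omega^{(last)}(T)$ as well — a contradiction since a double ray has two distinct ends. Everything else is a straightforward application of the continuity and rank-preservation of embeddings on the space of ends established earlier.
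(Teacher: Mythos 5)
Your overall strategy (route everything through Proposition \ref{prop:embeddingsoftrees} and Proposition \ref{lem:infinite orbit}, and use $\vert \Omega^{(last)}(T)\vert =1$ to pin down the preserved end) is close to the paper's, and the first steps are fine: every embedding preserves the singleton $\Omega^{(last)}(T)=\{e\}$, and an embedding fixing a vertex or reversing an edge preserves $e$ forward (period $0$). The error is in your treatment of the two-way infinite path case. You claim this case cannot occur, arguing that since $rank(end(D^-))\leq rank(end(D^+))$ and $e=end(D^+)$ has maximal rank, $end(D^-)$ must also have maximal rank and hence lie in $\Omega^{(last)}(T)$. That inequality gives you nothing here: an upper bound equal to the maximal rank does not force $end(D^-)$ to attain it. Worse, the case you are trying to exclude genuinely occurs under the hypotheses of the corollary: take the double ray on $\Z$ and attach a disjoint ray at each vertex $n\geq 0$. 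Then $\Omega^{(last)}(T)=\{end(D^+)\}$ is a singleton, the shift $n\mapsto n+1$ is a (non-surjective) embedding translating along $D$, and $end(D^-)$ is isolated, hence not in $\Omega^{(last)}(T)$. So your intermediate claim is false, not merely unproven.

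Fortunately the conclusion you need in that case is immediate and requires no contradiction: Proposition \ref{lem:infinite orbit} chooses $D^{+}$ so that $f[D^{+}]\subseteq D^{+}$ and asserts $end(D^{+})\in \Omega^{(last)}(T)$; with $\vert \Omega^{(last)}(T)\vert=1$ this forces $e=end(D^{+})$, and $D^{+}$ is then a ray of $e$ mapped into itself, i.e. $e$ is preserved forward by $f$. For comparison, the paper argues contrapositively: it assumes $e$ is \emph{not} preserved forward by some $f$, so (since $f$ preserves $e$) $e$ is preserved strictly backward, which produces a double ray $D$ with $e=end(D^{-})$ and $f$ translating toward $D^{+}$; Proposition \ref{lem:infinite orbit} then puts $end(D^{+})$ into $\Omega^{(last)}(T)=\{end(D^{-})\}$, a contradiction. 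Either route is sound once you stop trying to rule the translation case out; as written, your proof is not.
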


\begin{proof}
Let $f$ be an embedding of $T$. Then $f$ preserves
$\Omega^{(last)}(T)$. Since $\Omega^{(last)}(T)= \{e\}$, $f$ preserves
$e$. Suppose that $e$ is not preserved forward by $f$. That is there
is some ray $C\in e$ such that no infinite subray $C'$ of $C$ is sent
into $C'$ by $f$.  In this case, some subray, say $C'$, extends to a
two-way infinite path $D$ which is preserved by $f$. Indeed, pick a
subray $C'$ of $C$ such that $C' \subset f(C')$ and $D:= \bigcup_{n\in
\N} f^{(n)} (C)$. We may write $D:= \{x_n:n\in \Z\}$ in such a way
that $C'= D^{-}$ and $f(x_{n})= x_{n+k}$ for some positive $k$. Let
$e':= end(D^{+})$. According to Proposition \ref{lem:infinite orbit}
we have $e'\in \Omega^{(last)}(T)$. Since $e\not =e'$, this is
impossible. \end{proof}

\subsection{Proof of Theorem \ref{main1-2}} \label{proof of main1-2} 
According to Corollary \ref{lem:part of thm1.3}, if $T$ contains
exactly one end of maximal rank, then this end is preserved
forward. Next, if $e$ contains some regular end $e$ preserved forward
by every embedding, then $e$ contains some ray preserved forward
provided that $T$ is scattered and $e$ almost rigid (Proposition
\ref{prop:one end almost rigid}) or $e$ is not almost rigid
(Proposition \ref{Fact:finitdist}) .

\subsection{Examples of ends preserved} \label{subsection:examples}

We present six examples. The first four examples are scattered trees
 containing exactly one end preserved by every embedding.

In the first three examples there are embeddings with no fixed
point. The third example is due to Polat and Sabidussi
\cite{polat-sabidussi}. In the fourth example the end is ``almost
fixed" by every embedding.  First a notation: let $\kappa$ be a
cardinal, we set $\bigvee_{\kappa} T$ for the tree sup of $\kappa$
copies of $T$, that is the tree $T$ is rooted and the copies are
disjoint, and we glue the copies together at their roots. For an example, if $T$ is a $1$-element tree, then $\bigvee_{\kappa} T= T$. 

\noindent{\bf Example 1.} \label{ex:examples}
Let $(T^{1}_n)_{n\in \omega}$ be the sequence of finite trees defined
by induction as follows: $T^{1}_0$ is the tree consisting of one
vertex rooted at $0$; $T^{1}_{n+1}:= 1+\bigvee_{2}T^{1}_{n}$ rooted at
$n+1$, (hence, $\vert T^{1}_n\vert =2^n$). Set
$T^{1}_{\omega}:= \bigoplus_{n<\omega} T^{1}_n$. 
This tree has an interesting reproducing property: if we delete the end vertices of this tree, we get an isomorphic copy.

\begin{center}
\begin{figure}[H]\label{fig:example1}
\includegraphics[width=8cm]{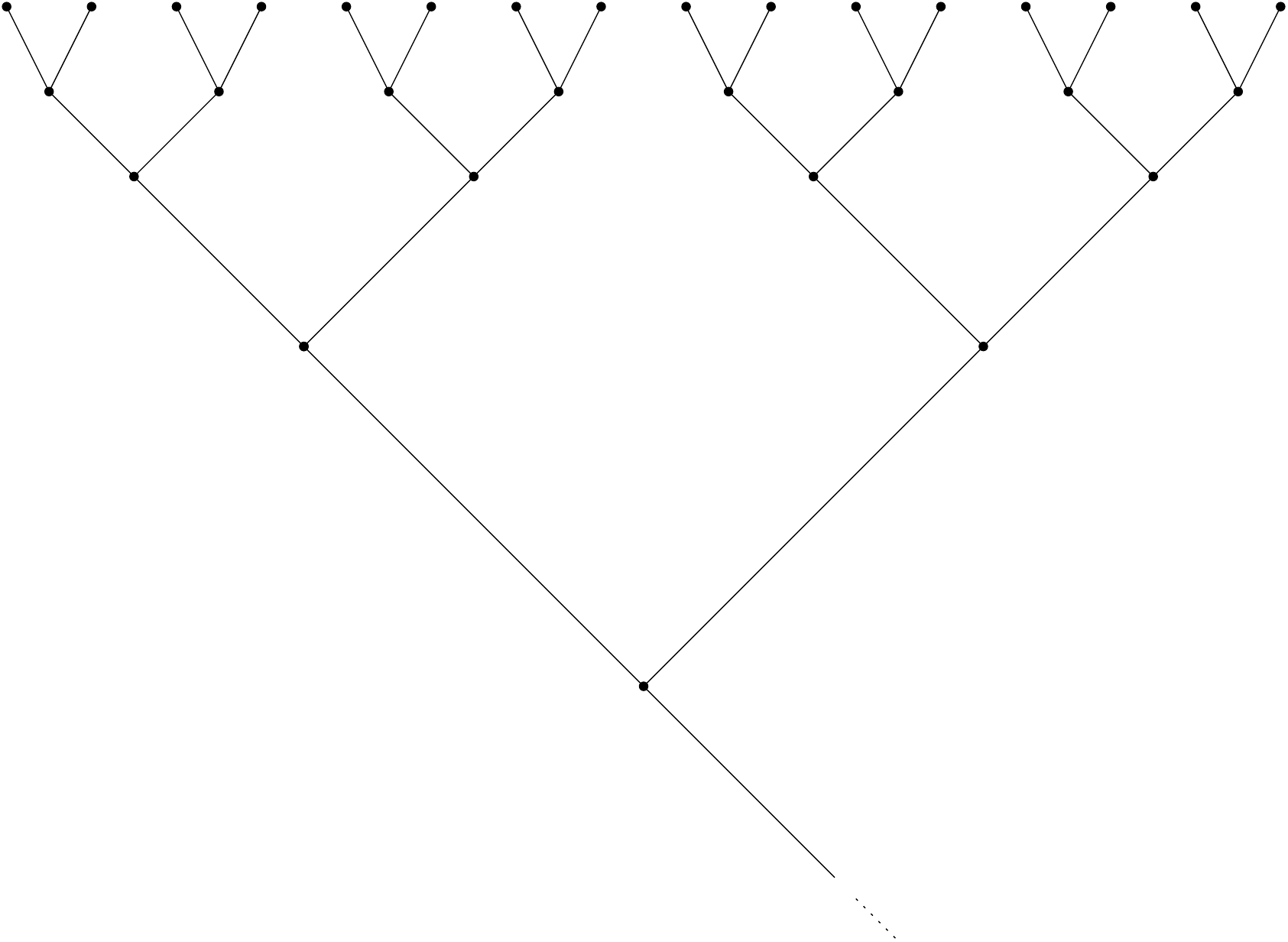}
\caption {Example 1: $T^1_{\omega}$}
\end{figure}
\end{center}

\noindent{\bf Example 2.} 
Let $(T^{2}_n)_{n\in \omega}$ be the sequence of trees defined by
induction as follows: $T^{2}_0$ is the tree consisting of one vertex
rooted at $0$; $T^{2}_{n+1}:=\bigvee_{\aleph_0}(1+ T^{2}_{n})$ rooted
at $n+1$, $T^{2}_{\omega}:= \bigoplus_{n<\omega} T^{2}_n$.

\begin{center}
\begin{figure}[H]\label{fig:example2.pdf}
\includegraphics[width=10cm]{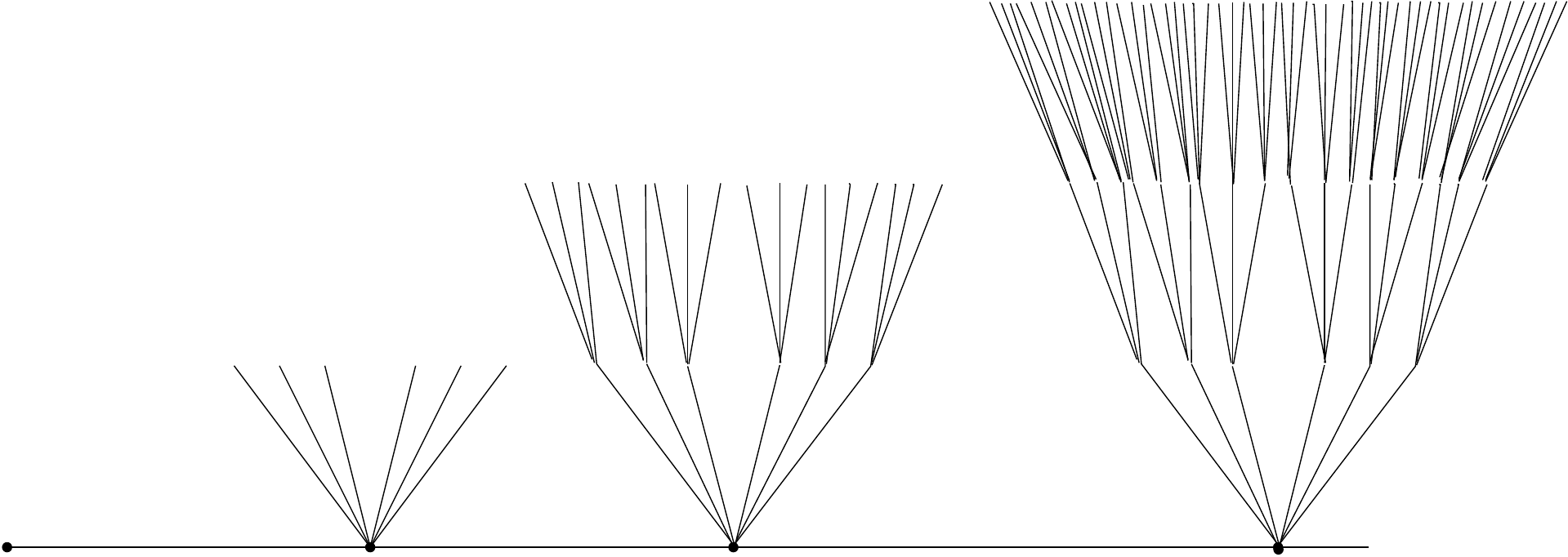}
\caption {Example 2: $T^2_{\omega}$}
\end{figure}
\end{center}

\noindent{\bf Example 3.} 
Let $(T^{3}_{n})_{n\in \omega}$ be the sequence of trees defined by
induction as follows: $T^{3}_n$ for $n=0, 1$ is the rooted tree consisting of an infinite 
path  rooted at $n$; for larger $n$, $T^{3}_{n+1}:=1+ \bigvee_{2}(1+ T^{3}_n)$
rooted at $n+1$, $T^{3}_{\omega}:= \bigoplus_{n<\omega} T^{3}_n$. (See
Fig 2 in Polat and Sabidussi \cite{polat-sabidussi}.) Equivalently,
$T^{3}_{\omega}$ is obtained from $T^{1}_{\omega}$ by replacing each
terminal vertex of $T^{1}_{\omega}$ by an infinite path rooted at this vertex.  We could 
as well replace each terminal vertex by two infinite paths rooted at this vertex.

\begin{center}
\begin{figure}[H]\label{fig:example3}
\includegraphics[width=8cm]{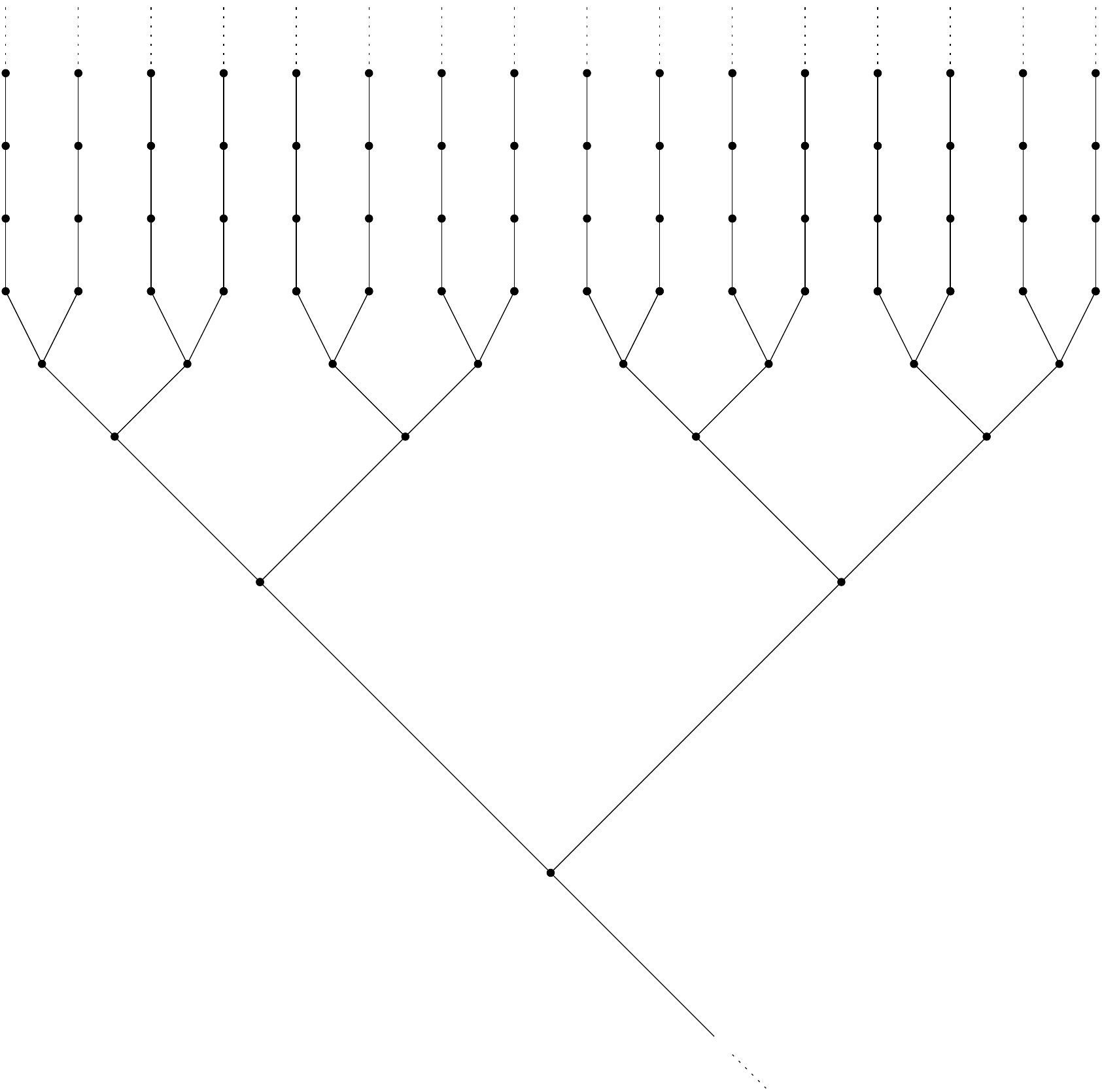}
\caption {Example 3: $T^3_{\omega}$}
\end{figure}
\end{center}

\noindent {\bf Example 4.} Let $T^{4}_{\omega}$ be the tree obtained by replacing each terminal vertex of 
$T^{1}_{\omega}$ by the rooted tree $\bigvee_{3}2$.

\begin{center}
\begin{figure}[H]\label{fig:example4}
\includegraphics[width=8cm]{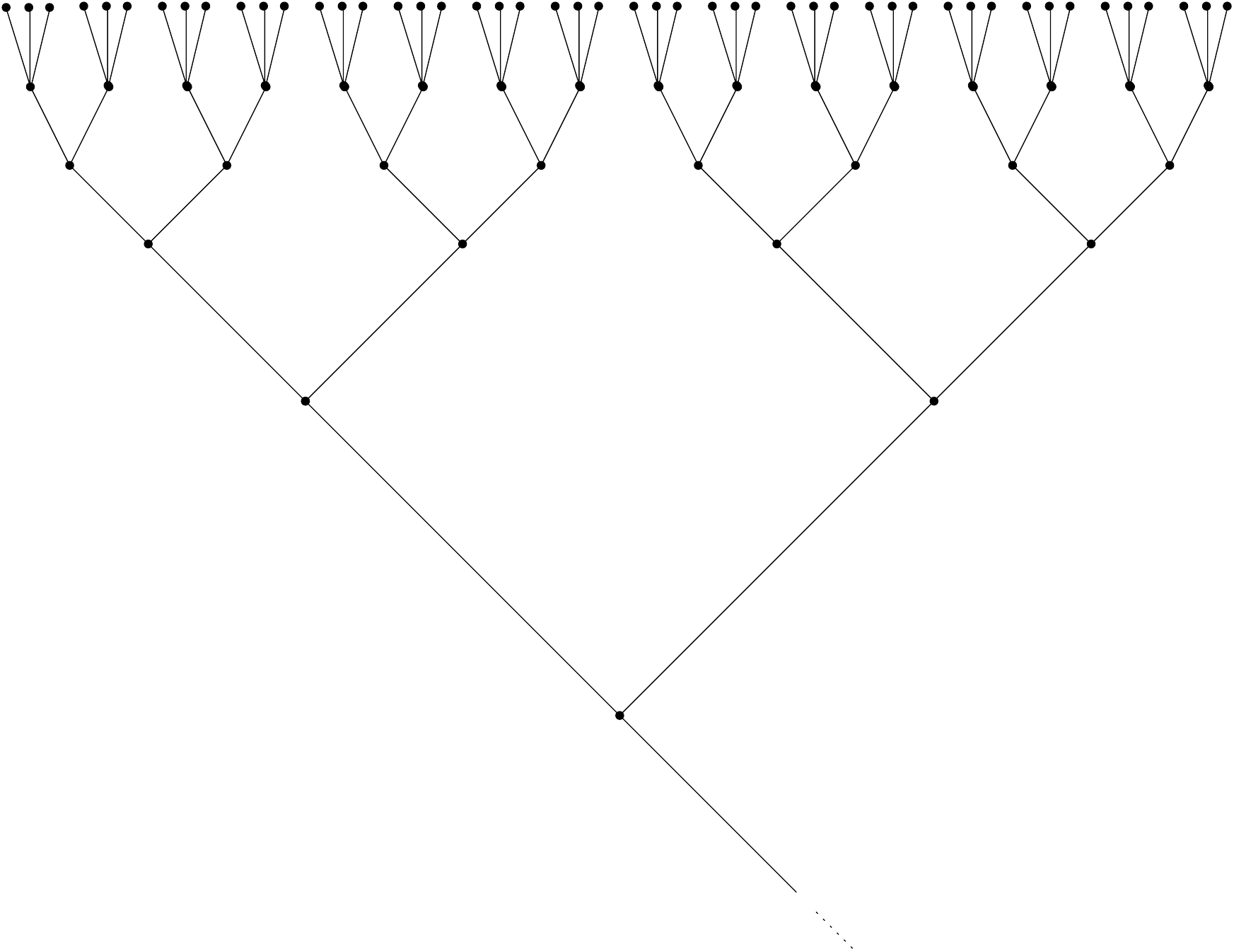}
\caption {Example 4: $T^4_{\omega}$}
\end{figure}
\end{center}

%


In all these examples, the non-negative integers form a ray. This ray
is not regular, hence its end, say $e$ is not regular. This end is
preserved by all embeddings. In the first two examples, this is the
only end, hence the rank of the space of ends is $1$.  This end is
preserved forward, but no ray containing it is preserved. In the third
example, the rank of the space of ends is $2$.  In the first three
examples there are embeddings which move the path forward. Hence
$\vert twin(T)\vert\geq 2^{\aleph_0}$ according to
Theorem~\ref{main2}.  This is not the case in the fourth example.
Indeed, there the end is almost rigid (any  embedding of
$T_{\omega}^{4}$ preserves the level function).  Because
$T_{\omega}^{4}$ is locally finite, every embedding is an automorphism.

The next two examples are trees containing exactly one end preserved
by every embedding, this end being preserved backward.

\noindent{\bf Example 5.}\label{example-Hamann} 
To the rooted binary tree add an infinite ray at the root $r$ (See Figure 3). Let
$T^{5}$ be the resulting tree. Label the vertices from the root by
integers as follows: The root has label $0$. The label of a vertex $x$
in the binary tree is the distance from $x$ to $r$. The label of a
vertex $x$ in the ray added is $-n$ if the distance from $r$ is $n$.
To each vertex labelled $0$ modulo $3$, graft in ten leaves and to each
vertex labelled $1$ modulo $3$ graft in five leaves. For degree
reasons, if $f$ is an embedding, then either the ray added is fixed or
it is moved backward.

\begin{center}
\begin{figure}[H]\label{fig:hamman}
\includegraphics[width=8cm]{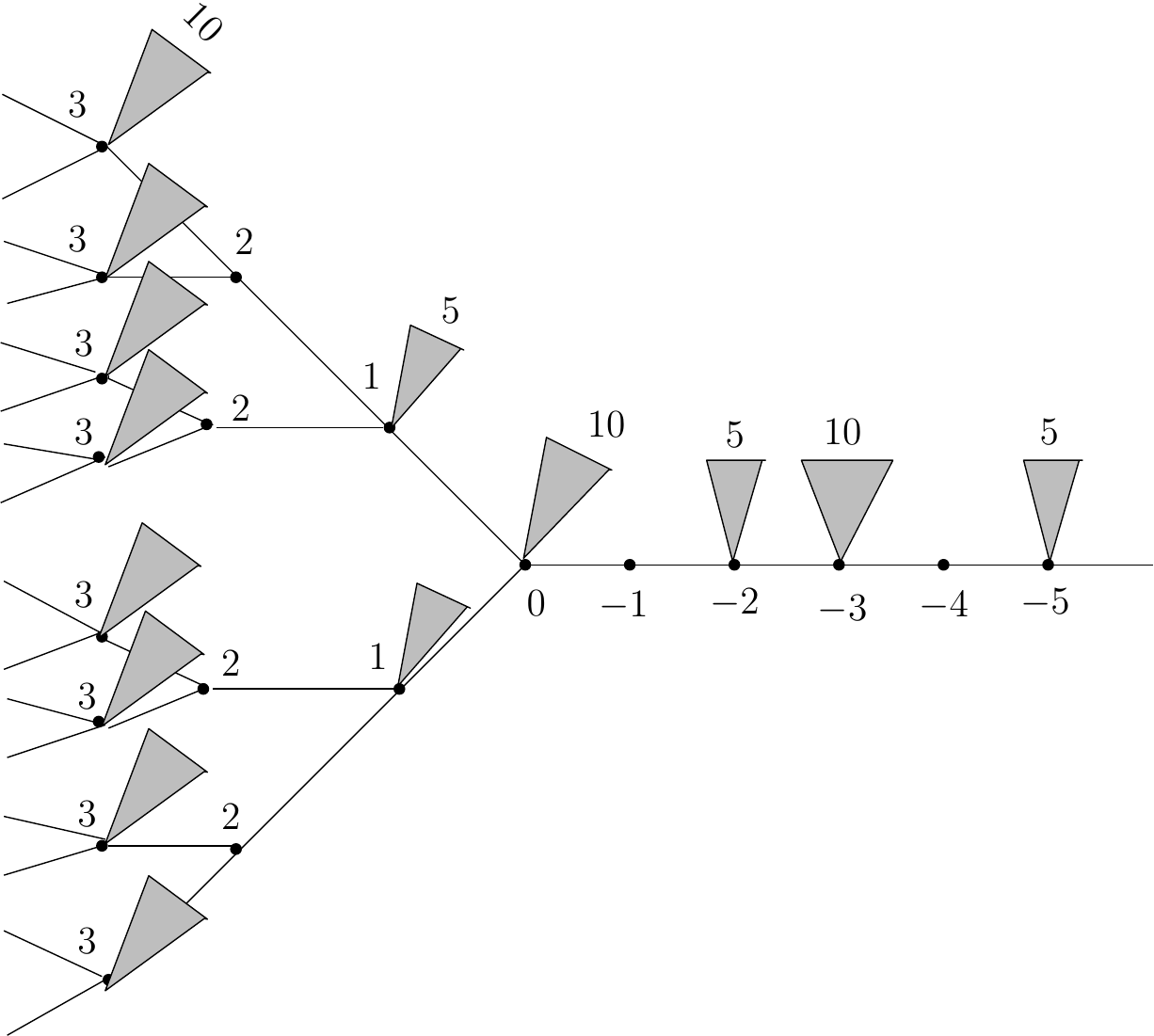}
\caption{Hamann's Example 5}
\end{figure}
\end{center}

\noindent{\bf Example 6.}\label{example-Lehner}
In the ternary tree, select an infinite ray, say $0, 1, 2 \dots, n,
\dots $ (See Figure 8).  Subdivide each edge on this ray by adding two
vertices. Subdivide all other edges $w:=\{u, v\}$ (outside that ray)
whose distance from $w$ to that ray is even by adding a vertex.  Hence
edges touching the ray are subdivided, edges a step further are not
subdivided, etc.  Let $T^{6}$ be the resulting tree. In this tree, the
infinite ray with the vertices added is either fixed or moved
backward.

\begin{center}
\begin{figure}[H]\label{fig:lehner}
\includegraphics[width=8cm]{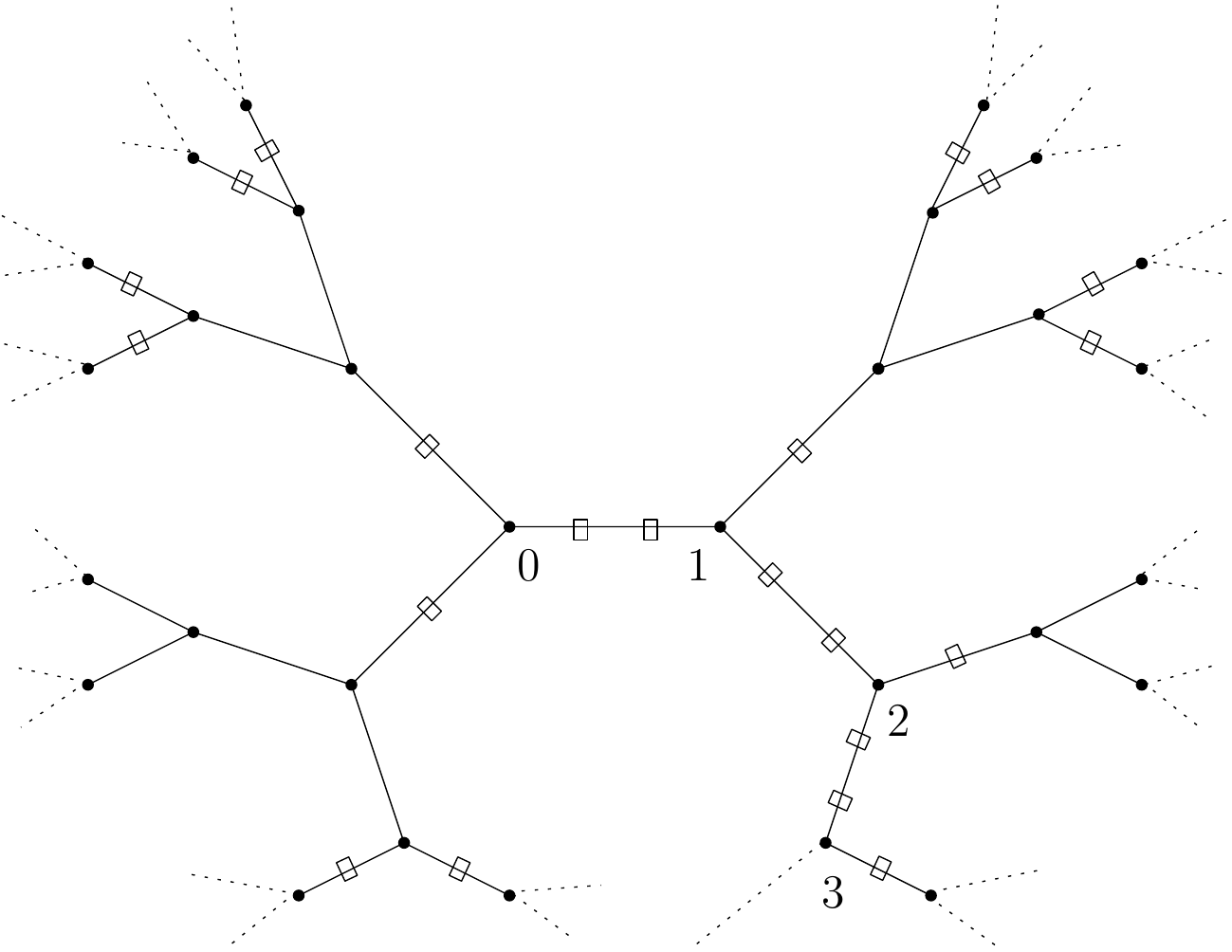}
\caption{Lehner's Example 6}
\end{figure}
\end{center}

%

\section{Scattered trees}\label{sec:subsectionnonscattered}
 
In this section we present some characterizations and constructions of
scattered trees, e.g. Lemma \ref {lem:scattered-binary} and Theorem
\ref{thm-scattered}, going back to Laver \cite{laver} and Jung
\cite{jung}.

\subsection{Scattered trees versus non-scattered trees}\label{subsec:subsectionnonscattered}

Let $T$ be a non scattered tree.  The \emph{non-scattered kernel} of
$T$, written $Ker(T)$, is the union of subsets of the form $V(T')$
where $T'$ is a subtree of $T$, which contains a subdivision of
$\mathrm{T}_2$.

\begin{notation}
Let $T$ be a tree, $x\in V(T)$ and $y\in V(T)\setminus \{x\}$, then
$T\setminus\{x\}(y)$ stands for the connected component of $y$ in the
graph $T\setminus\{x\}$.
\end{notation}

We have the following decomposition result, whose proof is immediate:

\begin{theorem} 
Let $T$ be a non scattered tree.  The non-scattered kernel of $T$ is a
non-scattered tree such that $Ker(Ker (T))=Ker(T)$ and $(T,
Ker(T))(x)$ is scattered for every $x\in Ker(T)$.
\end{theorem}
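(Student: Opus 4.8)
The plan is to prove the three asserted properties of $Ker(T)$ separately, all by direct arguments from the definition as a union of vertex sets $V(T')$ where $T'$ ranges over subtrees of $T$ containing a subdivision of $\mathrm{T}_2$.

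First I would show that $Ker(T)$ induces a connected subtree of $T$, hence is itself a tree. The key observation is that if $T'$ and $T''$ are two subtrees of $T$, each containing a subdivision of $\mathrm{T}_2$, then $V(T')\cup V(T'')$ together with the unique path in $T$ joining them is again such a subtree: it is connected, and it still contains a subdivision of $\mathrm{T}_2$ (namely the one inside $T'$). So the family of such subtrees is directed under the operation ``union plus connecting path'', and since any two vertices of $Ker(T)$ lie in a common member of this directed family, the path between them lies in $Ker(T)$ as well; thus $Ker(T)$ is connected. That $Ker(T)$ is non-scattered is then immediate, since it contains every $T'$ in the union and at least one such $T'$ contains a subdivision of $\mathrm{T}_2$ by the hypothesis that $T$ is non-scattered.

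Next I would verify $Ker(Ker(T))=Ker(T)$. The inclusion $Ker(Ker(T))\subseteq Ker(T)$ is trivial since $Ker(Ker(T))\subseteq V(Ker(T))=Ker(T)$. For the reverse inclusion, take $x\in Ker(T)$; then $x\in V(T')$ for some subtree $T'$ of $T$ containing a subdivision $S$ of $\mathrm{T}_2$. Replacing $T'$ by the minimal connected subtree containing $S$ and $x$ — i.e., $S$ together with the path in $T$ from $x$ to $S$ — we may assume $T'$ itself has all its vertices in $Ker(T)$, because each vertex of such a $T'$ lies on a subdivision of $\mathrm{T}_2$ obtained by attaching an appropriate ray of $S$. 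Hence $T'$ is a subtree of $Ker(T)$ witnessing a subdivision of $\mathrm{T}_2$, so $x\in Ker(Ker(T))$.

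Finally, for the statement that $(T, Ker(T))(x)$ is scattered for every $x\in Ker(T)$: recall from Section~\ref{sec:decomposition} that $(T,Ker(T))(x)$ is the connected component of $x$ in $T\setminus(Ker(T)\setminus\{x\})$, and by Lemma~\ref{lem:sumoftrees} these pieces, rooted at the vertices of $Ker(T)$, reconstitute $T$. Suppose toward a contradiction that some $(T,Ker(T))(x)$ contains a subdivision $S$ of $\mathrm{T}_2$. Then $S$ is a subtree of $T$ containing a subdivision of $\mathrm{T}_2$, so by definition $V(S)\subseteq Ker(T)$; but $(T,Ker(T))(x)$ meets $Ker(T)$ only in $\{x\}$, forcing $V(S)\subseteq\{x\}$, which is absurd since a subdivision of $\mathrm{T}_2$ is infinite. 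Hence each $(T,Ker(T))(x)$ is scattered. The only genuinely delicate point in the whole argument is the ``minimal connected subtree'' manoeuvre used twice above — making precise that adding the connecting path to a subtree containing $\mathrm{T}_2$ keeps every new vertex on some subdivision of $\mathrm{T}_2$ — and this is where I would be most careful, though it is still routine.
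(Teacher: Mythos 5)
Your argument hinges on reading the definition of $Ker(T)$ as the union of $V(T')$ over all subtrees $T'$ that merely \emph{contain} a subdivision of $\mathrm{T}_2$. Under that reading $T$ itself is such a subtree, so $Ker(T)=V(T)$, every lobe $(T,Ker(T))(x)$ is the singleton $\{x\}$, and the theorem is vacuous. The remark immediately following the theorem (and the use of the kernel in Proposition~\ref{kappatwinsscattered}, where the lobes are nontrivial) shows the intended meaning: $T'$ ranges over subtrees that \emph{are} subdivisions of $\mathrm{T}_2$. Your third part survives this correction verbatim --- if $(T,Ker(T))(x)$ contained a subdivision $S$ of $\mathrm{T}_2$, then $V(S)\subseteq Ker(T)$ by definition while $(T,Ker(T))(x)$ meets $Ker(T)$ only in $\{x\}$ --- and that is the intended argument.

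The other two parts do not survive. The union of two subdivisions of $\mathrm{T}_2$ together with a connecting path is not itself a subdivision of $\mathrm{T}_2$ (an attachment point that was a branch vertex acquires degree $4$), so your directed family evaporates under the correct reading; and the proposed repair, ``attaching an appropriate ray of $S$'', is genuinely false: if a path $P$ is attached to a subdivision $S$ at one endpoint only, an interior vertex $u$ of $P$ has a finite dead end on one side, whereas every vertex of a subdivision of $\mathrm{T}_2$ has all of its branches infinite, so $u$ lies on no subdivision of $\mathrm{T}_2$ inside $S\cup P$ and need not lie in $Ker(T)$ at all. What does work, and is exactly what the paper's remark after the theorem records, is the two-sided splice: $x\in Ker(T)$ if and only if $x$ has two neighbours $y,y'$ whose components in $T\setminus\{x\}$ are non-scattered; one then chooses a subdivision inside each component, re-rooted at the vertex where the path from $x$ first meets it so that this vertex has degree $2$ there, and joins the two by the paths from $x$, obtaining a genuine subdivision of $\mathrm{T}_2$ through $x$ all of whose vertices again satisfy the two-neighbour criterion. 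This characterization gives connectedness (every vertex on a path between two kernel vertices has a non-scattered component on each side) and $Ker(Ker(T))=Ker(T)$; for the latter, note also that under the correct definition $x$ already lies on a subdivision, so no connecting path from $x$ to a witness is ever needed.
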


\begin{remark} 
$Ker(T)$ is the set of vertices $x\in V(T)$ having at least two
distinct neighbours $y$ and $y'$ such that the connected components $T
\setminus \{x\}(y)$ of $y$ and $T \setminus \{x\}(y')$ of $y'$ in
$T\setminus \{x\}$ are non-scattered. 
\end{remark} 

Indeed, if $x$ is such a vertex, then the tree that is the  union of a subdivision of
the binary tree taken in $T\setminus \{x\}(y)$ and a subdivision of
the binary tree taken in $T\setminus \{x\}(y')$, with a path
connecting these two subdivision, is a subdivision of\/ $\mathrm{T}_2$
containing $x$, hence $x\in Ker(T)$. The converse is immediate: If
$x\in Ker(T)$ and $T'$ is a subtree of $T $ which is a subdivision of
the binary tree containing $x$, then $x$ has two neighbours $y$ and $y'$ in $T'$ such
that the connected components of $y$ and $y'$ in $T'\setminus \{x\}$
are non-scattered. Hence these connected components in $T$ are
non-scattered too.

\begin{corollary}The non-scattered kernel of a non-scattered tree is preserved by every embedding. 
\end{corollary}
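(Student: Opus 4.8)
The plan is to show that the non-scattered kernel $Ker(T)$ is preserved by every embedding $f$ of $T$, i.e. $f[Ker(T)] \subseteq Ker(T)$. First I would recall the definition: $Ker(T)$ is the union of all vertex sets $V(T')$ where $T'$ ranges over subtrees of $T$ that contain a subdivision of $\mathrm{T}_2$. The key elementary observation is that embeddings send subdivisions of the binary tree to subdivisions of the binary tree: if $T'\subseteq T$ is a subtree containing a subdivision $S$ of $\mathrm{T}_2$, then $f[T']$ is a subtree of $T$ (since $f$ is an embedding, $f[T']$ is connected and induced) and $f[S]$ is again a subdivision of $\mathrm{T}_2$, because $f$ maps paths to paths of the same length (embeddings preserve adjacency in both directions, hence are isometric on induced paths) and preserves the branching structure. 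Consequently $f[T']$ is a subtree of $T$ containing a subdivision of $\mathrm{T}_2$.

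Given that, the argument is short: let $x\in Ker(T)$. By definition there is a subtree $T'\subseteq T$ with $x\in V(T')$ and $T'$ containing a subdivision of $\mathrm{T}_2$. Then $f[T']$ is a subtree of $T$ containing a subdivision of $\mathrm{T}_2$ and $f(x)\in V(f[T'])$. Hence $f(x)\in Ker(T)$ by the definition of the kernel. Since $x$ was arbitrary, $f[Ker(T)]\subseteq Ker(T)$, which is exactly what ``preserved by every embedding'' means in the terminology of Section~\ref{section:definitions}.

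Alternatively, one could invoke the characterization from the Remark just above, namely that $x\in Ker(T)$ iff $x$ has two distinct neighbours $y, y'$ whose connected components in $T\setminus\{x\}$ are both non-scattered; since an embedding $f$ maps $x$ to $f(x)$, maps the neighbours $y, y'$ to distinct neighbours $f(y), f(y')$ of $f(x)$, and restricts to an isomorphism from $T\setminus\{x\}(y)$ (respectively $T\setminus\{x\}(y')$) onto a subtree of $T\setminus\{f(x)\}(f(y))$ (respectively $T\setminus\{f(x)\}(f(y'))$), non-scatteredness of these components is inherited, so $f(x)\in Ker(T)$. Either route works; I would prefer the first since it uses only the defining property of $Ker(T)$ and the fact that $f$ carries a subdivision of $\mathrm{T}_2$ to a subdivision of $\mathrm{T}_2$.

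There is essentially no obstacle here: the only point requiring a line of care is verifying that $f$ really does send a subdivision of the binary tree to a subdivision of the binary tree, which reduces to the fact that an embedding of trees is an isometry on induced paths (a consequence of the ``if and only if'' clause in the definition of embedding) and preserves vertex degrees along such paths, so the trivalent branch vertices of the subdivision are sent to trivalent branch vertices and the subdivided edges to paths of equal length. Given the Remark and its justification already present in the excerpt, this is exactly the content invoked, so the proof is a one-line corollary of the preceding discussion.
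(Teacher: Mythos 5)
Your proof is correct and matches the paper, which states this corollary without proof as an immediate consequence of the definition of $Ker(T)$ (and the preceding Remark): since an embedding is an isomorphism onto an induced subtree, it carries any subtree containing a subdivision of $\mathrm{T}_2$ to another such subtree, so $f[Ker(T)]\subseteq Ker(T)$. Both of your routes are valid; the first is exactly the intended one-line argument.
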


\begin{problem} If there is some embedding $f$ with $Ker(T)\not =f(Ker(T))$, does $T$ have infinitely many twins?

\end{problem}


\begin{proposition} \label{kappatwinsscattered}Let $T$ be a non scattered tree and $x\in Ker(T)$ and  $T_x$ be  the rooted tree $((T, Ker(T))(x),x)$. If $T_x$ has $\kappa$ twins, then $T$ has at least $\kappa$ twins. 
\end{proposition}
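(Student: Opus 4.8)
The plan is to take $\kappa$ pairwise non-isomorphic twins $(S_i, x)$ of the rooted tree $T_x = ((T,Ker(T))(x),x)$ and to ``graft them back'' onto the rest of $T$ in place of $T_x$, producing $\kappa$ twins of $T$. Concretely, I would write $T = \bigoplus_{y\in Ker(T)} ((T,Ker(T))(y), y)$ via Lemma \ref{lem:sumoftrees}, and for each twin $(S_i,x)$ of $T_x$ let $T^{(i)}$ be the tree obtained from this sum by replacing the summand $((T,Ker(T))(x),x)$ with $(S_i,x)$, keeping all other summands and the kernel $Ker(T)$ unchanged. Since $Ker(T)$ is a tree and each $S_i$ is attached at a single vertex $x\in Ker(T)$, each $T^{(i)}$ is again a tree, and $T^{(0)} = T$ for the trivial twin $S_0 = T_x$.

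The first thing to check is that each $T^{(i)}$ is equimorphic to $T$. For one direction, fix an embedding $g_i\colon S_i \hookrightarrow T_x$ (of rooted trees) and an embedding $h_i\colon T_x\hookrightarrow S_i$; these exist because $S_i\equiv T_x$. Using Corollary \ref{cor:sumoverpath} or directly Lemma \ref{lem:sumoftrees}, the identity on $Ker(T)$ together with the identity on all summands $((T,Ker(T))(y),y)$ for $y\ne x$ and the map $g_i$ on the $x$-summand assembles to an embedding $T^{(i)}\hookrightarrow T$; symmetrically, using $h_i$ on the $x$-summand gives an embedding $T\hookrightarrow T^{(i)}$. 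Hence $T^{(i)}\equiv T$ for every $i$.

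The main obstacle, and the real content, is to show the $T^{(i)}$ are pairwise non-isomorphic, i.e.\ that distinct twins of $T_x$ yield distinct twins of $T$. The key point is that $Ker(T)$ is preserved by every embedding (the corollary just above this proposition), and in particular by every isomorphism $T^{(i)}\to T^{(j)}$; moreover such an isomorphism must also respect the non-scattered kernel's structure, so it carries the $x$-summand of $T^{(i)}$ — which is $(S_i, x')$ for the image vertex $x' = \sigma(x)$ — to the $x'$-summand of $T^{(j)}$. I would argue that $x$ must map to a vertex $x'$ of $Ker(T)$ whose hanging scattered tree in $T^{(j)}$ is $S_j$ up to rooted isomorphism (using that $S_i$ is non-scattered-kernel-free, i.e.\ scattered, while distinguishing it from the fixed summands and from the kernel itself by a rank or structural invariant if necessary), forcing $(S_i,x)\simeq (S_j, x')$ and hence $S_i\simeq S_j$ as rooted trees. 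The delicate case is when the automorphism group of $Ker(T)$ (or of $T$) acts non-trivially on $Ker(T)$, so that $x$ could be sent to a different kernel vertex $x'\ne x$; here one uses that in $T^{(j)}$ the only kernel vertex carrying a modified (possibly non-scattered-free only at $S_j$) summand is the image of $x$, together with the fact that for $j=0$ the summand at $x$ is the original $T_x$. Thus an isomorphism $T^{(i)}\simeq T^{(j)}$ forces $S_i \simeq S_j$, so the map $i\mapsto T^{(i)}$ is injective up to isomorphism, giving $\kappa$ pairwise non-isomorphic twins of $T$. I expect the bookkeeping around the kernel-preservation and the possible non-rigidity of $Ker(T)$ to be where care is needed; the equimorphy half is routine via the sum decomposition.
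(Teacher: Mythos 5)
Your overall strategy is the one the paper uses: graft twins of $T_x$ onto the kernel, and use the fact that every embedding (hence every isomorphism between the resulting trees) preserves $Ker(T)$ to separate them. But there is a genuine gap in the separation step, and it sits exactly where you flag the ``delicate case.'' If you replace only the single summand at $x$, an isomorphism $\sigma\colon T^{(i)}\to T^{(j)}$ carries $K$ onto $K$ but need not fix $x$; if $\sigma(x)=x'\neq x$, then the summand $S_i$ at $x$ is carried onto the \emph{unmodified} original summand $((T,K)(x'),x')$ of $T^{(j)}$, not onto $S_j$, and the summand $S_j$ at $x$ in $T^{(j)}$ is the image of some unmodified summand $((T,K)(x''),x'')$ of $T^{(i)}$. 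From this you only get $S_i\simeq ((T,K)(x'),x')$ and $S_j\simeq ((T,K)(x''),x'')$, which yields no contradiction and no conclusion $S_i\simeq S_j$. Your proposed rescue --- that ``the only kernel vertex carrying a modified summand is the image of $x$'' --- is not an isomorphism-invariant statement: an isomorphism sends summands to isomorphic summands, and nothing prevents an unmodified summand at another kernel vertex from being isomorphic to $S_i$. When infinitely many kernel vertices carry summands equimorphic to $T_x$ but realizing several isomorphism types, the multiset of summands of $T^{(i)}$ and $T^{(j)}$ can coincide even though $S_i\not\simeq S_j$, so the construction as written can collapse distinct twins.

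The paper closes this gap by replacing the summand at \emph{every} vertex $z$ in $S:=\{z\in K:((T,K)(z),z)\equiv T_x\}$ simultaneously by the same twin $(T',x)$. Then in $T(T')$ each $z\in S$ carries a summand isomorphic to $(T',x)$, and for any isomorphism $f\colon T(T')\to T(T'')$ and $z\in S$ one gets $((T(T''),K)(f(z)),f(z))\simeq (T',x)\equiv (T'',x)$, which forces $f(z)\in S$ (membership in $S$ is detected by equimorphy type, which is preserved), whence the summand at $f(z)$ is $\simeq (T'',x)$ and $(T',x)\simeq(T'',x)$. Your equimorphy half goes through unchanged for this modified construction (each $T'$ embeds into each $((T,K)(z),z)$ for $z\in S$ and conversely), and one also needs the small observation that the grafted trees are scattered, so that $Ker(T(T'))=K$.
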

\begin{proof} Let $K:=Ker(T)$.  Let $(T', x)$ be a   twin of  $T_x$ and let $S:= \{z \in K : ((T, K)(z), z) \equiv ((T, K)(x), x)\}$.   Let $T(T')$ be the tree obtained by replacing each $((T, K)(z), z)$ for $z\in S$ by $(T', x)$.

Then $T(T')$ is equimorphic to $T$ and $Ker(T(T'))=K$.  We claim that if $(T',x)$ and $(T'',x)$ are two twins of $((T, Ker(T))(x), x)$ which are  not isomorphic (as rooted trees), then the trees  $T(T')$ and $T(T'')$  are not  isomorphic  and this will prove the proposition. Suppose by contradiction that there is some isomorphism $f$ from  $T(T')$ onto  $T(T'')$.  Clearly $f$ carries the kernel $K$ of $T(T')$ onto  the kernel $K$ of $T(T'')$ and we have   $((T(T''), K)(f(z)), f(z))\simeq ((T(T'),K)(z), z)$. Let  $z\in S$.  We have by construction $((T(T'),K)(z), z)\simeq (T',x)$. Hence $((T(T''), K)(f(z)), f(z))\simeq (T',x)$. If $f(z)\not \in S$, then $((T(T''), K)(f(z)), f(z))$ is not equimorphic to $(T'', x)$ which contradicts the fact that $(T',x)$ and $(T'', x)$ are equimorphic.  Hence $f(z)\in S$  and hence $((T(T''), K)(f(z)), f(z))\simeq (T'',x)$, from which follows that $(T',x)$ and $(T'',x)$ are isomorphic. 
\end{proof}

\subsection{Scattered trees as posets}
Let $T$ be a tree, $x\in V(T)$ and $P(T,x)$ be the poset associated with the rooted tree  $(T, x)$. For an example, if $\mathrm{T}_2$ is the binary tree  and the root is the unique vertex of degree $2$, we have the dual of the ordered binary tree. 

%

\begin{lemma}\label{lem:scattered-binary}Let $T$ be a tree and $x\in V(T)$. Then the following properties are equivalent:
\begin{enumerate} [{(i)}]

\item $T$ is scattered;
\item The dual of the ordered binary tree does not embed into $P(T,x)$ as a poset.
\end{enumerate}
\end{lemma}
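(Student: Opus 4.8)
The plan is to prove the equivalence $(i)\Leftrightarrow(ii)$ by establishing the contrapositive in both directions, translating between the combinatorial notion of ``subdivision of $\mathrm{T}_2$ as a subtree'' and the order-theoretic notion of ``the dual of the ordered binary tree embeds into $P(T,x)$''. The key observation is that a subdivision of $\mathrm{T}_2$ inside $T$ is, after choosing a suitable root, essentially a copy of the ordered binary tree realized as a ``path-branching'' subtree, and that branch vertices correspond to elements of $P(T,x)$ with two incomparable successors.

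First I would prove $\neg(ii)\Rightarrow\neg(i)$: suppose $\varphi$ is a poset embedding of the dual of the ordered binary tree $2^{<\N}$ into $P(T,x)$. Recall that $P(T,x)$ is the vertex set of $T$ ordered so that its dual is the rooted tree $(T,x)$ with least element $x$; in particular, for $u\leq v$ in $P(T,x)$ (dually $v\leq u$ in the rooted-tree order) the unique path in $T$ from $v$ to $u$ passes through $x$-side, and crucially for any two elements $a,b$ in the image of $\varphi$ that are incomparable, the images of $a\wedge b$ (the meet, which exists in the rooted-tree order) is mapped correctly because $\varphi$ is an order embedding between meet-semilattices of this special shape. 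From the image of $\varphi$ one extracts, for each $s\in 2^{<\N}$, the vertex $\varphi(s)$, and for each pair $s, s\concat i$ the unique path in $T$ joining $\varphi(s)$ to $\varphi(s\concat i)$; because $\varphi$ preserves order and incomparability, these paths only meet at the prescribed branch vertices, so their union is a subtree of $T$ which is a subdivision of $\mathrm{T}_2$. The one point needing care here is that $\varphi(s\concat 0)$ and $\varphi(s\concat 1)$ must branch off the path to $\varphi(s)$ at distinct vertices and in distinct directions; this follows because $s\concat 0$ and $s\concat 1$ are incomparable with meet $s$, so their images are incomparable with meet $\varphi(s)$ in $P(T,x)$, meaning the two paths diverge at $\varphi(s)$ itself.

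Conversely, for $\neg(i)\Rightarrow\neg(ii)$: suppose $S\subseteq T$ is a subtree which is a subdivision of $\mathrm{T}_2$. Pick a vertex $r\in V(S)$ which is a branch vertex of $S$ and which lies ``closest to $x$'' among branch vertices in an appropriate sense — more precisely, orient $S$ using the rooted-tree order induced from $(T,x)$ and pass to the subtree $S'$ of $S$ consisting of those vertices $v$ with $v$ below some fixed branch vertex $r$; since $\mathrm{T}_2$ is self-similar, one of the ``downward'' subtrees hanging from a branch vertex of $S$, when re-rooted, still contains a full subdivision of $\mathrm{T}_2$, all of whose vertices are $\leq r$ in $P(T,x)$. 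Then map $2^{<\N}$ into $P(T,x)$ by sending $\Box\mapsto r$ and, inductively, sending $s\concat i$ to the $i$-th branch vertex of $S'$ encountered when descending from (the vertex assigned to) $s$ in the rooted tree order. Because descending in $S'$ means descending in $(T,x)$, i.e. going up in $P(T,x)$ reversed — one must be careful with the direction: in $P(T,x)$ the root $x$ is the least element, so ``descending away from $x$'' is going \emph{up} in $P(T,x)$; the map should send longer sequences to larger elements of $P(T,x)$, and incomparable sequences to incomparable vertices (they lie in different components after deleting the relevant branch vertex). This yields the required poset embedding.

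The main obstacle I anticipate is bookkeeping the direction of the orders correctly and verifying that incomparability is genuinely preserved in both directions — that is, ensuring the chosen branch vertices in the $\neg(i)\Rightarrow\neg(ii)$ direction really do produce \emph{incomparable} images with the correct meets, which hinges on the fact that in a tree, deleting a branch vertex separates the two descending subtrees into different connected components, so no vertex of one can be $\leq_{P(T,x)}$-comparable to a vertex of the other. A secondary subtlety is that a ``subdivision of $\mathrm{T}_2$'' has subdivided edges, so branch vertices of $S$ are not adjacent but joined by paths; however this causes no trouble since a poset embedding need not preserve the covering relation, only the order, and the subdividing vertices simply become intermediate elements in chains of $P(T,x)$ that we ignore. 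Once the direction conventions are pinned down, both implications are essentially a direct translation, so I would keep the write-up short and emphasize the separation-by-a-vertex fact as the crux.
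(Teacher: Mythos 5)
Your overall structure matches the paper's, and your $\neg(i)\Rightarrow\neg(ii)$ direction is sound (indeed slightly more careful than the paper's one-line version for an arbitrary base vertex $x$: the separation-by-a-branch-vertex fact you isolate is exactly why incomparability is preserved). The genuine gap is in $\neg(ii)\Rightarrow\neg(i)$, precisely at the point you flag as ``the one point needing care''. You claim that since $s\concat 0$ and $s\concat 1$ are incomparable with meet $s$, ``their images are incomparable with meet $\varphi(s)$'', so the two connecting paths diverge at $\varphi(s)$ itself. A poset embedding preserves order and incomparability but does \emph{not} preserve meets or joins: $\varphi(s)$ need only lie weakly below the true branch point $\varphi(s\concat 0)\vee\varphi(s\concat 1)$ (join taken in $P(T,x)$), and in general the paths from $\varphi(s)$ to $\varphi(s\concat 0)$ and to $\varphi(s\concat 1)$ share a nontrivial initial segment. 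Concretely, in the tree with path $x\sim a\sim b$ and two leaves $c,d$ adjacent to $b$, sending $s\mapsto a$, $s\concat 0\mapsto c$, $s\concat 1\mapsto d$ is an order embedding of the relevant three-element poset, yet $c\vee d=b\neq a$, so both paths leave $a$ through the same edge. Hence the union of your chosen paths need not be a subdivision of $\mathrm{T}_2$ with branch vertices at the $\varphi(s)$.

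The repair is exactly the move the paper makes: discard the images $\varphi(s)$ and replace them by the actual branch points $\check\varphi(s):=\varphi(s\concat 0)\vee\varphi(s\concat 1)$, then verify $\check\varphi(s\wedge t)=\check\varphi(s)\vee\check\varphi(t)$, which guarantees the connecting paths meet only at the prescribed vertices. Your argument as written still produces a subtree that \emph{contains} a subdivision of $\mathrm{T}_2$ (with branch vertices at the joins), so the conclusion survives, but the justification you give for the branching is false and needs this correction. A secondary point: in the paper's convention $x$ is the \emph{largest} element of $P(T,x)$ (it is the dual of the usual rooted-tree order), not the least as you state; your write-up compensates by also un-dualizing the binary tree, so this is a consistency-of-conventions slip rather than a mathematical one, but it must be pinned down in the final version.
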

\begin{proof} 
$\neg (ii)Ê\Rightarrow \neg (i)$. Suppose that there is an embedding
$f$ of the dual of the ordered binary tree into $P(T,x)$. We construct
an embedding $g$ of a subdivision of $\mathrm{T}_2$ into $T$ and hence
$T$ is not scattered. For that, we transform first $f$ to a map
$\check f$ from $2^{<\N}$ into $P(T,x)$.  We set $\check f(\Box):=
f(0)\vee f(1)$ and more generally $\check f(s):= f(s\concat 0)\vee
f(s\concat 1)$. This map satisfies $\check f(s\wedge t)=\check
f(s)\vee \check f(t)$. Now, if for each $s\in 2^{<\N}$ we add the
shortest path between $\check f(s)$ and $\check f(s\concat i)$, for
$i\in 2$, the resulting graph is a subdivision of $\mathrm{T}_2$.

$\neg (i) \Rightarrow \neg (ii)$. If $T$ is not scattered, let $K$ be a subtree of $T$ which is a subdivision of the binary tree. Let $x$ be a vertex of $K$ having degree $2$. Then  the dual of the ordered binary tree $\mathrm{T}_2$ embeds into the poset $P(T, x)$.   
\end{proof}
\subsection{Construction of scattered trees}\label{subs:constsc}

We describe the collection of rooted trees such that
$rank(\Omega_r(T))\leq\alpha$.  For that, we need the following
notation.  Let $\mathcal X$ be a collection of rooted trees. We set
$\bigoplus_{n\in \N}\mathcal X$ for the collection of rooted trees
$\bigoplus_{n\in \N} T_n$ such that $T_n\in \mathcal X$ for every
$n\in \N$, with the convention that this class is reduced to the empty
tree if $\mathcal X$ is empty. Note that if the empty tree $\Box \in
\mathcal{X}$, then $\bigoplus$-sums over finite paths are in $\mathcal
X$ and hence $\mathcal X\subseteq \bigoplus_{n\in \N}\mathcal X $.  We
set $\mathcal {X}^{suc, \sup}$ for the closure of $\mathcal X$ by means
of the operations \emph{successor} and \emph{sum}. Hence $\mathcal
{X}^{suc, \sup}$ is the smallest class $\mathcal Y$ of rooted trees
which contains $\mathcal X$ and such that $T\in \mathcal Y$ provided
that $T'\in \mathcal Y$ if $T=1+T'$ or all $T_i\in \mathcal Y$ if
$T=\bigvee_{i\in I} T_i$.  We obtain from Equation (\ref{eq:rank-sup})
that if $rank (\Omega(T))\leq \alpha$ for all $T\in\mathcal{X}$, then
$rank(\Omega(T))\leq \alpha$ for all $T\in \mathcal {X}^{suc, \sup}$.

\begin{lemma}\label{lem:root-neighbour}
Let $\mathcal X$ be  a collection of rooted trees. Then $\mathcal {X}^{suc, \sup}$ is the collection $\mathcal{R}(\mathcal X)$   of rooted trees  for which there exists a rayless rooted tree $R$ with members of $\mathcal{X}$ attached at the endpoints of $R$. For $R$  empty the tree in  $\mathcal{R}(\mathcal X)$ is a member of $\mathcal{X}$. 
\end{lemma}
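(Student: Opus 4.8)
The plan is to prove both inclusions between $\mathcal{X}^{suc,\sup}$ and $\mathcal{R}(\mathcal{X})$.

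First I would show $\mathcal{R}(\mathcal{X}) \subseteq \mathcal{X}^{suc,\sup}$. Given a rayless rooted tree $R$ with members of $\mathcal{X}$ attached at its endpoints (i.e. at the leaves of $R$, or more precisely at the maximal vertices in the rooted-tree order), I want to argue by induction on the ``rank'' of the rayless tree $R$ — here the natural parameter is the foundation rank of $R$ viewed as a well-founded poset, which exists precisely because $R$ is rayless (a rayless tree rooted at $r$ has $\downarrow_r$-chains that are finite downward but the tree as a whole is well-founded from the leaves; equivalently one uses the Cantor--Bendixson/Schmidt rank of rayless trees). At the base, $R$ is empty or a single vertex: then the tree in $\mathcal{R}(\mathcal{X})$ is a member of $\mathcal{X}\subseteq \mathcal{X}^{suc,\sup}$. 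For the inductive step, consider the root $r$ of $R$; if $r$ has a unique neighbour we are in the situation $T = 1+T'$ where $T'$ arises from a rayless rooted tree of smaller rank with $\mathcal{X}$-trees attached, so $T'\in\mathcal{X}^{suc,\sup}$ by induction and hence $T = 1+T' \in \mathcal{X}^{suc,\sup}$; if $r$ has several neighbours, then $T = \bigvee_{i\in I} T_i$ where each $T_i$ is built from a strictly smaller rayless rooted tree with $\mathcal{X}$-trees attached, so each $T_i\in\mathcal{X}^{suc,\sup}$ by induction, hence $T\in\mathcal{X}^{suc,\sup}$. One subtlety: if $r$ is itself an endpoint of $R$ (a leaf), then the attached $\mathcal{X}$-tree sits at $r$ and $T$ is already a member of $\mathcal{X}$; this is the base case with $R$ a single vertex. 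A second subtlety is that $\mathcal{X}$-trees may be attached at $r$ and also $r$ may have further rayless structure above it — I would handle this by allowing, at each vertex of $R$, both the attached $\mathcal{X}$-tree (as one of the branches in a $\bigvee$) and the subtrees coming from $R$ itself, so that the $\bigvee$ is taken over all of these.

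Next I would show $\mathcal{X}^{suc,\sup}\subseteq\mathcal{R}(\mathcal{X})$, i.e. that $\mathcal{R}(\mathcal{X})$ contains $\mathcal{X}$ and is closed under the operations \emph{successor} and \emph{sup}. Containment of $\mathcal{X}$ is the ``$R$ empty'' clause. Closure under successor: if $T\in\mathcal{R}(\mathcal{X})$ is witnessed by a rayless rooted tree $R$ with $\mathcal{X}$-trees attached, then $1+T$ is witnessed by $1+R$ (add a new root below $r$); $1+R$ is still rayless, and the same $\mathcal{X}$-trees are attached at the same endpoints — unless $R$ was empty, in which case $T\in\mathcal{X}$ and $1+T$ is witnessed by the one-vertex rayless tree $R'$ with $T$ attached at its unique endpoint. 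Closure under sup: given $T_i\in\mathcal{R}(\mathcal{X})$ witnessed by rayless $R_i$ with $\mathcal{X}$-trees attached, form $R := \bigvee_{i\in I} R_i$ by identifying the roots; this is rayless (a ray through the root of $\bigvee R_i$ would have to lie entirely in some $R_i$ past the first step), and the attached trees assemble to witness $\bigvee_i T_i\in\mathcal{R}(\mathcal{X})$. Here again one must be careful if some $R_i$ is empty (so $T_i\in\mathcal{X}$): then $T_i$ should be treated as attached directly at the common root, which is fine since the common root can serve as an endpoint carrying that $\mathcal{X}$-tree while other branches go into the nonempty $R_j$'s.

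The main obstacle I anticipate is bookkeeping in the first inclusion: making precise what ``a rayless rooted tree $R$ with members of $\mathcal{X}$ attached at the endpoints'' means when a vertex of $R$ can simultaneously be an internal vertex of $R$ and carry an attached $\mathcal{X}$-tree, and ensuring the induction on rayless trees is set up so that the $\bigvee$/$1+$ decomposition at the root genuinely reduces the rank of the rayless part. The cleanest formulation is to fix the rank of $R$ as the induction parameter and, at the root $r$, split $T$ as $\bigvee$ of (the $\mathcal{X}$-tree attached at $r$, if any) together with $\{1 + T_y : y \text{ a neighbour of } r \text{ in } R\}$, each $T_y$ being $\mathcal{R}(\mathcal{X})$-presented by the sub-rayless-tree above $y$; since the empty $\bigvee$ and the trivial $1+\Box$ are handled by the convention on $\mathcal{X}$ containing $\Box$ — or, if $\Box\notin\mathcal{X}$, by treating leaves separately — this goes through. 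Everything else is routine once this is set up, using Equation~(\ref{eq:rank-sup}) and the definitions of the three operations as already recorded.
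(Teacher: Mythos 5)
Your proposal is correct, and for the inclusion $\mathcal{X}^{suc,\sup}\subseteq\mathcal{R}(\mathcal X)$ it does exactly what the paper does: check that $\mathcal{R}(\mathcal X)$ contains $\mathcal X$ and is closed under successor and sup, so that minimality of $\mathcal{X}^{suc,\sup}$ finishes it. The paper's proof is in fact only these two sentences and never addresses the converse inclusion $\mathcal{R}(\mathcal X)\subseteq\mathcal{X}^{suc,\sup}$; your well-founded induction on the rayless scaffold $R$ (rank of the root in the away-from-root order, which is well-founded precisely because $R$ is rayless), with the decomposition $T=\bigvee$ of the attached $\mathcal X$-tree at the root together with the trees $1+T_y$ over the neighbours $y$ of the root, is the right way to supply that missing half. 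The bookkeeping issue you flag --- whether an $\mathcal X$-member may sit at a vertex of $R$ that is not a leaf, e.g.\ at the common root after a $\bigvee$ of one $\mathcal X$-member with several trees of the form $1+T'$ --- is a genuine imprecision in the statement of the lemma itself (under a strict reading of ``endpoints'' as degree-one vertices, $\mathcal{R}(\mathcal X)$ is not literally closed under $\bigvee$), and the paper glosses over it; your liberal reading, allowing the attachment at the root and at internal vertices of $R$, is the one under which the lemma is true and is clearly what is intended.
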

\begin{proof} 
Clearly $\mathcal{X}\in \mathcal{R}(\mathcal X)$.   Applying the operation $+$ to a tree in $\mathcal{R}(\mathcal X)$ or the operation $\bigvee_{i\in I}$ to a set of trees $(T_i)_{i\in I}$ results in a tree in $\mathcal{R}(\mathcal X)$.  \end{proof}

%

We  define inductively a collection $\mathcal D^{(\alpha)}_{r}$ of rooted trees for each ordinal $\alpha$ setting first $\mathcal D^{(0)}_{r }:= \{\Box\}^{suc, \sup}$. We set $\mathcal D^{(\alpha)}_{r }:=(\bigcup_{\beta\in \alpha} \mathcal D^{(\beta)}_{r })^{suc, \sup}$ if  $\alpha$ is a non-zero limit ordinal. We set $\mathcal D ^{(\alpha)}_{r }=(\bigoplus_{n\in \N}\mathcal D^{(\beta)}_{r})^{suc, \sup}$ where  $\beta$ is such that $\alpha=\beta+1$. We denote by   $\mathcal D_{r}$ the union of all $\mathcal D^{(\alpha)}_{r}$ and by $\mathcal D$ the collection of trees obtained by forgetting the roots of members of $\mathcal D_r$.   Unless otherwise stated we assume tacitly that $r$ is the name of the root of the elements of $\mathcal{D}_r$.

\begin{lemma} \label{prop:scattered} For each ordinal $\alpha$  and $T\in  \mathcal D^{(\alpha)}_{r}$,  the space $\Omega_r(T)$ is  topologically scattered and $rank(\Omega_r(T))\leq \alpha$.  

If $T\in \mathcal D^{(\alpha)}_{r}$ and $y\in V(T)$ and $\leq_r$ is the order of $P(T,r)$ with $r$ as largest element, then the subtree of $T$ induced by the set $\{x\in V(T) : x \leq_r y \}$ of vertices rooted at $y$ is an element of $\mathcal D^{(\alpha)}_{r}$. 
\end{lemma}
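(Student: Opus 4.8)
The plan is to prove both assertions of Lemma \ref{prop:scattered} simultaneously by transfinite induction on $\alpha$, using the recursive definition of $\mathcal D^{(\alpha)}_r$ together with the rank computations of Lemma \ref{lem:values of the rank} and the structural description of the closure operator $(\cdot)^{suc,\sup}$ given by Lemma \ref{lem:root-neighbour}. The two statements are entangled — the first (scatteredness and the rank bound) is what makes the induction run, while the second (hereditary membership of downsets) is needed because the operations $1+(\cdot)$, $\bigvee$ and $\bigoplus_{n\in\N}$ all produce trees whose principal downsets $\{x : x\leq_r y\}$ are downsets of the building blocks, so the second assertion must be available at the same level to feed the first.

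First I would handle the base case $\alpha=0$: elements of $\mathcal D^{(0)}_r=\{\Box\}^{suc,\sup}$ are, by Lemma \ref{lem:root-neighbour}, exactly the rayless rooted trees (the building blocks being the empty tree). For such $T$, $\Omega_r(T)=\emptyset$, so $\Omega_r(T)$ is trivially topologically scattered with $rank=0\leq 0$; and any principal downset $\{x:x\leq_r y\}$ of a rayless tree is again rayless, hence in $\mathcal D^{(0)}_r$. For the inductive step at a successor $\alpha=\beta+1$, a tree $T\in\mathcal D^{(\alpha)}_r=(\bigoplus_{n\in\N}\mathcal D^{(\beta)}_r)^{suc,\sup}$ is, by Lemma \ref{lem:root-neighbour}, obtained by attaching trees of the form $\bigoplus_{n\in\N}T_n$ (with each $T_n\in\mathcal D^{(\beta)}_r$) at the endpoints of a rayless rooted tree $R$. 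By the induction hypothesis each $\Omega(T_n)$ is topologically scattered with $rank\leq\beta$, so by the second clause of Lemma \ref{lem:values of the rank} each $\Omega(\bigoplus_{n\in\N}T_n)$ is topologically scattered with rank at most $\beta+1=\alpha$ (the property $(\ast)$ case gives $\leq\beta$, the not-$(\ast)$ case gives exactly $\beta+1$). Since attaching these at the endpoints of a rayless tree is, up to the supremum operation and finitely many successor operations, captured by iterating $\bigvee$ and $1+(\cdot)$, Equation (\ref{eq:rank-sup}) (which says $\sup$ does not raise the rank, and that sup of scattered spaces is scattered) lets me conclude $\Omega_r(T)$ is topologically scattered with $rank(\Omega_r(T))\leq\alpha$. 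The limit case $\alpha$ is similar but easier: $T\in\mathcal D^{(\alpha)}_r=(\bigcup_{\beta<\alpha}\mathcal D^{(\beta)}_r)^{suc,\sup}$ is built by $\bigvee$ and $1+(\cdot)$ from trees each lying in some $\mathcal D^{(\beta)}_r$ with $\beta<\alpha$, so by the induction hypothesis and Equation (\ref{eq:rank-sup}) the resulting space is topologically scattered with rank the sup of the ranks of the pieces, which is $\leq\alpha$.

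For the second assertion I would argue, within the same induction, that the class $\mathcal D^{(\alpha)}_r$ is closed under taking principal downsets in the $\leq_r$ order. The point is that if $T$ is obtained from building blocks by the three operations, then for any $y\in V(T)$ the downset $T{\restriction}\{x:x\leq_r y\}$ either lies entirely inside one building block (and one applies the induction hypothesis for that block, noting a downset of a block in $\mathcal D^{(\beta)}_r$ or of $\bigoplus_{n\in\N}T_n$ is again of the required form — a downset of $\bigoplus_{n\in\N}T_n$ is either a downset of some $T_n$, hence handled inductively, or a tail $\bigoplus_{n\geq m}T_n$ together with a finite initial path, which is again of the same shape hence in $\mathcal D^{(\alpha)}_r$), or $y$ lies on the rayless ``skeleton'' $R$, in which case the downset is obtained from $R{\restriction}\{x\leq y\}$ (still rayless) with the same building blocks attached at those of its endpoints that survive — so it is again in $\mathcal D^{(\alpha)}_r$ by Lemma \ref{lem:root-neighbour}. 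Since every $\leq_r$-downset arising from a subtree rooted at $y$ is of one of these forms, closure follows.

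The main obstacle I anticipate is bookkeeping rather than conceptual: one must carefully match the recursive syntax of $(\cdot)^{suc,\sup}$ and of $\bigoplus_{n\in\N}(\cdot)$ against the ``rayless skeleton with attached pieces'' normal form of Lemma \ref{lem:root-neighbour}, and verify that every principal downset of such a tree again falls into one of the finitely many recognizable shapes — in particular that a downset of a $\bigoplus_{n\in\N}$-sum is either contained in a summand or is itself a $\bigoplus_{n\in\N}$-sum prefixed by a finite path, and that this stays inside $\mathcal D^{(\alpha)}_r$ (using $\Box\in\mathcal D^{(\beta)}_r$ so that finite paths and finite sums are available). The rank inequalities themselves are immediate from Lemma \ref{lem:values of the rank} and Equation (\ref{eq:rank-sup}) once the normal form is in hand; the delicate case is $\alpha$ a successor, where one must check that stacking the $\bigoplus_{n\in\N}$-operation (which can add $1$ to the rank) on top of $\mathcal D^{(\beta)}_r$ (rank $\leq\beta$) and then closing under $\bigvee$ and $1+(\cdot)$ (which do not add to the rank, by Equation (\ref{eq:rank-sup})) yields exactly the bound $\leq\beta+1=\alpha$.
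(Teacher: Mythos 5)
Your proposal is correct and follows essentially the same route as the paper: transfinite induction on $\alpha$, with the base case being the rayless trees, the successor step combining Equation (\ref{eq:rank-sum1}) for the $\bigoplus_{n\in\N}$-sums with Equation (\ref{eq:rank-sup}) for the closure under successor and sup, and the limit step using Equation (\ref{eq:rank-sup}) alone. The only difference is presentational: you route the argument through the normal form of Lemma \ref{lem:root-neighbour} and spell out the downset case analysis for the second assertion, which the paper leaves as "easily seen."
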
  

\begin{proof}
The proof goes by induction on $\alpha$. Suppose  $\alpha=0$. $\mathcal D^{(0)}_{r}:= (\Box)^ {suc, \sup}$ is just the collection of   rayless trees. These are the trees such that $\Omega(T)= \emptyset$, hence they are scattered and  $rank(\Omega(T))=0$. Suppose that the Lemma  holds for all $\beta <\alpha$. If $\alpha$ is a limit ordinal, then induction asserts that for each $T\in \bigcup_{\beta<\alpha} \mathcal D^{\beta}_{r }$,  $rank(\Omega(T))<\alpha$. Since $\mathcal D^{\alpha}_{r }=(\bigcup_{\beta<\alpha} \mathcal D^{\beta}_{r })^{suc, \sup}$, the  Equation  (\ref{eq:rank-sup}) of Lemma \ref{lem:values of the rank} ensures that $rank(\Omega(T))\leq \alpha$ for every $T\in \mathcal D^{\alpha}_{r }$. Suppose that $\alpha$ is a successor ordinal and let $\beta$ such that  $\alpha=\beta+1$.  We have $\mathcal D_r^{(\alpha)}= (\bigoplus_{n\in \N}\mathcal D^{(\beta)}_{r})^{suc, \sup}$. Induction asserts that $rank(\Omega_r(T))\leq \beta$ holds for each $T\in \mathcal D^{(\beta)}_{r})$. According to Equation~(\ref{eq:rank-sum1})   of Lemma \ref{lem:values of the rank}, $rank(\Omega_r(T))\leq \beta+1=\alpha$  for every $T$ of $(\bigoplus_{n\in \N}\mathcal D^{(\beta)}_{r})$. With Equation (\ref{eq:rank-sup}) we obtain that $rank(\Omega_r(T))\leq \alpha$ holds for each  $T\in \mathcal D^{(\beta)}_{r}$.  The induction process verifies via  Lemma \ref{lem:values of the rank} the claim that $\Omega(T)$ is topologically scattered for all trees $T\in \mathcal{D}$. The second statement  obviously for trees in $\mathcal D^{(0}_{r}$ and then is easily seen to hold throughout the induction process.  \end{proof}

\begin{theorem}\label{thm-scattered}
Let $T$ be a tree. Then the following properties are equivalent:
\begin{enumerate}[{(i)}]
\item $T$ is scattered;
\item $\Omega(T)$ is topologically scattered; 
\item $T\in \mathcal D$. 
\end{enumerate} 
\end{theorem}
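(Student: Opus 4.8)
The plan is to prove the cycle of implications $(i) \Rightarrow (ii) \Rightarrow (iii) \Rightarrow (i)$, relying heavily on the machinery already developed. The implication $(i) \Rightarrow (ii)$ is the only genuinely new content and is where the work lies; the other two are essentially bookkeeping on top of the lemmas just proved.

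First, for $(iii) \Rightarrow (ii)$: this is immediate from Lemma \ref{prop:scattered}. Every $T \in \mathcal D$ lies in some $\mathcal D^{(\alpha)}_r$, and that lemma asserts precisely that $\Omega_r(T)$ is topologically scattered (the rank bound $rank(\Omega_r(T)) \le \alpha$ is a bonus we do not even need here). For $(ii) \Rightarrow (i)$: I would prove the contrapositive. If $T$ is not scattered, then by definition some subdivision of $\mathrm{T}_2$ embeds into $T$; equivalently, by Lemma \ref{lem:scattered-binary}, the dual of the ordered binary tree embeds into $P(T,x)$ for a suitable degree-$2$ vertex $x$. One then checks that the set of ends of such a subdivision forms a subset of $\Omega(T)$ homeomorphic to the Cantor set (the $2^{\aleph_0}$ branches through the subdivided $\mathrm{T}_2$), which has no isolated point, contradicting topological scatteredness of $\Omega(T)$. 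This is a short argument; the correspondence between branches of the subdivision and ends of $T$ is the one recorded implicitly in the end-space discussion of Subsection \ref{space of ends}.

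The main obstacle is $(i) \Rightarrow (iii)$, i.e. showing that every scattered tree, when rooted at an arbitrary vertex $r$, lands in $\mathcal D_r$. The natural approach is transfinite induction on $\alpha := rank(\Omega_r(T))$ — which makes sense because $(i) \Rightarrow (ii)$ guarantees $\Omega_r(T)$ is topologically scattered and hence has an ordinal Cantor–Bendixson rank. For the base case $\alpha = 0$ the tree is rayless, and $\mathcal D^{(0)}_r = \{\Box\}^{suc,\sup}$ is exactly the class of rayless rooted trees (by Lemma \ref{lem:root-neighbour}, the rayless rooted trees are those built from an empty $\mathcal X$-attachment over a rayless skeleton, i.e. rayless trees themselves). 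For the inductive step, given $T$ with $rank(\Omega_r(T)) = \alpha$, I would locate the "spine'': the set of vertices $x$ such that $rank(\Omega_x(T(\to x)))$, or more precisely the rank of the end-space of the part of $T$ hanging below $x$, is already maximal. Along any ray this rank is non-increasing away from $r$, and one extracts finitely or countably many maximal-rank rays emanating appropriately; cutting $T$ along these produces rooted subtrees of strictly smaller end-rank, which by induction lie in lower $\mathcal D^{(\beta)}_r$, and reassembling them via $\bigoplus_{n\in\N}$ along the rays and $\bigvee$ at branch points exhibits $T \in (\bigoplus_{n\in\N}\mathcal D^{(<\alpha)}_r)^{suc,\sup} = \mathcal D^{(\alpha)}_r$ (or the limit-stage analogue). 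The delicate points are: handling the case where $\alpha$ is a limit ordinal versus a successor (mirroring the two branches in the definition of $\mathcal D^{(\alpha)}_r$ and in Lemma \ref{lem:values of the rank}); and verifying that the decomposition really does drop the rank, which is the converse direction of the rank computation in Lemma \ref{lem:values of the rank} and requires care about where the last non-empty derivative of $\Omega_r(T)$ concentrates.

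I would therefore organize the proof as: (1) cite Lemma \ref{prop:scattered} for $(iii)\Rightarrow(ii)$; (2) give the Cantor-set argument for $\neg(i)\Rightarrow\neg(ii)$ using Lemma \ref{lem:scattered-binary}; (3) do the transfinite induction on Cantor–Bendixson rank for $(i)\Rightarrow(iii)$, splitting into the rayless base case, the limit stage (close under $suc,\sup$ of everything below), and the successor stage (extract maximal-rank rays, peel off lower-rank subtrees, reassemble), invoking Lemma \ref{lem:values of the rank} and Lemma \ref{lem:root-neighbour} throughout. The bulk of the genuinely new verification sits in the successor stage, and that is the step I expect to be the real obstacle.
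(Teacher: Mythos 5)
Your two easy implications coincide with the paper's: $(iii)\Rightarrow(ii)$ is read off from Lemma \ref{prop:scattered}, and $\neg(i)\Rightarrow\neg(ii)$ is the Cantor-space argument on a subdivision of $\mathrm{T}_2$ sitting as a closed subspace of $\Omega_r(T)$. The divergence is in the hard implication. The paper does \emph{not} induct on the Cantor--Bendixson rank: it proves $\neg(iii)\Rightarrow\neg(i)$ directly. Letting $F$ be the set of vertices $y$ such that the subtree below $y$ lies in $\mathcal D_r$, it observes that if $V(T)\setminus F$ contained no two incomparable elements it would reduce to a path, whence $T$ would be a $\bigoplus$-sum over that path of members of $\mathcal D_r$ and so lie in $\mathcal D_r$; since it does not, one finds two incomparable elements below every vertex outside $F$, iterates, and obtains the dual ordered binary tree inside $P(T,r)$, hence (Lemma \ref{lem:scattered-binary}) a subdivision of $\mathrm{T}_2$. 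This needs no rank at all. Your rank induction is not wrong in spirit --- the paper carries out essentially that argument later, in the Proposition characterizing $\mathcal D^{(\alpha)}_r$ exactly as the rooted trees of end-rank at most $\alpha$, which is a strictly finer statement --- but it is the harder road for the mere equivalence.

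Moreover, as written your successor stage has a genuine gap at precisely the point you flag. You say you would ``extract finitely or countably many maximal-rank rays'' and that cutting along them ``produces rooted subtrees of strictly smaller end-rank.'' Two things are missing. First, why is the set of vertices not yet absorbed into lower $\mathcal D^{(\beta)}_r$'s a single path (so that a $\bigoplus_{n\in\N}$ reassembly applies)? That is exactly the two-incomparable-elements dichotomy: if it is not a path you must invoke scatteredness of $T$ via Lemma \ref{lem:scattered-binary} to get a contradiction, and once you write that down you have essentially reproduced the paper's contrapositive. Second, the off-spine subtrees need not all have rank $<\alpha$; one must argue via property $(\ast)$ and Equation (\ref{eq:rank-sum1}) of Lemma \ref{lem:values of the rank} that only finitely many of them can attain rank $\alpha$ (otherwise the total rank would be $\alpha+1$), and then restart the decomposition beyond the last such one. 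Both points are handled explicitly in the paper's later Proposition; without them your inductive step does not close.
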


\begin{proof}
We proceed by showing that $(i)\Rightarrow (iii)\Rightarrow (ii)\Rightarrow (i)$. 

$\neg iii\Rightarrow \neg i$. Let $T\not\in \mathcal{D}$. Choose some vertex in $V(T)$ and name it $r$.   The rooted tree $(T,r)$ is not in $\mathcal{D}_r'$  because $T\not\in \mathcal{D}$.  Let $P(T,r)$ be the corresponding poset with order denoted by $\leq_r$. Let $F$ be the set of $y\in V(T)$ for which the set $\{x\geq_r y: x\in V(T)\}$ induces a subtree of $T$ which is an element of $\mathcal{D}_r$ with root at $y$. We claim that there are two incomparable elements $y_0$ and $y_1$ in $V(T) \setminus F$. For  otherwise, $V(T)\setminus F$ reduces to a path and $T$ is a sum over this path of members of $\mathcal D_r$ hence belongs to   $\mathcal D_r$. Applying this property repeatedly we get an embedding of the dual of the ordered binary tree and hence,  from Lemma \ref{lem:scattered-binary},  a subdivision of the binary tree.

$(iii)\Rightarrow (ii)$.  According to Lemma \ref{prop:scattered}, for each ordinal $\alpha$, and each rooted tree $T\in \mathcal D^{(\alpha)}_{r}$,  $\Omega_r (T)$  is topologically scattered.   

$\neg i\Rightarrow \neg ii$. Let $A$ be a subtree of $T$ isomorphic to
a subdivision of\/ $\mathrm{T}_2$. Let $r$ be the root of $A$ (a
vertex of degree $2$ within $A$.) Then $\Omega(A,r)$ identifies to the
Cantor space, hence is not topologically scattered. (No one-way path
in $\Omega_r(A)$ for example is isolated in $\Omega_r(A)$ and hence
not isolated in $\Omega_r(T)$.) Or, it is easy to see that
$\Omega_r(A)$ is a closed subspace of $\Omega_r(T)$, hence this space
is not topologically scattered.
\end{proof}

\begin{proposition}
For each ordinal $\alpha$ a tree  $T$ is an element of $\mathcal D^{(\alpha)}_{r}$ if and only if $\Omega_r(T)$ is topologically scattered and $rank(\Omega_r(T))\leq \alpha$. 
\end{proposition}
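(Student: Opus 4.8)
The statement is a converse pairing to Lemma~\ref{prop:scattered}: that lemma already gives one direction, namely if $T\in\mathcal D^{(\alpha)}_r$ then $\Omega_r(T)$ is topologically scattered and $rank(\Omega_r(T))\le\alpha$. So the plan is to prove the reverse implication: if $\Omega_r(T)$ is topologically scattered with $rank(\Omega_r(T))\le\alpha$, then $T\in\mathcal D^{(\alpha)}_r$. I would proceed by induction on $\alpha$. The base case $\alpha=0$ is immediate: $rank(\Omega_r(T))=0$ means $\Omega_r(T)=\emptyset$, i.e. $T$ is rayless, and $\mathcal D^{(0)}_r=\{\Box\}^{suc,\sup}$ is precisely the class of rooted rayless trees (as noted in the proof of Lemma~\ref{prop:scattered}).

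For the inductive step I would treat the successor and limit cases separately. Suppose first $\alpha=\beta+1$ and $rank(\Omega_r(T))\le\alpha$. By Theorem~\ref{thm-scattered}, $T$ is scattered, hence $T\in\mathcal D$. Using Lemma~\ref{lem:root-neighbour} together with the structure of $\mathcal D_r$, I would argue that $(T,r)$ is obtained from a rayless rooted tree $R$ by attaching, at each endpoint $y$ of $R$, a rooted subtree of the form $\bigoplus_{n\in\N} S_n$ with each $S_n\in\mathcal D^{(\beta)}_r$; what must be checked is that the parameter can uniformly be taken to be $\beta$. For this I would invoke Lemma~\ref{lem:values of the rank}: along any ray of $T$ through such an endpoint, the rank-$\le\alpha$ hypothesis forces $\sup\{rank(\Omega(S_n)):n\in\N\}\le\beta$ (if it were $\beta$ or more in a way violating $(\ast)$ the rank would reach $\beta+1$ only, which is still $\le\alpha$; but if some $rank(\Omega(S_n))\ge\alpha$ then either Equation~(\ref{eq:rank-sum1}) pushes the rank to $\alpha+1$, contradicting the bound, or $(\ast)$ holds with value $\ge\alpha$, and one recurses into $S_{n_0}$). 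Then the induction hypothesis gives each $S_n\in\mathcal D^{(\beta)}_r$, whence $\bigoplus_{n\in\N}S_n\in\bigoplus_{n\in\N}\mathcal D^{(\beta)}_r$ and, applying the $suc,\sup$-closure coming from the rayless tree $R$, we get $T\in\mathcal D^{(\alpha)}_r$. The limit case is easier: if $\alpha$ is a non-zero limit and $rank(\Omega_r(T))\le\alpha$, then by a decomposition of $(T,r)$ over the poset $P(T,r)$ as in the proof of $\neg(iii)\Rightarrow\neg(i)$ of Theorem~\ref{thm-scattered}, $T$ is built by $suc,\sup$-operations from subtrees of strictly smaller rank; for each such subtree $S$ we have $rank(\Omega_r(S))<\alpha$, so $rank(\Omega_r(S))\le\beta$ for some $\beta<\alpha$ and the induction hypothesis puts $S\in\mathcal D^{(\beta)}_r\subseteq\bigcup_{\beta<\alpha}\mathcal D^{(\beta)}_r$, so $T\in(\bigcup_{\beta<\alpha}\mathcal D^{(\beta)}_r)^{suc,\sup}=\mathcal D^{(\alpha)}_r$.

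The main obstacle I anticipate is the bookkeeping in the successor step: extracting, from the single global inequality $rank(\Omega_r(T))\le\alpha$, the correct uniform bound $\le\beta$ on the pieces $S_n$ attached along rays, and making precise the decomposition of $T$ as a $suc,\sup$-construction over a rayless skeleton with $\bigoplus$-sums of rank-$\le\beta$ trees hanging off the ends. This requires using Lemma~\ref{lem:values of the rank} in both directions (the case split on property $(\ast)$) and carefully iterating the argument inside a piece $S_{n_0}$ of maximal rank when $(\ast)$ holds, so that one does not lose control of the ordinal. The limit case and the base case are routine once the successor case machinery is set up, and everything else follows from Theorem~\ref{thm-scattered}, Lemma~\ref{lem:root-neighbour} and Lemma~\ref{prop:scattered} already in hand.
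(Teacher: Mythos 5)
Your overall framing is right: the forward direction is Lemma \ref{prop:scattered}, and what remains is the converse, for which some induction or minimality argument on $\alpha$ is needed, with the base case $\alpha=0$ exactly as you state. But the inductive steps as you describe them have a genuine gap: in both the successor and the limit case you \emph{assume} the decomposition of $(T,r)$ into a rayless skeleton with $\bigoplus$-sums of lower-rank pieces attached, and that decomposition is precisely the content of the proof. The paper obtains it by introducing the set $F$ of vertices $y$ whose down-set $\downarrow y$ (rooted at $y$) already lies in $\mathcal D^{(\alpha)}_{r}$, assuming $r\notin F$ for a minimal counterexample $\alpha$, and then proving the dichotomy: either every vertex of $V(T)\setminus F$ has two incomparable elements of $V(T)\setminus F$ below it --- in which case Lemma \ref{lem:scattered-binary} produces a subdivision of the binary tree, contradicting scatteredness --- or $(\downarrow x)\setminus F$ is a chain for some $x\notin F$, in which case $T'$ is a sum over a path of trees built by $suc,\sup$ from $F$-components, and Equation (\ref{eq:rank-sum1}) yields a contradiction. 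You gesture at this dichotomy only in the limit case, and there your justification is actually false: a component below the root need \emph{not} have strictly smaller rank (e.g.\ $T=1+T'$ with $rank(\Omega(T'))=\alpha$), so ``$T$ is built by $suc,\sup$-operations from subtrees of strictly smaller rank'' cannot be the induction step.

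The same problem defeats your successor step. Along the chain of ``bad'' vertices, finitely many of the attached pieces $T'_n$ may have rank exactly $\alpha$, so your induction hypothesis (which only covers rank $\le\beta$) does not apply to them, and your fallback --- ``one recurses into $S_{n_0}$'' --- has no decreasing measure and no termination guarantee. The paper resolves this not by induction on the rank of the pieces but by the definition of $F$: the pieces are sums and sups of subtrees rooted at $F$-vertices, hence lie in $\mathcal D^{(\alpha)}_{r}$ by construction, and only the \emph{tail} beyond the last piece of rank $\alpha$ is subjected to the $(\ast)$ bookkeeping of Equation (\ref{eq:rank-sum1}) (if the tail's ranks had supremum $\alpha$ while all being $<\alpha$, property $(\ast)$ would fail and the tail would have rank $\alpha+1$; hence the supremum is some $\beta<\alpha$ and the tail lies in $\mathcal D^{(\beta+1)}_{r}\subseteq\mathcal D^{(\alpha)}_{r}$, contradicting $x\notin F$). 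You correctly flag the ``bookkeeping'' as the main obstacle, but the missing idea is structural, not merely bookkeeping: without the set $F$ and the two-incomparable-vertices/binary-tree dichotomy, neither the existence of the rayless skeleton nor the treatment of rank-$\alpha$ pieces goes through.
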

\begin{proof} The direct implication is Lemma \ref{prop:scattered}.  Suppose that the reverse implication does not hold. Then there is a minimum ordinal   $\alpha$  such that for some rooted tree $T$, the set $\Omega_r(T)$ of one-way paths is topologically scattered and $rank(\Omega_r(T))\leq \alpha$ but $T\not \in  \mathcal D^{(\alpha)}_{r}$. In this case $rank(\Omega_r(T))=\alpha$. Let $P(T,r)$ be the  poset associated with  $T$, which has $r$ as  largest  element  and  the order denoted by $\leq_r$. Let $F$ be the set of vertices  $y\in V(T)$ such that the subtree of $T$ induced by the set $\{x\leq_r y: x\in V(T)\}:=\downarrow x$ of vertices and rooted at $y$ is in  $\mathcal D_r^{(\alpha)}$.  Since $T\not \in  \mathcal D^{(\alpha)}_{r}$ the vertex $r\not \in F$ and  hence  $F$ is  a proper  initial  segment of $P(T,r)$ according to the second claim of Lemma \ref{prop:scattered}.   Note that for $x\in V(T)$ the rank of the set of ends of the subtree induced by  the set of vertices $\downarrow x$ is less than or equal to $\alpha$.  

We claim that for every $x\in V(T) \setminus F$ there are two
incomparable elements $y_0$ and $y_1$ in $V(T) \setminus F$ with
$y_0,y_1\leq x$. For otherwise the initial segment $(\downarrow x)
\setminus F$ is a chain in the poset $P(T,r)$, hence there is a path
$C:=x=x_0 \sim \dots \sim x _n\sim \dots$. Let $Y_n$ be the set of
neighbours $y$ of $x_n$ which are distinct from $x_{n+1}$. Note that
$Y\subseteq F$.  Hence the connected component $T\setminus \{x_{n}\}
(y)$ of $T\setminus \{x_n\}$ containing $y$ is an element in $
\mathcal D_r^{(\alpha)}$ rooted at $y$. It follows that the tree
$T'_n=((T,C)(x_n), x_n)$ is an element of $\{T\setminus\{x_n\}(y):y\in
Y_n\}^{suc, \sup}$ and hence an element of
$\mathcal{D}_r^{(\alpha)}$. (Remember here that $(T,C)(x_n)$ is the
connected component of $x_n$ in $T\setminus(C\setminus \{x_n\})$. And
then $T'_n$ is this tree rooted at $x_n$.) Let $T'$ be the tree
induced by $\downarrow x$ rooted at $x$.  If $C$ is finite it is
easily seen that then the tree $T'$ is an element of $\{T'_n: x_n\in
V(C)\}^{suc,\sup}$ and hence an element of $\mathcal{D}_r^{(\alpha)}$,
contradicting the stipulation $x\in V(T) \setminus F$.

Hence $C$ is an infinite path and the tree $T'$ can be written as $T'=
\bigoplus_{\{x_n: n\in \N\}} T'_n$.  Let $\alpha_n$ be the rank of
$\Omega_{x_n}(T'_n)$.  Via the minimality of $\alpha$, we have
$rank(\Omega_x(T'))=\alpha$. Each $T'_{n}$ belongs to $\mathcal
D_r^{(\alpha)}$.  We apply Equation~(\ref{eq:rank-sum1}).  The set $M$
of integers $n$ such that $\alpha_n=\alpha$ must be finite for
otherwise $rank(\Omega_x(T'))=\alpha+1$.  Let $n_0=0$ if there is no
$n$ with $\alpha_n=\alpha$, and otherwise let $n_0=\max M$.  Let $T''$
be the tree induced by $\downarrow x_{n_0+1}$ rooted at
$x_{n_0+1}$. By construction $T''\not\in \mathcal{D}_r^{(\alpha)}$ but
it can be written as $T''=\bigoplus_{\{x_n: n_0< n\in \N\}} T'_n$. If
$\sup\{\alpha_n: n_0<n\in \N\}=\alpha$, then because $\alpha_n<\alpha$
for all $n_0<n\in \N$, the sequence $(T'_n)_{n_0<n\in \N}$ does not
have property $(\ast)$. This is not possible because then, according
to Equation (\ref{eq:rank-sum1}), we would have $rank
\Omega(T'')=\alpha +1$; implying $\sup\{\alpha_n: n_0<n\in
\N\}=\beta<\alpha$. Hence $T''\in \mathcal{D}_r^{(\beta+1)}\subseteq
\mathcal{D}_r^{(\alpha)}$. Again a contradiction.

Hence indeed, for every $x\in V(T) \setminus F$ there are two
incomparable elements $y_0$ and $y_1$ in $V(T) \setminus F$ with
$y_0,y_1\leq x$.  But then $T$ embeds a copy of the dual of the
ordered binary tree, and hence, from Lemma \ref{lem:scattered-binary},
a subdivision of the binary tree, contradicting the fact that
$\Omega_r(T)$ is topologically scattered.
\end{proof}
    
\begin{corollary}
 For each ordinal $\alpha$ there is a scattered tree whose space of ends has ordinal rank  $\alpha$. 
 \end{corollary}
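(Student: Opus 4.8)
The plan is a transfinite induction on $\alpha$, building the required tree explicitly from the operations \emph{successor}, \emph{sup} and \emph{sum} and reading off the rank of its space of ends from Lemma \ref{lem:values of the rank}. For $\alpha=0$ one takes any rayless tree, for instance a single vertex: then $\Omega(T)=\emptyset$, which is vacuously topologically scattered, and $rank(\Omega(T))=0$.

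For a successor ordinal $\alpha=\beta+1$, the induction hypothesis furnishes a rooted tree $S$ with $\Omega(S)$ topologically scattered and $rank(\Omega(S))=\beta$. I would set $T:=\bigoplus_{n\in\N}S_n$, the sum over the ray on $\N$ of pairwise disjoint copies $S_n\cong S$. The constant sequence $(S_n)_{n\in\N}$ does not have property $(\ast)$ --- both sides of the inequality defining $(\ast)$ are equal to $\beta$ --- so the second clause of Equation~(\ref{eq:rank-sum1}) of Lemma \ref{lem:values of the rank} applies and gives that $\Omega(T)$ is topologically scattered with $rank(\Omega(T))=\beta+1=\alpha$.

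For a limit ordinal $\alpha$, the induction hypothesis furnishes, for each $\beta<\alpha$, a rooted tree $S_\beta$ with $\Omega(S_\beta)$ topologically scattered and $rank(\Omega(S_\beta))=\beta$; choose these pairwise disjoint and set $T:=\bigvee_{\beta<\alpha}S_\beta$. By Equation~(\ref{eq:rank-sup}) of Lemma \ref{lem:values of the rank}, $\Omega(T)$ is topologically scattered and $rank(\Omega(T))=\sup\{\beta:\beta<\alpha\}=\alpha$, the last equality because $\alpha$ is a limit ordinal. In all three cases $\Omega(T)$ is topologically scattered, hence $T$ is scattered by the implication $(ii)\Rightarrow(i)$ of Theorem \ref{thm-scattered}; this closes the induction.

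No step is genuinely hard --- the argument is a routine transfinite recursion. The one point that requires care is the choice of operation at each stage: one must use the \emph{sum} over $\N$ of identical copies at successor stages, so that property $(\ast)$ fails and the rank increases by exactly one, but the \emph{sup} at limit stages, since a sum over $\N$ of a cofinal increasing family would again make $(\ast)$ fail and overshoot to $\alpha+1$. One may also record that each tree produced lies in $\mathcal D^{(\alpha)}_r$, so the construction simultaneously shows that the bound $rank(\Omega_r(T))\le\alpha$ of Lemma \ref{prop:scattered} is attained for every $\alpha$.
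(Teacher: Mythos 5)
Your construction is correct and is exactly the argument the paper intends (the corollary is stated without proof, immediately after the machinery of $\mathcal D^{(\alpha)}_r$ and Lemma \ref{lem:values of the rank} that makes it routine): a constant $\bigoplus_{n\in\N}$-sum at successor stages, where property $(\ast)$ fails and the rank goes up by exactly one, and a $\bigvee$ at limit stages, with scatteredness of $T$ supplied by Theorem \ref{thm-scattered}. Your closing observation that the trees produced lie in $\mathcal D^{(\alpha)}_r$, so the bound of Lemma \ref{prop:scattered} is sharp, is also accurate.
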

\section{An  extension of a result of  Polat and Sabidussi}\label{section-polat-sabidussi}
We extend Theorem 3.1 of \cite{polat-sabidussi}.

\begin{lemma}\label{lem:limit scattered rank0}
Let $T$ be a scattered tree such that $\alpha:= rank(\Omega(T))$ is a non-zero limit ordinal and $x\in V(T)$. Then the following properties are equivalent:
\begin{align*} 
(i)\hskip 3pt  &\text{Either $x$ has at least two neighbours $y,y'$ such that}\\ 
&\text{$rank(\Omega (T\setminus\{x\}(y)))= rank(\Omega (T\setminus\{x\}(y')))= \alpha$ or }\\
&\sup \{rank(\Omega (T\setminus\{x\}(y))): rank(\Omega (T\setminus\{x\}(y)))<\alpha \; \text{and}\; y\sim x\}= \alpha. \\
(ii)\hskip 3pt &\text{There are two disjoint sets $X_1$ and $X_2$ of neighbours of $x$ such that $\alpha=$}\\
&\sup \{rank(\Omega (T\setminus\{x\}(y))): y\in X_1\}=\sup \{rank(\Omega (T\setminus\{x\}(y))): y\in X_2\}.
\end{align*}   
\end{lemma}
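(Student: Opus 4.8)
The plan is to prove the two implications separately, exploiting the fact that for a neighbour $y$ of $x$ the tree $T\setminus\{x\}(y)$, rooted at $y$, together with the edge to $x$, realizes $(T,\{x\})$-decomposition: writing $K=\{x\}$ we have $T=\bigoplus_{y\sim x}(1+T\setminus\{x\}(y))$ over the star at $x$, so that $\Omega(T)$ is the disjoint union (as a clopen partition away from finitely many non-isolated issues) of the spaces $\Omega_y(T\setminus\{x\}(y))$, and therefore $rank(\Omega(T))=\alpha$ is governed by the supremum of the ranks of these components together with the possibility of a single component of rank exactly $\alpha$ forcing $\alpha$ as a non-strict bound; this is the content of Equation~(\ref{eq:rank-sup}) of Lemma~\ref{lem:values of the rank} applied to the sup-operation at $x$. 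The first thing I would record is that since $\alpha=rank(\Omega(T))$ is a limit ordinal, no single component $T\setminus\{x\}(y)$ can have rank $>\alpha$, and if some component had rank $=\alpha$ then in fact it must have rank $<\alpha$ is false — rather, by Equation~(\ref{eq:rank-sup}), $\alpha=\sup\{rank(\Omega(T\setminus\{x\}(y))):y\sim x\}$, so either the sup is attained (by one or more components of rank $\alpha$) or it is approached by components of strictly smaller rank.

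For $(i)\Rightarrow(ii)$: in the first case of $(i)$ we are handed two neighbours $y,y'$ with $rank(\Omega(T\setminus\{x\}(y)))=rank(\Omega(T\setminus\{x\}(y')))=\alpha$; simply take $X_1:=\{y\}$ and $X_2:=\{y'\}$, which are disjoint and each has sup of component-ranks equal to $\alpha$. In the second case of $(i)$ the supremum $\alpha$ is achieved by a set $Y$ of neighbours each of component-rank $<\alpha$; since $\alpha$ is a limit ordinal, $Y$ must be infinite (a finite set of ordinals below $\alpha$ has a maximum below $\alpha$), so we may split $Y$ into two disjoint infinite subsets $X_1,X_2$. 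The key point is that each of $X_1,X_2$ is cofinal in $\alpha$ with respect to the component-ranks: given $\beta<\alpha$, there is $y\in Y$ with $rank(\Omega(T\setminus\{x\}(y)))>\beta$, but a single such $y$ only lands in one of $X_1,X_2$, so I need the stronger fact that $\{rank(\Omega(T\setminus\{x\}(y))):y\in Y\}$ is not merely cofinal but has infinitely many members above each $\beta<\alpha$; this follows because if only finitely many components had rank above $\beta$ their ranks would have a maximum $\gamma<\alpha$, and then removing them would leave a set with sup $\le\gamma<\alpha$, while the finitely many removed contribute only their max, still $<\alpha$, contradicting that the total sup is $\alpha$. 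Hence each $X_i$ has sup of component-ranks $=\alpha$, giving $(ii)$.

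For $(ii)\Rightarrow(i)$: from $X_1,X_2$ disjoint with $\sup\{rank(\Omega(T\setminus\{x\}(y))):y\in X_i\}=\alpha$ for $i=1,2$, distinguish cases according to whether $\alpha$ is attained within $X_1\cup X_2$. If some $y\in X_1$ has component-rank $\alpha$ and some $y'\in X_2$ has component-rank $\alpha$, we are in the first clause of $(i)$. If $\alpha$ is attained in at most one of the $X_i$, say not in $X_2$, then every $y\in X_2$ has $rank(\Omega(T\setminus\{x\}(y)))<\alpha$ while $\sup$ over $X_2$ is still $\alpha$, which is exactly the second clause of $(i)$. The one remaining configuration — $\alpha$ attained in, say, $X_1$ by a unique $y_0$ and not at all in $X_2$ — also falls into the second clause via $X_2$. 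The main obstacle, and the step I would be most careful about, is the case analysis in $(i)\Rightarrow(ii)$ second case: verifying that the set of neighbours with component-rank below $\alpha$ can be split into two pieces each still witnessing $\alpha$ as a supremum, which genuinely uses that $\alpha$ is a limit ordinal (so that "sup attained only from below" forces infinitely many large values, allowing a balanced bipartition). Everything else is a routine application of Lemma~\ref{lem:values of the rank} together with the invariance of the end space under the star-decomposition at $x$.
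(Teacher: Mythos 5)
The paper states this lemma without proof (the relevant decomposition $T=\bigvee_{y\sim x}T'(y,x)$ and Equation~(\ref{eq:rank-sup}) only appear later, inside the proof of Lemma~\ref{complement}), so your write-up supplies an argument the authors omit, and it is essentially the intended one: both directions reduce to the identity $\alpha=\sup\{rank(\Omega(T\setminus\{x\}(y))):y\sim x\}$, and your case analysis for $(ii)\Rightarrow(i)$ is complete and correct. The one place where the write-up is looser than it should be is the splitting step in the second case of $(i)\Rightarrow(ii)$: an arbitrary partition of $Y$ into two infinite subsets need not have both pieces cofinal (e.g.\ if infinitely many $y\in Y$ share the same small rank, one piece could absorb all of them), and the fact that each $Y_{>\beta}$ is infinite does not by itself rescue an arbitrary split, nor does it suffice when $\alpha$ has uncountable cofinality. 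The clean fix is to note that the set of realized ranks has supremum $\alpha$ but no maximum (as $\alpha$ is a limit and all ranks in $Y$ are $<\alpha$), choose a strictly increasing cofinal transfinite sequence of ranks together with one witness $y_\xi$ per rank (these witnesses are automatically distinct), and place the $y_\xi$ alternately into $X_1$ and $X_2$; both index sets are cofinal, so both suprema are $\alpha$. With that adjustment the proof is complete.
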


For $rank(\Omega(T))$  a non-zero limit ordinal let $Lim(T)$ be the set of vertices $x\in V(T)$ such that $(i)$ or $(ii)$ holds. 
We list some simple properties of $Lim(T)$.

\begin{lemma}\label{complement} 
For $\alpha=rank(\Omega(T))$  a non-zero limit ordinal  and $a\in V(T)$: 

\begin{enumerate}
\item $a\in V(T)\setminus Lim (T)$ iff there is a  vertex $y\in V(T)$ such that $y\sim a$ and $rank(\Omega (T\setminus\{a\}(y)))=\alpha$ and some ordinal $\beta<\alpha$ such that $rank((\Omega (T\setminus\{a\})(y'))\leq\beta$ for every other $y'\sim a$. 

\item  If $a\in Lim(T)$ and $a\sim y$ and $rank(\Omega (T\setminus\{a\}(y)))=\alpha$, then $y\in Lim(T)$.
\item  $Lim(T)= \{a\}$   iff $rank(\Omega (T\setminus\{a\}(y)))< \alpha$   for every $y\sim a$. 

\item  Let  $a\in Lim(T)$. Then $a$  has degree $1$ in  $Lim(T)$ iff
 there is a unique $y\sim a$ such that $rank(\Omega
 (T\setminus\{a\}(y)))= \alpha$ and $\sup\{rank(\Omega
 (T\setminus\{y\}(y'))): \text{$y'\sim y$ and $ y'\not =a$}\}=
 \alpha$.
\end{enumerate}

\end{lemma}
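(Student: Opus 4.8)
Everything in the lemma is an exercise in ordinal arithmetic once one has the right decomposition of $\Omega(T)$ at a vertex. The plan is to begin from the single structural fact that for every vertex $a\in V(T)$,
\[
rank(\Omega(T))=\sup\{rank(\Omega(T\setminus\{a\}(y))):y\sim a\},
\]
together with the monotonicity statement: if $S$ is a connected component of $S'\setminus\{v\}$ for some vertex $v$ of a subtree $S'$ of $T$, then $rank(\Omega(S))\le rank(\Omega(S'))$. Both follow from Equation~(\ref{eq:rank-sup}) of Lemma~\ref{lem:values of the rank}: writing $S_y$ for the subtree induced by $\{a\}\cup V(T\setminus\{a\}(y))$ rooted at $a$, one has $T=\bigvee_{y\sim a}S_y$ and $S_y=1+(T\setminus\{a\}(y),y)$, and adjoining a vertex above the root changes neither the space of ends nor its rank. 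I write $\alpha_y:=rank(\Omega(T\setminus\{a\}(y)))$ for $y\sim a$, so $\alpha=\sup_{y\sim a}\alpha_y$ and $\alpha_y\le\alpha$ for each $y$.

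For item (1) I would just read off the two clauses of condition $(i)$ of Lemma~\ref{lem:limit scattered rank0}. Since $\alpha$ is a limit ordinal equal to $\sup_y\alpha_y$, either some $\alpha_y=\alpha$, or all $\alpha_y<\alpha$ and then $\sup\{\alpha_y:\alpha_y<\alpha\}=\alpha$, so the second clause holds and $a\in Lim(T)$. Hence $a\notin Lim(T)$ forces at least one neighbour $y$ with $\alpha_y=\alpha$, and exactly one (two would trigger the first clause); the remaining neighbours are precisely those with $\alpha_{y'}<\alpha$, so the second clause failing says exactly that their supremum is some $\beta<\alpha$, i.e.\ $\alpha_{y'}\le\beta$ for every $y'\neq y$. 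The converse is the same reasoning read backwards, since the stated condition in turn forces $y$ to be the unique neighbour of rank $\alpha$.

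Item (2) is where the actual work lies and is the step I expect to be the main obstacle, because one must keep careful track of which component of $T\setminus\{y\}$ lies on the $a$-side. Given $a\in Lim(T)$ and $y\sim a$ with $\alpha_y=\alpha$, the first goal is to prove $rank(\Omega(T\setminus\{y\}(a)))=\alpha$ as well, and this is where the hypothesis $a\in Lim(T)$ is used: clause $(i)$ for $a$ supplies either a second neighbour $w\neq y$ of $a$ with $\alpha_w=\alpha$, or a family of neighbours $w\neq y$ with $\sup\alpha_w=\alpha$; each such $T\setminus\{a\}(w)$ is a component of $(T\setminus\{y\}(a))\setminus\{a\}$, so monotonicity gives $rank(\Omega(T\setminus\{y\}(a)))\ge\alpha$, hence equality. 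On the other hand, applying the decomposition identity inside the tree $T\setminus\{a\}(y)$ at its root $y$ yields $\sup\{rank(\Omega(T\setminus\{y\}(y'))):y'\sim y,\ y'\neq a\}=\alpha_y=\alpha$. So among the components of $T\setminus\{y\}$, the $a$-side has rank $\alpha$ and the supremum over the remaining sides is also $\alpha$: if that supremum is attained there are two components of rank $\alpha$, and if not the remaining ranks are all $<\alpha$ with supremum $\alpha$ --- either way clause $(i)$ holds for $y$, so $y\in Lim(T)$.

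Items (3) and (4) then follow formally. For (3): if all $\alpha_y<\alpha$ then $a\in Lim(T)$ by the second clause, and any other vertex $b$ lies in $T\setminus\{a\}(y_0)$ for some $y_0\sim a$, so --- just as in item (1) --- every component of $T\setminus\{b\}$ other than the one towards $a$ has rank $\le\alpha_{y_0}<\alpha$, forcing exactly one component of rank $\alpha$ with the rest bounded below $\alpha$, whence $b\notin Lim(T)$ by (1); conversely if some $\alpha_{y_0}=\alpha$ then $y_0\in Lim(T)$ by (2), so $Lim(T)\neq\{a\}$. For (4) I would first record that a neighbour $y$ of $a$ lies in $Lim(T)$ if and only if $\alpha_y=\alpha$: the implication $\alpha_y=\alpha\Rightarrow y\in Lim(T)$ is item (2), and if $\alpha_y<\alpha$ then every component of $T\setminus\{y\}$ except the one towards $a$ has rank $<\alpha$, so $y\notin Lim(T)$ by (1). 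Consequently $a$ has degree $1$ in $Lim(T)$ if and only if there is a unique $y\sim a$ with $\alpha_y=\alpha$; and for such a $y$ the extra condition $\sup\{rank(\Omega(T\setminus\{y\}(y'))):y'\sim y,\ y'\neq a\}=\alpha$ is automatic, since this supremum equals $\alpha_y$ by the identity applied inside $T\setminus\{a\}(y)$ --- which is exactly the statement of (4). The only genuinely delicate point throughout is the bookkeeping of the $a$-side versus far-side components and of which suprema are attained; given the two structural facts of the first paragraph, each item is a short ordinal computation.
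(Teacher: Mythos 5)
Your proof is correct and follows essentially the same route as the paper: the identity $rank(\Omega(T))=\sup\{rank(\Omega(T\setminus\{a\}(y))):y\sim a\}$ obtained from the decomposition $T=\bigvee_{y\sim a}S_y$ and Equation (\ref{eq:rank-sup}), together with monotonicity of the rank under passing to components. The only differences are cosmetic: you prove item (2) directly by verifying clause $(i)$ at $y$ where the paper argues by contradiction via item (1), and you supply the details for items (3) and (4) that the paper leaves as clear.
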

\begin{proof}
Item (1). Let $a\in V(T)$. For each neighbour $y$ of $a$, let $T'(y,x)$ be the rooted tree obtained from $T\setminus\{a\}(y)$ by adding $x$ as a new node and a root. Then  $T= \bigvee_{y \simeq a} T'(y,x)$, hence by Equation \ref {eq:rank-sup}:
\begin{align*}
&\alpha= rank(\Omega(T)) = \\
&sup \{rank(\Omega (T'(y,x)): y\sim a\}= \sup \{rank(\Omega (T\setminus\{a\}(y))): y\sim a\}.
\end{align*}  
Let $S(a):= \{y\sim a:  rank(\Omega (T\setminus\{a\}(y)))<\alpha$. If $a$ has either no neighbour outside $S(a)$ or at least two neighbours outside $S(a)$, then  $a\in Lim(T)$. If $a$ has just one neighbour outside $S(a)$, then $a\in Lim (T)$   iff $\sup\{rank(\Omega (T\setminus\{a\}(y))): y\sim a\}=\alpha$. 

\vskip 5pt\noindent
Item (2).  We apply Item (1). If $y\not \in Lim(T)$ there is a vertex $z\sim y$  such that $rank(\Omega (T\setminus\{y\}(z)))=\alpha$ and some ordinal $\beta<\alpha$ such that $rank(\Omega (T\setminus\{a\}(y')))\leq\beta$ for every other $y'\sim y$. If $z\not = a$,  then $rank(\Omega (T\setminus\{y\}(a)))<\alpha$; but then $a\not \in Lim(T)$, a contradiction. Hence $z=a$ and $rank(\Omega (T\setminus\{z\}(y)))\leq\beta<\alpha$. 

\vskip 5pt\noindent
Item (3) This is clear. 

\vskip 5pt\noindent
Item (4) Suppose that $a$ has degree $1$ in  $Lim(T)$. The  only neighbour $y$ of $a$ in $Lim(T)$ has the stated property. 
\end{proof}

\begin{lemma} \label{lem:limit scattered rank}Let $T$ be a scattered tree such that $\alpha:= rank(\Omega(T))$ is a non-zero limit ordinal. Then:

\noindent a) 
$Lim (T)$ is preserved by every embedding of $T$. 

\noindent b) 
$Lim (T)$ is a non-empty rayless subtree of $T$. 
\end{lemma}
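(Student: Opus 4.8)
The plan is to prove parts a) and b) by combining the structural facts about $Lim(T)$ established in Lemma \ref{complement} with the observation (from Subsection \ref{space of ends}) that every embedding $f$ of $T$ preserves each Cantor--Bendixson derivative $\Omega^{(\alpha)}(T)$ and, more locally, cannot increase the rank of the end-space of a rooted subtree. For part a), I would fix an embedding $f$ and a vertex $a\in Lim(T)$, and show $f(a)\in Lim(T)$ using criterion $(ii)$ of Lemma \ref{lem:limit scattered rank0}: there are two disjoint sets $X_1,X_2$ of neighbours of $a$ with $\sup\{rank(\Omega(T\setminus\{a\}(y))):y\in X_i\}=\alpha$ for $i=1,2$. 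The key point is that for a neighbour $y$ of $a$, the component $T\setminus\{a\}(y)$ is exactly a rooted subtree of the form handled by Corollary \ref{cor:sumoverpath}/Lemma \ref{lem:sumoftrees}, so $f$ restricted to it is an embedding of $T\setminus\{a\}(y)$ into the component $T\setminus\{f(a)\}(f(y))$ (here one must check that $f(y)$ is a neighbour of $f(a)$ and that $f$ sends the $a$-component of $y$ into the $f(a)$-component of $f(y)$; this follows because $f$ preserves adjacency and, being an isometry, cannot map a path avoiding $a$ to one meeting $f(a)$). Hence $rank(\Omega(T\setminus\{a\}(y)))\le rank(\Omega(T\setminus\{f(a)\}(f(y))))$, and since distinct neighbours of $a$ have images that are distinct neighbours of $f(a)$ lying in distinct components, $f[X_1]$ and $f[X_2]$ are disjoint sets of neighbours of $f(a)$ each having supremum of ranks at least $\alpha$; as $rank(\Omega(T))=\alpha$ these suprema equal $\alpha$, so $f(a)\in Lim(T)$ by criterion $(ii)$.

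For part b), non-emptiness follows because $rank(\Omega(T))=\alpha$ is a limit ordinal: take any $a\in V(T)$; by the computation in the proof of Lemma \ref{complement}, Item (1), $\alpha=\sup\{rank(\Omega(T\setminus\{a\}(y))):y\sim a\}$. If some neighbour $y$ realizes rank $\alpha$, follow the edge to $y$ — by Item (2) of Lemma \ref{complement}, if $a\notin Lim(T)$ then $a$ has a unique such neighbour $y$ and walking towards it strictly decreases, in a suitable sense, a rank-parameter attached to the "large" component; since $\alpha$ is a limit ordinal this descent cannot continue forever without producing, along the way, a vertex at which the supremum over the remaining small neighbours already reaches $\alpha$ (forcing it into $Lim(T)$), or a branch point with two large neighbours (again in $Lim(T)$). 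This is essentially the standard argument that the "core" of a scattered tree of limit rank is non-empty; I would phrase it as: the set $V(T)\setminus Lim(T)$ cannot contain a ray, because along a ray the relevant ranks would have to form an infinite strictly increasing sequence bounded by $\alpha$ with supremum $\alpha$, which by $(\ast)$-considerations (Lemma \ref{lem:values of the rank}) would push $rank(\Omega(T))$ to $\alpha+1$, a contradiction — and a vertex all of whose forward directions leave $Lim(T)$ would similarly be forced into $Lim(T)$.

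To see that $Lim(T)$ is connected, hence a subtree, I would show that if $a\in Lim(T)$ and $b\in Lim(T)$ then every vertex $c$ on the unique $a$--$b$ path lies in $Lim(T)$: splitting at $c$, one of the two components $T\setminus\{c\}(\cdot)$ contains $a$ and the other contains $b$; I claim each of these components has end-rank $\alpha$. Indeed, using Item (1) of Lemma \ref{complement} applied at $a$ (resp. $b$), the component of $T\setminus\{c\}$ containing $a$ already "sees" two large directions out of $a$ (or a supremum reaching $\alpha$), so its end-rank is $\alpha$; symmetrically for $b$. Then $c$ has (at least) two neighbours $y_a,y_b$ — the ones towards $a$ and towards $b$ — with $rank(\Omega(T\setminus\{c\}(y_a)))=rank(\Omega(T\setminus\{c\}(y_b)))=\alpha$, so $c\in Lim(T)$ by criterion $(i)$. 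Finally, that $Lim(T)$ is rayless is immediate: a ray in $Lim(T)$ would be a ray $C$ in $T$ all of whose vertices have, for each $n$, some neighbour off $C$ with large end-rank, and reassembling $T=\bigoplus_{x\in C}((T,C)(x),x)$ and applying the $(\ast)$-dichotomy of Lemma \ref{lem:values of the rank} would again force $rank(\Omega(T))\ge\alpha+1$, contradicting $rank(\Omega(T))=\alpha$.

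\textbf{Main obstacle.} The delicate step is the non-emptiness/raylessness argument in part b): one must argue carefully that the "descent" through $V(T)\setminus Lim(T)$ terminates. The cleanest route is to prove directly that $V(T)\setminus Lim(T)$ contains no infinite path, arguing by contradiction via the $(\ast)$/$(\text{not }(\ast))$ analysis of Lemma \ref{lem:values of the rank}, since an infinite path in the complement of $Lim(T)$ forces the end-rank ranks along it to have supremum $\alpha$ without property $(\ast)$, whence $rank(\Omega(T))=\alpha+1$, contradicting that $\alpha=rank(\Omega(T))$ is a limit ordinal. Everything else is bookkeeping with Lemma \ref{complement} and the functoriality of Cantor--Bendixson rank under embeddings.
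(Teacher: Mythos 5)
Your part a) matches the paper's argument (monotonicity of the Cantor--Bendixson rank of $\Omega(T\setminus\{a\}(y))$ under the embedding, then the criterion of Lemma \ref{lem:limit scattered rank0}), and your connectedness argument -- showing directly that for $a,b\in Lim(T)$ every vertex $c$ of the $a$--$b$ path has two neighbours whose components have rank $\alpha$ -- is correct and in fact a little cleaner than the paper's minimal-counterexample argument. But the two remaining pieces of b) have genuine gaps.

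For non-emptiness, your ``cleanest route'' is to show that $V(T)\setminus Lim(T)$ contains no infinite path. That statement is false: take $T=\bigvee_{n}T_n$ where $rank(\Omega(T_n))=n$ and $\alpha=\omega$; then $Lim(T)$ is the single glueing vertex, and the complement contains many rays. What is true, and what the paper proves, is that the \emph{specific} walk that starts at an arbitrary $a_0\notin Lim(T)$ and at each step moves to the unique neighbour whose component has rank $\alpha$ (Lemma \ref{complement}(1)) must terminate in $Lim(T)$: along \emph{that} path every off-path piece has rank $<\alpha$, so if the walk were infinite the sum decomposition over the resulting ray would fail property $(\ast)$ and Equation (\ref{eq:rank-sum1}) would give $rank(\Omega(T))=\alpha+1$. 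Your argument only works for this distinguished path, not for an arbitrary ray in the complement, and you need to say why the large direction at $a_{n+1}$ points away from $a_n$ (the paper extracts this from the uniqueness clause in Lemma \ref{complement}(1)).

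For raylessness, you assume that every vertex $a_n$ of a ray $C\subseteq Lim(T)$ has an off-ray neighbour whose component has rank $\alpha$, so that every piece $(T,C)(a_n)$ has rank $\alpha$ and not-$(\ast)$ forces $rank(\Omega(T))=\alpha+1$. That premise does not follow from $a_n\in Lim(T)$: criterion $(i)$ can be witnessed by the two ray-neighbours $a_{n-1},a_{n+1}$ alone, in which case $(T,C)(a_n)$ may have small rank. In the case that actually has to be handled, property $(\ast)$ \emph{holds} for the decomposition along $C$ (it must, since not-$(\ast)$ gives a successor rank while $\alpha$ is a limit), and then one finds $n_0$ with all pieces beyond $n_0$ of rank $\leq\beta<\alpha$, deduces $rank(\Omega(T\setminus\{a_{n_0}\}(a_{n_0+1})))\leq\beta+1<\alpha$, and concludes from Lemma \ref{complement}(1) that $a_{n_0+1}\notin Lim(T)$ -- the contradiction is a ray vertex leaving $Lim(T)$, not an inflated rank. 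Your proposal never treats this branch of the dichotomy, which is the one that occurs.
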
 

\begin{proof} 
a) Let $a\in Lim(T)$ and $f$ an embedding of $T$.  Then
$rank(\Omega_a(T\setminus \{a\}(x)))
rank(\Omega_{f(a)}(T\setminus\{f(a)\}(f(x)))$ for all $x\sim a$.
Which, with $(i)$ of Lemma \ref{lem:limit scattered rank0}, implies
that $f(a)\in Lim(T)$.

\vskip 5pt\noindent
b) We prove first that $Lim(T)$ is rayless.  Let  $C:=a_0, \dots, a_n, \dots$ be a one-way infinite path. We claim that $C$ is not contained in $Lim(T)$. Indeed, we have  $T=\bigoplus_{a_n\in C} (T,C)(a_n)$. Since $\alpha$ is a limit ordinal, Lemma \ref{lem:values of the rank} asserts that  there is some integer $n_0$ and some ordinal $\beta<\alpha$ such that $rank(\Omega(T({a_n})))\leq rank(\Omega(T(a_{n_0})))=\alpha$ for $n\leq n_0$ and 
$rank(\Omega(T({a_n})))\leq \beta$ for $n>n_0$. It follows that $rank(\Omega(T\setminus \{a_{n_0}\}(a_{n_0+1})))<\alpha$. This implies that  $a_{n_0+1}\not \in  Lim(T)$, proving our claim.

Next we prove that   $Lim(T)$ is non-empty. 

Let $a\in T\setminus Lim(T)$. We show that there is an integer $n\in
\N$ and a sequence $a_0, \dots, a_i, \dots a_n$ with $a_0:=a$ and
$rank (\Omega(T\setminus\{a_i\} (a_{i+1})))=\alpha$ for $i<n$ and with
$a_i\in T\setminus Lim(T)$ and $a_n\in Lim(T)$.  Indeed, let $a_1$ be
the unique vertex $y$ given by Lemma \ref{complement}. If $a_1\in
Lim(T)$, set $n:=1$. Otherwise, suppose $a_0, \dots, a_{i+1}$ be
defined, with $rank (\Omega(T\setminus\{a_i\} (a_{i+1})))=\alpha$. If
$a_{i+1}\in Lim(T)$ set $n:=i+1$. Otherwise, observe with Lemma
\ref{complement} that there is a unique neighbour $b$ of $a_{i +1}$
such that $rank (\Omega(T\setminus\{a_{i+1}\} (b)))=\alpha$.  Since
furthermore, $rank(\Omega (T\setminus\{x\}(y')))\leq\beta$ for every
other $y'\sim a_{i+1}$ and some ordinal $\beta<\alpha$, it follows that
$b\not =a_{i+1}$. If the process does not stop, then $T$ is the sum
$\bigoplus_{\{a_n:n\in \N\}} T_{a_n}$, where $T_{a_n}:=T\setminus
\{a_{n+1}\}(a_n)$ for each $n\in \N$. Since $rank
(\Omega(T\setminus\{a_n\} (a_{n+1})))=\alpha$ for each $n\in \N$,
Equation \ref{eq:rank-sum1} of Lemma \ref{lem:values of the rank}
asserts that $rank(\Omega(T))=\alpha+1$. This is impossible.

Finally, we prove that $Lim(T)$ is connected. 

Suppose not.  Let $n$ be the least integer such that there are two
vertices $a,b$ in two different connected components of $Lim(T)$ such
that $d_T(a, b)= n$.  Let $a:=a_0,a_1, \dots, a_n:=b$ be the unique
path connecting $a$ and $b$.  We have $n\geq 2$, and $a_i \not \in
Lim(T)$ for every $i$, $1\leq i\leq n-1$. Since $a_1\not \in Lim(T)$,
there is a unique vertex $c\in V(T)$ such that $c\sim a_1$ and
$rank(\Omega (T\setminus\{a_1\}(c)))=\alpha$, and some ordinal
$\beta<\alpha$ such that $rank(\Omega
(T\setminus\{a_1\}(y')))\leq\beta$ for every other $y'\sim
a_1$. Suppose $c\not =a_0$. Then $rank(\Omega
(T\setminus\{a_1\}(a_0)))\leq\beta$. Hence $rank(\Omega
(T\setminus\{a_0\}(y')))\leq \beta$ for every $y'\sim a_0$, $y'\not =
a_1$. Thus $a_0\not \in Lim(T)$. Impossible. Since $c=a_0$,
$rank(\Omega (T\setminus\{a_0\}(a_1)))<\alpha$, but then
$T\setminus\{a_0\}(a_1)$ does not contain a vertex in $Lim(T)$,
contradicting that $b\in Lim(T)$.  \end{proof}

\begin{lemma}\label{lem:raylesslimit} Let $T$ be a non-empty scattered tree and  $\alpha:= rank(\Omega(T))$. If $\alpha$  is either a limit ordinal or  a successor ordinal $\alpha'+1$ and $\vert \Omega^{(\alpha')}(T)\vert \geq 3$, then some non-empty rayless subtree $A$ of $T$ is preserved by every embedding of $T$. 
\end{lemma}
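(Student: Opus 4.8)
The plan is to split into the two cases distinguished in the hypothesis and handle each by producing an explicit preserved rayless subtree.

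\textbf{Case 1: $\alpha$ is a non-zero limit ordinal.} Here the natural candidate is $A := Lim(T)$, the set of vertices satisfying condition $(i)$ (equivalently $(ii)$) of Lemma \ref{lem:limit scattered rank0}. By Lemma \ref{lem:limit scattered rank} this set is a non-empty rayless subtree of $T$, and it is preserved by every embedding of $T$. So in this case there is essentially nothing more to do beyond invoking that lemma. (If one wanted a uniform treatment, one could instead argue directly: an embedding $f$ preserves the function $x\mapsto rank(\Omega(T\setminus\{x\}(y)))$ comparisons because $f$ sends $T\setminus\{x\}(y)$ into $T\setminus\{f(x)\}(f(y))$ and Cantor--Bendixson rank of the end space is an embedding invariant that cannot increase; but since Lemma \ref{lem:limit scattered rank} already records all of this, I would simply cite it.)

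\textbf{Case 2: $\alpha = \alpha'+1$ is a successor with $|\Omega^{(\alpha')}(T)| \geq 3$.} The idea is to let $\mathcal C := \Omega^{(last)}(T) = \Omega^{(\alpha')}(T)$, a finite set with at least three elements that is preserved setwise by every embedding (every embedding preserves each derivative $\Omega^{(\beta)}(T)$, as established in Subsection \ref{space of ends}). Pick any vertex $r$ and consider the finitely many rays $e(r)$ for $e\in\mathcal C$; these rays pairwise share only a finite initial segment, so they ``fan out'' from a finite subtree. Concretely, I would take $A$ to be the subtree of $T$ spanned by the (finitely many, pairwise-finite) intersections: for each pair $e,e'\in\mathcal C$ let $x_{e,e'}$ be the last common vertex of $e(r)$ and $e'(r)$, and let $A$ be the convex hull (in $T$) of $\{x_{e,e'} : e,e'\in\mathcal C\}$, or more robustly the union of the initial segments of the $e(r)$ up to the point where they all have separated. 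Because $\mathcal C$ is finite, $A$ is a finite (hence rayless) subtree. The key point to check is that $A$ does not depend on the choice of $r$ (standard: moving the basepoint along a finite path is a homeomorphism of end spaces fixing $\mathcal C$, and it changes each branch point only within a bounded region, so the convex hull is unchanged) and, crucially, that $A$ is preserved by every embedding $f$: since $f$ permutes $\mathcal C$, it carries the collection of branch points of $\{e(r):e\in\mathcal C\}$ to the collection of branch points of $\{e(f(r)):e\in\mathcal C\}$, and by basepoint-independence this is again the branch-point set defining $A$; since embeddings preserve the tree-metric and convexity, $f[A]\subseteq A$.

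\textbf{Expected main obstacle.} The limit case is free from Lemma \ref{lem:limit scattered rank}. The real work is the successor case, and there the delicate point is getting a \emph{canonical} finite subtree out of the finite end-set $\mathcal C$ in a way that is provably basepoint-independent and embedding-invariant — one must be careful that $|\mathcal C|\geq 3$ is genuinely used (with only two ends the ``convex hull of branch points'' degenerates to a two-way infinite path, not a rayless tree, which is exactly why the hypothesis excludes $|\mathcal C|\leq 2$). So the heart of the argument is a clean description of the finite subtree ``generated'' by three or more ends of maximal rank and the verification that this description is intrinsic; the estimates involved (everything is finite since $\mathcal C$ is finite and rays in distinct ends diverge after finitely many steps) are routine once the right object is written down.
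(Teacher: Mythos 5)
Your Case 1 is exactly the paper's: it cites Lemma \ref{lem:limit scattered rank} (that $Lim(T)$ is a non-empty rayless subtree preserved by every embedding) and stops there, so that part is fine.

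The gap is in Case 2, at the very first step: you declare $\mathcal C:=\Omega^{(\alpha')}(T)$ to be \emph{finite}, and everything afterwards rests on that (``finitely many rays $e(r)$'', ``finitely many branch points $x_{e,e'}$'', ``because $\mathcal C$ is finite, $A$ is a finite (hence rayless) subtree''). But for scattered trees that are not locally finite the last non-empty derivative need not be finite --- the paper states this explicitly right after Lemma \ref{lem:dichotomic}, and an infinite star of rays (infinitely many rays glued at one vertex) already gives $rank(\Omega(T))=1$ with $\Omega^{(0)}(T)$ infinite. When $\mathcal C$ is infinite, the subtree spanned by the branch points of the rays $e(r)$, $e\in\mathcal C$, is an infinite subtree, and its raylessness is no longer automatic; proving it is the actual content of this case. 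The paper's proof (reproducing Polat--Sabidussi) chooses pairwise disjoint rays $C_e$, one for each $e\in\mathcal C$ (possible because each such end is isolated within $\Omega^{(\alpha')}(T)$), forms the connecting subtree $G$, trims off the ray tails to obtain $G'$, and then shows $G'$ is rayless by arguing that a ray $R\subseteq G'$ would lie on paths between infinitely many of the branchings towards ends of rank $\alpha'$, so that $end(R)$ would be a non-isolated point above them, i.e.\ an element of $\Omega^{(\alpha'+1)}(T)=\Omega^{(\alpha)}(T)=\emptyset$, a contradiction. That step --- which uses the maximality of $\alpha'$ rather than any finiteness of $\mathcal C$ --- is the idea missing from your proposal. (Your remark on why $\vert\mathcal C\vert\geq 3$ is genuinely needed, namely that two ends would only yield a two-way infinite path, is correct and matches the structure of Theorem \ref{main1}.)
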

\begin{proof}
If $\alpha=0$, then $\Omega(T)=\emptyset$, that is $T$ is rayless and there is nothing to prove. 
Suppose $\alpha>0$. We consider two cases: 

\vskip 5pt\noindent
Case 1. $\alpha$ is a limit ordinal. The 
assertion  follows from Lemma \ref{lem:limit scattered rank}. 

\vskip 5pt\noindent
Case 2.  $\alpha$ is a successor ordinal $\alpha'+1$ and $\vert
\Omega^{(\alpha')}(T)\vert \geq 3$. We may reproduce verbatim the
proof of Theorem 3.1 \cite {polat-sabidussi}, in which the authors
prove that there is some rayless tree $A$ which is preserved by every
automorphism of $T$. For every end in $e\in \Omega^{(\alpha')}(T)$
choose a one-way infinite path $C_e$ so that $V(C_e)\cap
V(C_e')=\emptyset$ for $e\not=e'\in \Omega^{(\alpha')}(T)$. This is
possible because every end in $\Omega^{(\alpha')}(T)$ is isolated
within $\Omega^{(\alpha')}(T)$. Let $W=\{C_e: e\in
\Omega^{(\alpha')}(T)\}$. Choose a vertex in each of the sets $V(W)$
and let $A$ be the set of those vertices. Let $G$ be the subtree of
$T$ induced by the set of vertices which are on the path from a vertex
in $A$ to a vertex in $A$ together with the vertices in $\bigcup_{C\in
W}V(C)$.  The subtree $G$ contains a vertex $v$ having degree at least
three because $\vert \Omega^{(\alpha')}(T)\vert \geq 3$. For every
$a\in A$ let $a_v$ be the last vertex on the path from $v$ to $a$ of
degree larger than or equal to three. Let $a'$ be the first vertex on
the path from $a_v$ to $a$ not equal to $a_v$. Then $a'$ has valence
two in $G$. Let $A'=\{a': a\in A\}$ and let $C_a$ be the one way
infinite path with endpoint $a'$. Note that for every $a'\in A'$,
every one-way infinite path in the end containing $C_a$ has $C_a$ as a
subpath. This implies, because every embedding preserves $\vert
\Omega^{(\alpha')}(T)\vert \geq 3$, that our embedding  maps every path in $W$ into a
path in $W$ and hence preserves $G$.  Because every embedding maps
every vertex to a vertex of the same degree it has to preserve $A'$
and hence the subtree $G'$ of $G$ induced by the set of vertices
$V(G)\setminus \bigcup_{C\in W}V(C)$.

It remains to prove that $G'$ does not contain a one-way infinite path, also called a ray.  This is proven in  \cite {polat-sabidussi}. For completeness sake we provide their proof here: Suppose there is a ray $R\in G'$. Every vertex in $R$ is on a path from a vertex in $A'$ to a vertex in $A'$, implying that $R$ does not have a terminal end so that each vertex on it has valence two in $G'$. If $x\in V(R)$ and the valence of $x$ in $G'$ is at least three, then there is a path from $x$ to some $a'\in A'$. It follows that $R$ is not an isolated ray in $G$, that is $R\in \Omega^{(\alpha)}(T)=\emptyset$, a contradiction. 
\end{proof}  

Polat and Sabidussi \cite{polat-sabidussi}, (Theorem 2.5), proved that
every non-empty rayless tree $T$ has a vertex or an edge fixed by
every automorphism of $T$. One of their ideas is to successively
change $T$ to the tree $\Phi(T)$ generated by the vertices of infinite
degree. Clearly $\Phi(T)$ is preserved by every embedding, not just
automorphism of $T$. It can be easily checked that the arguments in
\cite{polat-sabidussi} can be extended from automorphisms to
embeddings. Another way to verify the extension of the Theorem of
Polat and Sabidussi to embeddings is to specialize Theorem 11.5 of
Halin, see \cite{halin2}, from graphs to trees. In any case we obtain:

\begin{lemma}\label{lem:Polat_Sabidussi_Halin}
Every non-empty  tree $T$ which does not contain a one-way infinite path,  has a vertex or an edge preserved by every embedding of $T$. 
\end{lemma}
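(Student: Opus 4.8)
The plan is to deduce the statement from the automorphism version proved by Polat and Sabidussi (their Theorem~2.5 in \cite{polat-sabidussi}), by observing that their argument uses only features of maps that embeddings share with automorphisms. The one fact needed is that an embedding $f$ of $T$ satisfies $\deg_T(f(x))\geq \deg_T(x)$ for every vertex $x$: indeed, $f$ maps the neighbourhood of $x$ injectively into the neighbourhood of $f(x)$ and reflects adjacency. Hence $f$ sends every vertex of infinite degree to a vertex of infinite degree and every leaf to a leaf, so both reduction operations used in \cite{polat-sabidussi} --- passing to the subtree $\Phi(T)$ spanned by the vertices of infinite degree, and ``shaving'' the leaves of a finite tree --- yield subtrees that are preserved by every embedding of $T$, and the restriction of any embedding of $T$ to such a subtree is again an embedding of it.

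First I would dispose of the case $T$ finite: iterating the operation $T\mapsto T\setminus\{\text{leaves of }T\}$ produces a strictly decreasing finite sequence of finite subtrees, each preserved by every embedding of $T$, terminating at a single vertex or a single edge (the centre, resp. bicentre, of $T$); that vertex or edge is preserved by every embedding. For the general case one argues by induction on the Schmidt rank $\rho(T)$ of the rayless tree $T$ (with $\rho(T)=0$ for $T$ finite, and $\rho(T)\leq\alpha$ meaning that some finite $S\subseteq V(T)$ has $\rho(C)<\alpha$ for every component $C$ of $T-S$). If $T$ is infinite it has a vertex of infinite degree --- otherwise $T$ would be infinite and locally finite, hence contain a ray by K\H{o}nig's lemma --- so $\Phi(T)$ is non-empty; and $\Phi(T)\neq T$ as soon as $|V(T)|\geq 2$, since a rayless tree cannot have all of its vertices of infinite degree (following the neighbours of infinite-degree vertices one step at a time would produce a ray). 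When $\Phi(T)$ is finite, the finite case applied to $\Phi(T)$ finishes the proof.

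The main obstacle is exactly the case where $\Phi(T)$ is infinite: the operation $\Phi$ by itself need not decrease the Schmidt rank --- for instance if $T=\bigvee_{n\in\N}T_n$ with $T_n$ a rayless tree of rank $n$ in which every internal vertex has infinite degree, then every iterate $\Phi^{(k)}(T)$ still has rank $\omega$ --- so the induction must be organised with the finer rank bookkeeping of \cite{polat-sabidussi}: at each rank level one isolates a finite, embedding-invariant set of ``essential'' vertices (roughly, the vertices $v$ such that $T-v$ has at least two components of the current maximal rank, together with the paths joining them), reduces $T$ to the finite subtree they span, and appeals to the finite case. Replacing ``automorphism'' by ``embedding'' throughout is legitimate by the degree inequality recorded above. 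Alternatively, the whole combinatorial argument can be bypassed by specialising Halin's Theorem~11.5 on rayless graphs (see \cite{halin2}) to trees, its proof applying verbatim to embeddings.
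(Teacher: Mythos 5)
Your proposal follows essentially the same route as the paper, which likewise reduces the statement to the Polat--Sabidussi automorphism result (via the observation that the subtree $\Phi(T)$ spanned by the vertices of infinite degree is preserved by every embedding, not just every automorphism) and offers the specialisation of Halin's Theorem~11.5 on rayless graphs as an alternative; in fact you supply more detail than the paper, whose ``proof'' is the one-paragraph remark that the arguments of \cite{polat-sabidussi} can be extended from automorphisms to embeddings. One small caveat of phrasing: ``every leaf to a leaf'' does not follow from the degree inequality $\deg_T(f(x))\geq\deg_T(x)$ for embeddings of infinite trees (consider the shift on a ray), but this is harmless in your argument, since you only shave leaves of finite trees, on which every self-embedding --- and more generally the restriction of an embedding to any preserved finite subtree --- is an automorphism.
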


\subsection{Proof of Theorem \ref{main1}. } \label{prooftheorem1.1}
Let $T$ be a scattered tree.  According to Theorem \ref{thm-scattered}, the space $\Omega(T)$ is scattered. Let $\alpha:=rank(\Omega(T))$ be its Cantor-Bendixson rank. There are two cases: 

\vskip 5pt
\noindent 
\textbf{Case 1:}  $\alpha$ is a limit ordinal or   
is a successor ordinal $\alpha'+1$ and $\vert
\Omega^{(\alpha')}(T)\vert \geq 3$. According to Lemma
\ref{lem:raylesslimit}, some non-empty rayless subtree $A$ of $T$ is
preserved by every embedding of $T$. According to Lemma
\ref{lem:Polat_Sabidussi_Halin} the subtree $A$ contains a vertex or
an edge preserved under every embedding of $A$, and  hence of $T$.

\vskip 5pt
\noindent    
\textbf{Case 2:}  $\alpha$ is a successor ordinal $\alpha'+1$ and $\vert \Omega^{(\alpha')}(T)\vert \leq 2$.  The set $\Omega^{(\alpha')}(T)$ is preserved by every embedding. We get the required conclusion. 
 \hfill $\Box$

\section{Embeddings of labelled trees}\label{section: labelled}

Let $Q$ be a quasi ordered set (qoset), that is, a set equipped with a
quasi order that we denote by $\leq$. We set $q\equiv q'$ if $q'\leq
q\leq q'\in Q$ and $equiv(q):=\{q'\in Q: q'\equiv q\}$. A
$Q$-\emph{labelled graph} is a pair $(G, \ell)$ where $\ell$ is a map
from $V(G)$ into $Q$. If $(G, \ell)$ and $(G',\ell')$ are two
$Q$-labelled graphs, a $Q$-\emph{isomorphism} of $(G, \ell)$ onto
$(G',\ell')$ is an isomorphism $f$ of $G$ onto $G'$ such that
$\ell(x)=\ell(f(x))$ for every $x\in V(G)$.  A $Q$-\emph{embedding} of
$(G, \ell)$ into $(G',\ell')$ is an embedding $f$ of $V(G)$ into
$V(G')$ such that $\ell(x)\leq \ell(f(x))$ for every $x\in V(G)$. We
say that $(G, \ell)$ and $(G',\ell')$ are \emph{isomorphic} and we set
$(G, \ell)\simeq(G',\ell')$ if there is a $Q$-isomorphism of $(G,
\ell)$ to $(G',\ell')$. We say that they are \emph{equimorphic} if
each of $(G, \ell)$ and $(G',\ell')$ are $Q$-embeddable in the other;
in which case we set $(G, \ell)\equiv (G',\ell')$.  If $(G, \ell)$ is
a labelled tree, then as before we set $twin(G,\ell) $ for the set of equimorphic 
$Q$-labelled trees up to isomorphism.

If $(G,\ell)$ is a one-way labelled path we will assume that $V(G)$ has been enumerated with the numbers in $\N$  in a natural way and then, in order to simplify notation,  pretend to have identified $V(G)$ with $\N$ along this enumeration. For a two-way infinite path we will use $\Z$ in the place of $\N$. Let $(G,\ell)$ be a one-way or two-way infinite path labelled by a poset $Q$. The labelling $\ell$ is {\em $r$-periodic} for $1\leq \vert r\vert\in \N$ if $\ell(n)\leq \ell(n+r)$ for all $n\in \N$. The number $r$ for which $\ell$ is $r$-periodic with $|r|$ minimal is the {\em period} of $\ell$. If $\ell$ is $r$-periodic  with $\vert r\vert$ minimal and $\ell$ is also $-r$ periodic then the period of $\ell$ is $\vert r\vert$.   If $\ell$ has period $p$, then $\mathfrak{d}_\ell:=\vert \{n\in V(G): \ell(n)<\ell(n+p)\}\vert$. 

We may note that the tree alternative property fails badly for  labelled trees, even with the assumption that the qoset is a poset. In fact: 

Let $C$ be the path on the set $\N$ of non-negative integers and
$\ell$ be a labeling by a two element antichain $Q:=\{a, b\}$. Then
$\vert twin(C,\ell) \vert=1$ if $\ell$ is constant or $\ell$ is not
periodic and $\vert twin(C, \ell) \vert =p$ if $p$ is the period of $\ell$.
However:

\begin{theorem}\label{lemma:paths}
Let $(G,\ell)$ be a one-way infinite path or a two-way infinite path
labelled by a poset $Q$. If $Q$ has a minimum, say 0, then the tree
alternative property holds. If $\ell$ is not periodic, then $\vert
twin(G,\ell) \vert =1$ and if $\ell$ is periodic and
$\mathfrak{d}_\ell$ is infinite then $\vert twin(G,\ell) \vert \geq
2^{\aleph_0}$. 
countable, then $twin(G,\ell)$ is countable.

If $\ell$ is periodic and $G$ a one-way infinite path, then
$twin(G,\ell)$ is infinite unless $\ell$ is the constant map with
image $\{0\}$, in which case $\vert twin(G,\ell) \vert=1$. If $\ell$ is periodic
and $G$ a two-way infinite path, then $twin(G,\ell)$ is infinite
unless $\mathfrak{d}_\ell=0$ in which case $\vert twin(G,\ell) \vert =1$.

\end{theorem}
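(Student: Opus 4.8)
The plan is to reduce the statement to the structure of graph-embeddings between labelled paths together with two explicit constructions of twins. We first record that an induced subgraph of a ray is a disjoint union of finite paths and at most one ray, so a two-way path never embeds into a ray; hence every labelled path equimorphic to $(G,\ell)$ has underlying graph isomorphic to $G$, the graph-embeddings of the ray $\N$ into itself are exactly the translations $n\mapsto n+c$ ($c\ge 0$), and the graph-embeddings of $\Z$ into itself are exactly the automorphisms $n\mapsto a\pm n$. Thus, whenever $(G',\ell')\equiv(G,\ell)$ via $f\colon(G,\ell)\hookrightarrow(G',\ell')$ and $g\colon(G',\ell')\hookrightarrow(G,\ell)$, the composite $h:=g\circ f$ is such a self-embedding of $G$ and $\ell(n)\le\ell'(f(n))\le\ell(h(n))$ for every $n$. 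If $\ell$ is not periodic, then $h$ cannot be a translation by $c\neq 0$ (this would make $\ell$ periodic), so in that case $h=\mathrm{id}$; and if $h$ is a reflection of $\Z$ then $h^{2}=\mathrm{id}$, so $\ell(n)\le\ell(h(n))\le\ell(h^{2}(n))=\ell(n)$ gives $\ell\circ h=\ell$. In either case $\ell'\circ f=\ell$ by the squeeze, and since $h$ is bijective so are $g$ and hence $f$; thus $f$ is a $Q$-isomorphism and $\vert twin(G,\ell)\vert=1$.

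Suppose next that $\ell$ is periodic with period $p$ and $\mathfrak d_\ell$ infinite, and set $S:=\{n\in V(G):\ell(n)<\ell(n+p)\}$, which is countably infinite. For $A\subseteq S$ define $\ell_A(n):=\ell(n+p)$ for $n\in A$ and $\ell_A(n):=\ell(n)$ otherwise; then $\ell\le\ell_A\le\ell(\cdot+p)$ pointwise, so the identity embeds $(G,\ell)$ into $(G,\ell_A)$ and the shift $n\mapsto n+p$ embeds $(G,\ell_A)$ into $(G,\ell)$, whence $(G,\ell_A)\equiv(G,\ell)$. For distinct $A,A'$ and $n$ in their symmetric difference we have $\ell_A(n)\neq\ell_{A'}(n)$ since $n\in S$, so $A\mapsto\ell_A$ is injective; as $G$ has only countably many graph-automorphisms, each $Q$-isomorphism class contains at most $\aleph_0$ of the $\ell_A$, so $\vert twin(G,\ell)\vert\ge 2^{\aleph_0}$.

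It remains to treat $\ell$ periodic with $\mathfrak d_\ell$ finite. If $G=\N$ and $\ell\equiv 0$, then from $(G',m)\hookrightarrow(\N,0)$ we get $G'\cong\N$ and $m\le 0$, hence $m\equiv 0$ (as $0$ is the minimum of $Q$), so $\vert twin(G,\ell)\vert=1$. If $G=\N$ and $\ell\not\equiv 0$, fix $n^{*}$ with $\ell(n^{*})\neq 0$ and for $j\ge 0$ put $m_j(n):=0$ for $n<j$ and $m_j(n):=\ell(n-j)$ for $n\ge j$; one checks $(\N,\ell)\hookrightarrow(\N,m_j)$ via $n\mapsto n+j$ and $(\N,m_j)\hookrightarrow(\N,\ell)$ via $n\mapsto n+c$ for the least $c\ge 0$ with $p\mid c+j$ (using the minimum on the first $j$ coordinates and $p$-periodicity beyond), so every $(\N,m_j)$ is a twin, and since $\mathrm{id}$ is the only graph-automorphism of $\N$ while $m_j(j+n^{*})=\ell(n^{*})\neq 0=m_{j'}(j+n^{*})$ whenever $j'>j+n^{*}$, infinitely many of the $m_j$ are pairwise non-isomorphic, so $\vert twin(G,\ell)\vert=\infty$. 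If $G=\Z$ and $\mathfrak d_\ell=0$, then $\ell$ is genuinely $p$-periodic; with $h=g\circ f$ as above, in the translation case $(\ell(n+kc))_{k\in\Z}$ is non-decreasing and periodic, hence constant, so $\ell\circ h=\ell$, and in the reflection case $\ell\circ h=\ell$ by idempotency; either way the squeeze and bijectivity of $f$ make $f$ a $Q$-isomorphism, so $\vert twin(G,\ell)\vert=1$. Finally, if $G=\Z$ and $0<\mathfrak d_\ell<\infty$, then along each residue class modulo $p$ the values of $\ell$ stabilise to a left-limit pattern $\lambda$ and a right-limit pattern $\rho$ with $\lambda\le\rho$ and $\lambda\neq\rho$; for each $k\ge 1$ I build a twin $m^{(k)}$ by splicing $k$ extra alternations $\rho,\lambda,\rho,\lambda,\dots$ between the $\lambda$-part and the $\rho$-part in the finite transition region, verify equimorphy by suitable shifts (every label $\le\rho$ can be sent into the $\rho$-part, and the $\lambda$-part embeds everywhere), and separate the $m^{(k)}$ by the isomorphism invariant counting, over all residue classes, the positions whose value differs from the eventual-right value but whose two $p$-neighbours both carry that value — this count equals $k\,\mathfrak d_\ell$ — so $\vert twin(G,\ell)\vert=\infty$.

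Assembling the cases: when $Q$ has a minimum, $\vert twin(G,\ell)\vert$ is always $1$ or infinite, so the tree alternative property holds, and the numerical assertions are exactly what each case delivered; when in addition $Q$ is countable and $\mathfrak d_\ell$ is finite, all twins arise from finitely-supported modifications of $\ell$ of the kinds above, so $twin(G,\ell)$ is countable. The main obstacle is the last case, $G=\Z$ with $0<\mathfrak d_\ell<\infty$: the wiggled labellings must be set up carefully enough that both required embeddings are realised — which is where finiteness of $\mathfrak d_\ell$, producing genuine left- and right-limit patterns, enters — and one must exhibit a quantity distinguishing them that survives the nontrivial translations and reflections of $\Z$; by comparison the non-periodic case and the $\mathfrak d_\ell$-infinite case are short, and the minimum of $Q$ is needed only for the one-way periodic case.
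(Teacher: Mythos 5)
Your proof is correct and reaches the same conclusions, but it takes a noticeably different and in places more self-contained route than the paper. For the lower bound $\vert twin(G,\ell)\vert\geq 2^{\aleph_0}$ the paper invokes its general Theorem \ref{thm:continuum} (proved via Lemma \ref{lem:manylabelled} and the almost-disjoint/translate family of Lemma \ref{pz2}, machinery designed for arbitrary locally finite graphs), whereas you exploit the fact that a path has only countably many graph-automorphisms: your labellings $\ell_A$ are injectively indexed by subsets of $S$, so each isomorphism class absorbs at most countably many of them. This is shorter and avoids the spectrum argument, at the price of not generalizing beyond paths. Conversely, for the ``$twin$ is infinite'' cases (one-way periodic non-constant, and two-way periodic with $0<\mathfrak d_\ell<\infty$) the paper only introduces the notions of eventually stepwise constant labellings and their character without completing the construction, while you give explicit families of twins ($m_j$ obtained by shifting and padding with $0$, and the spliced $\lambda/\rho$ alternations $m^{(k)}$); this is a genuine improvement in completeness. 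Your reduction of the non-periodic and $\mathfrak d_\ell=0$ cases to the classification of self-embeddings of $\N$ and $\Z$, with the squeeze $\ell\leq\ell'\circ f\leq\ell\circ h$ and the idempotency trick to get bijectivity, matches the paper's intent but is spelled out more carefully.

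Two small points to tighten. First, in the case $G=\Z$, $0<\mathfrak d_\ell<\infty$, your distinguishing invariant is phrased relative to the \emph{eventual-right} value of each residue class, which is not preserved by a reflection of $\Z$; either rule out reflection isomorphisms separately (both $m^{(k)}$ and $m^{(k')}$ have $\lambda$-tails on the left and $\rho$-tails on the right with $\lambda\neq\rho$, so a reflection forces the $\lambda$-pattern to equal the reversed $\rho$-pattern, and one must still count) or use the symmetric invariant $\vert\{n: m(n)\neq m(n+p)\}\vert$, which is invariant under all automorphisms of $\Z$ and still grows strictly with $k$. Second, your closing claim that all twins are finitely supported modifications of $\ell$ when $\mathfrak d_\ell$ is finite (used for the countability remark) is asserted rather than proved; it does follow from the squeeze $\ell\circ g^{-1}\leq\ell'\leq$ (shifted $\ell$) forcing the tails of any twin, but that deserves a line of justification — though to be fair the corresponding sentence in the theorem statement is itself incomplete and the paper's own proof does not establish it either.
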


\begin{question}
Does the tree alternative property hold for labelled trees labelled with a poset having a least element?
\end{question}

The proof of Theorem \ref{lemma:paths} relies on the following lemmas and Theorem \ref{thm:continuum} below. 
\begin{lemma}\label{lem:colormodif} Let $(G, \ell )$ and  $(G', \ell' )$ be two $Q$-labelled connected graphs and $g$ be an embedding of $(G, \ell )$ into  $(G', \ell' )$. Let $\ell'':V(G')\rightarrow Q$ such that:
\begin{equation}\label{eq:ineg}
\ell(g^{-1}(y))\leq \ell''(y)\leq \ell'(y)
\end{equation}
for every $y\in V(G')$, with the stipulation that the first inequality holds if $y$ does not belong to the range of $g$. Then:

\begin{equation}
\text{$(G,\ell)$ embeds into  $(G',\ell'')$ embeds into  $(G', \ell')$.} 
\end{equation}

As a consequence, if $(G, \ell )$ and  $(G', \ell' )$ are equimorphic, then  $(G, \ell )$ and  $(G', \ell'')$ are equimorphic. 
\end{lemma}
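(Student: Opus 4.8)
The plan is to write down explicit $Q$-embeddings realizing the two asserted containments, and then to deduce the equimorphism statement by composition. There is essentially no obstacle here: the whole argument is a direct unwinding of the definition of a $Q$-embedding, so the ``hard part'' is merely bookkeeping about where the inequalities in \eqref{eq:ineg} are required.

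First I would check that the given map $g$ is \emph{itself} a $Q$-embedding of $(G,\ell)$ into $(G',\ell'')$. On the level of graphs $g$ is already an embedding by hypothesis, so the only thing to verify is the label condition $\ell(x)\leq\ell''(g(x))$ for all $x\in V(G)$. Since $g$ is injective we have $g^{-1}(g(x))=x$, and applying the left-hand inequality of \eqref{eq:ineg} at the vertex $y:=g(x)$ — which does lie in the range of $g$ — gives exactly $\ell(x)=\ell(g^{-1}(g(x)))\leq\ell''(g(x))$. Hence $g\colon(G,\ell)\to(G',\ell'')$ is a $Q$-embedding.

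Next I would check that the identity map $\mathrm{id}_{V(G')}$ is a $Q$-embedding of $(G',\ell'')$ into $(G',\ell')$: it is trivially a graph isomorphism, and the required label condition is precisely the right-hand inequality $\ell''(y)\leq\ell'(y)$ of \eqref{eq:ineg}, valid for \emph{every} $y\in V(G')$. These two observations together establish the displayed chain ``$(G,\ell)$ embeds into $(G',\ell'')$ embeds into $(G',\ell')$''.

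Finally, for the consequence, I would assume $(G,\ell)$ and $(G',\ell')$ are equimorphic, so that in addition there is a $Q$-embedding $h\colon(G',\ell')\to(G,\ell)$. Using transitivity of the quasi-order on $Q$, a composite of $Q$-embeddings is again a $Q$-embedding; composing the identity $Q$-embedding $(G',\ell'')\to(G',\ell')$ with $h$ then yields a $Q$-embedding $(G',\ell'')\to(G,\ell)$. Together with $g\colon(G,\ell)\to(G',\ell'')$ from the first step this gives $(G,\ell)\equiv(G',\ell'')$. The only point needing a word of care throughout is that $\ell''$ is constrained from below only on the range of $g$ and merely from above elsewhere — which is exactly what makes both $g$ and the identity map work.
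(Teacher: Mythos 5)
Your proof is correct and follows exactly the paper's argument: the first inequality in \eqref{eq:ineg} makes $g$ a $Q$-embedding of $(G,\ell)$ into $(G',\ell'')$, the second makes the identity a $Q$-embedding of $(G',\ell'')$ into $(G',\ell')$, and the equimorphism consequence follows by composing with an embedding witnessing $(G',\ell')\equiv(G,\ell)$. No issues.
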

\begin{proof} From the first inequality in (\ref{eq:ineg}), the map $g$ is an embedding of $(G, \ell )$ into   $(G', \ell'' )$, whereas from the second inequality, the identity map is an embedding of $(G', \ell'' )$ into   $(G', \ell' )$. 
\end{proof}

We need the following result (see Lemma 4, p.39, \cite{laflamme-pouzet-sauer-zaguia}). For reader convenience, we give a proof.
\begin{lemma}\label{pz2} Let $S$ be an infinite subset of $\N$. There is a family $\mathcal A$ of $2^{\aleph_0}$ subsets of $S$ such that for every pair $A, A'$ of distinct subsets of $\mathcal A$,  no translate  of an infinite subset of $A$  is  almost included into $A'$.
\end{lemma}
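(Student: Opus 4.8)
Lemma (restated). Let $S$ be an infinite subset of $\mathbb{N}$. There is a family $\mathcal{A}$ of $2^{\aleph_0}$ subsets of $S$ such that for every pair $A, A'$ of distinct subsets of $\mathcal{A}$, no translate of an infinite subset of $A$ is almost included into $A'$.

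The plan is to construct $\mathcal{A}$ as the image of the Cantor space $2^{\mathbb{N}}$ under a carefully chosen coding map, using a sufficiently sparse indexing sequence inside $S$ so that translates cannot create spurious inclusions. First I would fix a rapidly increasing sequence $(n_k)_{k\in\mathbb{N}}$ enumerating a subset of $S$ with the gap condition $n_{k+1} - n_k > n_k - n_0$ for all $k$ (for instance $n_k$ growing at least geometrically); the point of this lacunarity is that any fixed nonzero translation $t$ eventually fails to map one block of the sequence onto another, since consecutive gaps are eventually larger than $|t|$. For $\sigma \in 2^{\mathbb{N}}$ I would then set $A_\sigma := \{\, n_{2k} : \sigma(k) = 0\,\} \cup \{\, n_{2k+1} : \sigma(k) = 1\,\}$, so that the pair of positions $\{n_{2k}, n_{2k+1}\}$ records the bit $\sigma(k)$: exactly one of the two is in $A_\sigma$, and which one it is determines $\sigma(k)$. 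Set $\mathcal{A} := \{A_\sigma : \sigma \in 2^{\mathbb{N}}\}$; this has cardinality $2^{\aleph_0}$ since distinct $\sigma$ give sets differing in infinitely many pairs.

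Next I would verify the translate condition. Suppose $\sigma \neq \tau$ and, towards a contradiction, that some translate $t + B$ of an infinite $B \subseteq A_\sigma$ satisfies $t + B \subseteq^* A_\tau$, i.e. $(t+B)\setminus F \subseteq A_\tau$ for some finite $F$. Since $B$ is an infinite subset of the lacunary set $\{n_k\}$, enumerate it as $n_{k_0} < n_{k_1} < \cdots$. The images $t + n_{k_i}$ must, for all but finitely many $i$, land in $A_\tau \subseteq \{n_k\}$. Here I would use the lacunarity: if $t \neq 0$, then because the gaps $n_{k+1}-n_k$ tend to infinity faster than any fixed bound, for large enough $i$ the point $t + n_{k_i}$ lies strictly between two consecutive terms of $(n_k)$ and hence is not of the form $n_k$ at all — contradiction. (One has to be slightly careful when $t>0$: one shows that $t + n_{k_i}$ exceeds $n_{k_i}$ but is smaller than $n_{k_i + 1}$ once the gap beats $t$, and is larger than $n_{k_i - 1}$, so it is not in $\{n_k\}$; when $t<0$ a symmetric argument applies.) Therefore $t = 0$, so we are reduced to $B \subseteq^* A_\tau$ with $B \subseteq A_\sigma$ infinite.

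Finally, I would derive a contradiction from $B \subseteq^* A_\tau$, $B\subseteq A_\sigma$, $B$ infinite. Pick a pair-index $k$ with $\sigma(k)\neq\tau(k)$; there are infinitely many such $k$. For such $k$, $A_\sigma$ and $A_\tau$ restricted to $\{n_{2k}, n_{2k+1}\}$ are the two distinct singletons. Since $B$ is infinite and $B\subseteq A_\sigma$, I need $B$ to actually meet $\{n_{2k},n_{2k+1}\}$ for infinitely many such $k$ — which is not automatic, so the construction should be tightened: replace the block coding by one where $\sigma(k)$ is coded redundantly across infinitely many positions, e.g. let $A_\sigma := \bigcup_k \{\, n_m : m \in I_k,\ m \equiv \sigma(k) \bmod 2\,\}$ for a partition of $\mathbb{N}$ into infinite intervals $I_k$; then any infinite $B\subseteq A_\sigma$ meets $I_k$ for infinitely many $k$, hence for infinitely many $k$ with $\sigma(k)\neq\tau(k)$, giving an element of $B$ of parity $\sigma(k)$ inside $I_k$ that cannot lie in $A_\tau$ (whose elements in $I_k$ have parity $\tau(k)$), contradicting $B\subseteq^* A_\tau$.

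The main obstacle I expect is getting the lacunarity bookkeeping exactly right so that a nonzero translate is genuinely excluded: one must make the gap sequence grow fast enough that for every fixed $t$, all sufficiently large terms of the sequence are displaced out of the sequence by adding $t$, and simultaneously keep the redundant coding robust under passing to infinite subsets. Once those two features are reconciled — which the interval-based coding above does — the argument is routine. I would assume standard facts about $\subseteq^*$ (finite modifications) and the cardinality of $2^{\mathbb{N}}$ without further comment.
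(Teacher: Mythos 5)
Your handling of nonzero translates is sound and is exactly the paper's device: a lacunary set $X=\{x_k\}\subseteq S$ with gaps tending to infinity, so that for any fixed $t\neq 0$ all but finitely many points of $X+t$ fall strictly between consecutive terms of $X$ and hence miss $X$ entirely. The gap is in the $t=0$ case. Observe that the required property forces the family to be \emph{almost disjoint}: if $A\cap A'$ is infinite, then $B:=A\cap A'$ is an infinite subset of $A$ that is literally (translate $0$) included in $A'$. Your coded families are not almost disjoint. In the interval version, whenever $\sigma(k)=\tau(k)$ the sets $A_\sigma$ and $A_\tau$ coincide on the block $I_k$; since a typical pair of distinct $\sigma,\tau\in 2^{\mathbb N}$ agrees on infinitely many $k$, the intersection $A_\sigma\cap A_\tau$ is infinite and the lemma fails for that pair. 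Your step ``any infinite $B\subseteq A_\sigma$ meets $I_k$ for infinitely many $k$, \emph{hence} for infinitely many $k$ with $\sigma(k)\neq\tau(k)$'' is a non sequitur: $B$ may live entirely on the blocks where $\sigma$ and $\tau$ agree (take $B=A_\sigma\cap A_\tau$). Nor can you repair this by thinning the index set to $\sigma$'s that pairwise agree only finitely often: with a two-letter alphabet, any three such $\sigma,\tau,\rho$ would have to be pairwise different at some common coordinate, which is impossible, so at most two indices survive. (There is also the smaller incoherence that $\mathbb N$ admits no partition into infinitely many infinite \emph{intervals}.)

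The paper's proof is shorter and avoids all of this: inside the lacunary set $X$ it simply takes any almost disjoint family $\mathcal A$ of $2^{\aleph_0}$ infinite subsets of $X$ (a standard object, e.g.\ coming from the branches of the binary tree). Almost disjointness disposes of $t=0$ as above, and lacunarity disposes of $t\neq 0$ because $A+t$ is then almost disjoint from $X$ itself, hence from every $A'\subseteq X$. If you replace your coding by such an almost disjoint family, the rest of your argument (the displacement estimate for $t\neq 0$) goes through and you recover the paper's proof.
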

\begin{proof} 
Start with $X:=\{x_n\in S, n \in \omega\}$ where $x_0=\min(S)$ and
$x_{n+1}\geq x_n + n$ (one just needs increasing gaps).  Now, let
$\mathcal A$ be an almost disjoint family of $2^{\aleph_0}$ infinite
subsets of $X$.  For any $A\in \mathcal A$, and $n>0$, $A+n$ is almost
disjoint from $X$, and thus almost disjoint from any other $A'$.
\end{proof}

\begin{lemma}\label{lem:manylabelled} 
Let $Q$ be a poset with a least element $0$. Let $(G, \ell )$ and
$(G', \ell' )$ be two equimorphic $Q$-labelled locally finite
graphs. Let $g$ be an embedding of $(G, \ell )$ into $(G', \ell' )$
and let $D:= \{y\in range (g): \ell(g^{-1}(y))<\ell'(y)\}\cup \{y\in
V(G')\setminus range(g): \ell'(y)\not =0\}$.  If $D$ is infinite, then
$\vert twin (G, \ell) \vert \geq 2^{\aleph_0}$.
\end{lemma}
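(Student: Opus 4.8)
The plan is to manufacture $2^{\aleph_0}$ pairwise non-isomorphic twins of $(G,\ell)$ by \emph{lowering} the labelling $\ell'$ on subsets of $D$: Lemma~\ref{lem:colormodif} will guarantee that lowering preserves equimorphy, and Lemma~\ref{pz2} will be used to defeat the symmetries of $G'$ so that the resulting labelled graphs stay pairwise non-isomorphic.

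\medskip\noindent
First I would fix, for each $A\subseteq D$, the labelling $\ell''_A\colon V(G')\to Q$ given by $\ell''_A(y):=\ell'(y)$ for $y\notin A$, by $\ell''_A(y):=\ell(g^{-1}(y))$ for $y\in A\cap\mathrm{range}(g)$, and by $\ell''_A(y):=0$ for $y\in A\setminus\mathrm{range}(g)$. By the very definition of $D$ one has $\ell(g^{-1}(y))<\ell'(y)$ on $A\cap\mathrm{range}(g)$, while $0\le\ell'(y)$ everywhere since $0=\min Q$; hence $\ell''_A$ meets the hypotheses of Lemma~\ref{lem:colormodif}, so $(G,\ell)$ embeds into $(G',\ell''_A)$, which embeds into $(G',\ell')$. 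As $(G,\ell)\equiv(G',\ell')$, every $(G',\ell''_A)$ is a twin of $(G,\ell)$. I would then record that these labellings are pairwise distinct, indeed that $A$ is recoverable from $\ell''_A$: by construction $\ell''_A(y)<\ell'(y)$ holds exactly for $y\in A$, so $A=\{y\in D:\ell''_A(y)<\ell'(y)\}$. In particular $A\mapsto\ell''_A$ is injective, and since $D$ is infinite this already yields $2^{\aleph_0}$ distinct labellings equimorphic to $(G,\ell)$.

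\medskip\noindent
The remaining and real work is to pass from distinctness of labellings to non-isomorphy of labelled graphs. Fixing a vertex $o$ of $G'$ and enumerating an infinite subset of $D$ by non-decreasing $d_{G'}(o,\cdot)$, I would identify it with $\N$ and invoke Lemma~\ref{pz2} to obtain a family $\mathcal A\subseteq\mathcal P(\N)$ with $\vert\mathcal A\vert=2^{\aleph_0}$ such that for distinct $A,A'\in\mathcal A$ no translate of an infinite subset of $A$ is almost included in $A'$. The goal is then: for $A\neq A'$ in $\mathcal A$ there is no $Q$-isomorphism $(G',\ell''_A)\to(G',\ell''_{A'})$. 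Such a map would be an automorphism $\sigma$ of $G'$ with $\ell''_{A'}=\ell''_A\circ\sigma$; using local finiteness of $G'$ one wants to show that $\sigma$ carries, up to a translation of the index set and a finite error, an infinite subset of the depressed set $A$ into the depressed set $A'$ — contradicting the choice of $\mathcal A$. Together with the previous paragraph this gives $\vert twin(G,\ell)\vert\ge 2^{\aleph_0}$.

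\medskip\noindent
The hard part is precisely this last claim. The delicate point is that an automorphism of $G'$ need not respect $\ell'$, so a depressed vertex of $A$ might be carried to a vertex whose $\ell'$-label already coincides with its depressed value, masking the depression; this is exactly why one needs the \emph{almost included} strength of Lemma~\ref{pz2} rather than mere equality up to translation, and why local finiteness (finiteness of balls, hence bounded distortion along a canonical ray) must be exploited to turn an abstract graph isomorphism into a near-translation of subsets of $D$. In the cases where the lemma is actually applied $G'$ is a one-way or a two-way infinite path, where $\mathrm{Aut}(G')$ consists only of the identity, the single reflection, and the translations; each of these would force $A$ and $A'$ to agree up to a translation and a finite set, and the contradiction with $\mathcal A$ is then immediate.
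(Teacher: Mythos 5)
Your construction of the twins is exactly the paper's: lower $\ell'$ to $\ell(g^{-1}(y))$ (or to $0$ off the range of $g$) on a subset of $D$ chosen via Lemma~\ref{pz2}, and invoke Lemma~\ref{lem:colormodif} for equimorphy. But the statement you yourself flag as ``the hard part'' --- that a $Q$-isomorphism between two of the lowered labelled graphs forces a translate of an infinite subset of one depressing set to be almost included in the other --- is precisely the content of the lemma, and you do not prove it. Note also that distinctness of the labellings $\ell''_A$ is worth nothing here: $twin(G,\ell)$ counts isomorphism classes, so one must show that the $2^{\aleph_0}$ labelled graphs fall into $2^{\aleph_0}$ classes, i.e.\ that each isomorphism class is small. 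Your closing remark that in the applications $G'$ is a path and $\mathrm{Aut}(G')$ is tame does not discharge the obligation: the lemma is stated and used (as Theorem~\ref{thm:continuum}) for arbitrary locally finite graphs.

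The paper fills this gap in three steps that are absent from your proposal. First, if some isomorphism class were uncountable, a counting argument (using that $V(G')$ is countable) produces uncountably many members that are isomorphic as \emph{pointed} labelled graphs with the same base point $x$. Second --- and this is the answer to the ``masking'' problem you correctly identify --- for a pointed isomorphism $h$ fixing $x$, each sphere $B_{G'}(x,n)$ is finite and permuted by $h$, so some iterate of $h$ is the identity on it; chaining the inequalities $\ell(y)=\ell_{B''}(y)=\ell_{B'}(h(y))\leq \ell(h(y))$ around the cycle forces equality everywhere, whence $Spec(B',x)=Spec(B'',x)$ (Claim~\ref{claimSpec}). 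Third, the bound $\vert d_{G'}(x,z)-d_{G'}(r,z)\vert\leq d_{G'}(x,r)$ plus pigeonhole converts equality of spectra at $x$ into an infinite translate inclusion between $Spec(B',r)$ and $Spec(B'',r)$ (Claim~\ref{claimtranslate}), contradicting Lemma~\ref{pz2}. A further technical point: the family from Lemma~\ref{pz2} must live in the \emph{spectrum} $Spec(D,r)\subseteq\N$ (sets of distances), with the depressing sets defined as $B=\{x\in D: d_{G'}(r,x)\in A\}$, not in an abstract enumeration of $D$ as you propose; the translation that the isomorphism induces acts on distances, and several vertices of $D$ may share a distance, so your identification of $D$ with $\N$ would break step three.
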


\begin{proof} 
Our aim is to find a family $\mathcal B$ of $2^{\aleph_0}$ subsets $B$
of $D$ such that the $Q$-labelled graphs $(G', \ell '_{B})$ defined by
$\ell'_{B}(y):= \ell(g^{-1}(y))$ are pairwise isomorphic if $y \in
B\cap range (g)$ and $\ell'_{B}(y)=0$.  Indeed, according to Lemma
\ref{lem:colormodif}, these $Q$-labelled graphs are equimorphic to
$(G, \ell)$. For this purpose, we will use the following notation. 

Let $x\in V(G')$ and $n\in \N$, set $B_{G'}(x, n):= \{y\in V(G'):
d_{G'}(x,y)=n\}$. Let $X\subseteq V(G')$, set $Spec(X,x):=\{n\in \N:
X\cap B_{G'}(x,n)\not = \emptyset\}$. Now, fix a vertex of $G'$, say
$r$.  Let $S:= Spec (D, r)$. Since $G'$ is locally finite, $S$ is an
infinite subset of $\N$. Let $\mathcal A$ be a family of
$2^{\aleph_0}$ subsets $A$ of $S$ satisfying the properties of
Lemma~\ref{pz2}. For each $A\in \mathcal A$, set $B:=
\{x\in D: d_{G'}(r, x)\in A\}$, hence $Spec(B,r) =A$. Let $\mathcal
B':= \{ B: A\in S\}$. This family yields a collection of
$2^{\aleph_0}$ $Q$-labelled graphs.  Divide this collection into
isomorphism classes. 

We claim that each class is countable. Picking a
representative in each class, we will get a family $\mathcal B$ as
described above. To prove our claim, suppose by contradiction that some
isomorphy class, say $\mathcal C$, is uncountable. For each $(G', \ell
'_{B})\in \mathcal C$ pick $x_{B}\in V(G')$. Since $G'$ is locally
finite, $V(G')$ is countable, hence we may find an uncountable
subfamily $C'$ such that all labelled graphs $(G', \ell '_{B},
x_{B})\in \mathcal C'$ are pairwise isomorphic and, furthermore all
$x_{B}$ are equal to the same element $x\in V(G')$ (indeed, fix a
labelled graph $(G', \ell '_{B})\in C$ and observe that if $(G',
\ell'_{B'})$ and $(G', \ell'_{B''})$ are isomorphic to $(G', \ell
'_{B})$ and the images of $x_{B'}$ and $x_{B''}$ by some isomorphisms
are the same, then the labelled graphs $(G', \ell '_{B'}, x_{B'})$ and
$(G', \ell'_{B''}, x_{B''})$ are isomorphic).

\begin{claim} \label {claimSpec}
If $(G', \ell '_{B'}, x)\simeq (G', \ell '_{B''}, x)$, then $Spec(B', x)=Spec (B'',x)$. 
\end{claim}

\noindent{\bf{Proof of Claim \ref{claimSpec}.}} 
Assume that $(G', \ell '_{B'}, x)\simeq (G', \ell '_{B''}, x)$. Let
$n\in \N$. We prove that if $n\not \in Spec(B', x)$, then $n\not \in
Spec(B'', x)$; that is, $B''\cap B_{G'}(x, n)=\emptyset$ implies
$B''\cap B_{G'}(x, n)=\emptyset$. Supposing $B''\cap B_{G'}(x,
n)=\emptyset$ we have $\ell_{B''}(y)=\ell(y)$ for every $y\in
B_{G'}(x, n)$. Let $h$ be an isomorphism of $(G', \ell '_{B''}, x)$
onto $(G', \ell '_{B''}, x)$. For all $y\in B_{G'}(x, n)$ we have:
\begin{equation} \ell (y)=\ell_{B''}(y)=\ell_{B'}(h(y)\leq \ell (h(y)). 
\end{equation}

Since $h$ is a self map on $V(G')$ we may iterate it. The iterates
satisfies the inequalities above. Since $G'$ is locally finite,
$B_{G'}(x, n)$ is finite too. Hence, some iterate of $h$ is the
identity, and therefore $\ell(y)= \ell(h(y))$ for all $y\in B_{G'}(x, n)$. It
follows that $\ell_{B'}(y)=\ell (y)$ for all $y\in B_{G'}(x,
n)$. Hence $B'\cap B_{G'}(x, n)=\emptyset$, proving our claim.  \hfill
$\Box$

\begin{claim}\label {claimtranslate} 
If $Spec(B', x)=Spec (B'',x)$, then one of the  sets $A':=Spec(B', r)$, $A'':=Spec (B'',r)$ contains an infinite translate of the other.
\end{claim}

\noindent{\bf {Proof of Claim \ref{claimtranslate}.}} 
Let $D':= Spec(B', x)$. For each $n\in D'$ pick $x'_n\in B',x''\in
B''$ such that $d_{G'}(x,x'_n)=d_{G'}(x,x''_n)=n$. Since $\vert
d_{G'}(x,x'_n)-d_{G'}(r,x'_n)\vert \leq d_{G'}(x, r)$ and $\vert
d_{G'}(x,x''_n)-d_{G'}(r,x"_n)\vert \leq d_{G'}(x, r)$ we may find a
increasing sequence of integers $(\varphi(n))_{n\in \N} $ such that $
d_{G'}(x,x'_{\varphi(n)})-d_{G'}(r,x'_{\varphi(n)}) =c'$ and $
d_{G'}(x,x''_{\varphi(n)})-d_{G'}(r,x''_{\varphi(n)}) =c''$ for some
integer constants $c'$ and $c''$. It follows that
$d_{G'}(r,x'_{\varphi(n)})-d_{G'}(r,x''_{\varphi(n)})$ is an integer
constant. The set $Y':= \{d_{G'}(r,x'_{\varphi(n)}): n\in \N \}$ is an
infinite subset of $A':=Spec(B', x)$ and the set $Y''$ similarly
defined is an infinite subset of $A'':=Spec(B'', x)$. One of the sets
$Y'$, $Y''$ is a translate of the other, as claimed.  \hfill $\Box$
 
To complete the proof of Lemma \ref{lem:manylabelled}, observe that
this situation is not possible, because $A', A''\in \mathcal A$.
\end{proof}

 We can rephrase Lemma \ref{lem:manylabelled} and state it as Theorem \ref{thm:continuum}: 
 
\begin{theorem}\label{thm:continuum} 
Let $Q$ be a poset with a least element $0$. Let $(G, \ell )$ be a
locally finite graph such that $\vert twin(G,\ell) \vert <2^{\aleph_0}$. Then, for
every embedding $g$ of $(G, \ell)$ into a twin $(G', \ell' )$, the set
of $x\in V(G)$ such that $\ell(x)\not =\ell'(f(x))$, and the set of
$y\in V(G')\setminus range (g)$ such that $\ell'(y)\not =0$, are
finite. 
\end{theorem}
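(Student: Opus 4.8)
The plan is to observe that Theorem \ref{thm:continuum} is nothing more than the contrapositive of Lemma \ref{lem:manylabelled}, once the two sets appearing in the statement are identified with the set $D$ used there. First I would recall that, since $g$ is a $Q$-embedding of $(G,\ell)$ into $(G',\ell')$, we have $\ell(x)\leq \ell'(g(x))$ for every $x\in V(G)$; hence $\ell(x)\neq \ell'(g(x))$ holds if and only if $\ell(x)<\ell'(g(x))$, and the injection $g$ restricts to a bijection
\[
\{x\in V(G): \ell(x)\neq \ell'(g(x))\}\ \longrightarrow\ \{y\in range(g): \ell(g^{-1}(y))<\ell'(y)\}.
\]
Likewise, because $0$ is the least element of $Q$, the set $\{y\in V(G')\setminus range(g): \ell'(y)\neq 0\}$ is exactly the second part of the union defining $D$ in Lemma \ref{lem:manylabelled}. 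Consequently the disjoint union of the two sets mentioned in the theorem is carried bijectively by $g$ (together with the identity on $V(G')\setminus range(g)$) onto $D$, so these two sets are both finite if and only if $D$ is finite. (Here I would also note the harmless notational slip in the statement: the $f$ appearing there is the embedding $g$.)

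Next I would invoke Lemma \ref{lem:manylabelled} directly: if $D$ is infinite, then $\vert twin(G,\ell)\vert \geq 2^{\aleph_0}$. Taking the contrapositive, the hypothesis $\vert twin(G,\ell)\vert < 2^{\aleph_0}$ forces $D$ to be finite, and by the reduction of the previous paragraph this is precisely the assertion that the set of $x\in V(G)$ with $\ell(x)\neq\ell'(g(x))$ and the set of $y\in V(G')\setminus range(g)$ with $\ell'(y)\neq 0$ are both finite. This completes the proof.

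I expect no real obstacle here: the content is entirely contained in Lemma \ref{lem:manylabelled}, and the work is purely bookkeeping. The only point that needs a moment's care is verifying that the correspondence above is a genuine bijection (not merely an injection) between the disjoint union of the two displayed sets and $D$ — which is immediate from $g$ being injective with range $range(g)$, so that every element of $D$ lies either in $range(g)$, where it has a unique $g$-preimage, or outside it, where it is accounted for by the second set.
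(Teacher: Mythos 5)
Your proposal is correct and coincides with the paper's own treatment: the authors explicitly introduce Theorem \ref{thm:continuum} as a rephrasing of Lemma \ref{lem:manylabelled}, which is exactly the contrapositive-plus-bookkeeping argument you give. The identification of the two sets in the statement with the set $D$ of the lemma (via the bijection induced by $g$) is the whole content, and you have verified it.
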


\vskip 8pt 
\noindent {\bf Proof of Theorem \ref{lemma:paths}} \label{subsection:paths} 
Let $(G,\ell)$ be a one way infinite path. The function $\ell$ is
$r$-{\em constant} for $1\leq r\in \N$ if $\ell(n+r)=\ell(n)$ for all
$n\in \N$. If there exists such a number $r$, then $\ell$ is {\em
stepwise constant}. Note that the smallest such number $r$ is the
period of $\ell$. If $\ell$ is stepwise constant when restricted to
some terminal interval $[n,\infty)$ of $\N$, then $\ell$ is {\em
eventually stepwise constant}. Let $\ell_n$ be the restriction of the
labelling $\ell$ to the interval $[n,\infty)$.  If $\ell$ is
eventually stepwise constant there exists a smallest number $n$, the
{\em character of $\ell$}, for which $\mathfrak{d}_{\ell_n}=0$. The
character of $\ell$ is an isomorphism invariant. Note that if
$\mathfrak{d}_{\ell_n}=0$, then $\ell_n$ is stepwise constant and if
$Q$ is countable, then there are countably many eventually stepwise
constant $Q$-labelled one way paths.

If $\ell$ is not periodic, then there is no embedding $f$ of
$(G,\ell)$ with $0<f(0)$ and hence $\vert twin(G,\ell) \vert=1$. Let $\ell$ be
periodic with period $p$. If $\mathfrak{d}_l$ is infinite, then
because the function $n\to n+p$ is an embedding, it follows from
Theorem \ref {thm:continuum} that $\vert twin(G,\ell) \vert \geq 2^{\aleph_0}$.

Let $(G,\ell)$ be a two-way infinite path. The function $\ell$ is
$r$-{\em constant} for $1\leq r\in \N$ if $\ell(n+r)=\ell(n)$ for all
$n\in \Z$. If there exists such a number $r$, then $\ell$ is {\em
stepwise constant}.  Note that the smallest such number $r$ is the
period of $\ell$.  The function $\ell$ is {\em eventually stepwise
constant} if there are numbers $m$ and $n$ in $\Z$ so that $\ell$ is
stepwise constant when restricted to the one-way infinite path
$(-\infty,m)$ as well as to the one-way infinite path $[n,\infty)$.
In this case the {\em eventual period} of $\ell$ is the least common
multiple of the period of $\ell$ on $(-\infty,m]$ and the period of
$\ell$ on $[n,\infty)$.

If $\ell$ is not periodic,  there is no embedding $f$ of
$(G,\ell)$ which is a translation of $\Z$ with $f(0)\not= 0$. This
implies that every embedding is an automorphism and hence that
$\vert twin(G,\ell) \vert =1$. If $\ell$ is periodic, say with period $p$ and
$\mathfrak{d}_\ell=0$, then $\ell$ is stepwise constant, in which case
$\vert twin(G,\ell) \vert =1$. If $\mathfrak{d}_\ell$ is infinite, then because the
function $n\to n+p$ is an embedding, it follows from Theorem \ref
{thm:continuum} that $\vert twin(G,\ell) \vert \geq 2^{\aleph_0}$.  \hfill $\Box$


We look at  graphs labelled  by a qoset. 

\begin{lemma}\label{label=1} 
Let $(G, \ell )$ be a $Q$-labelled connected graph.  Let $Q_\ell:= \{
equiv(\ell (x)): x\in V(G)\}$, and $\kappa$ be the cardinality of the
set $\Phi$ of maps $\varphi$ from $Q_\ell$ into $Q$ such that
$\varphi(C)\in C$ for each $C\in Q_\ell$. Then
\begin{equation}
\kappa \leq \vert twin (G, \ell)\vert .
\end{equation}

In particular 
\begin{equation}
\vert equiv(\ell(x))\vert  \leq \vert twin (G, \ell)\vert.
\end{equation} 
for every $x\in V(G)$. 
 \end{lemma}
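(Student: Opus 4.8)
The plan is to construct, for each $\varphi\in\Phi$, a $Q$-labelled graph $(G,\ell_\varphi)$ that is equimorphic to $(G,\ell)$, and to show that $\varphi\mapsto (G,\ell_\varphi)$ descends to an injection from $\Phi$ into $twin(G,\ell)$. Define $\ell_\varphi(x):=\varphi(equiv(\ell(x)))$ for every $x\in V(G)$. Since $\ell_\varphi(x)\equiv\ell(x)$ for each $x$, the identity map on $V(G)$ is simultaneously a $Q$-embedding of $(G,\ell)$ into $(G,\ell_\varphi)$ and of $(G,\ell_\varphi)$ into $(G,\ell)$; hence $(G,\ell_\varphi)\equiv(G,\ell)$ and $(G,\ell_\varphi)\in twin(G,\ell)$. (This is essentially the observation already packaged in Lemma \ref{lem:colormodif} applied with $g$ the identity, but it is quick enough to state directly.)

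The remaining point is injectivity up to isomorphism: if $\varphi\neq\varphi'$ in $\Phi$, then $(G,\ell_\varphi)\not\simeq(G,\ell_{\varphi'})$. The idea is to recover $\varphi$ from the isomorphism type of $(G,\ell_\varphi)$. Since $\varphi(C)\in C$ for each class $C\in Q_\ell$, the map $x\mapsto equiv(\ell_\varphi(x))$ equals $x\mapsto equiv(\ell(x))$; in particular the partition of $V(G)$ into fibers of $equiv\circ\ell$ is the same for $(G,\ell_\varphi)$ and $(G,\ell)$, and this partition together with the graph structure is an isomorphism invariant. A $Q$-isomorphism $h\colon (G,\ell_\varphi)\to(G,\ell_{\varphi'})$ satisfies $\ell_\varphi(x)=\ell_{\varphi'}(h(x))$; but $\ell_{\varphi'}$ is constant on each fiber of $equiv\circ\ell$ with value $\varphi'(C)$ on the fiber labelled $C$, and $h$ maps the fiber labelled $C$ to itself (it must, since $h$ preserves the labels up to equivalence, hence preserves $equiv\circ\ell$). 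Therefore $\varphi(C)=\ell_\varphi(x)=\ell_{\varphi'}(h(x))=\varphi'(C)$ for every $C\in Q_\ell$, i.e.\ $\varphi=\varphi'$. This yields $\kappa\leq\vert twin(G,\ell)\vert$.

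The particular case follows by taking, for a fixed vertex $x_0\in V(G)$, only those $\varphi$ that are fixed on every class except $C_0:=equiv(\ell(x_0))$: the number of such maps is $\vert equiv(\ell(x_0))\vert=\vert equiv(\ell(x))\vert$ when $x$ ranges over the fiber of $x_0$, but since $x_0$ is arbitrary this gives $\vert equiv(\ell(x))\vert\leq\vert twin(G,\ell)\vert$ for every $x\in V(G)$.

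I do not expect a genuine obstacle here; the only place requiring a little care is the claim that a $Q$-isomorphism must preserve the fibers of $equiv\circ\ell$. This is immediate from the definition of $Q$-isomorphism ($\ell(x)=\ell(f(x))$, hence $equiv(\ell(x))=equiv(\ell(f(x)))$) once one notes that on $(G,\ell_\varphi)$ the composite $equiv\circ\ell_\varphi$ coincides with $equiv\circ\ell$, so this is really a one-line verification rather than a difficulty.
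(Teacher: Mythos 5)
Your proposal is correct and follows essentially the same route as the paper: define $\ell_\varphi(y)=\varphi(equiv(\ell(y)))$, observe that the identity map witnesses equimorphy with $(G,\ell)$, and recover $\varphi$ from the isomorphism type because any $Q$-isomorphism preserves the fibers of $equiv\circ\ell$. Your explicit justification of that last fiber-preservation step, and of the ``in particular'' clause, is slightly more detailed than the paper's but not different in substance.
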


\begin{proof} Let  $\varphi\in \Phi$. Set $V:=V(G)$ and define  $\ell_\varphi: V \rightarrow Q$ by setting $\ell_\varphi(y)=\varphi(equiv(\ell (y)))$. The $Q$-labelled graph $G<\varphi>:= (G, \ell_\varphi)$ is equimorphic to $(G,\ell)$. Indeed, the identity map on $V$ is an embedding of $(G, \ell)$ into $G<\varphi>$ and  of $G<\varphi>$ into $(G, \ell)$. If  $\varphi \not =\varphi'$, the $Q$-labelled graphs $G<\varphi>$ and $G<\varphi'>$ cannot be isomorphic. Otherwise, let $x\in V$ such that $q:=\varphi(equiv(l(x)))\not =\varphi'(equiv(l(x)))=:q'$. We  would have  $q\not=\ell_{\varphi'}(f(x))=\ell_\varphi(x)=q$ for every  isomorphism $f$ of $G<\varphi>$ onto $G<\varphi'>$. The 
 inequality follows. 
\end{proof}

\section{Bonato-Tardif and Tyomkyn conjectures and a proof of Theorem \ref{main2}}\label{section: Bonato-tardif-level function}

\begin{lemma}\label{twin-twinrooted} 
Let $T$ a tree, $A$ be a subtree of $T$, $x\in A$ and
$T_x:=((T,A)(x), x)$. If $f(A)=A$ for every embedding $f$ of $T$, then
$\vert twin(T_x) \vert \leq \vert twin (T) \vert $.
\end{lemma}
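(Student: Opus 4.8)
The plan is to build an injection $\Psi$ from the set of twins of the rooted tree $(T_x,x)$, taken up to rooted isomorphism, into $twin(T)$. Put $S:=\{z\in V(A): ((T,A)(z),z)\equiv T_x\text{ as rooted trees}\}$, so that $x\in S$, and recall that $T=\bigoplus_{z\in A}((T,A)(z),z)$ by Lemma \ref{lem:sumoftrees}. Given a rooted tree $(T',x)$ equimorphic to $T_x$, let $T(T')$ be obtained from $T$ by replacing the hanging tree $((T,A)(z),z)$ by a copy of $(T',z)$ for every $z\in S$, leaving $A$ and all other hanging trees unchanged --- the construction from the proof of Proposition \ref{kappatwinsscattered}. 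Gluing, over the subtree $A$, the identity on $A$ with a rooted embedding $((T,A)(z),z)\hookrightarrow T'$ on each $z\in S$ gives an embedding $T\hookrightarrow T(T')$, and gluing the identity on $A$ with rooted embeddings $T'\hookrightarrow((T,A)(z),z)$ gives $T(T')\hookrightarrow T$; hence $T(T')\equiv T$. Since $(T',x)\simeq(T'',x)$ as rooted trees clearly yields $T(T')\simeq T(T'')$, the assignment $(T',x)\mapsto T(T')$ descends to a well-defined map $\Psi\colon twin(T_x)\to twin(T)$.

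The main point is that $\Psi$ is injective, equivalently that $(T',x)\not\simeq(T'',x)$ forces $T(T')\not\simeq T(T'')$. The obstacle is that the rigidity hypothesis concerns self-embeddings of $T$, whereas we must control an isomorphism $g\colon T(T')\to T(T'')$ between two different (though equimorphic) trees, and $A$ need not be an invariant of the equimorphy class. To get past this I compose. Let $a\colon T\hookrightarrow T(T')$ and $b\colon T(T'')\hookrightarrow T$ be the embeddings described above (identity on $A$, a chosen rooted embedding on each hanging tree over $S$). Then $F:=b\circ g\circ a$ is an embedding of $T$ into itself, so $F(A)=A$ by hypothesis. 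As $a$ is the identity on $A$ and sends $V(T)\setminus V(A)$ into $V(T(T'))\setminus V(A)$, and similarly for $b$ (so that $b^{-1}(V(A))=V(A)$), the equality $b(g(V(A)))=F(V(A))=V(A)$ together with injectivity of $b$ forces $g(V(A))=V(A)$.

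Once $g(A)=A$, the argument closes by bookkeeping. Since $g$ is an isomorphism fixing $A$ setwise, for each $z\in V(A)$ it carries the connected component of $z$ in $T(T')\setminus(A\setminus\{z\})$ onto that of $g(z)$ in $T(T'')\setminus(A\setminus\{g(z)\})$, rooted at $z$ and $g(z)$ respectively; write these hanging trees as $R_z$ and $R'_{g(z)}$. Take $z=x$, so $R_x=T'$. If $g(x)\in S$ then $R'_{g(x)}$ is a copy of $T''$, whence $(T',x)\simeq(T'',x)$, contradicting our assumption. If $g(x)\notin S$ then $R'_{g(x)}=((T,A)(g(x)),g(x))$, so $T'$ and $((T,A)(g(x)))$ are isomorphic, hence equimorphic, so $((T,A)(g(x)),g(x))\equiv T_x$ and $g(x)\in S$ --- again a contradiction. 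Therefore no such $g$ exists, $\Psi$ is injective, and $\vert twin(T_x)\vert\le\vert twin(T)\vert$.
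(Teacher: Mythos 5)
Your proof is correct, and it takes a more direct route than the paper's. The paper factors the statement through its labelled-tree machinery: it labels $A$ by $\ell(z):=((T,A)(z),z)$ with values in the quasi-order $Q$ of rooted trees under embeddability, proves $\vert twin(A,\ell)\vert\leq\vert twin(T)\vert$ using the invariance of $A$, and then invokes Lemma \ref{label=1}, whose choice-function construction yields $\vert twin(T_x)\vert=\vert equiv(\ell(x))\vert\leq\vert twin(A,\ell)\vert$. You instead build the injection $twin(T_x)\to twin(T)$ in one step by the replacement construction of Proposition \ref{kappatwinsscattered}, substituting a given twin $(T',x)$ at every $z$ in $S=\{z\in A:((T,A)(z),z)\equiv T_x\}$. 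The two arguments share their essential content: the equimorphy $T(T')\equiv T$ via gluing embeddings over $A$, and the composition trick $F=b\circ g\circ a$ that converts an isomorphism $g$ between two constructed twins into a self-embedding of $T$ so that the hypothesis $F(A)=A$ can be brought to bear (the paper does the same when it says the isomorphism of $\bigoplus_{x\in A'}\ell'(x)$ onto $\bigoplus_{x\in A''}\ell''(x)$ ``induces an embedding from $T$ into $T$''). What your version buys is self-containedness and explicitness --- in particular your derivation of $g(V(A))=V(A)$ from $b^{-1}(V(A))=V(A)$ and the injectivity of $b$, and the two-case analysis on whether $g(x)\in S$, are spelled out where the paper is terse; what the paper's version buys is reusability, since Lemma \ref{label=1} and the $twin(A,\ell)$ formalism are used elsewhere (e.g.\ for paths preserved by every embedding). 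One small point worth making explicit in your write-up: the rooted embeddings used to define $a$ and $b$ send non-root vertices to non-root vertices (by injectivity), which is what guarantees $b^{-1}(V(A))=V(A)$; you state this but it is the one place where the argument genuinely uses that the sum $\bigoplus$ attaches the hanging trees disjointly outside $A$.
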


\begin{proof} 
Let $Q$ be the set of rooted trees (considered up to isomorphy) which
are equimorphic to some $T(x)$ for $x\in A$. Let $(A, \ell)$ where
$\ell(x):=T(x)$. We claim that $\vert twin (A, \ell)\vert \leq \vert
twin(T)\vert $. Indeed, notice first that if $(A', \ell')$ is a
$Q$-labelled tree, and $(A, \ell')\equiv (A,\ell)$, then $
\bigoplus_{x\in A} \ell'(x)\equiv \bigoplus_{x\in A}\ell(x)$. Next,
let $(A',\ell')$, $(A'', \ell'')$ be two $Q$-labelled trees
equimorphic to $(A,\ell)$.  Suppose that $\bigoplus_{x\in A'}
\ell'(x)\simeq \bigoplus_{x\in A''}\ell''(x)\equiv T$. Let $f$ be an
isomorphism of $\bigoplus_{x\in A'} \ell'(x)$ onto $ \bigoplus_{x\in
A''}\ell''(x)$. The map $f$ induces an embedding from $T$ into $T$
hence, by our hypothesis, $f(A)=A$. This implies that $(A,
\ell')\simeq(A'',\ell)$, proving our claim. According to Lemma
\ref{label=1}, we have $\vert twin((T,A)(x), x) \vert \leq \vert twin
((T, \ell)) \vert $.
\end{proof} 

From Lemma \ref{twin-twinrooted} and Tyomkyn's Theorem \cite{tyomkyn} that the rooted tree alternative conjecture holds we obtain: 
\begin{corollary}
Let $T$ be a tree and $A$ be a subtree such that $f(A)=A$ for every
embedding of $T$. If $\vert twin (T) \vert <\omega$, then $\vert twin((T,A)(x), x)\vert=1$ for
every $x\in A$.
\end{corollary}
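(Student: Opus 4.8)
The plan is to read off the statement as an immediate consequence of the preceding Lemma~\ref{twin-twinrooted} together with Tyomkyn's theorem. First I would fix a vertex $x\in A$ and set $T_x:=((T,A)(x),x)$. Since by hypothesis every embedding $f$ of $T$ satisfies $f(A)=A$, the hypothesis of Lemma~\ref{twin-twinrooted} is met, and that lemma yields $\vert twin(T_x)\vert\leq\vert twin(T)\vert$. Combined with the assumption $\vert twin(T)\vert<\omega$, this gives that $twin(T_x)$ is finite.

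Next I would invoke Tyomkyn's theorem \cite{tyomkyn}, which asserts that the tree alternative property holds for rooted trees: for the rooted tree $T_x$, either every rooted tree equimorphic to $T_x$ is isomorphic to $T_x$ (so $\vert twin(T_x)\vert=1$), or there are infinitely many pairwise non-isomorphic rooted trees equimorphic to $T_x$ (so $twin(T_x)$ is infinite). The finiteness established in the previous step rules out the second alternative, leaving $\vert twin(T_x)\vert=1$. Since $x\in A$ was arbitrary, the conclusion holds for every $x\in A$.

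The only point requiring a small amount of care is bookkeeping rather than any genuine difficulty: one must be sure that $twin(T_x)$ is interpreted in the category of \emph{rooted} trees (so that Tyomkyn's rooted dichotomy applies), and correspondingly that the inequality supplied by Lemma~\ref{twin-twinrooted} is the one for rooted twins, as it is stated there. There is no substantive obstacle beyond this alignment of conventions; the argument is purely a combination of the two cited results.
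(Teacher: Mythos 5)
Your proposal is correct and is exactly the paper's argument: the corollary is derived by combining Lemma~\ref{twin-twinrooted} (giving $\vert twin((T,A)(x),x)\vert\leq\vert twin(T)\vert<\omega$) with Tyomkyn's rooted tree alternative theorem, which excludes any finite cardinality other than $1$. Your remark about interpreting $twin(T_x)$ in the category of rooted trees is the right bookkeeping and matches the conventions of the lemma as stated.
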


Let $T$ be a tree and $A$ be a finite subtree. If $T'$ is an other
tree and $A'$ is a finite subtree of $T'$, we will say that $(T, A)$
is embeddable into $(T',A')$ if there is an embedding of $T$ into $T'$
which maps $A$ into $A'$. If $n:= \vert A\vert$ and $(a_i)_{i<n}$ is
an enumeration of $A$ and $A'$ is an $n$-element subtree of a tree
$T'$ with an enumeration $(a'_i)_{i<n}$, we will say that $((T,
(a_i)_{i<n})$ is embeddable into $(T',(a'_i)_{i<n})$ if there is an
embedding of $T$ into $T'$ which sends each $a_i$ onto $a'_i$. This
allows to define $twin((T,A))$ and $twin( (T, (a_i)_{i<n}))$. We
define $twin((T,A))$ as the set of twins of $(T, A)$ considered up to
isomorphy, and similarly $twin((T, (a_i)_{i<n}))$.

\begin{lemma}
\begin{equation}\label {eq:labeling}
\frac {1} {n!}\cdot \vert twin( (T,   (a_i)_{i<n})) \vert  \leq \vert twin((T,A)) \vert \leq \vert twin( (T,  (a_i)_{i<n}))\vert. 
\end{equation}
\end{lemma}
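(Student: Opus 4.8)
The plan is to compare the two notions of twin through the ``forget the enumeration'' map $\Phi$ which sends the isomorphism class of an enumerated twin $(T',(a'_i)_{i<n})$ to the isomorphism class of the pair $(T',A')$, where $A'$ is the subtree of $T'$ induced by $\{a'_i:i<n\}$. First I would check that $\Phi$ is well defined: an embedding of $T$ into $T'$ carrying each $a_i$ to $a'_i$ is in particular an embedding carrying $A$ into $A'$, so an enumerated twin of $(T,(a_i)_{i<n})$ is carried by $\Phi$ to a genuine twin of $(T,A)$; and an isomorphism of enumerated structures is an isomorphism of the underlying pairs, so $\Phi$ descends to isomorphism classes. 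The right-hand inequality $\vert twin((T,A))\vert\le \vert twin((T,(a_i)_{i<n}))\vert$ will follow once $\Phi$ is shown to be surjective, and the left-hand inequality will follow once the fibres of $\Phi$ are shown to have at most $n!$ elements.

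For surjectivity, fix a twin $(T',A')$ of $(T,A)$. Since $(T,A)$ embeds into $(T',A')$ and conversely, comparing cardinalities of the images of $A$ and of $A'$ forces $\vert A'\vert=\vert A\vert=n$; hence an embedding $g\colon T\to T'$ carrying $A$ into $A'$ must carry $A$ \emph{onto} $A'$, and dually there is an embedding $h\colon T'\to T$ carrying $A'$ onto $A$. Put $a'_i:=g(a_i)$; then $g$ is an embedding of the enumerated structure $(T,(a_i)_{i<n})$ into $(T',(a'_i)_{i<n})$, and $\Phi$ applied to the class of the latter is the class of $(T',A')$. The delicate point is to embed back: $h$ carries $a'_i=g(a_i)$ to some vertex $a_{\pi(i)}$, where $\pi$ is a permutation of $\{0,\dots,n-1\}$ because $h\circ g$ is an embedding of $T$ into itself carrying $A$ bijectively onto $A$. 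Letting $m\ge 1$ be the order of $\pi$ and using the identity $(h\circ g)^k(a_j)=a_{\pi^k(j)}$, the composite $\psi:=(h\circ g)^{m-1}\circ h\colon T'\to T$ is an embedding sending each $a'_i$ to $a_{\pi^m(i)}=a_i$. Thus $(T',(a'_i)_{i<n})$ is a twin of $(T,(a_i)_{i<n})$, and $\Phi$ is surjective.

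For the fibre bound, suppose $(T'',(a''_i)_{i<n})$ lies in the fibre of $\Phi$ over the class of $(T',A')$, so that $(T'',A'')\simeq(T',A')$ as pairs; composing with such an isomorphism shows that every class in this fibre has a representative of the form $(T',(b_i)_{i<n})$ for some enumeration $(b_i)_{i<n}$ of the $n$ vertices of $A'$. There are only $n!$ such enumerations, hence at most $n!$ isomorphism classes in the fibre. Combining the two observations gives $\vert twin((T,(a_i)_{i<n}))\vert\le n!\cdot\vert twin((T,A))\vert$ and $\vert twin((T,A))\vert\le\vert twin((T,(a_i)_{i<n}))\vert$, which is (\ref{eq:labeling}).

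I expect the main obstacle to be exactly the surjectivity step, and within it the fact that the ``backward'' embedding $h$ need not respect the chosen enumeration $(a'_i)_{i<n}$; the device of composing $h$ with the power $(h\circ g)^{m-1}$ to cancel the induced permutation $\pi$ is the crux. Everything else — well-definedness of $\Phi$, the cardinality comparisons forcing $\vert A'\vert=n$, and the counting of enumerations in a fibre — is routine bookkeeping with finite sets and compositions of embeddings.
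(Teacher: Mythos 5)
Your proposal is correct and follows essentially the same route as the paper: both forget the enumeration, prove surjectivity by exploiting the finiteness of $A$ to take a power of the composite self-embedding that cancels the induced permutation of $A$, and bound each fibre by the $n!$ possible enumerations of $A'$. The only difference is cosmetic — you make the permutation $\pi$ and its order explicit where the paper simply picks $m$ with $(g\circ f)^{(m)}$ the identity on the finite set.
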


\begin{proof} 
Note that the map $\varphi$ which transforms $(T, (a_i)_{i<n})$ to $(T,
A)$ preserves isomorphy, namely if $(T', (a'_i)_{i<n})\simeq (T'',
(a''_i)_{i<n})$, then $(T', A')\simeq (T'', A'')$. Hence $\varphi$
sends isomorphic types onto types. Also $(T', (a'_i)_{i<n})\equiv (T'',
(a''_i)_{i<n})$, then $(T', A')\equiv (T'', A'')$, hence $\varphi$
sends $twin((T,(a_i)_{i<n}))$ into $twin((T,A))$. This map is
surjective, proving that the second inequality holds. Indeed, let
$(T', A')\equiv (T, A)$. Let $f,g$ witness this fact, that is $f$
is an embedding of $(T',A')$ into $(T, A)$ and $g$ is an embedding of
$(T, A)$ into $(T', A')$.  Since $A$ is finite, there is a nonnegative
integer $m$ such that $(g\circ f)^{(m)}_{\restriction A}$ is the
identity on $A'$. Let $f':=(g\circ f)^{(m)}\circ g$. Let
$(a'_i)_{i<n}$ such that $a_i=f(a'_i)$. Then $(T', (a'_i)_{i<n})\equiv
(T, (a_i)_{i<n})$, proving that the map $\varphi$ is surjective. Let
$(T',A')\in twin((T,A))$. The number of non-isomorphic types in
$twin((T, (a_i)_{i<n}))$ which can be sent onto $(T', A')$ is at most
$n!$. The first inequality follows.
\end{proof}

From this type of trivial argument we cannot expect to prove that
$\vert twin((T,A)) \vert \leq \vert twin( (T, (a_i)_{i<n}))\vert$. For an example, there are
two graphs $G$ and $G'$ and subsets $A:= \{a_0, a_1\}, A':= \{a'_0,
a'_1\}$ such that $(G, (a_0,a_1) )\equiv (G', (a'_0,a'_1))$, $(G,
(a_0,a_1) )\not \simeq (G', (a'_0,a'_1))$. The graph $G$ is a path on
$\N$ with two extra vertices $a_0$ and $a_1$, with $a_0$ linked by an
edge to every even integer, $a_1$ linked by an edge to every odd
integer. The graph $G'$ is $G_{-0}$, $\alpha'_i=\alpha_i$.

\subsection{A vertex or an edge preserved}
\begin{proposition}\label{prop:edge-vertex}
Let $T$ be a tree. If there is a vertex or an edge preserved by every
embedding of $T$, then $\vert twin(T)\vert $ is $1$ or $\infty$, and if $T$ is also
locally finite, then every embedding of $T$ is an automorphism of $T$.
\end{proposition}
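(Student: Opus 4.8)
The plan is to reduce the statement to the labelled-path and labelled-tree machinery already developed in Section~\ref{section: labelled}, using the fact that the preserved vertex or edge gives us a finite subtree $A$ fixed (setwise) by every embedding. First I would handle the case of a preserved vertex $r$: then every embedding of $T$ fixes $r$, so $T$ may be regarded as a rooted tree $(T,r)$, and by Tyomkyn's theorem (the rooted tree alternative conjecture, quoted just before this proposition) either $\vert twin(T,r)\vert =1$ or $\vert twin(T,r)\vert=\infty$. Since a rooted-tree twin is in particular a twin, and conversely any twin of $(T,r)$ as an unrooted tree carries $r$ to the root of its image (because the preserved vertex is an isomorphism invariant of the tree — it can be recovered from the structure), we get $\vert twin(T)\vert=\vert twin(T,r)\vert$, so $\vert twin(T)\vert\in\{1,\infty\}$.

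Next I would treat the case of a preserved edge $\{a_0,a_1\}$. Here every embedding either fixes both endpoints or swaps them; in either case the two-element subtree $A=\{a_0,a_1\}$ is preserved setwise. Apply Lemma~\ref{twin-twinrooted} with this $A$: writing $T_i:=((T,A)(a_i),a_i)$ for $i=0,1$, we have $\vert twin(T_i)\vert\leq\vert twin(T)\vert$. If some $\vert twin(T_i)\vert=\infty$ we are done. Otherwise both are finite, hence equal to $1$ by Tyomkyn. Now decompose $T=T_0\oplus_A T_1$ as the sum of the two rooted trees joined by the edge, and observe that any twin $T'$ of $T$ must (again because the preserved edge is recoverable) decompose the same way as $T'_0\oplus T'_1$ with $\{T'_0,T'_1\}$ equimorphic as an unordered pair to $\{T_0,T_1\}$; since each $T_i$ has a unique twin, $T'\simeq T$, giving $\vert twin(T)\vert=1$. (A symmetric case split according to whether $T_0\equiv T_1$ or not handles the possible swap, but in both subcases the conclusion is the same.)

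Finally, for the locally finite assertion: suppose $T$ is locally finite and has a vertex or an edge preserved by every embedding, and let $f$ be an embedding. In the vertex case $f$ fixes $r$ and restricts to an embedding of the rooted, locally finite tree $(T,r)$ into itself; by a straightforward levelwise induction — each sphere $B_T(r,n)$ is finite and $f$ maps it injectively into $B_T(r,n)$, hence onto it — $f$ is surjective, so an automorphism. In the edge case the same argument works with $B_T(\{a_0,a_1\},n)$, which is finite and $f$-invariant, so $f$ is again surjective.

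The main obstacle I anticipate is the bookkeeping in showing that an unrooted twin of $T$ really does decompose compatibly with the preserved vertex/edge — i.e. that the preserved element is an isomorphism invariant so that twins inherit the same distinguished vertex or edge. One clean way to secure this is to note that the property ``$x$ is preserved by every embedding'' is itself invariant under isomorphism, but one must be slightly careful: a priori a tree equimorphic to $T$ need not have a \emph{unique} such vertex. The safe route is to phrase things via the labelled-tree reduction of Lemma~\ref{twin-twinrooted} and Lemma~\ref{label=1}, which encapsulate exactly this passage between twins of the pieces and twins of the whole, so that the only external input needed is Tyomkyn's rooted result; this keeps the edge case from requiring any ad hoc argument about swaps.
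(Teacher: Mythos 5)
Your proposal is correct and follows essentially the same route as the paper: both cases are reduced to Tyomkyn's rooted tree alternative theorem via the preserved vertex or edge (the paper handles the edge case through the inequality~(\ref{eq:labeling}) relating $twin((T,A))$ to $twin((T,(a_i)_{i<n}))$, where you instead combine Lemma~\ref{twin-twinrooted} with a direct decomposition of a twin over the distinguished edge, but the underlying idea is the same). Your levelwise sphere argument for the locally finite assertion is likewise the standard one the paper uses (citing Tyomkyn's Lemma~4 for the rooted case and arguing directly for the edge case).
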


\begin{proof}
Case 1: There is a vertex $x$ preserved by every embedding. In this
case $twin(T)=twin ((T, r))$. According to Tyomkyn's Theorem, $\vert
twin ((T,r))\vert$ is $1$ or $\infty$. Every embedding of a locally
finite rooted tree is an automorphism, see \cite{tyomkyn} Lemma 4.

\vskip 2pt
\noindent 
Case 2: There is an edge $u:=\{x,y\}$ preserved by every embedding but there is no vertex fixed by every embedding. 
According to the inequalities in (\ref{eq:labeling}) we have:  
\begin{equation}\label {eq:labeling2}
\frac {1} {2}\cdot \vert twin( (T,   (x,y))) \vert \leq twin((T, u)) \vert \leq \vert twin( (T,  (x,y))) \vert. 
\end{equation}

According to Tyomkyn's Theorem, $\vert twin ((T,x))\vert$ is
$1$ or $\infty$. Since in our case $twin ((T,x))=twin ((T,(x,y)))$,
$\vert twin ((T,\{x,y\}))\vert$ is $1$ or $\infty$. Since $twin(T)=
twin((T,\{x,y\}))$, the result follows.  Let $T$ be locally finite and
$f$ an embedding of $T$. If $f(x)=x$, then $f(y)=y$ and hence $f$ is
an automorphism on the tree $(T,x)$ and on the tree $(T,y)$. If
$f(x)=y$ then $f(y)=x$ and hence $f^2$ is the identity,  and therefore $f$ is an
automorphism.

\end{proof}

\subsection{A two-way infinite path  preserved}

\begin{proposition} If a tree contains a two-way infinite path preserved by every embedding, then the tree alternative property holds. 
 \end{proposition}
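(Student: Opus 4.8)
The plan is to reduce the statement to the classification of twins of labelled two‑way infinite paths, Theorem~\ref{lemma:paths}. Write $D$ for a two‑way infinite path preserved by every embedding of $T$, and for $x\in V(D)$ set $T_x:=((T,D)(x),x)$, so that $T=\bigoplus_{x\in D}T_x$ by Lemma~\ref{lem:sumoftrees}. Since a two‑way path that is preserved by an embedding is mapped \emph{onto} itself, every embedding $f$ of $T$ restricts to an automorphism of $D$ and, by Corollary~\ref{cor:sumoverpath}, induces an embedding $T_x\hookrightarrow T_{f(x)}$ for every $x$. Orienting $D$, I identify $V(D)$ with $\Z$; then each $f\restriction D$ is a translation $n\mapsto n+c$ or a reflection $n\mapsto c-n$.

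First I would dispose of the case in which \emph{no} embedding of $T$ restricts to a non‑trivial translation of $D$. Since the composition of two distinct reflections of $\Z$ is a non‑trivial translation and a composition of embeddings is an embedding, at most one reflection can occur among the maps $f\restriction D$; hence either every embedding fixes $D$ pointwise, so that any single vertex of $D$ is preserved by all embeddings, or there is exactly one reflection $\sigma$ with $f\restriction D\in\{\mathrm{id},\sigma\}$ for all $f$, so that the vertex fixed, or the edge reversed, by $\sigma$ is preserved by all embeddings. Either way $T$ has a vertex or an edge preserved by every embedding, and Proposition~\ref{prop:edge-vertex} gives $\vert twin(T)\vert\in\{1,\infty\}$.

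The main case is that some embedding restricts to a translation by $k\neq 0$; reorienting $D$ I may assume $k>0$, whence $T_n\hookrightarrow T_{n+k}$ for all $n$. Let $Q$ be the poset of equimorphy classes of rooted trees equimorphic to some $T_x$, ordered by embeddability, with a least element adjoined if the one‑vertex rooted tree is not already present (harmless, as that tree embeds into every $T_x$ and is never used as a label), and put $\bar\ell(n):=[T_n]_{\equiv}\in Q$. Then $\bar\ell(n)\le\bar\ell(n+k)$, so $\bar\ell$ is periodic, of some period $p$. The crux is a transfer between $twin(T)$ and $twin(D,\bar\ell)$: every tree $T'$ equimorphic to $T$ has a \emph{canonical} two‑way infinite path $D':=f_0[D]$ --- for any embedding $f_0\colon T\to T'$ --- which is independent of $f_0$ and preserved by every embedding of $T'$, because composing with embeddings between $T$ and $T'$ yields embeddings of $T$, which preserve $D$. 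Moreover, when $(D,\ell')$ is any $Q$‑labelled two‑way path equimorphic to $(D,\bar\ell)$, the witnessing $Q$‑embeddings may be chosen to respect the path and then, by Lemma~\ref{lem:sumoverpath}, extend to mutual embeddings between $T$ and $U_{\ell'}:=\bigoplus_{x\in D}(\text{a representative of }\ell'(x))$ carrying the two sum‑paths onto one another; so the sum‑path of $U_{\ell'}$ is its canonical path, and an isomorphism $U_{\ell'}\to U_{\ell''}$ of twins must carry one onto the other and hence induce an isomorphism $(D,\ell')\simeq(D,\ell'')$. Thus $\vert twin(D,\bar\ell)\vert\le\vert twin(T)\vert$. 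By Theorem~\ref{lemma:paths}, $\vert twin(D,\bar\ell)\vert\in\{1,\infty\}$, and it is $1$ only when $\bar\ell$ is non‑periodic (excluded here) or $\mathfrak d_{\bar\ell}=0$. If $\mathfrak d_{\bar\ell}\neq 0$ the transfer gives $\vert twin(T)\vert=\infty$ at once. If $\mathfrak d_{\bar\ell}=0$, then $T_n\equiv T_{n+p}$ for all $n$, and I split once more: if every $T_x$ has $\vert twin(T_x)\vert=1$, then the isomorphism‑type labelling is itself $p$‑periodic and a faithful copy of $(D,\bar\ell)$, so the $\mathfrak d=0$ clause of Theorem~\ref{lemma:paths} with the (sharpened) transfer forces $\vert twin(T)\vert=1$; if some $T_{x_0}$ has two non‑isomorphic twins, re‑labelling the copies along the residue class of $x_0$ modulo $p$ by an increasing‑gaps almost‑disjoint family (Lemma~\ref{pz2}, as in the proof of Theorem~\ref{lemma:paths}) produces $2^{\aleph_0}$ pairwise non‑isomorphic twins of $T$, so $\vert twin(T)\vert=\infty$. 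In every case $\vert twin(T)\vert\in\{1,\infty\}$, which is the tree alternative property.

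I expect the transfer step to be the main obstacle: making precise that the preserved two‑way path is canonical throughout the equimorphy class of $T$, so that isomorphism of trees matches isomorphism of the associated $Q$‑labelled paths, and cleanly separating the ``shape'' of the labelled path (governed by Theorem~\ref{lemma:paths}) from the isomorphism types of the hanging subtrees within a fixed equimorphy class (governed by Lemma~\ref{label=1} and the colouring argument). Everything else is bookkeeping built on the already‑established Theorem~\ref{lemma:paths} and Proposition~\ref{prop:edge-vertex}.
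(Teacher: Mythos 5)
Your overall strategy --- decompose $T$ as $\bigoplus_{x\in D}((T,D)(x),x)$ over the preserved path and reduce to Theorem~\ref{lemma:paths} for labelled two-way paths --- is exactly the paper's, and your ``canonical path'' argument (that $f_0[D]$ is independent of $f_0$, is preserved in every twin, and is carried onto itself by isomorphisms of twins, so that isomorphisms of twins induce isomorphisms of labelled paths) usefully fills in details the paper compresses into the one-line claim $twin(T)=twin(D,\ell)$. The organization differs, though: the paper assumes $\vert twin(T)\vert<\omega$, invokes Lemma~\ref{twin-twinrooted} together with Tyomkyn's theorem to get $\vert twin((T,D)(x),x)\vert=1$ for every $x\in D$, and then lets Theorem~\ref{lemma:paths} finish everything at once; it needs no case split on whether some embedding translates $D$, nor on the value of $\mathfrak{d}$, since the labelled-path theorem covers all those situations uniformly.

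The one step I would not accept as written is your final sub-case: when some $T_{x_0}$ has two non-isomorphic twins, you propose to manufacture $2^{\aleph_0}$ twins of $T$ by relabelling along an almost-disjoint family ``as in the proof of Theorem~\ref{lemma:paths}''. That proof runs through Lemma~\ref{lem:manylabelled}/Theorem~\ref{thm:continuum}, whose counting of isomorphism classes uses local finiteness in an essential way (finiteness of the spheres $B_{G'}(x,n)$, countably many candidate base points), and $T$ here need not be locally finite; moreover, an isomorphism $T_A\to T_{A'}$ is only forced to send $A$ into the set of \emph{all} positions of $T_{A'}$ whose lobe is isomorphic to your $R_1$, a set which may strictly contain $A'$, so concluding that $A'$ is a translate or reflection of $A$ requires an additional argument. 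The clean fix is already in the paper: since $f[D]=D$ for every embedding, Lemma~\ref{twin-twinrooted} gives $\vert twin((T,D)(x_0),x_0)\vert\le\vert twin(T)\vert$, and Tyomkyn's theorem makes the left-hand side infinite as soon as it exceeds $1$. With that substitution (and a few more words on why, in your $\mathfrak{d}=0$ sub-case, every lobe of a twin is equimorphic to the matching lobe of $T$), your proof is complete.
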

 
\begin{proof} 
Let $T$ be a tree, $D$ be a two-way infinite path preserved by every
embedding. Suppose that $\vert twin (T) \vert<\omega$. According to
Lemma \ref {twin-twinrooted} and Tyomkyn's Theorem, $\vert
twin((T,D)(x), x)) \vert =1$ for every $x\in D$.  Write $T=
\bigoplus_{x\in D} ((T,D)(x), x)$. Let $Q$ be the set of rooted trees
which embed in some $((T,D)(x), x)$ for some $x\in D$; we quasi order
$Q$ by embeddability of rooted trees.  Let $(D, \ell)$ be the labelled
two-way infinite path where $\ell: D \rightarrow Q$ is defined by
$\ell(x):= ((T,D)(x), x)$. Each embedding of a labelled two-way
infinite path yields an embedding of $T$ and conversely. Hence
$twin(T)= twin (D, \ell)$. The conclusion follows from Theorem
\ref{lemma:paths}.

%
%
\end{proof}

\subsection{One end preserved}  

Let $\mathfrak{E}$ be the set of  trees having an end  preserved forward by every embedding.  We will use the notions introduced in Subsection \ref {subsection:valuation}. 
In particular, each $T\in \mathfrak{E}$  has a level function, say $lev_T$.


\vskip 5pt
\noindent
{\bf Case \textbf{1}}: Every embedding of $T$ preserves the level function. This amounts to the fact that the end $e$ is almost rigid. Let $\mathfrak{G}$ be the set of trees $T$ in $\mathfrak{E}$ with the property that every embedding of $T$ preserves $lev_T$. 

\begin{proposition}\label{claim:finalindexpr}
If $T\in \mathfrak{G}$, then the set $twin(T)$ is infinite or a
singleton.If in addition $T$ is locally finite, then every embedding
of $T$ is an automorphism of $T$, hence $\vert twin(T)\vert=1$.
\end{proposition}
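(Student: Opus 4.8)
The plan is to prove the $1$-or-$\infty$ dichotomy through the rooted subtrees $T(\to x)$, and to obtain the locally finite addendum separately from the rigidity of locally finite rooted trees. A preliminary step is to check that every twin of $T$ again lies in $\mathfrak{G}$. So let $T'\equiv T$ with embeddings $g\colon T\to T'$ and $h\colon T'\to T$, and write $v:=lev_T$. Since $T\in\mathfrak{G}$, the embedding $h\circ g$ of $T$ has period $0$, hence preserves $e$, so $h(g(e))=e$; thus $e':=g(e)$ is an end of $T'$ with $h(e')=e$, and since embeddings preserve the covering relation (Corollary~\ref{cor:joinembedding}) the map $v':=v\circ h$ is an $e'$-valuation on $T'$. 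For an arbitrary embedding $f'$ of $T'$, the composite $h\circ f'\circ g$ is an embedding of $T$, hence has period $0$, so $v(hf'g(x))=v(x)$ for all $x$; as $v(hg(x))=v(x)$ and $f'$ preserves $e'$ (because $h(f'(e'))=hf'g(e)=e=h(e')$ and $h$ is injective on ends), evaluating the period of $f'$ on the range of $g$ gives period $0$. Hence $T'\in\mathfrak{G}$ with $lev_{T'}=v'$.

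Now the dichotomy. Suppose $|twin(T(\to x))|\ge 2$ for some vertex $x$. As $T(\to x)$ is a rooted tree, Tyomkyn's theorem~\cite{tyomkyn} gives $|twin(T(\to x))|=\infty$. For each twin $R$ of $T(\to x)$ form the tree $T_R$ obtained from $T$ by replacing every subtree $T(\to z)$ with $T(\to z)\equiv T(\to x)$ by a copy of $R$ rooted at $z$. Then $T_R\equiv T$, and — by the argument of Proposition~\ref{kappatwinsscattered}, now using $lev_T$ to see that any isomorphism $T_R\to T_{R'}$ is level-preserving and hence maps replaced subtrees onto replaced subtrees — non-isomorphic $R,R'$ give non-isomorphic $T_R,T_{R'}$; so $|twin(T)|=\infty$.

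It remains to show that if $|twin(T(\to x))|=1$ for every vertex $x$, then $|twin(T)|=1$. Let $T'\equiv T$; by the preliminary step $T'\in\mathfrak{G}$, and we normalise $v'=lev_{T'}$ so that the embeddings $g,h$ above are level-preserving. For each integer $n$, the subtree $T_{\ge n}:=\{x\in V(T):v(x)\ge n\}$ carries the rooted trees $T(\to y)$ hanging at its $\leq_e$-minimal vertices $y$ (those with $v(y)=n$), and likewise for $T'$; since all these rooted trees are rigid, matching the parts of $T$ and of $T'$ at levels $\ge n$ up to equimorphism is the same as giving an isomorphism between those parts. Choosing such partial isomorphisms coherently as $n\to-\infty$ yields an isomorphism $T\to T'$, so $|twin(T)|=1$. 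This last point is the crux of the whole argument: since $v$ need not be bounded below, it is a genuine limit (back-and-forth) construction along the levels, and arranging the coherence of the successive partial isomorphisms is what requires care.

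Finally, suppose $T$ is locally finite and $f$ is an embedding. Fix any vertex $w$ and choose $N$ with $f(w\boxplus_e k)=w\boxplus_e k$ for all $k\ge N$ (possible since $e$ is almost rigid), and put $y:=w\boxplus_e N$. Then $f$ fixes $y$ and, by Corollary~\ref{cor:joinembedding}, maps the locally finite rooted tree $T(\to y)$ into itself, so $f\restriction T(\to y)$ is an automorphism of it by \cite{tyomkyn}, Lemma~4; since $w\leq_e y$ this gives $w\in f[T]$. As $w$ was arbitrary, $f$ is surjective, hence an automorphism of $T$; and then for $T'\equiv T$ with witnesses $g,h$ the automorphism $h\circ g$ of $T$ forces $h$ to be surjective, hence an isomorphism $T'\to T$, so $|twin(T)|=1$ (this also follows from the dichotomy, locally finite rooted trees being rigid).
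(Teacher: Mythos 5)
Your outline matches the paper's strategy (reduce to the rooted subtrees $T(\to x)$, invoke Tyomkyn's theorem, and treat the locally finite case via rigidity of locally finite rooted trees), and the preliminary step, the ``some $T(\to x)$ has two twins $\Rightarrow$ $\vert twin(T)\vert=\infty$'' direction, and the locally finite addendum are essentially correct. (One slip in the replacement argument: you replace \emph{every} $z$ with $T(\to z)\equiv T(\to x)$, but two such vertices can be $\leq_e$-comparable, in which case the replacement is ill-defined; you must restrict, as the paper does, to the set $Y$ of such $z$ lying on a single level $\mathcal{L}_n$, which is an antichain.)

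The serious problem is the converse direction: if $\vert twin(T(\to x))\vert=1$ for all $x$, then $\vert twin(T)\vert=1$. Here you write that ``choosing such partial isomorphisms coherently as $n\to-\infty$ yields an isomorphism'' and then concede that ``arranging the coherence of the successive partial isomorphisms is what requires care.'' That coherence is not a routine limit argument; it is the entire content of the paper's Claims \ref{claim:refatz}, \ref{claim:refatzb} and \ref{claim:refatzup}, which occupy roughly two pages. Concretely: writing $x_n:=o\boxplus n$, rigidity gives isomorphisms $\phi_n\colon T(\to x_n)\to T'(\to x_n)$ for each $n$, but $\phi_{n+1}$ fixes only the root $x_{n+1}$ and may send the distinguished child $x_n$ to a \emph{different} child of $x_{n+1}$ whose subtree happens to be isomorphic to $T(\to x_n)$; so $\phi_{n+1}$ need not restrict to any isomorphism $T(\to x_n)\to T'(\to x_n)$, and without local finiteness there is no compactness to extract a coherent subsequence. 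The paper circumvents this not by a back-and-forth but by surgery: it first puts the twins ``in position'' (Claim \ref{claim:normrep}), then in Claim \ref{claim:refatzb} performs a nontrivial cardinality-balancing modification (the partitions $\mathcal{P}$, $\mathcal{P}'$ into isomorphism classes of children and the sets $I$, $J$) to produce, at each level, replacement trees whose parts at $x_{n+1}$ are \emph{literally equal} while everything at $T(\to x_n)$ and above $x_{n+1}$ is fixed, and finally assembles a third tree $R$ to which both $T$ and $T'$ are isomorphic. None of this is supplied or even sketched in your proposal, so the argument has a genuine gap precisely at its hardest point.
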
 

Note that if $T$ is locally finite, every embedding which has a fixed
point is surjective. Since $T$ has a level function preserved by every
embedding of $T$, every embedding has a fixed point. Note also that if
$T$ is scattered and the ray is regular, then by Theorem
\ref{main1-2}, $T$ has a vertex fixed by every embedding. Hence in
this case the value of $\vert twin(T)\vert$ is given by Proposition
\ref{prop:edge-vertex}. The general case needs more work.

For $n$ a number in $\Z$ let $\mathcal{L}_n(T):=\{x\in V(T):
lev_T(x)=n\}$ and let $\mathcal{L}_{\geq n}(T):=\{x\in V(T):
lev_T(x)\geq n\}$. Two trees $T$ and $T'$, with the same origin $o\in
V(T)\cap V(T')$ for the level functions, are \emph{$n$-equimorphic} if
$\mathcal{L}_{\geq n}(T)$ is equal to $\mathcal{L}_{\geq n}(T')$ and
if for every $x\in \mathcal{L}_n(T)$ the rooted trees $T(\to x)$ and
$T'(\to x)$ are equimorphic.  An embedding of a tree $T$ into a tree
$T'$ is \emph{level preserving} if $lev_T(x)=lev_{T'}(f(x))$ for all
$x\in V(T)$.

\begin{claim}\label{claim:presindup}
Let $T$ and $T'$ be two trees which  are $n$-equimorphic, with common origin $o$ for the level function.  If $T\in \mathfrak{G}$, then  $T'\in \mathfrak{G}$ and  every embedding of $T'$ into $T$ is level preserving and every embedding of $T$ into $T'$ is level preserving.   
\end{claim}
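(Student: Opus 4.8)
The plan is to reduce the entire statement to the hypothesis $T\in\mathfrak{G}$ by conjugating an arbitrary embedding with two \emph{fixed} level‑preserving embeddings, one from $T$ into $T'$ and one from $T'$ into $T$, whose existence is guaranteed by the $n$‑equimorphy of $T$ and $T'$.

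First I would manufacture those two embeddings. Write $e'$ for the end of $T'$ carrying $lev_{T'}$, put $A:=\mathcal{L}_{\geq n}(T)=\mathcal{L}_{\geq n}(T')$ — a common subtree of $T$ and $T'$ on which $lev_T$ and $lev_{T'}$ coincide (both equal $n$ plus the depth in $A$), so in particular $e$ and $e'$ induce the same orientation on $A$ — and let $L:=\mathcal{L}_n(T)=\mathcal{L}_n(T')$ be the set of minimal vertices of $A$. By Lemma~\ref{lem:sumoftrees}, $T=\bigoplus_{x\in A}((T,A)(x),x)$ and $T'=\bigoplus_{x\in A}((T',A)(x),x)$, where $(T,A)(x)=(T',A)(x)=\{x\}$ if $lev(x)>n$ and $(T,A)(x)=T(\to x)$, $(T',A)(x)=T'(\to x)$ if $x\in L$. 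Since $T(\to x)\equiv T'(\to x)$ as rooted trees, choose for every $x\in L$ rooted embeddings $\varphi_x\colon T(\to x)\to T'(\to x)$ and $\psi_x\colon T'(\to x)\to T(\to x)$, each fixing the root $x$, and glue them to the identity on $A$ to obtain embeddings $g:=\mathrm{id}_A\cup\bigcup_{x\in L}\varphi_x\colon T\to T'$ and $h:=\mathrm{id}_A\cup\bigcup_{x\in L}\psi_x\colon T'\to T$ (the check that these are genuine tree embeddings is exactly as in Lemma~\ref{lem:sumoverpath}). Because tree embeddings preserve distances and the subtrees $T(\to x)$, $T'(\to x)$ carry the induced distance, for $z\in V(T(\to x))$ one gets $lev_{T'}(\varphi_x(z))=n-d_{T'}(\varphi_x(z),x)=n-d_T(z,x)=lev_T(z)$, and $g$ is the identity on $A$; hence $g$ is level preserving, and likewise $h$ (and in passing $T\equiv T'$).

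Next I would prove $T'\in\mathfrak{G}$. Let $f$ be any embedding of $T'$. Since $T'$ has a level function, $T'\in\mathfrak{E}$, so $f$ preserves $e'$ forward and, by Lemma~\ref{lem:valuation2}, there is $d\geq 0$ with $lev_{T'}(f(w))=lev_{T'}(w)+d$ for all $w$. Now $h\circ f\circ g$ is an embedding of $T$, hence preserves $lev_T$ because $T\in\mathfrak{G}$; combining this with the level preservation of $g$ and $h$ gives, for every $z\in V(T)$, $lev_{T'}(f(g(z)))=lev_T(z)=lev_{T'}(g(z))$, i.e. $lev_{T'}(f(w))=lev_{T'}(w)$ for every $w$ in the range of $g$. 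Evaluating this at any $w\in A$ forces $d=0$, so $f$ preserves $lev_{T'}$; thus $T'\in\mathfrak{G}$. The remaining two assertions are then immediate conjugations: for an embedding $f\colon T\to T'$, the map $h\circ f$ is an embedding of $T\in\mathfrak{G}$, hence preserves $lev_T$, and since $h$ is level preserving this reads $lev_{T'}(f(z))=lev_T(z)$ for all $z$; for an embedding $f\colon T'\to T$, the map $g\circ f$ is an embedding of $T'$, which preserves $lev_{T'}$ by the previous step, and since $g$ is level preserving this reads $lev_T(f(z))=lev_{T'}(z)$ for all $z$.

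The heart of the proof is the construction in the second paragraph; everything after it is a two‑line conjugation resting on $T\in\mathfrak{G}$ and Lemma~\ref{lem:valuation2}. I therefore expect the main, albeit essentially routine, obstacle to be the bookkeeping around the decomposition along $A$: identifying $A$ as a common subtree with the two end‑orientations matching, verifying that the glued maps $g$ and $h$ are honest tree embeddings, and confirming that they are isometric, hence level preserving, on each pendant tree $T(\to x)$.
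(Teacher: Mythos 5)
Your proof is correct and follows essentially the same route as the paper's: both construct the two level-preserving "position" embeddings $T\to T'$ and $T'\to T$ that are the identity on $\mathcal{L}_{\geq n}$ and map $T(\to x)$ into $T'(\to x)$ for $x\in\mathcal{L}_n$, and then obtain all three assertions by conjugating an arbitrary embedding with these two maps and using $T\in\mathfrak{G}$. The extra detail you supply (the isometry computation showing the glued maps are level preserving, and the appeal to Lemma \ref{lem:valuation2} to force the period $d=0$) only makes explicit what the paper leaves implicit.
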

\vskip-1pt

\noindent{\bf Proof of Claim \ref{claim:presindup}.}  Let $f$ be an embedding of $T$ into $T'$ with $f(x)=x$ for all $x\in \mathcal{L}_{\geq n}$ and $f$ restricted to $T(\to x)$ an embedding of $T(\to x)$ into $T'(\to x)$ for all $x\in \mathcal{L}_n$.   Let $g$ be an embedding of $T'$ into $T$ with $g(x)=x$ for all $x\in \mathcal{L}_{\geq n}$ and $g$ restricted to $T'(\to x)$ an embedding of $T'(\to x)$ into $T(\to x)$ for all $x\in \mathcal{L}_n$. Then $lev_T(x)=lev_{T'}(x)$ for all $x\in \mathcal{L}_{\geq n}(T)$.   Because $f$ is then an embedding of the rooted tree $T(\to x)$ into the rooted tree $T'(\to x)$ for every $x\in \mathcal{L}_n(T)$ and  $lev_T(x)=lev_{T'}(x)$, the embedding $f$ is level preserving on $T(\to x)$. It follows that $f$ is level preserving. Similarly, the embedding $g$ is level preserving.  

Let $h$ be an embedding of $T'$. Then $g\circ h\circ f$ is an embedding of $T$ and hence level preserving. The embeddings $f$ and $g$ and $g\circ h\circ f$ are level preserving and hence $h$ is level preserving.  Let $h$ be an embedding of $T'$ into $T$. Then $h\circ f$ is an embedding of $T$ and hence level preserving. Because $f$ is level preserving, the embedding $h$ is level preserving.     
\hfill $\Box$

\begin{lemma}\label{claim:rootedtoT}
Let $T\in \mathfrak{G}$.  Then $\vert twin(T(\to x))\vert \leq \vert twin(T)\vert$ for all $x\in V(T)$ and if the set $twin(T)$  is finite, then $\vert twin(T(\to x))\vert =1$ for every $x\in V(T)$.  
\end{lemma}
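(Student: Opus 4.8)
The plan is to imitate the proof of Lemma~\ref{twin-twinrooted}, replacing the hypothesis ``$f(A)=A$ for every embedding $f$'' (which we no longer have, since an embedding of a tree in $\mathfrak{G}$ need not be surjective on its level sets) by the level--preservation supplied by Claim~\ref{claim:presindup}. Fix $x\in V(T)$, set $n:=lev_T(x)$ and $A:=\mathcal L_{\ge n}(T)$. First I would record that $A$ is a subtree: if $u,v\in A$ then $lev_T(u\vee_e v)\ge\max\{lev_T(u),lev_T(v)\}\ge n$, and the paths from $u$ and from $v$ to $u\vee_e v$ climb in level, hence stay in $A$. By Lemma~\ref{lem:sumoftrees}, $T=\bigoplus_{z\in A}((T,A)(z),z)$, and since $lev_T$ is a valuation for the distinguished end $e$ one checks that $((T,A)(z),z)=T(\to z)$ when $lev_T(z)=n$ and $((T,A)(z),z)=\{z\}$ when $lev_T(z)>n$; in particular $x\in\mathcal L_n(T)\subseteq A$, and $x\in S:=\{z\in\mathcal L_n(T): T(\to z)\equiv T(\to x)\}$.

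Next, for each rooted tree $(R,r)$ with $(R,r)\equiv(T(\to x),x)$, let $T^{(R)}$ be the tree obtained from $T$ by replacing, for every $z\in S$, the hanging tree $T(\to z)$ by an isomorphic copy of $(R,r)$ whose root is identified with $z$. I would then verify: $T^{(R)}$ is $n$-equimorphic to $T$ (they share the set $A=\mathcal L_{\ge n}$ with the same levels, and for $z\in\mathcal L_n(T)$ one has $T^{(R)}(\to z)\simeq R\equiv T(\to x)\equiv T(\to z)$ if $z\in S$ and $T^{(R)}(\to z)=T(\to z)$ otherwise), hence $T^{(R)}\equiv T$, so $T^{(R)}$ represents an element of $twin(T)$; by Claim~\ref{claim:presindup}, $T^{(R)}\in\mathfrak{G}$ and every embedding between $T$ and $T^{(R)}$ is level preserving; and the assignment $(R,r)\mapsto T^{(R)}$ respects isomorphism, so it induces a well defined map $twin(T(\to x))\to twin(T)$.

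It remains to prove this map is injective. Suppose $\phi$ is an isomorphism of $T^{(R)}$ onto $T^{(R')}$, where $(R,r)\equiv(T(\to x),x)\equiv(R',r')$. Since $T^{(R)}$ and $T^{(R')}$ are both $n$-equimorphic to $T$, they are $n$-equimorphic to each other, and $T^{(R)}\in\mathfrak{G}$, so by Claim~\ref{claim:presindup} $\phi$ is level preserving. Being also a graph isomorphism, $\phi$ carries the covering relation $\lessdot$ of the distinguished end onto that of the distinguished end (in each tree this relation is characterised by ``$u\sim v$ and the level of $v$ exceeds the level of $u$ by $1$''), hence preserves the order $\le_e$, hence maps $V(T^{(R)}(\to z))=\{v:v\le_e z\}$ onto $V(T^{(R')}(\to\phi(z)))$ for each $z\in\mathcal L_n$, giving $T^{(R)}(\to z)\simeq T^{(R')}(\to\phi(z))$ as rooted trees. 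Applying this with $z=x\in S$ gives $R\simeq T^{(R)}(\to x)\simeq T^{(R')}(\to\phi(x))$; since $R\equiv T(\to x)$ this forces $\phi(x)\in S$ (otherwise $T^{(R')}(\to\phi(x))=T(\to\phi(x))\not\equiv T(\to x)$), and then $T^{(R')}(\to\phi(x))\simeq R'$, so $(R,r)\simeq(R',r')$. Thus $|twin(T(\to x))|\le|twin(T)|$. Finally, if $twin(T)$ is finite then so is $twin(T(\to x))$, and since by Tyomkyn's theorem the tree alternative property holds for rooted trees, $|twin(T(\to x))|=1$.

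The only delicate point is the injectivity step: an a priori arbitrary isomorphism $\phi:T^{(R)}\to T^{(R')}$ must be shown to act separately on the ``base'' $A$ and on each hanging subtree $T(\to z)$. This is exactly what Claim~\ref{claim:presindup} provides, together with the elementary observation that the order $\le_e$ is recoverable from the adjacency relation and the level function; everything else is routine gluing of embeddings as in Lemma~\ref{twin-twinrooted}.
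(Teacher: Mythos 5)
Your proof is correct and follows essentially the same route as the paper's: replace the hanging tree $T(\to z)$ at every vertex $z$ of the equimorphism class $S$ of $x$ in level $n$ by a twin $(R,r)$ of $(T(\to x),x)$, invoke Claim~\ref{claim:presindup} to force any isomorphism between two such modified trees to be level preserving, and deduce that it carries the copy of $R$ hanging at $x$ onto the copy of $R'$ hanging at some vertex of $S$, whence $R\simeq R'$. Your added detail that a level-preserving isomorphism respects the covering relation and hence $\leq_e$, so that it maps hanging subtrees onto hanging subtrees, merely makes explicit what the paper leaves implicit.
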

\vskip-1pt

\begin{proof} 
Let $x\in V(T)$ and $n:=lev_{T}(x)$ and $\kappa:=\vert twin(T(\to
x))\vert$ and let $(R^{\langle \alpha\rangle};\alpha\in\kappa)$ be an
enumeration of $twin(T(\to x))$. Let $Y:=\{y\in \mathcal{L}_n: T(\to
y)\equiv T(\to x)\}$.  For $\alpha\in\kappa$ let $T^{\langle
\alpha\rangle}$ be the tree obtained from $T$ by replacing for every
$y\in Y$ the tree $T(\to y)$ with the tree $R^{\langle
\alpha\rangle}$.  Then $T^{\langle \alpha\rangle}$ is equimorphic to
$T^{\langle \beta\rangle}$ for all $\alpha$ and $\beta$ in $\kappa$.
It follows from Claim \ref{claim:presindup} that every embedding of
$T^{\langle \alpha\rangle}$ is level preserving for every $\alpha\in
\kappa$.  Let $\alpha$ and $\beta$ in $\kappa$ and let $h$ be an
isomorphism of $T^{\langle \beta\rangle}$ to $T^{\langle
\alpha\rangle}$. It follows from Claim \ref{claim:presindup} that $h$
is level preserving.  Hence $h$ maps the set $Y$ onto the set $Y$ and
then induces an isomorphism of $T^{\langle \beta\rangle}(\to y)$ onto
$T^{\langle \alpha\rangle}(\to y)$ for every $y\in Y$. This implies that
$\alpha=\beta$.

If $\vert twin(T(\to x))\vert >1$ for some $x\in V(T)$, then $\vert
twin(T(\to x))\vert$ is infinite, according to Tyomkyn's Theorem
that the rooted tree alternative conjecture holds, and
hence $twin(T)$ is infinite.  
\end{proof}

Two twins $T$ and $T'$ are said to be {\em in position} if there are embeddings $f$
of $T$ to $T'$ and $g$ of $T'$ to $T$ for which $S_{g\circ f}=V(T)\cap
V(T')$ and $f(x)=x$ and $g(x)=x$ for all $x\in S_{g\circ f}$.  The
embeddings $f$ and $g$ are the {\em position embeddings}. Note that if
$T$ and $T'$ are in position with position embeddings $f$ and $g$,
then $T(\to x)\equiv T'(\to x)$ for all $x\in S_{g\circ f}$.

\begin{claim}\label{claim:normrep}
Let $T\in \mathfrak{G}$ and let  $T'$ be   a tree equimorphic to $T$. Then there exists a tree $T''$ isomorphic to $T'$ so that the trees $T$ and $T''$ are in position. 
\end{claim}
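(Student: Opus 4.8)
The plan is to exploit the defining feature of $\mathfrak{G}$: \emph{period zero}. Since $T$ and $T'$ are equimorphic, fix embeddings $f_1\colon T\to T'$ and $g_1\colon T'\to T$ and put $h:=g_1\circ f_1$, an embedding of $T$ into itself. Because $T\in\mathfrak{G}$, $h$ preserves the level function $lev_T$, which is the valuation attached to the end $e$ preserved forward by every embedding; hence by Lemma~\ref{lem:valuation2} the period of $h$ is $0$, and then by Lemma~\ref{lem:Sf} the set $S_h$ is exactly the set $F$ of fixed points of $h$. (Moreover $F\neq\emptyset$: taking $d=0$ in Claim~\ref{claim:join}, for every $x\in V(T)$ one gets $x\vee_e h(x)=x\boxplus k\in F$, since this vertex lies on $e(x)$ and has the same $lev_T$-value as $h$ of it.) The point of this step is that with period $0$ the ``$S$-set'' of a self-embedding of $T$ coincides with its fixed-point set, which is precisely the object one wants to realize as $V(T)\cap V(T'')$.

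Next I would manufacture $T''$ by relabelling $T'$ so that $f_1$ becomes the identity on $F$ and the rest of $T'$ is disjoint from $T$. Concretely: pick a set $N$ of fresh vertices with $N\cap V(T)=\emptyset$ and $|N|=|V(T')\setminus f_1[F]|$, set $V(T''):=F\cup N$, and define a bijection $\phi\colon V(T')\to V(T'')$ by $\phi(y):=f_1^{-1}(y)$ for $y\in f_1[F]$ and by any bijection of $V(T')\setminus f_1[F]$ onto $N$ otherwise. Transport the edge set of $T'$ along $\phi$; then $\phi\colon T'\to T''$ is an isomorphism of trees, so $T''\simeq T'$, and by construction $V(T)\cap V(T'')=F$.

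Finally I would check the ``in position'' conditions with position embeddings $f:=\phi\circ f_1\colon T\to T''$ and $g:=g_1\circ\phi^{-1}\colon T''\to T$, which are embeddings as composites of embeddings and isomorphisms. Since $\phi^{-1}\circ\phi=\mathrm{id}$, we have $g\circ f=g_1\circ f_1=h$, so $S_{g\circ f}=S_h=F=V(T)\cap V(T'')$; and for $x\in F$ one has $f(x)=\phi(f_1(x))=x$ (because $f_1(x)\in f_1[F]$ and $\phi$ was prescribed there as $f_1^{-1}$) and $g(x)=g_1(\phi^{-1}(x))=g_1(f_1(x))=h(x)=x$ (because $x$ is fixed by $h$). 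Hence $T$ and $T''$ are in position, which proves the claim.

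There is no deep obstacle here; the whole content is the period-zero observation (giving $S_h=F$) together with the careful bookkeeping in the relabelling. The only place one must be attentive is in choosing $\phi$ so that it is simultaneously a genuine isomorphism onto a copy of $T'$ sharing \emph{precisely} $F$ with $T$ and restricts to $f_1^{-1}$ on $f_1[F]$; the prescription above achieves both at once, and everything else is immediate.
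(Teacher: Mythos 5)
Your proof is correct and follows essentially the same route as the paper: both compose the two embeddings into a level-preserving (hence period-zero) self-embedding of $T$ whose set $S$ coincides with its fixed-point set, and then re-realize $T'$ on a vertex set meeting $V(T)$ exactly in $S$ so that the induced position embeddings fix $S$ pointwise. The paper phrases the re-realization as a sum $\bigoplus_{x\in S}$ of the rooted pieces of a disjoint copy of $T'$ attached at the points of $S$, whereas you use an explicit relabelling bijection $\phi$; these are the same construction.
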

\vskip-1pt
\noindent
\noindent{\bf Proof of Claim \ref{claim:normrep}.}  
Let $\bar{T}$ be an isomorphic copy of $T'$ with $V(T)\cap V(\bar{T})=\emptyset$. There are then embeddings $\bar{f}$ of $T$ into $\bar{T}$ and $\bar{g}$ of $\bar{T}$ into $T$.  The embedding $\bar{g}\circ \bar{f}$ of $T$ is level preserving and hence has a set of fixed points $S_{\bar{g}\circ \bar{f}}$. Then $\bar{f}$ is an isomorphism of $S_{\bar{g}\circ \bar{f}}$ into $\bar{T}$ with $\bar{g}$ restricted to  $f[S_{\bar{g}\circ \bar{f}}]$ the inverse of $\bar{f}$. 

For every point $x\in S_{g\circ f}$ let $T_x=((T,
S_{\bar{g}\circ\bar{f}})(x), x)$. That is, the subtree of $T$ rooted
at $x$ which is the connected component after removing the set
$S_{\bar{g}\circ \bar{f}}\setminus \{x\}$ of vertices from
$T$. Similarly let $\bar{T}_{\bar{f}(x)}=((\bar{T},
\bar{f}[S_{\bar{g}\circ \bar{f}}])(\bar{f}(x)), \bar{f}(x))$. Let
\[
T''=\bigoplus_{x\in S_{\bar{g}\circ \bar{f}}}(\bar{T}_{\bar{f}(x)},\bar{f}(x)). 
\]
Then $T''$ is isomorphic to $\bar{T}$ and hence isomorphic to $T'$. 

Let $f$ be the embedding of $T$ into $T''$ for which $f(x)=x$ for all $x\in S_{\bar{g}\circ\bar{f}}$ and $f(z)=\bar{f}(z)$ for all $z\in V(T)\setminus S_{\bar{g}\circ\bar{f}}$. Let $g$ be the embedding of $T''$ into $T$ for which $g(x)=x$ for all $x\in S_{\bar{g}\circ\bar{f}}$ and $g(z)=\bar{g}(z)$ for all $z\in V(T'')\setminus S_{\bar{g}\circ\bar{f}}$. \hfill $\Box$

Let $\mathfrak{F}$ be the subclass of $\mathfrak{G}$ made of trees $T$
for which the set $twin(T)$ is finite. We have to prove that $\vert
twin(T)\vert=1$ for every tree in $\mathfrak{F}$.  It follows from
Claim~\ref{claim:normrep} that it suffices to prove that if $T\in
\mathfrak{F}$ and the pair $T$ and $T'$ are twins in position, then
$T'$ is isomorphic to $T$. If $T$ and $T'$ are in position with
position embeddings $f$ of $T$ into $T'$ and $g$ of $T'$ into $T$ and
if $x\in S_{g\circ f}$, then $T(\to x)\equiv T'(\to x)$ and hence
$T(\to x)\simeq T'(\to x)$, because $\vert twin(T(\to x))\vert=1$.

Let $T\in \mathfrak{F}$ and $T'$ be twins in position with position
embeddings $f$ of $T$ into $T'$ and $g$ of $T'$ into $T$.  Let
$Y\subseteq S_{g\circ f}$ so that $x\not \in C_y$ for all $x$ and $y$
in $Y$ with $x\not=y$.  Let the tree {\em obtained from $T'$ by
replacing $T'(\to Y)$ with $T(\to Y)$} be the tree:
\[
repl(T',T(\to Y)): =\bigoplus_{y\in Y, T'\setminus T'(\to -Y)}T(\to y).  
\]

\begin{claim}\label{claim:refatz}
Let $T\in \mathfrak{F}$ and $T'$ be twins in position with position embeddings $f$ of $T$ into $T'$ and $g$ of $T'$ into $T$.  Let $Y\subseteq S_{g\circ f}$ so that $x\not \in C_y$ for all $x$ and $y$ in $Y$ with $x\not=y$. 

Then $T$ and $T''=repl(T',T(\to Y))$ are twins in position with position embeddings $\bar{f}$ and $\bar{g}$ for which $\bar{f}(z)=f(z)$ for all $z\in T\setminus T(\to -Y)$ and $\bar{f}(z)=z$ for all $z\in T(\to Y)$ and for which $\bar{g}(z)=g(z)$ for all $z\in T'\setminus T'(\to -Y)$ and $\bar{g}(z)=z$ for all $z\in T'(\to Y)$. There exists, for any choice of isomorphism $\iota_y$ of $T'(\to y)$ to $T(\to y)$ for $y\in Y$, an isomorphism $\beta$ of $T'$ to $T''$ with $\beta(z)=z$ for all $z$ in $T'\setminus T'(\to -Y)$ and $\beta(z)=\iota(z)$ for $z\in T'(\to y)$ and $y\in Y$. 
\end{claim}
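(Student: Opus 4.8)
The plan is to produce the isomorphisms below the antichain $Y$ first, and then glue them onto the position embeddings $f$ and $g$ by a bookkeeping argument along the common ray $S_{g\circ f}$.

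\emph{Step 1 (the local isomorphisms).} Since $T$ and $T'$ are in position and $y\in Y\subseteq S_{g\circ f}$, the rooted trees $T(\to y)$ and $T'(\to y)$ are equimorphic; as $T\in\mathfrak F\subseteq\mathfrak G$, Lemma~\ref{claim:rootedtoT} gives $\vert twin(T(\to y))\vert=1$, so $T(\to y)\simeq T'(\to y)$, and we may fix an isomorphism $\iota_y$ of $T'(\to y)$ onto $T(\to y)$, which maps the root $y$ to the root $y$. Write $T(\to-Y):=\bigcup_{y\in Y}\bigl(V(T(\to y))\setminus\{y\}\bigr)$, and likewise $T'(\to-Y)$. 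The hypothesis that $x\notin C_y$ for distinct $x,y\in Y$ says exactly that $Y$ is a $\leq_e$-antichain; hence the sets $V(T(\to y))$, $y\in Y$, are pairwise disjoint, and for each $y$ the only neighbour of $y$ outside $T(\to y)$ is its $e$-successor $y\boxplus1$, which moreover lies on $S_{g\circ f}$.

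\emph{Step 2 (the position embeddings $\bar f,\bar g$).} First, $f$, and symmetrically $g$, preserves the end $e$: it fixes the ray $S_{g\circ f}$ pointwise, and this ray belongs to $e$ in $T$ and to the corresponding end of $T'$, so $f[e]\subseteq e$ and, by the argument of Corollary~\ref{cor:joinembedding} (applied to embeddings between trees sharing an end), $f$ is a $\leq_e$-order embedding. In particular $f$ maps $T\setminus T(\to-Y)$ into $T'\setminus T'(\to-Y)$: if $f(z)<_e y$ for some $z$ and some $y\in Y$, then $f(z)<_e f(y)$, so $z<_e y$, i.e.\ $z\in T(\to-Y)$. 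Now let $\bar f$ be the map of $V(T)$ into $V(T'')$ equal to $f$ on $T\setminus T(\to-Y)$ and to the identity on $T(\to Y)$; the two recipes agree on $Y$ because $f(y)=y$, and $V(T)=(T\setminus T(\to-Y))\cup T(\to Y)$. It is injective, $f$ being injective into $T'\setminus T'(\to-Y)=T''\setminus T''(\to-Y)$ and the identity being injective on $T(\to Y)=T''(\to Y)$, with the two images meeting only in $Y$. It preserves and reflects adjacency: on $T\setminus T(\to-Y)$ this is inherited from $f$, as $T'$ and $T''$ coincide off $T'(\to-Y)$; on each $T(\to y)$ it is trivial, as $T$ and $T''$ coincide there; and the only edges joining the two regions are the $\{y,y\boxplus1\}$, $y\in Y$, which $\bar f$ sends to the edge $\{y,f(y\boxplus1)\}$ of $T''$. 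Hence $\bar f$ is an embedding of $T$ into $T''$ with the stated values. Symmetrically, the map $\bar g$ of $V(T'')$ into $V(T)$ equal to $g$ on $T''\setminus T''(\to-Y)=T'\setminus T'(\to-Y)$ and to the identity on $T''(\to Y)=T(\to Y)$ is an embedding of $T''$ into $T$.

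\emph{Step 3 (position and the isomorphism $\beta$).} On $T(\to Y)$ both $\bar f$ and $\bar g$ are the identity, so $\bar g\circ\bar f$ fixes $T(\to Y)$ pointwise; on $T\setminus T(\to-Y)$ we have $\bar g\circ\bar f=g\circ f$, whose fixed set there is $S_{g\circ f}\setminus T(\to-Y)$. Hence $S_{\bar g\circ\bar f}=T(\to Y)\cup(S_{g\circ f}\setminus T(\to-Y))$. Since $V(T'')=(V(T')\setminus T'(\to-Y))\cup T(\to Y)$ and $V(T)\cap V(T')=S_{g\circ f}$, we get $V(T)\cap V(T'')=(S_{g\circ f}\setminus T'(\to-Y))\cup T(\to Y)$, and $S_{g\circ f}\setminus T'(\to-Y)=S_{g\circ f}\setminus T(\to-Y)$ (both equal $\{w\in S_{g\circ f}:w\not<_e y\text{ for all }y\in Y\}$), so $S_{\bar g\circ\bar f}=V(T)\cap V(T'')$; as $\bar f,\bar g$ fix this set pointwise, $T$ and $T''$ are twins in position. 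Finally, let $\beta$ be the map of $V(T')$ onto $V(T'')$ equal to the identity on $T'\setminus T'(\to-Y)$ and to $\iota_y$ on each $T'(\to y)$; these agree at $y$ since $\iota_y(y)=y$, and the $T'(\to y)$ are pairwise disjoint, so $\beta$ is a well-defined bijection carrying $T'(\to y)$ onto $T(\to y)$. It is an isomorphism $T'\to T''$: it preserves adjacency on $T'\setminus T'(\to-Y)$, where $T'$ and $T''$ agree; on each $T'(\to y)$, where $\iota_y$ is an isomorphism onto $T(\to y)$ and $T''$ agrees with $T(\to y)$; and on the cut edges $\{y,y\boxplus1\}$, which it fixes. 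This $\beta$ has the stated form. The one substantive point is the claim in Step 2 that $f$ and $g$ restrict to maps of the truncated trees $T\setminus T(\to-Y)$ and $T'\setminus T'(\to-Y)$ — that no vertex lying weakly above the antichain $Y$ is dragged strictly below it; this rests on $f,g$ being $\leq_e$-order embeddings fixing $Y$, hence on their preserving the end $e$, which follows from their fixing the common ray $S_{g\circ f}$. Everything else is routine bookkeeping with the $\bigoplus$-decompositions along $S_{g\circ f}$, and the only care needed is to keep the notation $T(\to-Y)$, $T''(\to Y)$, and so on consistent.
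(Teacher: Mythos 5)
Your proof is correct and takes essentially the same approach as the paper, whose own proof merely asserts that $V(T)\cap V(T'')=(V(T)\cap V(T'))\cup T(\to Y)$ and that $\bar f$ and $\bar g$ are embeddings fixing this set. You supply the verifications the paper leaves implicit: the pairwise disjointness of the $T(\to y)$ from the antichain hypothesis, the fact that $f$ and $g$ respect the cut along $Y$ because they are $\leq_e$-order embeddings fixing $Y$, and the computation of $S_{\bar g\circ\bar f}$.
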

\noindent{\bf Proof of Claim \ref{claim:refatz}.}  
We have $V(T)\cap V(repl(T',T(\to X))=V(T)\cap V(T')\cup T(\to X)$ and hence both $\bar{f}$ and  $\bar{g}$ fix every vertex in $V(T)\cap V(repl(T',T(\to X))$. Also $\bar{f}$ and $\bar{g}$ are embeddings. 
\hfill $\Box$

\begin{definition}\label{defin:compl}

Let $T\in \mathfrak{E}$ and let $x\in V(T)$ and $X\subseteq V(T)$.  

Let $T(\to -x)=T(\to x)\setminus \{x\}$ and  $T(\to X)=\bigcup_{x\in X}T(\to x)$ and  $T(\to -X)=\\ T(\to X)\setminus X$ and $N_T(x)=\{y\in (T): y\to x)\}$. For  $y^\ast\in N_T(x)$ let:  
\[
\text{$T\lfloor y^\ast,x\rfloor=\negthickspace\negthickspace\negthickspace\negthickspace\bigcup_{y\in (N_T(x)\setminus \{y^\ast\}}\negthickspace\negthickspace\negthickspace\negthickspace\negthickspace\negthickspace V(T(\to y))$\thickspace\thickspace and \thickspace\thickspace $T\lceil y^\ast,x\rceil= T\setminus T\lfloor y^\ast,x\rfloor$}.      
\]

\end{definition}

\begin{claim}\label{claim:refatzb}

Let $T\in \mathfrak{F}$ and $T'$ be twins in position with position embeddings $f$ of $T$ into $T'$ and $g$ of $T'$ into $T$.  Let   $x\in S_{g\circ f}$ and  $y^\ast \in N_T(x)\cap S_{g\circ f}$.  If $T(\to y)\subseteq S_{g\circ f}$ for all $y\in N_T(x)\cap S_{g\circ f}$ then: 
 
There exists a pair of twins $R$ and $R'$ in position with $\{y^\ast,x\}\subseteq V(R)\cap V(R')$ and with: 
\[
\text{$T\lceil y^\ast,x\rceil=R\lceil y^\ast,x\rceil$  and  $T'\lceil y^\ast,x\rceil=R'\lceil y^\ast,x\rceil$ and $R(\to x)=R'(\to x)$.}
\]
There is an isomorphism $\delta$ of $T$ to $R$ with $\delta(z)=z$  for all vertices $z$ in $T\lceil y^\ast,x\rceil$. There is an isomorphism $\delta'$ of $T'$ to $R'$ with  $\delta'(z)=z$  for all vertices $z$ in $T'\lceil y^\ast,x\rceil$. The isomorphism $\delta$ restricted to $T(\to x)$ is an isomorphism $\epsilon$ of $T(\to x)$ to $R(\to x)$ with $\epsilon(z)=z$ for all vertices $z$ in $T(\to y^\ast)$.  The isomorphism $\delta'$ restricted to $T'(\to x)$ is an isomorphism $\epsilon'$ of $T'(\to x)$ to $R'(\to x)$ with $\epsilon'(z)=z$ for all vertices $z$ in $T'(\to y^\ast)$. 

\end{claim}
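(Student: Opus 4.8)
The plan is to build $R$ and $R'$ by ``swapping out'' the rooted trees hanging below the neighbours of $x$ other than $y^\ast$, replacing them with the corresponding rooted trees from the other tree, so as to force the two trees to literally agree on all of $T(\to x)$ below $y^\ast$. Concretely, write $N_T(x)\cap S_{g\circ f}=\{y^\ast\}\cup Z$, where $Z$ is the set of neighbours $y\lessdot_e x$ of $x$ lying in $S_{g\circ f}$ and distinct from $y^\ast$; by the hypothesis $T(\to y)\subseteq S_{g\circ f}$ for every such $y$, and hence (since $T$ and $T'$ are in position) $T(\to y)\simeq T'(\to y)$ for every $y\in Z$ — indeed, since $\vert twin(T(\to y))\vert=1$ by Lemma \ref{claim:rootedtoT}, equimorphy of rooted trees at fixed points gives isomorphy. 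Fix isomorphisms $\iota_y\colon T'(\to y)\to T(\to y)$ for $y\in Z$. Now I would first apply Claim \ref{claim:refatz} with $Y:=Z$ to replace, inside $T'$, each $T'(\to y)$ by $T(\to y)$: this yields a tree $T''=repl(T',T(\to Z))$ in position with $T$, together with position embeddings $\bar f,\bar g$ and an isomorphism $\beta\colon T'\to T''$ fixing $T'\setminus T'(\to -Z)$ and acting as $\iota_y$ on each $T'(\to y)$. Symmetrically, I would leave $T$ alone but observe that after this replacement $T$ and $T''$ now agree on all of $T\lceil y^\ast,x\rceil$ union the pieces $T(\to y)$ for $y\in Z$ — in particular they agree on $T(\to x)$ except possibly on $T(\to y^\ast)$, since the only remaining subtree below $x$ not forced to agree is $T(\to y^\ast)$, and that one already lies in $S_{g\circ f}$ so $T(\to y^\ast)\simeq T''(\to y^\ast)$; fixing one more isomorphism there (and composing) makes $R(\to x)=R'(\to x)$ outright.

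The precise bookkeeping I would do is: set $R:=T$ and $R':=T''$, with the position embeddings inherited from Claim \ref{claim:refatz} restricted appropriately. Then $R\lceil y^\ast,x\rceil = T\lceil y^\ast,x\rceil$ holds by construction since we changed nothing in $T$; and $R'\lceil y^\ast,x\rceil=T''\lceil y^\ast,x\rceil$, which equals $T'\lceil y^\ast,x\rceil$ up to the isomorphism $\beta$ (note $\beta$ fixes everything outside $T'(\to -Z)\subseteq T'\lfloor y^\ast,x\rfloor$, so $\beta$ restricts to an isomorphism $\delta'$ of $T'$ onto $R'$ that is the identity on $T'\lceil y^\ast,x\rceil$). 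For $R$, the isomorphism $\delta\colon T\to R$ is simply the identity, which is trivially the identity on $T\lceil y^\ast,x\rceil$; if I instead want $R(\to y^\ast)$ also brought into literal agreement with $R'(\to y^\ast)$ I can compose $R$ with an isomorphism below $y^\ast$, but this only touches vertices $<_e y^\ast$, hence still fixes $T\lceil y^\ast,x\rceil$ pointwise. Restricting $\delta$ and $\delta'$ to $T(\to x)$ and $T'(\to x)$ gives the asserted $\epsilon,\epsilon'$ which fix $T(\to y^\ast)$, resp.\ $T'(\to y^\ast)$, pointwise — again because all the surgery happened strictly below the other neighbours of $x$.

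Finally I would verify that $R$ and $R'$ really are twins \emph{in position}: the position embeddings are the ones supplied by Claim \ref{claim:refatz}, namely $\bar f$ which is $f$ off $T(\to -Z)$ and the identity on $T(\to Z)$, and $\bar g$ which is $g$ off $T'(\to -Z)$ and the identity on $T'(\to Z)$; these fix $V(R)\cap V(R')$ pointwise since that intersection is $V(T)\cap V(T')\cup T(\to Z)$, on which both are the identity, and $S_{\bar g\circ\bar f}$ is as required because $\bar g\circ\bar f$ agrees with $g\circ f$ where it matters and is the identity on $T(\to Z)$, which is already inside $S_{g\circ f}$. The main obstacle I anticipate is purely the combinatorial care needed to check that each surgery alters only vertices strictly inside $T(\to -Z)$ (so that the ``$\lceil y^\ast,x\rceil$'' parts, and the subtree $T(\to y^\ast)$, are genuinely untouched) and that the replacement operation commutes with the level function (which it does, since $T\in\mathfrak F\subseteq\mathfrak G$ and $n$-equimorphy is preserved, via Claim \ref{claim:presindup}, so all embeddings in sight remain level preserving). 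Everything else is a direct assembly from Claims \ref{claim:refatz} and \ref{claim:presindup} and Lemma \ref{claim:rootedtoT}.
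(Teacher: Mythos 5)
Your construction only touches the down-neighbours of $x$ that lie in $S_{g\circ f}$, and that is precisely the part of the problem which is already trivial: if $y\in N_T(x)\cap S_{g\circ f}$ and $T(\to y)\subseteq S_{g\circ f}$, then $f$ is the identity on $T(\to y)$ and $g$ maps $T'(\to y)$ injectively into $T(\to y)$ while fixing the copy of $T(\to y)$ sitting inside it, which forces $T'(\to y)=T(\to y)$ outright. So your replacement $repl(T',T(\to Z))$ changes nothing, and your intermediate assertion that afterwards ``the only remaining subtree below $x$ not forced to agree is $T(\to y^\ast)$'' is false. The genuine difficulty, which your argument never addresses, is the set of down-neighbours of $x$ \emph{not} in $S_{g\circ f}$: these are vertices of $T$ outside $V(T)\cap V(T')$ (resp.\ vertices of $T'$ outside that intersection), the rooted trees hanging from them need not correspond in any literal way, and they are exactly what prevents $T(\to x)=T'(\to x)$ from holding already. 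Since the required isomorphisms $\epsilon$ and $\epsilon'$ must fix $T(\to y^\ast)$ pointwise, you also cannot push the discrepancy into the $y^\ast$-branch.

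The paper's proof is devoted entirely to that case. It partitions $N_T(x)$ and $N_{T'}(x)$ into matched isomorphism classes $P_k$, $P'_k$ with $\vert P_k\vert=\vert P'_k\vert$ and $P_k\cap S_{g\circ f}=P'_k\cap S_{g\circ f}$, and splits the index set according to whether the non-fixed parts $\bar{P}_k$, $\bar{P'}_k$ have equal cardinality. When they do, the corresponding forests $W$ and $W'$ are isomorphic and one can be substituted for the other; when they do not, the class is necessarily infinite and an auxiliary embedding $\gamma$ (resp.\ $\gamma'$) re-injects the whole class into its fixed part, discarding the mismatched branches while staying inside the equimorphy class of $T$ (resp.\ of $T'$). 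Without an argument of this kind --- in particular without exploiting that $\vert P_k\vert=\vert P'_k\vert$ forces the class to be infinite whenever $\vert\bar{P}_k\vert\neq\vert\bar{P'}_k\vert$ --- the equality $R(\to x)=R'(\to x)$ cannot be achieved, so the proposal has a genuine gap rather than a repairable slip.
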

\vskip-1pt
\noindent{\bf Proof of Claim \ref{claim:refatzb}.}  If $y$ and $z$ are vertices in $N_T(x)$ let $y\cong z$ if the rooted tree $T(\to y)$ is isomorphic to the rooted tree $T(\to z)$.   If $y$ and $z$ are vertices in $N_{T'}(x)$ let $y\cong z$ if the rooted tree $T'(\to y)$ is isomorphic to the rooted tree $T'(\to z)$. If $y\in N_T(x)$ and $z\in N_{T'}(x)$ let $y\cong z$ if the rooted tree $T(\to y)$ is isomorphic to the rooted tree $T'(\to z)$. Because $T(\to x)$ and $T'(\to x)$ are isomorphic there exist partitions $\mathcal{P} =(P_{k};k\in K )$ of $N_T(x)$ and $\mathcal{P}' =(P'_{k};k\in K )$ of $N_{T'}(x)$  into $\cong$-equivalence classes so that $y\cong y'$  for every $k\in K $ and every $y\in P_{k}$ and $y'\in P'_{k}$ and so that $\vert P_{k}\vert =\vert P'_{k}\vert$ for every $k\in K $.  For every $k\in K $ let $\bar{P}_{k}=P_{k}\setminus(P_{k}\cap S_{g\circ f})$ and let  $\bar{P'}_{k}=P'_{k}\setminus(P_{k}\cap S_{g\circ f})$.  Note  that $P_{k}\cap S_{g\circ f}=P'_{k}\cap S_{g\circ f}$ for every $k\in K $.

Let  $k\in K $. It follows from $\vert P_k\vert =\vert P'_k\vert$ and $P\cap S_{g\circ f}= P'_k\cap S_{g\circ f}$ that  if  $\vert \bar{P}_{k}\vert \not=\vert \bar{P'}_{k}\vert$, then $\vert \bar{P}_{k}\vert \leq \vert P_{k}\cap S\vert$ and $\vert \bar{P'}_{k}\vert \leq \vert P'_{k}\cap S\vert$ and $P_{k}\cap S_{g\circ f}=P'_{k}\cap S_{g\circ f}$ is infinite. Let $I=\{k\in K : \vert \bar{P}_{k}\vert \not=\vert \bar{P'}_{k}\vert\}$ and $J=\{k\in K : \vert \bar{P}_{k}\vert =\vert \bar{P'}_{k}\vert\}$. Let $W=T(\to \bigcup_{k\in J}\bar{P}_k)$ and $W'=T'(\to \bigcup_{k\in J}\bar{P}'_k)$.  Note that there is an isomorphism, say $\iota$,  of $W'$ to  $W$. Let $(W\cup \{x\},x)$ be the tree rooted at $x$ for which $(W\cup x,x)\setminus x=W$.   Let $\gamma $ be an embedding of $T$ so that for every $k\in I$:   
\begin{enumerate}
\item For every $k\in I$,  the restriction of $\gamma $ to $P_k$ is an injection of $P_k$ onto $(P_k\cap S)\setminus \{y^\ast\}$. 
\item For every $k\in I$,  the restriction of $\gamma $ to $T(\to y)$ for $y\in P_k$ is an isomorphism of $T(\to y)$ to $T(\to \gamma (y))$.  
\item If  $z\in  W$ or $z\in T(\to y^\ast)$, then $\gamma (z)=z$.  
\item If $z$ is a vertex in $T\lceil y^\ast,x\rceil$, then $\gamma(z)=z$.  
\end{enumerate}
Let $\gamma' $ be an embedding of $T'$ so that for every $k\in I$:   
\begin{enumerate}
\item For every $k\in I$, the restriction of $\gamma $ to $P'_k$ is an injection of $P'_k$ onto $(P'_k\cap S)\setminus \{y^\ast\}$. 
\item For every $k\in I$, the restriction of $\gamma $ to $T'(\to y)$ for $y\in P'_k$ is an isomorphism of $T'(\to y)$ to $T'(\to \gamma (y))$.  
\item If  $z\in  W'$ or $z\in T'(\to y^\ast)$, then $\gamma'(z)=z$.  
\item If $z$ is a vertex in $T'\lceil y^\ast,x\rceil$, then $\gamma'(z)=z$.   
\end{enumerate}
For $R:=\gamma [T]$ and $R'':=\gamma' [T']$ and $U=\bigcup_{k\in
I}\bar{P}_k$ and $U'=\bigcup_{k\in I}\bar{P'}_k$ we have $R=T\setminus
T(\to U)$ and $R''=T'\setminus T'(\to U')$. Then {$T\lceil
y^\ast,x\rceil=R\lceil y^\ast,x\rceil$ and $T'\lceil
y^\ast,x\rceil=R''\lceil y^\ast,x\rceil$.  Let $\bar{f}$ be the
embedding of $R$ into $R''$ given by: $\bar{f}(z)=f(z)$ for all $z$ in
$R$ with $z\in T\setminus W $.  For $z\in W $ let
$\bar{f}(z)=\iota^{-1}(z)$.  Let $\bar{g}$ be the embedding of $R''$
into $R$ given by: $\bar{g}(z)=g(z)$ for all $z$ in $R''$ with $z\in
R''\setminus W'$.  For all $z\in W'$ let $\bar{g}(z)=\iota(z)$.  It
follows that $S_{\bar{g}\circ \bar{f}}=S_{g\circ f}$ and that $R$ and
$R''$ are in position with position embeddings $\bar{f}$ and
$\bar{g}$.

Next we identify the vertices $z$  in $R'' $ with the vertices $\iota(z)$. Let $R'=(R''\setminus W' )\oplus_x  (W\cup \{x\},x) = repl(R'', T(\to \bigcup_{k\in J}\bar{P}_k))$ and use Claim \ref{claim:refatz}.   Then there exists an isomorphism $\nu$ of $R''$ to $R'$ given by $\nu(z)=z$ for all vertices $z$ in $R''\setminus W' $ and $\nu(z)=\iota(z)$ for $z$ in $W'$. Then $T'\lceil y^\ast,x\rceil=R''\lceil y^\ast,x\rceil=R'\lceil y^\ast,x\rceil$ and $R(\to x)=R'(\to x)$.

Let $\delta $ be  the isomorphism $\gamma $ of $T$ to $R$ and let $\delta' $ be the isomorphism $\nu\circ \gamma $ of $T'$ to $R'$. Then $\delta(z)=z$ for all $z\in T(\to y^\ast)$ and because $\delta(x)=x$ and $\delta$ is an isomorphism the restriction of $\delta$ to $T(\to x)$ is an isomorphism of $T(\to x)$ to $R(\to x)$. Similar of $\delta'$.   Let $\check{f} (z)=\bar{f}(z)$ for those $z$ in $R$ with  $z$ in $R\setminus W$ and $\check{f}(z)=z$ for $z$ in $W$.   Let $\check{g} (z)=\bar{g}(z)$ for those $z$ in $R'$ with $z$ in $R'\setminus W$ and $\check{g} (z)=z$ for $z$ in $W$.   Then   $R$ and $R'$ are in position with position embeddings $\check{f}$ and $\check{g}$.     \hfill $\Box$

\begin{claim}\label{claim:refatzup}
Let $T\in \mathfrak{F}$ and $T'$ be twins in position. Then there exists a tree $R$ for which both $T$ and $T'$ are isomorphic to $R$. 
\end{claim}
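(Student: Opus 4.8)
The plan is to hold $T$ fixed and move $T'$, through a chain of isomorphic copies, into literal coincidence with $T$, by iterating Claim~\ref{claim:refatz} along a cofinal ray of the common skeleton. First I would record the structure at hand. Write $S:=S_{g\circ f}=V(T)\cap V(T')$ for the common skeleton of the pair $T,T'$; it is a subtree of both. Since $T\in\mathfrak F\subseteq\mathfrak G$, the embedding $g\circ f$ of $T$ is level preserving, hence has period $0$, hence by Lemma~\ref{lem:valuation2} (its proof) fixes pointwise a ray of the distinguished end $e$; thus $S$ contains such a ray, and after discarding an initial segment we may assume $e(c_0)=\{c_n:n\in\N\}\subseteq S$, where $c_n:=c_0\boxplus n$. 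Because $e(c_0)$ is cofinal in $(V(T),\leq_e)$, we get $V(T)=\bigcup_{n\in\N}T(\to c_n)$, an increasing union. Moreover, for every $x\in S$ the rooted trees $T(\to x)$ and $T'(\to x)$ are equimorphic (the remark following the definition of ``in position''), and $\vert twin(T(\to x))\vert=1$ by Lemma~\ref{claim:rootedtoT} since $T\in\mathfrak F$; hence $T(\to x)\simeq T'(\to x)$ as rooted trees.

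Next I would build the iteration. Put $T_0':=T'$. Given $T_n'$, a twin of $T$ in position with $T$ whose skeleton still contains $e(c_0)$, I apply Claim~\ref{claim:refatz} with $Y=\{c_n\}$: this yields $T_{n+1}':=repl(T_n',T(\to c_n))$, again a twin of $T$ in position with $T$ and with larger skeleton, together with an isomorphism $\beta_n\colon T_n'\to T_{n+1}'$ that is the identity off $T_n'(\to -c_n)$ and, on $T_n'(\to c_n)$, equals a freely chosen rooted-tree isomorphism $\iota_n\colon T_n'(\to c_n)\to T(\to c_n)$. The one delicate point is the choice of $\iota_n$: for $n\geq 1$ we have, after step $n-1$, that $T_n'(\to c_{n-1})=T(\to c_{n-1})$, and since $c_{n-1}\lessdot_e c_n$ is a child of $c_n$ on both sides while $T_n'(\to c_n)\simeq T(\to c_n)$, the children of $c_n$ carry the same multiset of isomorphism types on the two sides; I would therefore pick $\iota_n$ so that it restricts to the identity on $T(\to c_{n-1})$ (pair $c_{n-1}$ with $c_{n-1}$ by the identity of $T(\to c_{n-1})$, and match the remaining children bijectively). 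With this choice $\beta_{n-1}$ leaves the already shared region $T(\to c_{n-1})$ untouched; since $T(\to c_{m-1})\supseteq T(\to c_n)$ for $m>n$, every vertex and edge of $T$ is moved only finitely often and is frozen from some step on.

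Finally I would pass to the limit. The composites $\gamma_n:=\beta_{n-1}\circ\dots\circ\beta_0\colon T'\to T_n'$ stabilize pointwise: for $v\in V(T')$, choosing $N$ with $v\in T'(\to c_N)$, one checks that $\gamma_{N+1}(v)\in T(\to c_N)$ and that $\beta_m$ fixes $\gamma_{N+1}(v)$ for all $m\geq N+1$, so $\gamma(v):=\lim_n\gamma_n(v)$ is defined and lies in $\bigcup_nT(\to c_n)=V(T)$. A routine check shows $\gamma\colon T'\to T$ is an isomorphism (injectivity from that of each $\gamma_n$; surjectivity because $T(\to c_m)\subseteq V(T_{m+1}')$ for every $m$; adjacency because $T$ and $T_{m+1}'$ induce the same subtree on each region $T(\to c_m)$). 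Hence $T'\simeq T$, and $R:=T$ witnesses the statement; in particular $\vert twin(T)\vert=1$ for every $T\in\mathfrak F$, as wanted. One could run the same argument with Claim~\ref{claim:refatzb} in place of Claim~\ref{claim:refatz}, synchronizing $R(\to c_n)=R'(\to c_n)$ directly at each step.

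I expect the main obstacle to be precisely the coherence of this infinite iteration: arranging, via the careful choice of each $\iota_n$, that the successive surgeries never disturb the portion of $T$ and $T'$ already made to agree, so that the process converges; the reason for iterating along a cofinal ray of the skeleton, rather than say level by level, is exactly that $\bigcup_n T(\to c_n)$ then exhausts $V(T)$, so the limit tree is literally $T$.
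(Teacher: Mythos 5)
Your proposal is correct, and it reaches the conclusion by a genuinely different (and leaner) route than the paper. The paper's proof synchronizes $T$ and $T'$ \emph{simultaneously} along the ray: at each step it first applies Claim~\ref{claim:refatz} to the skeleton-children of $x_{n+1}$ and then invokes the heavy Claim~\ref{claim:refatzb}, whose role is to reconcile, child by child, the neighbours of $x_{n+1}$ that lie \emph{outside} the skeleton by absorbing them into infinite classes of skeleton-children of the same type; since this shrinks both trees, the paper must modify $T$ as well as $T'$ and can only exhibit a third tree $R$ to which both are isomorphic. You instead hold $T$ fixed and replace $T'(\to c_n)$ \emph{wholesale} by $T(\to c_n)$ using Claim~\ref{claim:refatz} alone, so that the limit tree is literally $T$ and $R=T$. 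The crux of your argument --- and the point that lets you bypass Claim~\ref{claim:refatzb} entirely --- is the choice of $\iota_n$ restricting to the identity on the previously installed branch $T(\to c_{n-1})$; this does hold, and the cleanest justification is not the multiset-matching you sketch but a two-step correction: given any isomorphism $\phi\colon T_n'(\to c_n)\to T(\to c_n)$ (which exists since the two are equimorphic and $\vert twin(T(\to c_n))\vert=1$ by Lemma~\ref{claim:rootedtoT}), first compose with an automorphism of $T(\to c_n)$ swapping the branch at $\phi(c_{n-1})$ with the branch at $c_{n-1}$ (these branches are isomorphic), then compose with the automorphism of $T(\to c_n)$ acting as $\bigl(\phi\vert_{T(\to c_{n-1})}\bigr)^{-1}$ on the branch at $c_{n-1}$ and as the identity elsewhere. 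With that, your stabilization and limit arguments go through: every vertex of $T(\to c_N)$ is frozen from step $N+1$ on, the composites converge to a bijection $V(T')\to\bigcup_n T(\to c_n)=V(T)$ preserving adjacency, and $T'\simeq T$. Your approach buys a shorter proof and the stronger conclusion $T'\simeq T$ directly; the paper's buys a canonical, child-by-child synchronization of the two trees, at the cost of the elaborate absorption argument of Claim~\ref{claim:refatzb}.
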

\noindent{\bf Proof of Claim \ref{claim:refatzup}.}  
Let $f$ of $T$ to $T'$ and $g$ of $T'$ to $T$ be the position embeddings of $T$ and $T'$. Let the origin $o$ of the level function of $T$ and $T'$ be in $S_{g\circ f}$ and let $x_n:=o\boxplus n$ for every $n\in \N$.  We will construct an $\omega$-sequence $\mathfrak{R}$ of pairs of twins in position with: 
$\mathfrak{R}=(T\simeq R^{\langle 0\rangle},T'\simeq R'^{\langle 0\rangle}), (R^{\langle 1\rangle},R'^{\langle 1\rangle}), (R^{\langle 2\rangle},R'^{\langle 2\rangle}), \dots$  and  position embeddings $f_n$ of $R^{\langle n\rangle}$ into $R'^{\langle n\rangle}$ and $g_n$ of $R'^{\langle n\rangle}$ into $R^{\langle n\rangle}$ and with isomorphism  $\gamma_{n+1}$ of $R^{\langle n+1\rangle}$ to $R^{\langle n\rangle}$ and $\gamma'_{n+1}$ of $R'^{\langle n+1\rangle}$ to $R'^{\langle n\rangle}$ so that for all $n\in \N$: 
\begin{enumerate} 
\item The path $C_o\subseteq R^{\langle n\rangle}$ and $C_o\subseteq R'^{\langle n\rangle}$.  
\item $R^{\langle n\rangle}(\to x_n)=R'^{\langle n\rangle}(\to x_n)\subseteq S_{g_n\circ f_n}$.
\item $R^{\langle n\rangle}\setminus (R^{\langle n\rangle}(\to -x_n))=T\setminus (T(\to -x_n))$. 
\item $\gamma_{n+1}(z)=z$ for all $z$ in $R^{\langle n\rangle}\lceil x_n,x_{n+1}\rceil$. 
\item $\gamma'_{n+1}(z)=z$ for all $z$ in $R'^{\langle n\rangle}\lceil x_n,x_{n+1}\rceil$. 
\end{enumerate} 

We proceed by recursion  on $n$. Let $R^{\langle 0\rangle}=T$ and $P^{\langle 0\rangle}=T(\to x_0)=Q^{\langle 0\rangle}$ and let $\alpha_0$ be the identity map on $T(\to x_0)$ and $\alpha'_0$ an isomorphism  of $T'(\to x_0):=Q'^{\langle 0\rangle}$ to $T(\to x_0)$. Let $R'^{\langle 0\rangle }=repl(T', T(\to \{x_0\}))$  and $\gamma_0$ the identity map on $T$ and $\gamma'_0$ the isomorphism $\beta$ given by Claim \ref{claim:refatz} with $\iota$ for $\alpha_0'$.  

Let $Y_{n}=(N_{R^{\langle n\rangle}}(x_{n+1})\cap S_{g_n\circ f_n})\setminus \{x_n\}$ and $\iota_y$ an isomorphism of $R'^{\langle n\rangle}(\to y)$ to $R^{\langle n\rangle}(\to y)$ for every $y\in Y_n$.  Let  $R''^{\langle n\rangle}=repl(R'^{\langle n\rangle}, R^{\langle n\rangle}(\to Y_{n}))$ and $\beta$ the isomorphism of $R'^{\langle n\rangle}$ to $R''^{\langle n\rangle}$ given by Claim \ref{claim:refatz}. Note that then $R''^{\langle n\rangle}(\to y)=R^{\langle n\rangle}(\to y)$  and hence $R^{\langle n\rangle}(\to y)\subseteq S_{\bar{g}\circ \bar{f}}$ for all $y\in N_{R^{\langle n\rangle}}(x_{n+1})\cap S_{\bar{g}\circ \bar{f}}$. Note also that $\beta$ restricted to $R'^{\langle n\rangle}(\to x_{n+1})$  is an isomorphism of $R''^{\langle n\rangle}(\to x_{n+1)}$ which fixes  $R'^{\langle n\rangle}(\to x_{n})$.

We apply Claim \ref{claim:refatzb}  with $R^{\langle n\rangle}$ for $T$ and $R''^{\langle n\rangle}$ for $T'$ and  $x_{n+1}$ for $x$ and $x_n$ for $y^\ast$ to obtain the pair $R^{\langle n+1\rangle}$ and $R'^{\langle n+1\rangle}$ of twins in position for which $R^{\langle n+\rangle}(\to x_{n+1})=R'^{\langle n+1\rangle}(\to x_{n+1})$ and: 
\[
\text{$R^{\langle n\rangle}\lceil x_n,x_{n+1}\rceil =R^{\langle n+1\rangle}\lceil x_n,x_{n+1}\rceil $ and   $R'^{\langle n\rangle}\lceil x_n,x_{n+1}\rceil =R'^{\langle n\rangle}\lceil x_n,x_{n+1}\rceil $.}  
\]
Let 
\[
P^{\langle n+1\rangle}=R^{\langle n+1\rangle}(\to x_{n+1})\setminus R^{\langle n+1\rangle}(\to -x_n)
\]
and note that  
\[
\text{$R^{\langle n+1\rangle}=R^{\langle n\rangle}\lceil x_n,x_{n+1}\rceil\oplus_{x_{n+1}}P^{\langle n+1\rangle}$ and $R'^{\langle n+1\rangle}= R'^{\langle n\rangle}\lceil x_n,x_{n+1}\rceil\oplus_{x_{n+1}}P^{\langle n+1\rangle}$}
\] 
and that
\begin{align*}
&T(\to x_{n+1})\setminus T(\to -x_n)=  R^{\langle n\rangle}(\to x_{n+1})\setminus R^{\langle n\rangle}(\to x_{n}):= Q^{\langle n+1\rangle},  \\
& T'(\to x_{n+1})\setminus T'(\to -x_n)=  R'^{\langle n\rangle}(\to x_{n+1})\setminus R'^{\langle n\rangle}(\to x_{n}):= Q'^{\langle n+1\rangle}. 
\end{align*}
The isomorphism $\epsilon$ mapping $R^{\langle n\rangle}(\to x_{n+1})$ to $R^{\langle n+1\rangle}(\to x_{n+1})$ given by Claim~\ref{claim:refatzb} fixes the rooted tree $R^{\langle n\rangle}(\to x_{n})$ and hence induces an isomorphism $\alpha_{n+1}$ of $Q^{\langle n+1\rangle}$ to $P^{\langle n+1\rangle}$. Similarly $\epsilon'\circ \beta$ induces  an isomorphism of $R'^{\langle n\rangle}(\to x_{n+1})$ to $R'^{\langle n+1\rangle}(\to x_{n+1})$, which fixes the rooted tree $R'^{\langle n\rangle}(\to x_{n})$ and hence induces an isomorphism $\alpha'_{n+1}$ of $Q'^{\langle n+1\rangle}$ to $P^{\langle n+1\rangle}$.

The tree $T$ can be written as a sum $T=\bigoplus_{C_o}Q^{\langle n\rangle}$. That is, the tree $T$ consists of the one-way infinite path $C_o$ with a rooted tree $Q^{\langle n\rangle}$ attached at $x_n$ for every $n\in \N$. Similarly $T'=\bigoplus_{C_o}Q'^{\langle n\rangle}$. Let $R=\bigoplus_{C_o}P^{\langle n\rangle}$. Then $\bigcup_{n\in \N}\alpha_n$ is an isomorphism of $T$ to $R$ and  $\bigcup_{n\in \N}\alpha'_n$ is an isomorphism of $T'$ to $R$. \hfill $\Box$


\subsubsection{\noindent{\bf Proof of Proposition \ref{claim:finalindexpr}.}}  
Assume that the set $twin(T)$ is not infinite. It follows from Lemma
\ref{claim:rootedtoT} that then $\vert twin(T(\to x))\vert =1$ for all
$x\in V(T)$. Because of Claim \ref{claim:normrep} it suffices to show
that if $T'$ is a twin of $T$ and $T$ and $T'$ are in position, then
$T$ and $T'$ are isomorphic, which indeed is the case due to Claim
\ref{claim:refatzup}. \hfill $\Box$
%
%

\vskip 20pt
\noindent
{\bf Case \textbf{2}}: Let $T\in \mathfrak{E}$. There exists an embedding which does not preserve $lev_T$, that is the end is not semi-rigid. Such an embedding has a positive period.    
We have to deal with two subcases. 

\vskip 10pt
\noindent
Subcase \textbf{2.1}: The end is not regular.

\begin{lemma}\label{lem:almost contained} 
Let $T\in \mathfrak{E}$ and let $f$ be an embedding of $T$ with period $k>0$, then $\vert twin(T)\vert\geq 2^{\aleph_0}$.   
\end{lemma}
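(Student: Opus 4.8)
Plan of proof.

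The plan is to decompose $T$ along a ray in its distinguished end, extract a ``periodicity'' forced by $f$, and then exploit the extra freedom this gives together with non-regularity to build $2^{\aleph_0}$ pairwise non-isomorphic twins. Write $e$ for the end of $T$ preserved forward by every embedding (non-regular and not almost rigid, by the standing hypotheses of this subsection), and let $f$ be the given embedding, of period $k>0$. By Lemma~\ref{lem:Sf} the set $S_f$ is a ray of $e$ or a two-way infinite path preserved by $f$; in either case choose a ray $C=\{c_0,c_1,\dots\}\subseteq S_f$ belonging to $e$, so that $c_n=c_0\boxplus_e n$ and, since $c_n\in S_f$ and $f$ has period $k$, $f(c_n)=c_n\boxplus_e k=c_{n+k}$. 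Decompose $T=\bigoplus_{n\ge 0}S_n$ over $C$, with $S_n:=T\langle e(c_0),c_n\rangle$ the rooted tree hanging at $c_n$ (the tree called $T_{c_n}$ in the definition of a regular ray). The first step is to note that for $n\ge 1$ the vertex $c_n$ has exactly the neighbours $c_{n-1},c_{n+1}$ on $C$ and $f$ sends the triple $c_{n-1},c_n,c_{n+1}$ to $c_{n+k-1},c_{n+k},c_{n+k+1}$; hence $f$ carries any path out of $c_n$ inside the component $S_n$ of $T\setminus\{c_{n-1},c_{n+1}\}$ to a path out of $c_{n+k}$ inside the component $S_{n+k}$ of $T\setminus\{c_{n+k-1},c_{n+k+1}\}$, so $f$ restricts to an embedding $S_n\hookrightarrow S_{n+k}$ of rooted trees for every $n\ge 1$.

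Since $e$ is not regular, the ray $C$ is not regular, i.e.\ $\{equiv(S_n):n\ge 0\}$ is infinite; as $equiv(S_n)\le equiv(S_{n+k})$ for $n\ge 1$, the set $N:=\{n\ge 1: S_n\not\equiv S_{n+k}\}$ is infinite, for otherwise $equiv(S_n)$ would be eventually constant on each residue class modulo $k$ and so take only finitely many values. For $A\subseteq N$ let $T^{(A)}$ be the tree obtained from $T$ by replacing $S_n$ with $S_{n+k}$ for every $n\in A$, and write $R^{(A)}_n$ for the branch of $T^{(A)}$ at $c_n$ (so $R^{(A)}_n=S_{n+k}$ if $n\in A$ and $R^{(A)}_n=S_n$ otherwise). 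Each $T^{(A)}$ is equimorphic to $T$: the identity on $C$ together with $S_n\hookrightarrow S_n$ ($n\notin A$) and $S_n\hookrightarrow S_{n+k}$ ($n\in A$) on the branches embeds $T$ into $T^{(A)}$, while $c_n\mapsto c_{n+k}$ for $n\ge 1$ and $c_0\mapsto c_k$, using $f$ on $S_0$ and the identity $S_{n+k}\hookrightarrow S_{n+k}$ ($n\in A$) and $S_n\hookrightarrow S_{n+k}$ ($n\notin A$) on the other branches, embeds $T^{(A)}$ into $T$. As $N$ is infinite we thus obtain $2^{\aleph_0}$ trees $T^{(A)}$, all twins of $T$.

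The remaining, and hardest, step is to show that these represent $2^{\aleph_0}$ isomorphism types, equivalently that each isomorphism class contains only countably many of them. Fix $A_0$ and suppose $\varphi\colon T^{(A)}\to T^{(A_0)}$ is an isomorphism. Since $\varphi$ need not fix $C$, one must first see that it maps the distinguished end $e_A:=end(C)$ of $T^{(A)}$ to the distinguished end $e_{A_0}:=end(C)$ of $T^{(A_0)}$. When $T$ is scattered this is clean: replacing $S_n$ by $S_{n+k}$ does not change $\sup_n rank(\Omega(S_n))$ (because $rank(\Omega(S_n))\le rank(\Omega(S_{n+k}))$), hence, by the rank computations underlying Proposition~\ref{lem:infinite orbit}, does not change $rank(\Omega(T))$, so $\Omega^{(last)}(T^{(A)})=\{e_A\}$ by Corollaries~\ref{for:the two cases} and~\ref{lem:part of thm1.3}; then $\varphi$, preserving Cantor-Bendixson derivatives, maps $e_A$ to $e_{A_0}$. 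For general $T\in\mathfrak{E}$ one needs instead that the distinguished end is carried along by equimorphisms, and establishing this is the point I expect to require real work. Granting it, $\varphi$ maps a cofinal segment of $C$ onto a cofinal segment of $C$ by an order isomorphism, so $\varphi(c_n)=c_{n+p}$ for all large $n$ and a fixed $p\in\Z$, whence $equiv(R^{(A)}_n)=equiv(R^{(A_0)}_{n+p})$ for all large $n$. For $n\in N$ we have $equiv(R^{(A)}_n)\in\{equiv(S_n),equiv(S_{n+k})\}$, two distinct classes, so this value determines whether $n\in A$; hence $A\cap N$ agrees, outside a finite set, with a set depending only on $A_0$ and $p$. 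Letting $p$ range over $\Z$, only countably many $A$ can satisfy $T^{(A)}\cong T^{(A_0)}$.

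Therefore the $2^{\aleph_0}$ twins $T^{(A)}$ realize $2^{\aleph_0}$ isomorphism types and $|twin(T)|\ge 2^{\aleph_0}$, as required. The same reduction can alternatively be run through the labelled-path machinery: labelling the ray $C\cong\N$ by $\ell(c_n):=equiv(S_n)$, valued in the poset of equimorphy classes of rooted trees embeddable in some $S_m$ (which has a least element), the first two steps show that $\ell$ is eventually $k$-periodic with $\mathfrak{d}_\ell=\infty$, so Theorem~\ref{lemma:paths} (via Theorem~\ref{thm:continuum} and Lemma~\ref{pz2}) produces $2^{\aleph_0}$ pairwise non-isomorphic twins of $(C,\ell)$; the argument of the third paragraph is exactly what is needed to push these down to $2^{\aleph_0}$ twins of $T$.
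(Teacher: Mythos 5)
Your construction of the continuum-sized family of twins is essentially the one in the paper: you enlarge the branch at $c_n$ to $S_{n+k}$ for $n$ ranging over subsets of the infinite set $N$ where $S_n\not\equiv S_{n+k}$, while the paper shrinks the branch $T(f,x_{n+k})$ to $T(f,x_n)$ for $n$ in a member of an almost disjoint family; the two are interchangeable, and your verification of equimorphy is correct. The problem is the step you yourself flag. To conclude that an isomorphism $\varphi\colon T^{(A)}\to T^{(A_0)}$ eventually shifts the ray $C$ onto itself, you need that $\varphi$ carries $end(C)$ in $T^{(A)}$ to $end(C)$ in $T^{(A_0)}$, and you establish this only for scattered $T$ (via Cantor--Bendixson ranks), writing ``granting it'' for the general case. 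That is a genuine gap, not a routine verification: the lemma is invoked for arbitrary $T\in\mathfrak{E}$ with a non-regular end (it is what proves item (v) of Theorem \ref{main2} for all stable trees, not only scattered ones); the trees $T^{(A)}$ are not themselves known to lie in $\mathfrak{E}$, and there is no a priori reason an isomorphism between two twins of $T$ should respect the end inherited from $T$, so there is nothing to quote. Even in the scattered case your rank argument needs more care when $S_f$ is a two-way infinite path, since $\Omega^{(last)}$ may then have two elements which $\varphi$ could a priori exchange.

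The paper closes exactly this hole by never analysing $\varphi$ in isolation. Assuming uncountably many sets $A_\alpha$ give isomorphic trees, it fixes isomorphisms $h_\alpha\colon T_{A_0}\to T_{A_\alpha}$ and considers the composites $h_\alpha\circ f$, which are self-embeddings of $T$ (because $f$ embeds $T$ into $T_{A_0}$ and each $T_{A_\alpha}$ sits inside $T$ via the identity). Since $T\in\mathfrak{E}$, every such composite preserves the distinguished end forward, hence has a non-negative period and a nonempty set $S_{h_\alpha\circ f}$; a pigeonhole argument then extracts uncountably many $\alpha$ sharing a common period $l$ and a common vertex $x_{\overline{n}}\in S_f\cap S_{h_\alpha\circ f}$, which forces two such composites to agree on the ray $e(x_{\overline{n}})$, and the contradiction is read off from the branches at a position $n\in A_\alpha\setminus A_\beta$. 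To complete your version you would have to replace the ``granting it'' step by this composition-and-pigeonhole argument (or something equivalent); as written, the proof is incomplete at its crucial point.
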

\begin{proof} 
For $u \in S_f$ and $x\in e(u)$, set $T(f, x):= T<e(u), x>$
Since the end $e$ of $T$ is no regular, for  every $u\in S_f$, the ray  $e(u)$ contains  an infinite subset $L$ such  that $T(f,x)$ is not equimorphic to $T(f,f(x))$ for all $x\in L$.

There exists a set $\mathcal{A}$ of  $2^{\aleph_0}$ infinite pairwise almost disjoint subsets of $L$.    If $S_f$ induces a two-way infinite path enumerate $S_f$ naturally as $\{x_i: i\in \Z\}$. If $S_f$  induces a one-way infinite path enumerate $S_f$ naturally as $\{x_i: i\in \N\}$. Then $f(x_n)=x_{n+k}$ for every $x_n\in S_f$.   

Let $A\in \mathcal{A}$.  For each $x_n\in A$, let $T_A$ be the tree obtained from $T$ by replacing each of the rooted trees $T(f,x_{n+k})$  by  the rooted tree  $T(f,x_n)$ and let $T(f,x_{n+k})$ unchanged if $n\not\in A$.   According to Lemma \ref{lem:sumoverpath},  the map $f$ induces an embedding of $T$ into $T_A$  because $f$ maps the rooted tree $T(f, x_n)$ into the rooted tree $T(f, x_{n+k})$, whereas the identity of $T_A$ is   an embedding of $T_A$ into $T$.  Hence $T_A$ and $T$ are equimorphic. Say that two sets $A$ and $A'$ are equivalent and set $A\simeq A'$ if $T_A$ is isomorphic to $T_{A'}$. If every isomorphism class of sets in $\mathcal{A}$ is countable  we obtain $2^{\aleph_0}$ pairwise non isomorphic trees. Hence it suffices to prove that every isomorphism class of sets in $\mathcal   {A}$ is countable.  

Suppose that there is an uncountable family of subsets
$(A_{\alpha})_{\alpha<\omega_{1}}$ of $\mathcal{A}$ which are all in
the same isomorphism class.  For $\alpha<\omega_1$ let $h_{\alpha}$ be
an isomorphism of $T_{A_{0}}$ onto $T_{A_{\alpha}}$. The map
$h_{\alpha}\circ f$ is an embedding of $T$. For every
$\alpha<\omega_1$ choose a vertex $x_{n_\alpha}\in S_f\cap
S_{h_\alpha\circ f}$ which is not an endpoint of $S_f$ nor of
$S_{h_\alpha\circ f}$. There is an $x_{\overline{n}}\in S_f$ so that
$x_{\overline{n}}=x_{n_\alpha}$ for uncountably many $\alpha$. Let
$C:=e({x_{\overline{n}}})$.  Each of those embeddings $h_\alpha\circ
f$ has an period which is a number in $\N$ and hence there is an
uncountable set $\mathcal{B}$ of $\alpha\in \omega_1$ and a number
$l\in \N$ so that $x_{\overline{n}}\in S_{h_\alpha\circ f}$ and the
period of $h_\alpha\circ f$ is the same number $l$ for all $\alpha\in
\mathcal{B}$. Let $\alpha$ and $\beta$ in $\mathcal{B}$. Then
$h_\alpha\circ f$ and $h_\beta\circ f$ have equal restrictions to $C$.

Let $n>\overline{n}+l-k$ with $n\in A_\alpha$ but $n\not\in
A_\beta$. Then the rooted tree $T(C,x_n)$ is isomorphic to the tree
$T_{A_\alpha}(C,x_{n+k})$ and is embedded by $f$ into the tree
$T(C,x_{n+k})$ which in turn is isomorphic to
$T_{A_\beta}(C,x_{n+k})$. But the tree $T(C_{x_{n+k}})$ can not be
embedded into $T(C,x_n)$, for otherwise the trees $T(C_{x_{n+k}})$
and $T(C,x_n)$ would be equimorphic. Hence $T_{A_\alpha}(C,x_{n+k})$
can not be embedded into $T_{A_\beta}(C,x_{n+k})$ and are therefore
not isomorphic. The embedding $f$ maps the tree $T(C,x_{n-l+k})$ into
the tree $T_{A_0}(C, x_{n-l+2k})$. Because $h_\beta$ is an isomorphism
of $T_{A_0}$ to $T_{A_\beta}$, the trees $T_{A_0}(C, x_{n-l+2k})$ and
$T_{A_\beta}(C,x_{n+k})$ are isomorphic. Because $h_\alpha$ is an
isomorphism of $T_{A_0}$ to $T_{A_\alpha}$ the trees $T_{A_0}(C,
x_{n-l+2k})$ and $T_{A_\alpha}(C,x_{n+k})$ are isomorphic. This implies 
the contradiction that $T_{A_\beta}(C,x_{n+k})$ and
$T_{A_\alpha}(C,x_{n+k})$ are isomorphic.  \end{proof}

\vskip 10pt
\noindent
Subcase \textbf{2.2}: The end is regular.  We also assume in this Subcase that $T$ is not the one-way infinite path.

Let $v\in \check{E}$ and $f$ be a proper embedding with $v$ the
endpoint of $\check{S}_f$.  We are going to construct infinitely many
subtrees $Twin_n$ of $T$, each of them a twin of $T$. Let $k$ be the
period of $f$ and note that $k$ is a multiple of $\mathbf{d}$. For
$v\in V(T)$ we will denote by $C_v$, instead of $e(v)$, the ray
originating at $v$.

In the case that $S_f$ is a one-way infinite path, let $u$ be the
endpoint of $S_f$ and $v=u\boxplus l$.  Because $T$ is not the one-way
infinite path there is a number $0\leq i<k$ for which the tree $T(C_v,
v\boxplus i)=T(C_u,u\boxplus(l+i)$ contains at least one vertex
different from its root $v\boxplus i$. Let $X_n=\{u\boxplus j: 0\leq
j\leq l+i+1+3nk\}$ with $n\in \N$. The subtree $Twin_n$ for $n\in \N$
of $T$ is obtained from $T$ by removing all trees $T(\to x)$ from $T$
for which $y\to x\in X_n$ and $x\not\in C_u$. Then $Twin_n(C_u, x)$ is
the singleton $x$ for every vertex $x\in X$.  The identity map on
$Twin_n$ embeds the tree $Twin_n$ into $T$ and $f^m$ maps $T$ into
$Twin_n$ for $m>l+i+1+3n$.  Hence $Twin_n$ is a twin of $T$ for every
$1\leq n\in \N$. The embedding $f$ restricted to $Twin_n$ is an
embedding of $Twin_n$ and the vertex $v_n:=u\oplus(l+i+1+3nk)\in
\check{E}(Twin_n)$, is the endpoint of the path
$\check{S}_f(Twin_n)$.  It follows from Proposition
\ref{Fact:finitdist} that the origin $o_n$ of the tree $Twin_n$ is of
the form $v_n\oplus j_n$ with $0\leq j_n\leq 2\mathbf{d}\leq 2k$. It
follows that the level of $o_n$ as a vertex of $T$ is strictly smaller
than the level of $o_m$ for $n<m$ and hence that $o_m\in C_{o_n}$ for
$n<m$. Hence if there is an isomorphism, say $h$, of $Twin_n$ to
$Twin_m$ with $n<m$, then $h$ translates the path $C_{o_n}$ forward
onto the path $C_{o_m}$ mapping $o_n$ to $o_m$ according to
Proposition \ref{Fact:finitdist}. Hence $lev(h(x))>lev(x)$ for all
vertices in $Twin_n$. Implying that there is no vertex $x\in
V(Twin_n)$ with $h(x)=u\in V(Twin_m)$.

In the case that $S_f$ is a two-way infinite path let $u$ be the vertex in $S_f$ with $u\boxplus k=v$ and denote by $C$ the two-way infinite path $S_f$. Let $X_n=\{v\oplus j: \mathbf{d}\leq j\leq 3nk\}$. For $x=v\oplus j\in X_n$ let $R^{\langle x\rangle}$ be the tree $T(C,u\oplus i)$  with $0\leq i<k$ and $i$ congruent to $j$ modulo $k$. We obtain the tree $Twin_n$ from the tree $T$ by replacing for each $x\in X_n$ the rooted tree $T(C,x)$ by the rooted tree $R^{\langle x\rangle}$. That is if $x\in X_n$, then $Twin_n(C,x)=R^{\langle x\rangle}$ and if $x\not\in X$ the $Twin_n(C,x)=T(C,x)$. (Note that there is no embedding of $Twin_n$ which moves the path from $v$ to $v\boxplus k-1$ into a path in $X$.) It follows that the embedding $f^{3nk}:=g_n$ of $T$ is also an embedding of $Twin_n$ and that $C=S_{g_n}(Twin_n)$. The identity maps $Twin_n$ into $T$ and $g_n$ maps $T$ into $Twin_n$ and hence $Twin_n$ is a twin of $T$ for every $n\in \N$. The vertex $v_n=v\boxplus 3nk$ is the endpoint of the path $\check{S}_{g_n}(Twin_n)$. Implying that $lev(o_n)+k\leq lev(o_m)$  for the origin $o_n$ of $Twin_n$ and the origin $o_m$ of $Twin_m$   if $n<m$.

Assume that there is an isomorphism $h$ of $Twin_n$ to $Twin_m$ with
$1\leq n<m$. Then $h$ maps $C_{o_n}$ onto $C_{o_m}$ and hence
$lev(h(x))- lev(x)=:l\geq k$ for every $x\in V(Twin_n)$. It is not
possible that $h$ maps $v$ into $C$ and hence $h$ maps $v$ into one of
the rooted trees $Twin_m(C,x)$ with $x\in X$. Let $r\in C$ be the
vertex of smallest level with $h(r)\in C$. Then $r$ is a vertex on the
oriented path from $v\boxplus 1$ to $o_n$. Let $x\in C$ with $x\to r$.
Then $h(r)=r\boxplus l$ and $h(x)$ is a vertex in $Twin_m(C, h(x))$
and hence every vertex in $Twin_n(\to x)$ is mapped by $h$ into the
tree $Twin_m(C, h(x))$. Let $y$ be the vertex in $Twin_n$ with
$h(y)=x$. Then $y$ is not a vertex in $Twin_n(\to x)$ and hence the
vertex, say $z$, of smallest level in $C_y$ and $C$ has to be a vertex
in $C_r(Twin_n)$ and hence $y\in Twin_n(C,z)$. But then $h(y)\in
Twin_m(C,z\boxplus l)$. Implying that $h(y)\not=x$ and hence that $h$
could not have been an isomorphism. This completes Case 2.

We are now ready to complete the proof of Theorem \ref{main2}.

\subsection{\bf Proof of Theorem \ref{main2}.}
 Recall that stable trees are characterized by four types of subgraphs that are preserved by every embedding.


$(i)$ In this case, $T$ may contain a two way infinite path or a non-regular end preserved by every embedding. In the first case, the conclusion follows from Theorem \ref{lemma:paths}. In the second case, $T$ contains a path preserved by every embedding (Theorem \ref{main1-2}). The conclusion follows by Subcase 2.2. \\
$(ii)$  This case follows from the fact that embeddings of a locally finite trees are automorphism provided they fix a vertex. \\
$(iii)$ This case follows from  Lemma \ref {claim:rootedtoT}. \\
$(iv)$ This case follows from  Lemma \ref{lem:almost contained}.


\begin{thebibliography}{10pt}
\bibitem{Bo-Ta} A.~Bonato, C.~Tardif,  Mutually embeddable graphs and the tree alternative conjecture. J. Combin. Theory Ser. B 96 (2006), no. 6, 874--880.
\bibitem{Bo-al}A.~Bonato, H.~Bruhn, R.~Diestel, P.~Spr\"ussel, Twins of rayless graphs. J. Combin. Theory Ser. B 101 (2011), no. 1, 60Ð65. 
\bibitem{diestel} R.~Diestel, Graph theory. Fourth edition. Graduate Texts in Mathematics, 173. Springer, Heidelberg, 2010. xviii+437 pp.
\bibitem{halin} R.~Halin,  Fixed configurations in graphs with small number of disjoint rays, in R.Bodendiek (Ed) Contemporary Methods in Graph Theory, Bibliographisches Inst., Mannheim, 1990, pp. 639-649. 
\bibitem{halin3} R.~Halin, Automorphisms and endomorphisms of infinite locally finite graphs. Abh. Math. Sem. Univ. Hamburg 39 (1973), 251--283. 
\bibitem{halin2} R.~Halin,  The structure of rayless graphs. Abh. Math. Sem. Univ. Hamburg 68 (1998), 225--253. 
\bibitem{hamann1} M.~Hamann, Personnal communication, Feb 2016. 
\bibitem{hamann2} M.~Hamann, Group action on metric spaces: fixed points and free subgroups, this volume. 
\bibitem{hamann3} M.~Hamann, Self-embeddings of trees. Manuscript, Feb. 2016.
\bibitem{jung} H.~A.~Jung, Wurzelb{\"a}ume und undendliche Wege in Graphen, Math. Nachr. 41 (1969) 1-22. 
\bibitem{konig} D.~K\"onig,  Theorie der Endlichen und Unendlichen Graphen: Kombinatorische Topologie der Streckenkomplexe. 1936 Leipzig: Akad. Verlag.
\bibitem{laflamme-pouzet-sauer-zaguia} C.~Laflamme, M.~Pouzet, N.~Sauer, I.~Zaguia, Pairs of orthogonal countable ordinals, Discrete Maths, 335 (2014) 35-44. 
\bibitem{laver} R.~Laver,  Better-quasi-orderings and a class of trees. Studies in foundations and combinatorics, pp. 31--48, Adv. in Math. Suppl. Stud., 1, Academic Press, New York-London, 1978.
\bibitem{lehner} F.~Lehner, Personnal communication, Feb. 2016. 
\bibitem{pays-valette} I.~Pays, A.~Valette,  Sous-groupes libres dans les groupes d'automorphismes d'arbres, L'enseignement Math\'ematique, 37 (1991) 151--174. 
\bibitem{polat-sabidussi}N.~Polat, G.~Sabidussi,  Fixed elements of infinite trees, in Graphs and combinatorics (Lyon, 1987; Montreal, PQ, 1988),  Discrete Math. 130 (1994), no. 1-3, 97--102. 
\bibitem {tits} J.~Tits, Sur le groupe des automorphismes d'un arbre, Essays on topology and related topics, pp 188-211, Springer, New-York,   1970. 
\bibitem{tyomkyn} M.~Tyomkyn, A proof of the rooted tree alternative conjecture, Discrete Math. 309 (2009) 5963-5967.
\bibitem {woess} W.~Woess, Fixed sets and free subgroups of groups acting
on metric spaces, Math. Z. 214 (1993) 425-440.
\end{thebibliography}
\end{document}